\newtheorem{thm}{Theorem}[section]
\newtheorem{lem}[thm]{Lemma}
\newtheorem{prop}[thm]{Proposition}
\newtheorem{cor}[thm]{Corollary}
\numberwithin{equation}{section}
\newtheorem{thmx}{Theorem}
\theoremstyle{definition}
\newtheorem{rmk}[thm]{Remark}
\newtheorem{defn}[thm]{Definition}
\newtheorem*{question}{Question}
\newtheorem*{rmkk}{Remark}
\newtheorem{claim}[thm]{Claim}
\def\Bun{\mathop{\mathscr{B}un}\nolimits}
\def\Pic{\mathop{\mathscr{P}ic}\nolimits}
\def\pic{\mathop{\mathrm{Pic}}\nolimits}
\def\Spec{\mathop{\mathrm{Spec}}\nolimits}
\def\Bun{\mathop{\mathscr{B}un}\nolimits}
\def\gm{\mathop{{\mathbf G}_m}\nolimits}
\def\dt{\mathop{{\rm dt}}\nolimits}
\DeclareMathOperator*{\bigboxtimes}{\raisebox{-0.63ex}{\scalebox{1.9}{$\boxtimes$}}}
\begin{document}

\title{A stacky approach to identifying the semistable locus of bundles}
\author{Dario Wei\ss mann}
\email{dario.weissmann@uni-due.de}
\address{Fakult\"{a}t f\"{u}r Mathematik, Universit\"{a}t Duisburg-Essen, Universit\"{a}ts-strasse 2, 45141 Essen, Germany}
\author{Xucheng Zhang}
\email{zhangxucheng@mail.tsinghua.edu.cn}
\address{Yau Mathematical Sciences Center, Tsinghua University, Beijing 100084, China}
%
\classification{14D20 (primary), 14D23, 14H60 (secondary)}
\keywords{Bundles over curve; stability condition; stacky approach; schematic moduli space}
\thanks{Dario Wei\ss mann was supported by the DFG-Research Training Group 2553 ``Symmetries and classifying spaces: analytic, arithmetic and derived''.}

\begin{abstract}
We show that the semistable locus is the unique maximal open substack of the moduli stack of principal bundles over a curve that admits a schematic moduli space. For rank $2$ vector bundles it coincides with the unique maximal open substack that admits a separated moduli space, but for higher rank there exist other open substacks that admit separated moduli spaces.
\end{abstract}

\maketitle


\section{Introduction}
Semistable vector bundles over a curve form a proper moduli: the stability condition imposed on vector bundles ``accidentally'' coincides with the one arising from the action of a reductive group on a Quot scheme. 
This allows us to use the machinery of geometric invariant theory (GIT), whose outcome is always quasi-projective, to construct the moduli space. In other words, for vector bundles the stability, a priori, is a sufficient condition to obtain a proper moduli space and there is no obvious reason why this condition is also necessary. Therefore it would be interesting to either prove it is indeed the case, or to find more proper moduli spaces of vector bundles.

Let us state this formally. Let $C$ be a smooth projective connected curve and let $G$ be a connected reductive group. Denote by $\mathscr{B}un_G$ the moduli stack of principal $G$-bundles over $C$. The guiding problem in this paper is to single out all open substacks of $\mathscr{B}un_G$ that admit proper good moduli spaces (in the sense of \cite[Definition 4.1]{MR3237451}).

\vspace{0.7em}

By its very definition a good moduli space is only an algebraic space, not necessarily a scheme. If we require that an open substack of $\mathscr{B}un_G$ admits a schematic good moduli space (even without the properness assumption), then the classical stability condition appears.
\begin{thmx}[(Char. 0 version of Theorem \ref{tba})]\label{1535-2}
Let $C$ be a smooth projective connected curve over an algebraically closed field $k$ of characteristic $0$ and let $G$ be a connected reductive group over $k$. Then the open substack $\mathscr{B}un_G^{ss} \subseteq \mathscr{B}un_G$ of semistable principal $G$-bundles is the unique maximal open substack that admits a schematic good moduli space.
\end{thmx}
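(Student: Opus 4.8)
The statement has two halves: that $\mathscr{B}un_G^{ss}$ does admit a schematic good moduli space, and that it is maximal among open substacks with this property (being itself one, it is then the unique maximal one). The first half I would simply invoke: in characteristic $0$ the classical GIT construction for principal bundles (Ramanathan) produces a good moduli space $\mathscr{B}un_G^{ss}\to M_G^{ss}$ with $M_G^{ss}$ a normal projective variety, in particular a scheme. So the content is entirely in maximality, which I would phrase contrapositively: if an open substack $\mathscr{U}\subseteq\mathscr{B}un_G$ admits a good moduli space $\pi\colon\mathscr{U}\to U$ with $U$ a scheme, then $\mathscr{U}\subseteq\mathscr{B}un_G^{ss}$. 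Uniqueness of the maximal substack is then formal.

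For maximality, suppose for contradiction that $\mathscr{U}$ contains an unstable bundle. First I would reduce to a \emph{closed} unstable point. By Alper's theory the existence of $\pi$ forces every closed point of $\mathscr{U}$ to have linearly reductive stabilizer and every point to specialize to a unique closed point of its fibre; combined with upper semicontinuity of the Harder--Narasimhan polygon (the unstable locus is closed, and specialization can only raise the polygon), the closed point $[E_0]$ in the fibre of an unstable point is again unstable, and its canonical HN reduction is a polystable graded object. The finer structure theory of Alper--Halpern-Leistner--Heinloth ($\Theta$-reductivity and $S$-completeness of a stack admitting a separated good moduli space) I would use to ensure that the HN degenerations at issue actually land inside $\mathscr{U}$. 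Thus I may assume $[E_0]\in\mathscr{U}$ is closed and unstable, with reductive automorphism group $G_0=\mathrm{Aut}(E_0)$.

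Next I would localize. By the \'etale slice theorem for good moduli spaces (Alper--Hall--Rydh), \'etale-locally near $[E_0]$ the stack is $[N/G_0]$ with good moduli space $N/\!/G_0=\mathrm{Spec}\,\mathcal{O}(N)^{G_0}$, where $N$ is the deformation space $\mathrm{Ext}^1(E_0,E_0)\cong H^1(C,\mathrm{ad}\,E_0)$ with its natural $G_0$-action. Because $E_0$ is unstable, its HN reduction singles out a central cocharacter $\lambda\colon\gm\to G_0$ of the Levi of the HN parabolic, and --- this is the formal meaning of instability in the sense of Kempf and Halpern-Leistner --- $\lambda$ is \emph{destabilizing}: the theta/determinant line bundle $\mathcal{L}$ on $\mathscr{B}un_G$ has strictly negative weight along $\lambda$, and $\lambda$ acts on the destabilizing part of $N$ with weights of a single sign. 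This one-sidedness is the feature that cannot be reconciled with $U$ being a scheme.

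The hard part --- the genuine heart of the theorem --- is to convert this into a failure of \emph{schematicness} rather than of mere separatedness; as the paper itself notes, for higher rank there are non-semistable open substacks carrying \emph{separated} (algebraic-space) good moduli spaces, so no argument relying on separatedness or on properness of $M_G^{ss}$ alone can succeed. The route I would pursue is through line bundles. A schematic good moduli space would, Zariski-locally near $\pi([E_0])$, admit a line bundle that is ample relative to $\pi$; such a bundle descends from a $G_0$-linearized line bundle on $N$ of weight $0$ along $\lambda$, since it must be trivial on the fibres of $\pi$. But $\mathrm{Pic}(\mathscr{B}un_G)$ is, up to the contribution of the curve, generated by the theta bundle $\mathcal{L}$ (Biswas--Hoffmann), whose $\lambda$-weight is strictly negative by instability; hence no nonzero multiple can have vanishing weight along the destabilizing direction. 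The destabilizing direction therefore cannot be separated by invariant functions compatibly with a relatively ample polarization, and $U$ fails to be a scheme near $\pi([E_0])$, a contradiction. I expect the delicate point to be making this last implication rigorous at the level of algebraic spaces --- ruling out \emph{all} schematic structures, not merely quasi-projective ones --- which is exactly where the scheme-versus-algebraic-space distinction, and so the real novelty of the theorem, resides.
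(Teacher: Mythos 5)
Your overall strategy---pass to affine pieces of the moduli space, produce a line bundle, and play the Picard group of $\mathscr{B}un_G$ against the sign of the weight of the determinant-of-cohomology bundle on destabilizing directions---is indeed the engine of the paper's proof (via the adequate analogue of Alper's Theorem 11.14(ii), the chain ``adequately semistable $\Rightarrow$ $\Theta$-semistable'' of Lemma \ref{1408-1}, and the computation of $(\mathscr{B}un_G)^{\Theta\text{-}ss}_{\mathcal{L}_{\det}^{\otimes e}}$). But the local route you propose for extracting the contradiction has a gap I believe is fatal as stated. You reduce to a closed unstable point $[E_0]\in\mathscr{U}$ and then need a destabilizing cocharacter $\lambda$ \emph{inside} $\mathrm{Aut}(E_0)$. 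A closed unstable point of $\mathscr{U}$ need not have split Harder--Narasimhan filtration: a simple unstable bundle can be a closed point of $\mathscr{U}$ (its HN degenerations may simply leave $\mathscr{U}$), and then $\mathrm{Aut}(E_0)=\mathbf{G}_m$ is central and acts with weight $0$ on $\mathcal{L}_{\det}$, which is built from the adjoint/endomorphism bundle---so your weight comparison at $[E_0]$ sees nothing, and the local model $[N/\mathbf{G}_m]$ with trivial action carries no obstruction. The tools you invoke to force the HN degeneration to stay in $\mathscr{U}$, namely $\Theta$-reductivity and S-completeness, are not available: they characterize \emph{separated} good moduli spaces, and a schematic good moduli space need not be separated. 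The paper avoids all of this by testing instability with morphisms $\Theta\to\mathscr{B}un_G$ coming from parabolic reductions, which exist at \emph{every} unstable point, closed or not, and whatever its automorphism group.

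Two further steps are asserted rather than proved. First, to compare your locally defined line bundle with $\mathrm{Pic}(\mathscr{B}un_G)$ you must extend it (with controlled non-vanishing loci) from the open substack $\pi^{-1}(U_{\mathrm{aff}})$ to all of $\mathscr{B}un_G$; this is exactly Theorem \ref{theorem-adequately-semistable-locally-noetherian} and uses smoothness in an essential way. Second, your weight argument at best forces the relevant line bundle to be pulled back from an algebraic space ($e=0$ in the paper's trichotomy), and you then assert that ``$U$ fails to be a scheme''; but this case needs its own argument: Alper's criterion demands a section through the unstable point whose non-vanishing locus is (adequately) affine, and sections of line bundles pulled back from the Jacobian factor are constant along the fibres, so the non-vanishing locus is a non-quasi-compact stack, hence not adequately affine. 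Finally, for general reductive $G$ the Picard group is not simply ``theta bundle plus contributions of the curve''; the paper spends \S 2.2.2 reducing to the almost simple, simply connected case via central isogenies and $\Bun_\mu$-torsors (Lemma \ref{lemma-torsor-ams}, Lemma \ref{0232}), and some such reduction would be needed in your approach as well.
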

This gives a stacky approach to identifying the semistable locus of principal bundles and hence provides an intrinsic way to introduce the classical stability condition. As a result the good moduli space of any open substack of $\mathscr{B}un_G$ containing unstable objects, if it exists, cannot be a scheme. In particular, the good moduli space of simple vector bundles is only an algebraic space, not a scheme (see Corollary \ref{1938-new}). Furthermore, as GIT always produces schematic moduli spaces, Theorem \ref{1535-2} implies that the semistable locus is also the maximal locus to which GIT can be applied to construct a moduli space.

The idea to prove Theorem \ref{1535-2} is simple. A result due to Alper describes open substacks of a smooth stack that admit quasi-projective moduli spaces in terms of line bundles over it (see \cite[Theorem 11.14 (ii)]{MR3237451}). If $G$ is almost simple and simply connected, then there is essentially only one line bundle over $\mathscr{B}un_G$ (see \cite[Theorem 17]{MR1961134}) and this line bundle recovers the classical stability condition (see \cite[Corollary 1.16]{MR3758902}). Via the structure theory of reductive groups employed in \cite[\S 5]{MR2628848} we can reduce the general case to the almost simple and simply connected one.

\vspace{0.7em}

In general a good moduli space need not be a scheme and there could be more possibilities for open substacks admitting good moduli spaces. 
Here we consider the case $G=\mathrm{GL}_n$ and denote by $\mathscr{B}un_n^d$ the moduli stack of vector bundles of rank $n$ and degree $d$ over $C$.

In the rank $2$ case, we again find the classical stability condition under the condition that the good moduli space is separated.
\begin{thmx}[(Char. 0 version of Theorem \ref{1724}, \cite{thesis-zhang} Theorem A (3))]\label{thmA}
Let $C$ be a smooth projective connected curve of genus $g_C>1$ over an algebraically closed field $k$ of characteristic $0$. Then the open substack $\mathscr{B}un_2^{d,ss} \subseteq \mathscr{B}un_2^d$ of semistable vector bundles is the unique maximal open substack that admits a separated good moduli space.
\end{thmx}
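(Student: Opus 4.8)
The statement asserts two things, and I would prove them separately. \textbf{First}, that $\mathscr{B}un_2^{d,ss}$ does admit a separated good moduli space: this is classical, its good moduli space being the projective (hence separated) moduli space $M_2^{d}$ of S-equivalence classes of semistable bundles, produced by GIT. From the stacky viewpoint the separatedness is precisely Langton's theorem: a family of semistable bundles over a punctured trait extends, after elementary modifications, to a semistable bundle over the whole trait whose special fibre is unique up to S-equivalence, which is exactly the valuative criterion for the good moduli space. \textbf{Second}, and this is the entire content, every open substack $\mathscr{U}\subseteq\mathscr{B}un_2^{d}$ that admits a separated good moduli space satisfies $\mathscr{U}\subseteq\mathscr{B}un_2^{d,ss}$; together with the first point this exhibits $\mathscr{B}un_2^{d,ss}$ as the maximum, hence the unique maximal such substack.

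For the second point I would argue by contradiction: assume $\pi\colon\mathscr{U}\to X$ is a separated good moduli space and that $\mathscr{U}$ contains an unstable bundle $E$. A rank $2$ unstable bundle has a \emph{unique} maximal destabilising line sub-bundle $L\subset E$ with $\deg L>d/2$; writing $M=E/L$ we have $\deg L>\deg M$. Applying the Rees/$\gm$-construction to this Harder--Narasimhan filtration degenerates $E$ to the split bundle $L\oplus M$. Since each fibre of a good moduli space contains a unique closed point, and closed points of $\mathscr{U}$ correspond bijectively to points of $X$ (see \cite[\S4]{MR3237451}), the closed point below $[E]$ is forced to be a split unstable bundle $L\oplus M$ with $\deg L\neq\deg M$; in particular $L\oplus M\in\mathscr{U}$.

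The crux is then to convert the asymmetry $\deg L\neq\deg M$ into a failure of the valuative criterion of separatedness for $X$ (equivalently, into a failure of S-completeness of $\mathscr{U}$). Unlike a polystable semistable bundle, the split bundle $L\oplus M$ carries a \emph{non-central} one-parameter subgroup $\gm\subset\mathrm{Aut}(L\oplus M)$, acting with distinct weights on the two summands, and both extension spaces $\mathrm{Ext}^1(M,L)$ and $\mathrm{Ext}^1(L,M)$ are non-zero since $g_C>1$ and $\deg L>\deg M$ (Riemann--Roch: $h^1(M L^{-1})=g_C-1+\deg L-\deg M>0$). Using these I would build, over a DVR $R$ with fraction field $K$, two families of bundles in $\mathscr{U}$ whose generic fibres are S-equivalent over $K$, hence define the same $K$-point of $X$, but whose special fibres have \emph{distinct} closed points in $X$; concretely one plays the two opposite one-parameter degenerations of a small deformation of $L\oplus M$ against a Langton-type semistable reduction, so that the competing limits are not identified by $\pi$. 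Such a pair of families violates the valuative criterion, contradicting separatedness of $X$.

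The \textbf{main obstacle} is exactly this last construction: one must produce two genuinely \emph{different} closed limits, rather than have both degenerations collapse back to the common point $[L\oplus M]$, and one must keep the whole configuration inside the given open substack $\mathscr{U}$. This is where the rank $2$ hypothesis enters decisively: the strict inequality $\deg L>\deg M$ for the unique destabilising sub-bundle makes the two competing degenerations inequivalent. In higher rank one can instead choose a \emph{symmetric} Harder--Narasimhan type, for which the analogous competing limits become S-equivalent and the obstruction disappears; this is the source of the additional open substacks admitting separated moduli spaces alluded to in the introduction, and it explains why the clean statement holds precisely for $n=2$.
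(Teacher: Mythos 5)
Your first part (the semistable locus does admit a separated good moduli space) is fine, and your overall strategy --- assume an unstable bundle lies in $\mathscr{U}$ and derive a contradiction with separatedness --- is the right one. But there are two genuine gaps in the hard direction, and they sit exactly at the two places where the paper has to work.

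First, the deduction that $L\oplus M\in\mathscr{U}$ does not follow from the unique-closed-point property of good moduli spaces. The Rees degeneration of $E$ to $\mathrm{gr}(E)=L\oplus M$ takes place in $\mathscr{B}un_2^d$, and nothing forces it to stay inside the \emph{open} substack $\mathscr{U}$: if $L\oplus M\notin\mathscr{U}$, then the closure of $[E]$ \emph{in} $\mathscr{U}$ need not contain $[L\oplus M]$, and $[E]$ itself can perfectly well be the closed point of its fiber (for a non-split extension with $\mathrm{Hom}(M,L)=0$ the automorphism group is just the central $\mathbf{G}_m$, which is linearly reductive, so no contradiction arises from stabilizers either). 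The paper closes this gap with Proposition \ref{1704-beg}: a density argument in $\mathrm{Pic}^0$ (Lemma \ref{1943-2}) produces two \emph{distinct} unstable bundles in $\mathscr{U}$ with opposite filtrations, obtained by flipping the Harder--Narasimhan filtration after twisting; only then does the modular characterization of S-completeness (Proposition \ref{prop:S-comple}, via \cite[Theorem A]{MR4665776}) force their common associated graded --- a direct sum of line bundles of different degrees --- into $\mathscr{U}$.

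Second, the step you yourself flag as the ``main obstacle'' is pointing at the wrong criterion. Once a split bundle $L\oplus M$ with $\deg L\neq\deg M$ lies in $\mathscr{U}$, you cannot in general contradict S-completeness/separatedness by playing the two opposite extensions of $L$ by $M$ and of $M$ by $L$ against each other: those two bundles have opposite filtrations with associated graded $L\oplus M$, which \emph{is} in $\mathscr{U}$, so the S-completeness test is passed, not failed. The actual obstruction in the paper is $\Theta$-reductivity (Proposition \ref{24}, via Corollary \ref{0305-1}): an elementary modification along an effective divisor of degree $\deg L-\deg M$, combined with a translation argument on $\mathrm{Pic}^{d_1}\times\mathrm{Pic}^{d_2}$ (Lemma \ref{1801}) to keep everything in $\mathscr{U}$, produces a family over a DVR whose generic fiber carries a filtration with graded in $\mathscr{U}$ but whose induced limit filtration has torsion in its graded, hence cannot lie in $\mathscr{U}\subseteq\mathscr{B}un_2^d$. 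Note that $\Theta$-reductivity is the other necessary condition supplied by \cite[Theorem A]{MR4665776}, which your proposal never invokes; without it the argument cannot close.
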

In higher rank, we construct an example showing that the conclusion in Theorem \ref{thmA} no longer holds. To be precise we have the following result.
\begin{thmx}[(Theorem \ref{rank-3-main}, \cite{thesis-zhang} Theorem B)]\label{thmB}
Let $C$ be a smooth projective connected curve of genus $g_C>2$ over an algebraically closed field $k$. Then there exists an open dense substack $\mathscr{U} \subseteq \mathscr{B}un_3^2$ that admits a separated non-proper good moduli space and is not contained in the open substack $\mathscr{B}un_3^{2,ss} \subseteq \mathscr{B}un_3^2$ of semistable vector bundles.
\end{thmx}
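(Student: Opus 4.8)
The plan is to build $\mathscr{U}$ as a \emph{modification} of the semistable locus. Since $\gcd(3,2)=1$ there are no strictly semistable bundles, so $\mathscr{B}un_3^{2,ss}=\mathscr{B}un_3^{2,s}$ is already proper; to enlarge it while staying separated I would trade a closed substack of stable bundles for an open family of unstable ones. Among the Harder--Narasimhan strata of $\mathscr{B}un_3^2$ the one of smallest codimension, namely $2g_C-1$, consists of bundles $E$ admitting a line subbundle $L$ with $\deg L=1$ and stable rank-$2$ quotient $Q$ of slope $1/2$; every such $E$ is unstable since $\mu(L)=1>2/3$. I would take $\mathscr{W}$ to be the open part of this stratum cut out by the vanishing condition $\operatorname{Hom}(Q,L)=0$ (including the split bundles $L\oplus Q$) and set $\mathscr{U}:=\bigl(\mathscr{B}un_3^{2,s}\setminus\mathscr{Z}\bigr)\cup\mathscr{W}$, where $\mathscr{Z}\subseteq\mathscr{B}un_3^{2,s}$ is the substack of stable bundles that occur as the \emph{other} limit of a one-parameter degeneration whose competing limit lands in $\mathscr{W}$ (the two central fibres differing by a modification supported on the special fibre). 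The role of the condition $\operatorname{Hom}(Q,L)=0$ is that it forces $\operatorname{Aut}(L\oplus Q)=\gm^2$, which is linearly reductive, so the split bundles can serve as the closed points of a good moduli space, every nonsplit extension in $\mathscr{W}$ degenerating to its graded $L\oplus Q$ under the natural $\gm$-action.

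First I would verify that $\mathscr{U}$ is open and dense in $\mathscr{B}un_3^2$: density is immediate because $\mathscr{U}$ contains the dense stable locus outside the proper closed $\mathscr{Z}$, while openness follows from upper-semicontinuity of the maximal destabilizing degree together with the openness of $\operatorname{Hom}(Q,L)=0$, once I check that the two pieces glue along their common boundary. Next, to produce the good moduli space and control separatedness I would invoke the intrinsic valuative criteria for good moduli spaces (uniqueness and existence of limits along test curves, in the spirit of \cite{MR3237451}): $\Theta$-reductivity for existence and S-completeness for separatedness. Uniqueness of limits is the crux. A priori a family of stable bundles over a punctured disc could acquire \emph{both} a stable limit and an unstable limit in $\mathscr{W}$, which would render the good moduli space non-separated; excising $\mathscr{Z}$ is designed precisely to remove these competing stable limits, so that after the trade each family has at most one limit in $\mathscr{U}$. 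I would then check that no new coincidences arise inside $\mathscr{W}$, using that $L$ and $Q$ are both stable and hence each S-equivalence class $[L\oplus Q]$ is a single isomorphism class.

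Non-properness I would establish by exhibiting a one-parameter family in $\mathscr{U}$ with no limit in $\mathscr{U}$: take a degeneration of stable bundles whose stable limit has been removed into $\mathscr{Z}$ and whose only unstable limit is a graded bundle $L\oplus Q$ lying on the \emph{excluded} boundary $\operatorname{Hom}(Q,L)\neq0$. Such a family escapes $\mathscr{U}$ in every completion, so the good moduli space is non-proper, while $\mathscr{U}\not\subseteq\mathscr{B}un_3^{2,ss}$ holds by construction since $\mathscr{W}$ consists of unstable bundles. The genus hypothesis $g_C>2$ enters through the dimension counts: it guarantees that $\mathscr{Z}$ and $\mathscr{W}$ are nonempty of the expected codimension, that $\operatorname{Hom}(Q,L)=0$ is the generic behaviour (here $\deg(Q^\vee\otimes L)=1$ and $\chi(Q^\vee\otimes L)=3-2g_C<0$), and that the escaping families of the previous step exist.

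The main obstacle I anticipate is securing separatedness and non-properness simultaneously, since they pull in opposite directions. A \emph{complete} flip, trading the entire unstable stratum against the entire competing stable locus, would typically produce a proper space, whereas separatedness forbids leaving any competing limit in place. The delicate point is therefore to choose $\mathscr{W}$ as a genuinely \emph{open}, non-complete piece of its stratum, omitting the boundary where $\operatorname{Hom}(Q,L)\neq0$, while still excising exactly the matching $\mathscr{Z}$, and then to confirm that the resulting $\mathscr{U}$ is at once open, S-complete, and $\Theta$-reductive. Threading this needle is precisely what rank $2$ disallows, and what the extra room in rank $3$ together with the genus bound makes possible.
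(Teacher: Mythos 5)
Your high-level strategy matches the paper's: you work with the same minimal-codimension Harder--Narasimhan stratum (degree-$1$ line subbundle $L$ with stable rank-$2$ quotient $Q$ of slope $1/2$), you trade an open part $\mathscr{W}$ of it against a locus $\mathscr{Z}$ of competing stable limits, and you plan to verify the existence criteria ($\Theta$-reductivity and S-completeness) together with an escaping family for non-properness. However, there is a genuine gap in the choice of cutting condition. The condition $\mathrm{Hom}(Q,L)=0$ is an \emph{open} condition on the pair $(L,Q)$, so the locus $\mathscr{Z}$ of stable bundles $\mathcal{E}'$ admitting a rank-$2$, degree-$1$ subbundle $Q$ with $\mathrm{Hom}(Q,\mathcal{E}'/Q)=0$ is only locally closed, not closed: a family $\mathcal{E}'_t\in\mathscr{Z}$ can specialize to a stable $\mathcal{E}'_0$ whose induced subbundle $Q_0$ acquires $\mathrm{Hom}(Q_0,L_0)\neq 0$, and then $\mathcal{E}'_0\notin\mathscr{Z}$ while all nearby $\mathcal{E}'_t$ are removed. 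Consequently $\bigl(\mathscr{B}un_3^{2,s}\setminus\mathscr{Z}\bigr)\cup\mathscr{W}$ is not an open substack, and the construction collapses before the valuative criteria are even in play. (Your informal definition of $\mathscr{Z}$ via ``occurring as the other limit of a degeneration with competing limit in $\mathscr{W}$'' amounts to exactly this locally closed set, by the opposite-filtration characterization of non-separated points.)

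The paper's resolution of precisely this difficulty is to strengthen the pair-condition to an \emph{intrinsic} condition on $Q$ alone: $Q$ must be $(1,0)$-stable, i.e., every sub-line bundle of $Q$ has negative degree. This implies $\mathrm{Hom}(Q,L)=0$ for \emph{every} degree-$1$ line bundle $L$ (a nonzero map would give $Q$ a sub-line bundle of degree $\geq 0$), and---crucially---its failure is a closed condition stable under specialization. That is what makes the removed locus closed (the paper's $\mathscr{A}$, after first restricting to the open substack $\mathscr{B}un_3^{2,\leq\lambda}$ to exclude deeper strata) and yields the clean equivalence ``$\mathcal{E}\in\mathscr{U}$ iff its graded piece $\mathcal{L}\oplus\mathcal{F}$ is in $\mathscr{U}$ iff $\mathcal{F}$ is $(1,0)$-stable,'' on which the verifications of $\Theta$-reductivity, S-completeness, and non-properness (via Langton's uniqueness of stable limits) all rest. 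This is also where the hypothesis $g_C>2$ actually enters: $(1,0)$-stable rank-$2$, degree-$1$ bundles exist only for $g_C>2$, since by Nagata's bound every rank-$2$ bundle of degree $1$ has a sub-line bundle of degree at least $(1-g_C)/2$, which is $\geq 0$ when $g_C=2$. Your Euler-characteristic count $\chi(Q^\vee\otimes L)=3-2g_C<0$ already holds for $g_C=2$ and so does not explain the genus bound.
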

This example can be generalized to arbitrary higher rank, thus showing that there are many other open substacks of $\mathscr{B}un_n^d$ that admit separated non-proper good moduli spaces, i.e., separatedness alone cannot help us identify the classical stability condition. Moreover, by Theorem \ref{1535-2} the good moduli space of $\mathscr{U} \subseteq \mathscr{B}un_3^2$ in Theorem \ref{thmB} is only an algebraic space, not a scheme. In particular, this moduli space does not come from a GIT construction.

The key ingredient to prove Theorem \ref{thmA} and \ref{thmB} is the result \cite[Theorem A]{MR4665776} characterizing algebraic stacks with separated or proper good moduli spaces. To apply this, we find simple translations of the notions of $\Theta$-reductivity and S-completeness in the case of vector bundles.
\begin{rmkk}
We also observe that the maximality type results Theorem \ref{1535-2} and \ref{thmA} actually hold over an arbitrary base field, see 
Lemma \ref{lem-arbitrary}.
\end{rmkk}
Motivated by our result in rank $2$ and the example in rank $3$, we propose the following more specific question:
\begin{question}
Is $\mathscr{B}un_n^{d,ss} \subseteq \mathscr{B}un_n^d$ (or more general $\mathscr{B}un_G^{ss} \subseteq \mathscr{B}un_G$) the unique maximal open substack that admits a proper good moduli space? 
\end{question}
A positive answer to this question would yield another stacky approach to identifying the semistable locus of vector bundles. Nevertheless, Theorem \ref{1535-2} implies that the answer is yes if the condition ``proper'' is replaced by ``quasi-projective''. Theorem \ref{thmA} answers this question affirmatively in rank $2$ and Theorem \ref{thmB} implies that the condition ``proper'' cannot be weakened to ``separated''.

\vspace{0.7em}

The structure of this paper is as follows. In \S\ref{g-bundles}, we give the proof of Theorem \ref{1535-2}. To make the arguments also work in positive characteristic, we need to introduce a stability notion depending on a line bundle over a stack, as a plain generalization of the one defined by Alper in characteristic $0$ (see \cite[Definition 11.1]{MR3237451}). 
We also provide a direct argument for the vector bundles case in \S\ref{vb-case}. In \S\ref{existence}, we recall the existence criteria \cite[Theorem 5.4]{MR4665776}. A first translation of these conditions for vector bundles, including a simple modular interpretation of S-completeness (see Proposition \ref{prop:S-comple}), is provided. As applications we reprove that the good moduli space of semistable vector bundles is separated and that of simple vector bundles is not. In \S\ref{trans-vb2}, we find more specific consequences of the existence criteria in rank $2$, which are sufficient to prove Theorem \ref{thmA}.
We also give the argument generalizing the maximality type results Theorem \ref{1535-2} and \ref{thmA} to an arbitrary base field in \S \ref{sub:generalizing}.
In \S\ref{eg-vb3}, we construct the promised example in rank $3$, therefore proving Theorem \ref{thmB}.
\subsection*{Notation}
Throughout the paper, unless stated otherwise, we work over an algebraically closed field $k$. By a reductive group we always mean a connected reductive group. Let $C$ be a smooth projective connected curve and let $G$ be a reductive group over $k$. Denote by $\mathscr{B}un_G$ the moduli stack of principal $G$-bundles over $C$ and $\mathscr{B}un_n^d$ the moduli stack of vector bundles of rank $n$ and degree $d$ over $C$.
\section{Schematic moduli spaces}\label{g-bundles}
In this section we prove the following result valid in arbitrary characteristic. Recall that in characteristic $0$, the notion of adequate moduli spaces (in the sense of \cite[Definition 5.1.1]{MR3272912}) agrees with the one of good moduli spaces.
\begin{thm}\label{tba}
Let $C$ be a smooth projective connected curve over an algebraically closed field $k$ and let $G$ be a reductive group over $k$. Then the open substack $\mathscr{B}un_G^{ss} \subseteq \mathscr{B}un_G$ of semistable principal $G$-bundles is the unique maximal open substack that admits a schematic adequate moduli space.
\end{thm}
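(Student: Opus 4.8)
The plan is to prove Theorem \ref{tba} by combining two ingredients: first, that the semistable locus $\mathscr{B}un_G^{ss}$ itself admits a schematic adequate moduli space, and second, a maximality statement showing that any open substack admitting a schematic adequate moduli space must be contained in $\mathscr{B}un_G^{ss}$. The first ingredient is classical GIT (semistable bundles have a quasi-projective, hence schematic, coarse/good moduli space), so the substance of the argument lies entirely in the maximality direction. As the introduction indicates, the intended strategy is to reduce to the case where $G$ is almost simple and simply connected via the structure theory of reductive groups from \cite[\S 5]{MR2628848}, and in that case to exploit the fact that $\mathscr{B}un_G$ carries essentially a single line bundle (\cite[Theorem 17]{MR1961134}) whose associated numerical/stability condition recovers classical semistability (\cite[Corollary 1.16]{MR3758902}).

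\medskip

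The key tool for the maximality direction is Alper's result \cite[Theorem 11.14]{MR3237451} characterizing, for a smooth stack, those open substacks admitting quasi-projective (adequate) moduli spaces in terms of line bundles: an open substack $\mathscr{U} \subseteq \mathscr{B}un_G$ admitting a schematic adequate moduli space should force $\mathscr{U}$ to be contained in the semistable locus defined by the relevant line bundle $\mathscr{L}$. First I would set up the stability notion $\mathscr{B}un_G^{\mathscr{L}\text{-}ss}$ attached to a line bundle $\mathscr{L}$ over the stack (as a characteristic-free generalization of \cite[Definition 11.1]{MR3237451}), and show that the existence of a schematic adequate moduli space over an open substack $\mathscr{U}$ implies the existence of such an $\mathscr{L}$ that is relatively ample/semiample in the appropriate sense and whose non-vanishing locus (the $\mathscr{L}$-semistable points) contains $\mathscr{U}$. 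Concretely, schematicity lets one produce enough sections: a scheme is (locally) quasi-affine or projective over its affine pieces, and pulling back functions/line bundles along $\mathscr{U} \to X$ (the moduli space) yields invariant sections separating the points we need, forcing every object of $\mathscr{U}$ to be $\mathscr{L}$-semistable.

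\medskip

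The reduction step then proceeds as in \cite[\S 5]{MR2628848}: using an isogeny or the exact sequence relating $G$ to its derived group, simply connected cover, and central torus, one expresses $\mathscr{B}un_G$-stability in terms of the almost-simple simply-connected pieces, on each of which \cite[Theorem 17]{MR1961134} guarantees $\pic(\mathscr{B}un_{G'}) \cong \mathbb{Z}$ (up to the contribution of the component group / central torus, which is handled separately). On that Picard group the ample generator is identified, via \cite[Corollary 1.16]{MR3758902}, with the line bundle whose semistable locus is exactly the classically semistable locus. Hence the $\mathscr{L}$-semistable locus obtained from the maximality argument coincides with $\mathscr{B}un_G^{ss}$, giving $\mathscr{U} \subseteq \mathscr{B}un_G^{ss}$, and combined with the GIT input this yields both maximality and uniqueness.

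\medskip

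I expect the main obstacle to be the characteristic-free implementation. In characteristic $0$, Alper's line-bundle criterion and the good/adequate moduli space dictionary are directly available, but to cover arbitrary characteristic I must work with adequate moduli spaces and re-prove the relevant part of \cite[Theorem 11.14]{MR3237451} in that setting, which is where the generalized $\mathscr{L}$-stability notion is essential and where the extraction of enough invariant sections from schematicity (rather than from reductivity-of-stabilizers arguments that are cleaner in characteristic $0$) becomes delicate. A secondary difficulty is controlling the central torus and the component group of $G$ in the reduction, since these contribute extra line bundles (numerical/degree conditions) to $\pic(\mathscr{B}un_G)$ that must be shown not to enlarge the semistable locus beyond the classical one; handling these pieces carefully, rather than the almost-simple simply-connected core where \cite[Theorem 17]{MR1961134} applies cleanly, is the technical heart of the general statement.
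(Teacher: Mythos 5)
Your plan follows the paper's proof of Theorem \ref{tba} step for step: the adequate-moduli analogue of \cite[Theorem 11.14 (ii)]{MR3237451} produces a line bundle $\mathcal{L}$ with $\mathscr{U}$ contained in its semistable locus, the structure theory of \cite[\S 5]{MR2628848} reduces (via a central isogeny whose induced map of stacks is a torsor under an adequately affine group stack) to the almost simple, simply connected case, and there $\mathrm{Pic}(\mathscr{B}un_G)\cong\mathbf{Z}$ together with \cite[Corollary 1.16]{MR3758902} identifies the semistable locus of the positive generator with $\mathscr{B}un_G^{ss}$. The two difficulties you single out, the characteristic-free version of Alper's theorem and the handling of the central torus, are indeed where the paper spends effort.

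There is, however, one step your plan passes over silently, and it is not cosmetic: after writing $\mathcal{L}\cong\mathcal{L}_{\det}^{\otimes e}$ you conclude directly that the $\mathcal{L}$-semistable locus ``coincides with $\mathscr{B}un_G^{ss}$,'' but this only makes sense once the cases $e\le 0$ are excluded, and nothing in Alper's criterion forces $e>0$. One must show that for $e\le 0$ the adequately semistable locus is empty, which is how the paper derives $e>0$ from $\mathscr{U}\neq\emptyset$. For $e=0$ this uses that $\Gamma(\mathscr{B}un_G,\mathcal{O}_{\mathscr{B}un_G})=k$ while $\mathscr{B}un_G$ is not quasi-compact. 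For $e<0$ the paper first establishes the inclusion $(\mathscr{X})^{a\text{-}ss}_{\mathcal{L}}\subseteq(\mathscr{X})^{\Theta\text{-}ss}_{\mathcal{L}}$ (Lemma \ref{1408-1}) --- a comparison your proposal never invokes, although it is also what lets you import \cite[Corollary 1.16]{MR3758902}, which is a statement about $\Theta$-stability rather than about nonvanishing loci of sections --- and then exhibits, for every $G$-bundle, a reduction to the Borel with all positive-root degrees negative (Behrend, \cite[Proposition 3]{MR1362973}) whose associated filtration has strictly positive weight on $\mathcal{L}_{\det}^{-1}$. Without this destabilization argument the identification of the $\mathcal{L}$-semistable locus, and hence the maximality statement, does not follow from what you have written.
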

As mentioned before, the starting point is Alper's description for open substacks of a smooth stack that admit quasi-projective good moduli spaces in terms of line bundles in characteristic $0$. A version of this also holds in positive characteristic if one replaces ``good'' by ``adequate''. To include this statement, we start by reviewing Alper's argument in this setting.

\subsection{Adequate stability}
Let $\mathscr{X}$ be an algebraic stack over a field $k$ and $\mathcal{L}$ be a line bundle over $\mathscr{X}$.
\begin{defn}[(Analogue of \cite{MR3237451}, Definition 11.1 (b))]\label{definition-adequately-semistable}
A geometric point $x: \mathrm{Spec}(\mathbf{K}) \to \mathscr{X}$ is \emph{adequately semistable with respect to} $\mathcal{L}$ if there exists a section $s \in \Gamma(\mathscr{X},\mathcal{L}^{\otimes m})$ for some integer $m>0$ such that $s(x) \neq 0$ and $\mathscr{X}_s:=\{y \in \mathscr{X}: s(y) \neq 0\} \to \mathrm{Spec}(k)$ is adequately affine (in the sense of \cite[Definition 4.1.1]{MR3272912}). Denote by $(\mathscr{X})_{\mathcal{L}}^{a\text{-}ss} \subseteq \mathscr{X}$ the open substack of adequately semistable points with respect to $\mathcal{L}$.
\end{defn}
Analogous to Alper's result, we have a description for open substacks of a smooth stack that admit quasi-projective adequate moduli spaces.
\begin{thm}[(Analogue of \cite{MR3237451}, Theorem 11.14 (ii))]
\label{theorem-adequately-semistable-locally-noetherian}
Let $\mathscr{X}$ be a smooth algebraic stack with affine diagonal over a field $k$. If $\mathscr{U} \subseteq \mathscr{X}$ is an open substack that admits a quasi-projective adequate moduli space, then there exists a line bundle $\mathcal{L}$ over $\mathscr{X}$ such that
$\mathscr{U} \subseteq (\mathscr{X})_{\mathcal{L}}^{a\text{-}ss}$.
\end{thm}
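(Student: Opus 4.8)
The plan is to transport an ample line bundle from the moduli space up to $\mathscr{X}$ and then verify Definition~\ref{definition-adequately-semistable} point by point on $\mathscr{U}$. Write $\phi\colon \mathscr{U}\to Y$ for the quasi-projective adequate moduli space, fix an ample line bundle $A$ on $Y$, and set $N:=\phi^{*}A$. Since $\mathscr{X}$ is smooth it is regular, so $\mathrm{Pic}(\mathscr{X})=\mathrm{Cl}(\mathscr{X})$ and every Weil divisor on the open substack $\mathscr{U}$ extends to $\mathscr{X}$ by taking closures; hence the restriction $\mathrm{Pic}(\mathscr{X})\to\mathrm{Pic}(\mathscr{U})$ is surjective and $N$ extends to a line bundle $\mathcal{L}_{0}$ on $\mathscr{X}$. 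This is the only place where smoothness enters, and it is exactly what lets us speak of global sections on $\mathscr{X}$ that restrict to the ones we control on $\mathscr{U}$.

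Next I would produce, for each geometric point $x$ of $\mathscr{U}$, a suitable section. Because $\phi$ is an adequate moduli space we have $\phi_{*}\mathcal{O}_{\mathscr{U}}=\mathcal{O}_{Y}$, so the projection formula gives $\phi_{*}(N^{\otimes m})\cong A^{\otimes m}$ and in particular $\Gamma(\mathscr{U},N^{\otimes m})=\Gamma(Y,A^{\otimes m})$ for every $m>0$; thus every section of $N^{\otimes m}$ on $\mathscr{U}$ is the pullback $\phi^{*}t$ of a unique $t\in\Gamma(Y,A^{\otimes m})$, and $\mathscr{U}_{\phi^{*}t}=\phi^{-1}(Y_{t})$. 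Ampleness of $A$ provides, for $y=\phi(x)$, some $m$ and some $t$ with $t(y)\neq 0$ and $Y_{t}$ affine; then $\phi^{-1}(Y_{t})\to Y_{t}$ is an adequate moduli space onto an affine scheme, so $\mathscr{U}_{\phi^{*}t}$ is adequately affine. It therefore suffices to extend $\phi^{*}t$ to a global section $s\in\Gamma(\mathscr{X},\mathcal{L}^{\otimes m})$ of a suitable extension $\mathcal{L}$ of $N$ so that $s(x)\neq 0$ and $\mathscr{X}_{s}\subseteq\mathscr{U}$: once $\mathscr{X}_{s}\subseteq\mathscr{U}$ we have $\mathscr{X}_{s}=\mathscr{U}_{\phi^{*}t}$, which the previous sentence identifies as adequately affine, and this places $x$ in $(\mathscr{X})^{a\text{-}ss}_{\mathcal{L}}$.

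Carrying out this extension is where the real work lies, and I expect it to be the main obstacle. Viewing $\phi^{*}t$ as a rational section of $\mathcal{L}_{0}^{\otimes m}$ on $\mathscr{X}$, its only obstruction to regularity is poles along the codimension-one components of $Z:=\mathscr{X}\setminus\mathscr{U}$; twisting $\mathcal{L}_{0}$ by a large effective divisor $D$ supported on $Z$ (which changes nothing on $\mathscr{U}$, so the twist $\mathcal{L}:=\mathcal{L}_{0}(D)$ still restricts to $N$ there) simultaneously clears these poles and forces $s$ to vanish along every codimension-one component of $Z$. The delicate point is the part of $Z$ of codimension $\ge 2$: the zero locus of $s$ is a Cartier divisor and need not contain these higher strata, so a priori $\mathscr{X}_{s}$ can still meet $Z$, and there it is no longer governed by $\phi$ and may fail to be adequately affine. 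One natural way to control this is to exploit the freedom in the choice of $t$: the unique regular extension $s$ depends linearly on $t$, so vanishing of $s$ along each of the finitely many higher-codimension strata of $Z$ is a linear condition, and I would try to meet all of them together with $t(y)\neq 0$ by passing to a sufficiently high power $A^{\otimes m}$, where ampleness of $A$ supplies enough sections. Arranging this choice uniformly over all of $\mathscr{U}$, rather than simply absorbing the complement into the twist as in the codimension-one case, is the step I expect to demand the most care.
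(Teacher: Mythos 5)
Your setup follows the same route as the paper (which defers to Alper's proof of \cite[Theorem 11.14 (ii)]{MR3237451}): pull back an ample bundle and its sections from the moduli space, extend the line bundle across $\mathscr{X}$ using smoothness, and extend the sections after twisting by an effective divisor supported on $Z=\mathscr{X}\setminus\mathscr{U}$. Up to and including the codimension-one twist your argument is fine. The gap is exactly where you place it, but your proposed repair does not work. You want to choose $t\in\Gamma(Y,A^{\otimes m})$, for $m\gg 0$, so that the regular extension $s$ of $\phi^{*}t$ also vanishes along the codimension $\geq 2$ components of $Z$ while $t(y)\neq 0$ and $Y_t$ is affine. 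But the only sections at your disposal are images of the linear extension map $\Gamma(Y,A^{\otimes m})\to\Gamma(\mathscr{X},\mathcal{L}^{\otimes m})$, and ampleness of $A$ controls sections on $Y$ only: $Y$ does not see $Z$ at all, there is no positivity hypothesis on $\mathscr{X}$ along $Z$, and nothing guarantees that the subspace of $t$'s whose extensions vanish along a given codimension-two component of $Z$ is anything other than $0$. Increasing $m$ enlarges $\Gamma(Y,A^{\otimes m})$, but every new extension is governed by the same local geometry of $\mathscr{U}\subseteq\mathscr{X}$ near $Z$, so the finitely many linear conditions need not become simultaneously satisfiable with $t(y)\neq 0$.

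The correct resolution, and the content of Alper's argument that the paper imports, is that no vanishing along the codimension $\geq 2$ locus is required. Once $s$ vanishes on every codimension-one component of $Z$, the open substack $\mathscr{X}_{s}$ contains $\mathscr{U}_{\phi^{*}t}$ with complement of codimension $\geq 2$, and one shows that an adequately affine open substack of a normal noetherian stack cannot have a nonempty complement of codimension $\geq 2$ inside a larger open substack; this is where the generalized Serre criterion \cite[Theorem 4.3.1]{MR3272912} (adequately affine together with quasi-compact and quasi-separated implies affine, for morphisms of algebraic spaces) enters --- precisely the extra input the paper highlights. Hence $\mathscr{X}_{s}=\mathscr{U}_{\phi^{*}t}$ is forced rather than arranged. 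Two further points you should address: extend only the finitely many sections $t_{1},\dots,t_{r}$ whose affine non-vanishing loci cover $Y$, rather than one section per point of $\mathscr{U}$, which removes your uniformity worry about the twist $D$ depending on $t$ and $m$; and since $\mathscr{X}$ is only locally noetherian (e.g.\ $\mathscr{B}un_{G}$ is not quasi-compact), the closure-of-divisors and twisting arguments need the reduction to the noetherian case that the paper carries out via Zorn's lemma.
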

\begin{proof}
Let $\pi:\mathscr{U}\to U$ denote the adequate moduli space. As $U$ is quasi-projective, there exists an ample line bundle
$M$ on $U$ together with global sections $m_1,\dots,m_r$ such that $U$ is covered by the non-vanishing loci $U_{m_i}$.
Let $\mathcal{M}=\pi^{\ast}M$ and $t_i=\pi^{\ast}m_i$. Then we obtain $\mathscr{U}=\bigcup_{i=1}^r\mathscr{U}_{t_i}$.
To conclude, we need to extend $\mathcal{M}$ together with the global sections $t_i$ to $\mathscr{X}$
such that the non-vanishing loci remain unchanged.

We use Zorn's lemma to reduce to the case where $\mathscr{X}$ is noetherian as follows.
Consider the set $\Sigma$ consisting of triples 
\[
(\mathscr{V},\mathcal{L},(s_1,\dots,s_r)),
\]
where $\mathscr{V} \subseteq \mathscr{X}$ is an open substack containing $\mathscr{U}$, 
$\mathcal{L}$ is a line bundle over $\mathscr{V}$ extending $\mathcal{M}$,
and $s_i$ is a global section of $\mathcal{L}$ 
extending $t_i$ such that $\mathscr{V}_{s_i}=\mathscr{U}_{t_i}$ for $i=1,\dots,r$. There is a natural ordering $\leq$ on $\Sigma$ and 
it is easy to see that Zorn's lemma applies to $\Sigma$. Furthermore, 
if we can show that the above data extends in the noetherian case, then a maximal element of $\Sigma$
is the desired extension to $\mathscr{X}$, as $\mathscr{X}$ is locally noetherian.

In this case the proof of \cite[Theorem 11.14 (ii)]{MR3237451} carries over
if one adds the following input: 
adequate affineness is equivalent to affineness for any quasi-separated, 
quasi-compact morphism of algebraic spaces (see \cite[Theorem 4.3.1]{MR3272912}). 
The idea is that - using smoothness - we can always extend a line bundle together with sections from an open substack to the entire stack. Adequate affineness shows that the non-vanishing locus of the extended sections stay the same.
\end{proof}
To get a better understanding of adequate stability, we compare it with some other popular stability notions which are also defined by line bundles.
\begin{lem}\label{1408-1}
For a geometric point $x \in \mathscr{X}(\mathbf{K})$, consider the following statements:
\begin{enumerate}
\item
$x$ is cohomologically semistable with respect to $\mathcal{L}$ in the sense of \cite[Definition 11.1]{MR3237451}, i.e., there exists an integer $m>0$ 
and a section $s \in \Gamma(\mathscr{X},\mathcal{L}^{\otimes m})$ such that $s(x) \neq 0$ and $\mathscr{X}_s \to \mathrm{Spec}(k)$ is cohomologically affine.
\item 
$x$ is adequately semistable with respect to $\mathcal{L}$ in the sense of Definition \ref{definition-adequately-semistable}, i.e., 
there exists an integer $m>0$ 
and a section $s \in \Gamma(\mathscr{X},\mathcal{L}^{\otimes m})$ such that $s(x) \neq 0$ and $\mathscr{X}_s \to \mathrm{Spec}(k)$ is adequately affine.
\item
$x$ is $\Theta$-semistable with respect to $\mathcal{L}$ in the sense of \cite[Definition 1.2]{MR3758902} or \cite[Definition 0.0.4]{DHL2014}, i.e., for all morphisms $f: [\mathbf{A}^1_{\mathbf{K}}/\mathbf{G}_{m,\mathbf{K}}] \to \mathscr{X}$ with $f(1) \cong x$ and $f(0) \ncong x$, we have $\mathrm{wt}_{\mathbf{G}_{m,\mathbf{K}}}(f^*\mathcal{L}) \leq 0$.
\end{enumerate}
Then (i) $\Rightarrow$ (ii) $\Rightarrow$ (iii), i.e., $(\mathscr{X})^{c\text{-}ss}_{\mathcal{L}} \subseteq (\mathscr{X})^{a\text{-}ss}_{\mathcal{L}} \subseteq (\mathscr{X})^{\Theta\text{-}ss}_{\mathcal{L}}$.
\end{lem}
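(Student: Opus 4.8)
The plan is to split the chain into its two implications, noting that \emph{(i)} $\Rightarrow$ \emph{(ii)} is formal and that all the content sits in \emph{(ii)} $\Rightarrow$ \emph{(iii)}. For \emph{(i)} $\Rightarrow$ \emph{(ii)} I would simply invoke the comparison between the two affineness notions built into the adequate formalism: a cohomologically affine morphism is adequately affine (one of the basic features developed in \cite{MR3272912}). Thus if a section $s\in\Gamma(\mathscr{X},\mathcal{L}^{\otimes m})$ witnesses cohomological semistability of $x$, so that $s(x)\neq 0$ and $\mathscr{X}_s\to\mathrm{Spec}(k)$ is cohomologically affine, then the very same $s$ witnesses adequate semistability, since $\mathscr{X}_s\to\mathrm{Spec}(k)$ is a fortiori adequately affine. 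No further work is needed here.

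For \emph{(ii)} $\Rightarrow$ \emph{(iii)}, the key observation is that a single section nonvanishing at $x$ already forces the weight inequality along every $\Theta$-test map, so that adequate affineness is in fact not used for this implication. Fix $s\in\Gamma(\mathscr{X},\mathcal{L}^{\otimes m})$ with $m>0$ and $s(x)\neq 0$, and let $f\colon\Theta_{\mathbf{K}}\to\mathscr{X}$ be any test map, where $\Theta_{\mathbf{K}}:=[\mathbf{A}^1_{\mathbf{K}}/\mathbf{G}_{m,\mathbf{K}}]$, with $f(1)\cong x$ and $f(0)\ncong x$. Pulling back, $f^*s$ is a global section of $(f^*\mathcal{L})^{\otimes m}$ over $\Theta_{\mathbf{K}}$, and it is nonzero: evaluating at the point $1\colon\mathrm{Spec}(\mathbf{K})\to\Theta_{\mathbf{K}}$ gives $(f^*s)(1)=s(f(1))=s(x)\neq 0$ under the isomorphism $f(1)\cong x$. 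Note that only $f(1)\cong x$ is used, so the argument applies uniformly to all relevant $f$.

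The heart of the matter is then an elementary computation on $\Theta_{\mathbf{K}}$. One has $\mathrm{Pic}(\Theta_{\mathbf{K}})\cong\mathbb{Z}$, classified by the $\mathbf{G}_{m,\mathbf{K}}$-weight of the fibre of a line bundle at the origin $0$; viewing global sections as $\mathbf{G}_{m,\mathbf{K}}$-equivariant polynomials on $\mathbf{A}^1_{\mathbf{K}}$, the space of global sections of the weight-$w$ bundle is one-dimensional for one sign of $w$ and zero for the other. Applying this to $(f^*\mathcal{L})^{\otimes m}$, whose weight equals $m\cdot\mathrm{wt}_{\mathbf{G}_{m,\mathbf{K}}}(f^*\mathcal{L})$, the existence of the nonzero section $f^*s$ forces $m\cdot\mathrm{wt}_{\mathbf{G}_{m,\mathbf{K}}}(f^*\mathcal{L})\leq 0$, and dividing by $m>0$ gives $\mathrm{wt}_{\mathbf{G}_{m,\mathbf{K}}}(f^*\mathcal{L})\leq 0$. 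As $f$ was arbitrary, $x$ is $\Theta$-semistable with respect to $\mathcal{L}$.

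The main obstacle I anticipate is bookkeeping rather than mathematics: pinning down the sign convention so that the section computation yields $\leq 0$ rather than $\geq 0$, and stating the identification $\mathrm{Pic}(\Theta_{\mathbf{K}})\cong\mathbb{Z}$ together with its section computation compatibly with the definition of $\mathrm{wt}_{\mathbf{G}_{m,\mathbf{K}}}$ used in \cite[Definition 1.2]{MR3758902} and \cite[Definition 0.0.4]{DHL2014}. A secondary point, easily checked, is insensitivity to the extension $\mathbf{K}/k$, since pullback of sections and the Picard/section computation on $\Theta_{\mathbf{K}}$ are valid over an arbitrary field. It is worth emphasizing that adequate affineness plays no role in \emph{(ii)} $\Rightarrow$ \emph{(iii)}; it is the ingredient that would distinguish these notions in the converse directions, which the lemma does not assert.
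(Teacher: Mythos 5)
Your proposal is correct and matches the paper's argument essentially verbatim: (i) $\Rightarrow$ (ii) via the fact that cohomologically affine implies adequately affine (\cite[Proposition 4.2.1 (2)]{MR3272912}), and (ii) $\Rightarrow$ (iii) by pulling back the nonvanishing section along a $\Theta$-test map and using the computation of global sections of line bundles on $[\mathbf{A}^1_{\mathbf{K}}/\mathbf{G}_{m,\mathbf{K}}]$ to force the weight to be nonpositive. Your observation that adequate affineness is not actually used in the second implication is also implicit in the paper's proof, which likewise only exploits $s(x)\neq 0$.
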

These notions remain unchanged if the line bundle $\mathcal{L}$ is replaced by a positive multiple.
\begin{proof}
A cohomologically affine morphism is adequately affine by \cite[Proposition 4.2.1 (2)]{MR3272912}, so (i) $\Rightarrow$ (ii). To see (ii) $\Rightarrow$ (iii), let $x \in \mathscr{X}(\mathbf{K})$ be a geometric point such that there exists a section $s \in \Gamma(\mathscr{X},\mathcal{L}^{\otimes m})$ for some integer $m>0$ satisfying $s(x) \neq 0$. For any morphism $f: [\mathbf{A}^1_{\mathbf{K}}/\mathbf{G}_{m,\mathbf{K}}] \to \mathscr{X}$ with $f(1) \cong x$ the pulled back section $f^*s \in \Gamma([\mathbf{A}^1_{\mathbf{K}}/\mathbf{G}_{m,\mathbf{K}}],f^*\mathcal{L}^{\otimes m})$ does not vanish at $1$. In particular $\Gamma([\mathbf{A}^1_{\mathbf{K}}/\mathbf{G}_{m,\mathbf{K}}],f^*\mathcal{L}^{\otimes m}) \neq 0$. Since for any line bundle $\mathcal{N}$ over $[\mathbf{A}^1_{\mathbf{K}}/\mathbf{G}_{m,\mathbf{K}}]$ we have (see \cite[1.A.a]{MR3758902})
\[
\Gamma([\mathbf{A}^1_{\mathbf{K}}/\mathbf{G}_{m,\mathbf{K}}],\mathcal{N})=\Gamma(\mathbf{A}^1_{\mathbf{K}},\mathcal{N})^{\mathbf{G}_{m,\mathbf{K}}}=\begin{cases}
\mathbf{K} \cdot a^{-d}e & \text{if } \mathrm{wt}_{\mathbf{G}_{m,\mathbf{K}}}(\mathcal{N})=d \leq 0, \\
0 & \text{if } \mathrm{wt}_{\mathbf{G}_{m,\mathbf{K}}}(\mathcal{N})=d>0,
\end{cases}
\]
where $\Gamma(\mathbf{A}^1_{\mathbf{K}},\mathcal{N})=\mathbf{K}[a] \cdot e$, i.e., $e$ is the unique non-vanishing section (up to a scalar) of the line bundle $\mathcal{N}$ over $\mathbf{A}^1_{\mathbf{K}}$. This implies that $m \cdot \mathrm{wt}_{\mathbf{G}_{m,\mathbf{K}}}(f^*\mathcal{L})=\mathrm{wt}_{\mathbf{G}_{m,\mathbf{K}}}(f^*\mathcal{L}^{\otimes m}) \leq 0$.
\end{proof}
To close this subsection, we note that $\Theta$-stability is not particularly interesting for algebraic spaces as there is no testing morphism at each point, so all points are $\Theta$-semistable with respect to any line bundle. 
Here we can prove a slightly more general result which we need later: line bundles from algebraic spaces do not affect the $\Theta$-semistable locus.
\begin{lem}\label{lb-from}
Let $g: \mathscr{X} \to X$ be a morphism from an algebraic stack to an algebraic space. Given two line bundles $\mathcal{L},\mathcal{L}'$ over $\mathscr{X}$ such that $\mathcal{L}$ descends to $X$, we have 
\[
(\mathscr{X})_{\mathcal{L} \otimes \mathcal{L}'}^{\Theta\text{-ss}}=(\mathscr{X})_{\mathcal{L}'}^{\Theta\text{-ss}}.
\]
In particular, we have $(\mathscr{X})_{\mathcal{L}}^{\Theta\text{-ss}}=\mathscr{X}$.
\end{lem}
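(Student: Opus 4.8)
The plan is to reduce everything to the additivity of the $\mathbf{G}_m$-weight together with a single vanishing statement. Recall from Lemma \ref{1408-1}(iii) that a geometric point $x \in \mathscr{X}(\mathbf{K})$ is $\Theta$-semistable with respect to a line bundle exactly when $\mathrm{wt}_{\mathbf{G}_{m,\mathbf{K}}}$ of its pullback is $\leq 0$ along every test morphism $f: [\mathbf{A}^1_{\mathbf{K}}/\mathbf{G}_{m,\mathbf{K}}] \to \mathscr{X}$ with $f(1) \cong x$ and $f(0) \ncong x$. Since $f^*$ is monoidal and the $\mathbf{G}_{m,\mathbf{K}}$-weight of a line bundle on $[\mathbf{A}^1_{\mathbf{K}}/\mathbf{G}_{m,\mathbf{K}}]$ is the character of a one-dimensional representation, it is additive in tensor products, so $\mathrm{wt}_{\mathbf{G}_{m,\mathbf{K}}}(f^*(\mathcal{L} \otimes \mathcal{L}')) = \mathrm{wt}_{\mathbf{G}_{m,\mathbf{K}}}(f^*\mathcal{L}) + \mathrm{wt}_{\mathbf{G}_{m,\mathbf{K}}}(f^*\mathcal{L}')$ for every such $f$. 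Hence it suffices to show that $\mathrm{wt}_{\mathbf{G}_{m,\mathbf{K}}}(f^*\mathcal{L}) = 0$ whenever $\mathcal{L}$ descends to $X$: the inequalities tested by $\mathcal{L} \otimes \mathcal{L}'$ and by $\mathcal{L}'$ then coincide for every test map, so the two semistable loci agree.

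The heart of the argument is this vanishing. Write $\mathcal{L} \cong g^*\mathcal{M}$ for a line bundle $\mathcal{M}$ on $X$, so that $f^*\mathcal{L} \cong (g \circ f)^*\mathcal{M}$. Following the description recalled in the proof of Lemma \ref{1408-1}, the weight $\mathrm{wt}_{\mathbf{G}_{m,\mathbf{K}}}$ of a line bundle on $[\mathbf{A}^1_{\mathbf{K}}/\mathbf{G}_{m,\mathbf{K}}]$ is read off from its fibre at the closed point, i.e., from the restriction along $0: B\mathbf{G}_{m,\mathbf{K}} \to [\mathbf{A}^1_{\mathbf{K}}/\mathbf{G}_{m,\mathbf{K}}]$, which is a one-dimensional $\mathbf{G}_{m,\mathbf{K}}$-representation. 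I would therefore analyse the composite $g \circ f \circ 0: B\mathbf{G}_{m,\mathbf{K}} \to X$ and the pullback of $\mathcal{M}$ along it.

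The key observation is that $g \circ f \circ 0$ factors through $\mathrm{Spec}(\mathbf{K})$. Indeed $X$ is an algebraic space, hence a sheaf with trivial automorphism groups, so any morphism out of the gerbe $B\mathbf{G}_{m,\mathbf{K}}$ is insensitive to the $\mathbf{G}_{m,\mathbf{K}}$-automorphisms: precomposing with the atlas $\mathrm{Spec}(\mathbf{K}) \to B\mathbf{G}_{m,\mathbf{K}}$ picks out a single point of $X(\mathbf{K})$ through which the whole map factors (equivalently, $\mathrm{Spec}(\mathbf{K})$ is the coarse space of $B\mathbf{G}_{m,\mathbf{K}}$ and maps to algebraic spaces factor through it). Consequently $(g \circ f \circ 0)^*\mathcal{M}$ is pulled back from $\mathrm{Spec}(\mathbf{K})$, so it carries the trivial $\mathbf{G}_{m,\mathbf{K}}$-action and has weight $0$. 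This yields $\mathrm{wt}_{\mathbf{G}_{m,\mathbf{K}}}(f^*\mathcal{L}) = 0$ and proves the displayed equality. For the final assertion I would take $\mathcal{L}' = \mathcal{O}_{\mathscr{X}}$: the weight of $f^*\mathcal{O}_{\mathscr{X}} = \mathcal{O}_{[\mathbf{A}^1_{\mathbf{K}}/\mathbf{G}_{m,\mathbf{K}}]}$ is $0 \leq 0$ at every point, so $(\mathscr{X})_{\mathcal{O}_{\mathscr{X}}}^{\Theta\text{-ss}} = \mathscr{X}$, and the equality just established identifies this with $(\mathscr{X})_{\mathcal{L}}^{\Theta\text{-ss}}$.

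The one step that deserves care — and which I expect to be the only genuine obstacle — is the factorization of $B\mathbf{G}_{m,\mathbf{K}} \to X$ through its coarse point. It is intuitively clear, since an algebraic space cannot detect stabilizers, but writing it cleanly requires either invoking the sheaf property of $X$ together with $\mathbf{G}_{m,\mathbf{K}}$-descent along the atlas, or the universal property of the coarse moduli space of $B\mathbf{G}_{m,\mathbf{K}}$. Everything else — the additivity of the weight and the trivial-bundle base case — is routine.
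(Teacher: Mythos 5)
Your proof is correct and follows essentially the same route as the paper: additivity of the $\mathbf{G}_m$-weight plus the observation that the pullback of $\mathcal{L}$ along a test map has weight zero because the relevant morphism to the algebraic space $X$ factors through $\mathrm{Spec}(\mathbf{K})$. The only cosmetic difference is that you factor the restriction to the closed point $\mathrm{B}\mathbf{G}_{m,\mathbf{K}}$ through its coarse point, while the paper factors the whole morphism $[\mathbf{A}^1_{\mathbf{K}}/\mathbf{G}_{m,\mathbf{K}}] \to X$ through the good moduli space $\mathrm{Spec}(\mathbf{K})$; both instantiate the same principle.
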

\begin{proof}
Let $\mathcal{L}=g^*\mathcal{L}_0$ for some line bundle $\mathcal{L}_0$ over $X$. For any field $\mathbf{K}/k$ and any morphism $f: [\mathbf{A}^1_\mathbf{K}/\mathbf{G}_{m,\mathbf{K}}] \to \mathscr{X}$, we compute that
\begin{align*}
\mathrm{wt}_{\mathbf{G}_{m,\mathbf{K}}}(f^*(\mathcal{L} \otimes \mathcal{L}'))&=
\mathrm{wt}_{\mathbf{G}_{m,\mathbf{K}}}((g \circ f)^*\mathcal{L}_0 \otimes f^*\mathcal{L}')=\mathrm{wt}_{\mathbf{G}_{m,\mathbf{K}}}((g \circ f)^*\mathcal{L}_0)+\mathrm{wt}_{\mathbf{G}_{m,\mathbf{K}}}(f^*\mathcal{L}')\\
&=\mathrm{wt}_{\mathbf{G}_{m,\mathbf{K}}}(f^*\mathcal{L}'),
\end{align*}
since the morphism $[\mathbf{A}^1_\mathbf{K}/\mathbf{G}_{m,\mathbf{K}}] \xrightarrow{f} \mathscr{X} \xrightarrow{g} X$ factors via the good moduli space morphism $[\mathbf{A}^1_\mathbf{K}/\mathbf{G}_{m,\mathbf{K}}] \to \mathrm{Spec}(\mathbf{K})$. 
In particular, we see a geometric point $x \in \mathscr{X}(\mathbf{K})$ is $\Theta$-semistable with respect to $\mathcal{L} \otimes \mathcal{L}'$ if and only if it is $\Theta$-semistable with respect to $\mathcal{L}'$.
\end{proof}
\subsection{Proof of Theorem $\ref{tba}$}
The connected components of $\mathscr{B}un_G$ are indexed by elements in $\pi_1(G)$ (see \cite[Theorem 5.8]{MR3013030}), so we may assume that $\mathscr{U} \subseteq \mathscr{B}un_G^d$ for some $d \in \pi_1(G)$. Note that $\mathscr{B}un_G^{d,ss}$ admits a schematic adequate moduli space (see, e.g. \cite[Theorem 3.2.3 (i)]{MR2657374} or \cite[Corollary 3.4.3]{MR2450609}).

Let $\mathscr{U} \subseteq \mathscr{B}un_G^d$ be an open substack that admits a schematic adequate moduli space $\mathscr{U} \to U$. To conclude we need to
show $\mathscr{U} \subseteq \mathscr{B}un_G^{d,ss}$. Choose an affine open cover $\{U_i\}_{i \in I}$ of $U$. Then $\{\mathscr{U}_i\}_{i\in I}:=\{\mathscr{U} \times_U U_i\}_{i \in I}$ is an open cover of $\mathscr{U}$ and each base change $\mathscr{U}_i \to U_i$ is again an adequate moduli space by \cite[Proposition 5.2.9 (1)]{MR3272912}. Note that $U_i$ is quasi-projective as $\mathscr{U}_i$ is of finite type, see \cite[Theorem 6.3.3]{MR3272912}. Since $\mathscr{B}un_G^d$ is smooth (see \cite[Proposition 4.1]{MR3013030}), 
by Theorem \ref{theorem-adequately-semistable-locally-noetherian} we have 
\[
\mathscr{U}_i \subseteq (\mathscr{B}un^d_G)^{a\text{-}ss}_{\mathcal{L}_i} \text{ for some line bundle } \mathcal{L}_i \text{ over } \mathscr{B}un^d_G.
\]
There is a natural line bundle $\mathcal{L}_{\det}$ over $\mathscr{B}un_G^d$, i.e., the determinant line bundle of cohomology (see, e.g. \cite[1.F.a]{MR3758902}), defined as follows. Via the adjoint representation $\mathrm{Ad}: G \to \mathrm{GL(Lie(G))}$, which defines for any $G$-bundle $\mathcal{P}$ over $C$ its adjoint bundle $\mathrm{Ad}_*\mathcal{P}:=\mathcal{P} \times^G \mathrm{Lie}(G) $, we set
\[
\mathcal{L}_{\det}|_{\mathcal{P}}:=\det H^*(C,\mathrm{Ad}_*\mathcal{P})^{-1}.
\]
In particular, if $G=\mathrm{GL}_n$, then for any vector bundle $\mathcal{E}$ over $C$ we have 
\[\mathcal{L}_{\det}|_{\mathcal{E}}:=\det H^*(C,\mathcal{E}nd(\mathcal{E}))^{-1}.
\]
Since the $\Theta$-semistable locus determined by $\mathcal{L}_{\det}$ is $\mathscr{B}un_G^{d,ss}$ (see \cite[1.E and 1.F]{MR3758902}), the idea to determine $(\mathscr{B}un^d_G)_{\mathcal{L}_i}^{a\text{-}ss}$ is to compare $\mathcal{L}_i$ with $\mathcal{L}_{\det}$.
\subsubsection{The case when $G$ is almost simple and simply connected}
In this case $\mathscr{B}un_G$ is connected and by \cite[Theorem 17]{MR1961134} we have
\[
\mathrm{Pic}(\mathscr{B}un_G) \cong \mathbf{Z}.
\]
To determine the adequately semistable locus $(\mathscr{B}un_G)_{\mathcal{L}}^{a\text{-}ss}$ for a line bundle $\mathcal{L}$ over $\mathscr{B}un_G$, we may assume that $\mathcal{L} \cong \mathcal{L}_{\det}^{\otimes e}$ for some integer $e$. Then we compute that
\begin{align*}
(\mathscr{B}un_G)_{\mathcal{L}}^{a\text{-}ss}&=\begin{cases}
(\mathscr{B}un_G)_{\mathcal{L}_{\det}}^{a\text{-}ss} & \text{if } e>0 \\ 
\emptyset & \text{if } e=0 \\
\left(\mathscr{B}un_G\right)_{\mathcal{L}_{\det}^{-1}}^{a\text{-}ss}  & \text{if } e<0
\end{cases} \\
&\subseteq
\begin{cases}
\left(\mathscr{B}un_G\right)_{\mathcal{L}_{\det}}^{\Theta\text{-}ss} & \text{if } e>0 \\
\emptyset & \text{if } e=0 \\
\left(\mathscr{B}un_G\right)_{\mathcal{L}_{\det}^{-1}}^{\Theta\text{-}ss} & \text{if } e<0
\end{cases} \quad \text{(by Lemma \ref{1408-1})} \\
&=
\begin{cases}
\mathscr{B}un_G^{ss} & \text{if } e>0 \text{ (by \cite[Corollary 1.16]{MR3758902})} \\
\emptyset & \text{if } e \leq 0.
\end{cases}
\end{align*}
Indeed, if $e=0$ we have $\Gamma(\mathscr{B}un_G,\mathcal{O}_{\mathscr{B}un_G})=k$ by \cite[Theorem 5.3.1 (i)]{MR2628848}. Thus, for any non-zero section $s \in \Gamma(\mathscr{B}un_G,\mathcal{O}_{\mathscr{B}un_G})$ 
the morphism $(\mathscr{B}un_G)_s=\mathscr{B}un_G \to \mathrm{Spec}(k)$ is not adequately affine since it is not quasi-compact. This shows that $(\mathscr{B}un_G)_{\mathcal{O}_{\mathscr{B}un_G}}^{a\text{-}ss}=\emptyset$. If $e<0$ we again claim that $(\mathscr{B}un_G)_{\mathcal{L}_{\det}^{-1}}^{\Theta\text{-}ss}=\emptyset$, i.e., for any point $\mathcal{P} \in \mathscr{B}un_G(\mathbf{K})$ for some field $\mathbf{K}/k$ there exists a morphism $f: [\mathbf{A}^1_{\mathbf{K}}/\mathbf{G}_{m,\mathbf{K}}] \to \mathscr{B}un_G$ such that $f(1) \cong \mathcal{P}$ and
\[ 
\mathrm{wt}_{\mathbf{G}_{m,\mathbf{K}}}(f^*\mathcal{L}_{\det}^{-1})=-\mathrm{wt}_{\mathbf{G}_{m,\mathbf{K}}}(f^*\mathcal{L}_{\det})>0.
\]
By \cite[1.F.b]{MR3758902}, cocharacters of $G$ and reductions of $\mathcal{P}$ to the corresponding parabolic subgroups define testing morphisms for $\mathcal{P}$. To construct the desired reduction we fix $T \subseteq B \subseteq G$ a maximal torus and a Borel subgroup. Let $\Phi$ be the set of roots of $G$ with respect to $T$. Let $\Phi^+ \subseteq \Phi$ be the positive system of roots determined by $B$ and $\Delta \subseteq \Phi^+$ be the set of simple roots relative to $\Phi^+$. For each $\alpha \in \Delta$ we have
\begin{itemize}
\item 
a rational cocharacter $\omega_\alpha \in X_*(T)_{\mathbf{Q}}$ defined by $\langle \omega_\alpha,\beta \rangle=\delta_{\alpha\beta}$ for any $\beta \in \Delta$.
\item 
a decomposition $\mathrm{Lie}(G)=\oplus_i \mathrm{Lie}(G)_{\alpha,i}$, where $\mathrm{Lie}(G)_{\alpha,i} \subseteq \mathrm{Lie}(G)$ is the subspace on which $\omega_\alpha$ acts with weight $i$. Note that all such weights are integers and each of these spaces is a representation of $T$. 
\end{itemize}
Choose $n>0$ such that $\lambda:=n\sum_{\alpha \in \Delta} \omega_\alpha \in X_*(T)$. Then the parabolic subgroup $P_\lambda \subseteq G$ defined by $\lambda$ is $B$. By \cite[Proposition 3]{MR1362973} there is a reduction $\mathcal{P}_B$ of $\mathcal{P}$ to a $B$-bundle with 
\begin{equation}\label{negative}
\deg(\mathcal{P}_B/R_u(B) \times^{T} \mathrm{Lie}(G)_{\alpha})<0 \text{ for each } \alpha \in \Phi^+,
\end{equation}
as every element in $\Phi^+$ is a non-negative linear combination of simple roots. 
By \cite[1.F.b]{MR3758902} the cocharacter $\lambda$ and the reduction $\mathcal{P}_B$ of $\mathcal{P}$ defines a morphism $f: [\mathbf{A}^1_{\mathbf{K}}/\mathbf{G}_{m,\mathbf{K}}] \to \mathscr{B}un_G$ with $f(1) \cong \mathcal{P}$ and the computation in \cite[1.F.c]{MR3758902} yields
\begin{align*}
\mathrm{wt}_{\mathbf{G}_{m,\mathbf{K}}}(f^*\mathcal{L}_{\det})&=2n\sum_{\alpha \in \Delta} \sum_{i>0} i \cdot \deg(\mathcal{P}_B/R_u(B) \times^{T} \mathrm{Lie}(G)_{\alpha,i}) \\
&=2n\sum_{\alpha \in \Delta} \sum_{i>0} \sum_{\langle \omega_\alpha,\beta \rangle=i} i \cdot \deg(\mathcal{P}_B/R_u(B) \times^{T} \mathrm{Lie}(G)_{\beta})<0.
\end{align*}
Indeed, any root $\beta \in \Phi$ with $\langle \omega_\alpha,\beta \rangle=i$ is a positive root since when expressing $\beta$ in terms of simple roots one of its coefficients (the coefficient of $\alpha$) is positive. 
We conclude by \eqref{negative}.

Since $\mathscr{U}_i$ is non-empty, it follows that $\mathcal{L}_i \cong \mathcal{L}_{\det}^{\otimes e_i}$ for some integer $e_i>0$ and thus $\mathscr{U}_i \subseteq \mathscr{B}un_G^{ss}$ for each $i$. Hence, $\mathscr{U} \subseteq \mathscr{B}un_G^{ss}$.
\subsubsection{The general case}
Up to central isogeny, a reductive group is a product of almost simple, simply connected groups and a torus. This turns out to be useful as we observe that admitting a (schematic) moduli space is well-behaved under the morphism induced by a central isogeny.
Indeed, for any central isogeny of reductive groups
\[
    1 \to \mu \to \tilde G\to G\to 1 \text{ mapping } e\in\pi_1(\tilde G) \text{ to } d\in\pi_1(G),
\]
the induced morphism $\Bun_{\tilde G}^e \to \Bun_G^d$ is a $\Bun_{\mu}$-torsor (see \cite[Example 5.1.4]{MR2628848}).
\begin{claim}
The structure morphism $\Bun_{\mu} \to \mathrm{Spec}(k)$ is adequately affine.  
\end{claim}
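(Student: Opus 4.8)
The plan is to prove the stronger statement that $\Bun_\mu \to \Spec(k)$ is \emph{cohomologically} affine, from which adequate affineness follows by \cite[Proposition 4.2.1 (2)]{MR3272912} (the same implication already used in the proof of Lemma \ref{1408-1}). The reason this is available is that, being the kernel of a central isogeny of reductive groups, $\mu$ is a finite group scheme of multiplicative type, hence diagonalizable: $\mu \cong D(A)$ for a finite abelian group $A$. Diagonalizable group schemes are \emph{linearly reductive in every characteristic}, since the category of their representations is the semisimple category of $A$-graded vector spaces; equivalently $\mathrm{B}\mu := [\Spec(k)/\mu] \to \Spec(k)$ is cohomologically affine. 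This is what lets us bypass the genuinely adequate (as opposed to cohomological) machinery here, even in positive characteristic.

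First I would realize $\Bun_\mu$ as a gerbe. Because $\mu$ is commutative, every $\mu$-torsor over the proper connected curve $C$ has automorphism group $H^0(C,\mu)=\mu(k)$, so the inertia stack of $\Bun_\mu$ is the constant group $\mu$. Rigidifying along this (necessarily central) copy of $\mu$ produces a morphism $\Bun_\mu \to X$ onto an algebraic space $X$, exhibiting $\Bun_\mu$ as a gerbe banded by $\mu$ over $X$; here $X$ is the fppf sheafification $H^1_{\mathrm{fppf}}(C,\mu)$. The key quantitative input is that $X$ is affine: writing $\mu \cong \prod_i \mu_{n_i}$ reduces the computation to $H^1_{\mathrm{fppf}}(C,\mu_n)=\pic(C)[n]$, the $n$-torsion of the Jacobian, which is finite. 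Hence $\Bun_\mu$ is quasi-compact and $X$ is a finite-type algebraic space over $k$ with finite underlying set, so it is a finite $k$-scheme and in particular affine (non-reducedness, which does occur for $\mu_p$ in characteristic $p$, is harmless).

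Next I would assemble cohomological affineness from the gerbe. A gerbe $\pi\colon \Bun_\mu \to X$ banded by the linearly reductive group $\mu$ is a good moduli space morphism: it is cohomologically affine, because fppf-locally on $X$ it is the base change $X'\times \mathrm{B}\mu \to X'$ of the cohomologically affine $\mathrm{B}\mu\to\Spec(k)$, and $\mathcal{O}_X \xrightarrow{\sim} \pi_*\mathcal{O}_{\Bun_\mu}$. Composing with the affine morphism $X\to\Spec(k)$ and using that cohomologically affine morphisms are stable under composition, we conclude that $\Bun_\mu \to \Spec(k)$ is cohomologically affine, and therefore adequately affine.

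The step I expect to be the main obstacle is the affineness of the rigidification $X$, i.e.\ controlling $\Bun_\mu$ well enough to know it is quasi-compact with finite set of points: one must establish finiteness of $H^1_{\mathrm{fppf}}(C,\mu)$ (clean for $\mu$ of multiplicative type, and this is precisely where the hypothesis enters) and that a quasi-compact, finite-type algebraic space with finitely many points is a finite, hence affine, scheme. By contrast, the descent of cohomological affineness along the gerbe is routine once $\mathrm{B}\mu\to\Spec(k)$ is known to be cohomologically affine, which is immediate from the linear reductivity of diagonalizable groups.
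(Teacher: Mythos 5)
Your argument is correct, and its engine is the same as the paper's: in both cases one proves the stronger statement that $\Bun_\mu \to \mathrm{Spec}(k)$ is cohomologically affine by factoring it as a cohomologically affine morphism onto a finite $k$-scheme followed by the (affine) structure morphism of that scheme, with the finiteness ultimately coming from the Kummer sequence and torsion in the Jacobian. The packaging differs: the paper first decomposes $\mu\cong\prod_i\mu_{n_i}$ and invokes the explicit isomorphism $\Bun_{\mu_{n_i}}\cong\Pic^0[n_i]$, whose good moduli space is the finite group scheme $\pic^0[n_i]$, so the cohomological affineness of the first factor is simply the definition of a good moduli space; you instead rigidify $\Bun_\mu$ along its inertia to exhibit it as a $\mu$-gerbe over $R^1f_*\mu$ and appeal to the general facts that diagonalizable groups are linearly reductive in every characteristic and that a gerbe banded by a linearly reductive group is cohomologically affine over its base. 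Your route avoids the product decomposition and is slightly more self-contained conceptually, at the cost of two extra verifications the paper gets for free from the identification with $\Pic^0[n_i]$: that cohomological affineness may be checked fppf-locally on the target (harmless here since the target is a finite scheme, hence has affine diagonal), and that the rigidification --- a priori only a quasi-separated algebraic space of finite type with finitely many points --- is in fact a finite scheme. Two small precisions: the automorphism group of a $\mu$-torsor on $C$ is the group \emph{scheme} $\mu$ (via $\mu(\Gamma(C_T,\mathcal{O}))=\mu(T)$), not merely the abstract group $\mu(k)$, which matters for $\mu_p$ in characteristic $p$ --- you implicitly use the correct statement when you band the gerbe by $\mu$; and the coarse space is $\pic_{C/k}[n]=J_C[n]$ rather than the literal cohomology set $H^1_{\mathrm{fppf}}(C,\mu)$, though these have the same $k$-points.
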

\begin{proof}
Since $\mu$ is of multiplicative type over an algebraically closed field we can write $\mu=\prod_i \mu_{n_i}$ for some tuple of positive integers $(n_i)$. 
Then we have $\Bun_\mu \cong \prod_i \Bun_{\mu_{n_i}}$. Applying \cite[Lemma 2.2.1]{MR2628848} to the short exact sequence $1 \to \mu_{n_i} \to \mathbf{G}_m \xrightarrow{(-)^{n_i}} \mathbf{G}_m \to 1$ yields an isomorphism 
\[
\Bun_{\mu_{n_i}} \cong \Pic^0[n_i],
\]
where $\Pic^0[n_i]$ denotes the $n_i$-torsion substack of the Picard stack $\Pic^0$. In particular, $\Bun_{\mu_{n_i}}$ admits a good moduli space $\pic^0[n_i]$, the $n_i$-torsion subgroup of the Picard variety $\pic^0$. Then the structure morphism $\Bun_{\mu} \to \mathrm{Spec}(k)$ decomposes as
\[
\Bun_{\mu} \cong \prod_i \Bun_{\mu_{n_i}} \xrightarrow{\text{gms}} \prod_i \pic^0[n_i] \to \mathrm{Spec}(k),
\]
where the first morphism is by definition cohomologically affine (in particular adequately affine) and the second morphism is finite (in particular affine) since each $\pic^0[n_i]$ is finite over $k$ (see \cite[\href{https://stacks.math.columbia.edu/tag/03RP}{Tag 03RP} (5)]{stacks-project}).
\end{proof}
Thus, we can show that if an open substack of $\Bun_G^d$ admits a (schematic) adequate moduli space, then so does its preimage in $\Bun_{\tilde G}^e$. To be precise:
\begin{lem}
\label{lemma-torsor-ams}
    Let $\mathscr{G}$ be a group stack over an algebraically closed field $k$.
    Let $\varphi:\mathscr{Y}\to\mathscr{X}$ be a $\mathscr{G}$-torsor in the sense of \cite[Definition 5.1.3]{MR2628848}. If $\mathscr{G}\to \Spec(k)$ is adequately affine, 
    then $\varphi$ is adequately affine. In particular, if an open substack $\mathscr{U}\subseteq \mathscr{X}$ admits a (schematic) 
    adequate moduli space, then so does $\varphi^{-1}(\mathscr{U}) \subseteq \mathscr{Y}$.
\end{lem}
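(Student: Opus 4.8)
The plan is to deduce adequate affineness of $\varphi$ by fppf descent along $\varphi$ itself, and then to produce the adequate moduli space of $\varphi^{-1}(\mathscr{U})$ by a relative-$\Spec$ construction over $U$.

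First I would unwind the torsor condition. Being a $\mathscr{G}$-torsor, $\varphi$ is in particular an fppf cover, and the action supplies an isomorphism $\mathscr{G}\times\mathscr{Y}\xrightarrow{\sim}\mathscr{Y}\times_\mathscr{X}\mathscr{Y}$, $(g,y)\mapsto(gy,y)$, under which the projection $\mathscr{Y}\times_\mathscr{X}\mathscr{Y}\to\mathscr{Y}$ onto the second factor corresponds to $\mathrm{pr}_2\colon\mathscr{G}\times\mathscr{Y}\to\mathscr{Y}$. Thus the base change of $\varphi$ along the fppf cover $\varphi$ is exactly the projection $\mathscr{G}\times\mathscr{Y}\to\mathscr{Y}$, which is the base change of $\mathscr{G}\to\Spec(k)$ along $\mathscr{Y}\to\Spec(k)$. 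Since adequate affineness is stable under base change and $\mathscr{G}\to\Spec(k)$ is adequately affine by hypothesis, this projection is adequately affine. As adequate affineness is fppf-local on the base and $\varphi$ is an fppf cover, I conclude that $\varphi$ is adequately affine.

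For the second assertion, write $\mathscr{Z}=\varphi^{-1}(\mathscr{U})$ and let $\psi\colon\mathscr{Z}\to\mathscr{U}$ be the restriction of $\varphi$; as the base change of a $\mathscr{G}$-torsor along the open immersion $\mathscr{U}\hookrightarrow\mathscr{X}$ it is again a $\mathscr{G}$-torsor, hence adequately affine by the first part. If $\pi\colon\mathscr{U}\to U$ is the given adequate moduli space, then the composite $\pi\circ\psi\colon\mathscr{Z}\to U$ is adequately affine. I would then set $\mathcal{A}:=(\pi\psi)_*\mathcal{O}_\mathscr{Z}$, a quasi-coherent $\mathcal{O}_U$-algebra, put $V:=\underline{\Spec}_U(\mathcal{A})$ with its affine structure morphism $q\colon V\to U$, and consider the induced factorization $\mathscr{Z}\xrightarrow{h}V\xrightarrow{q}U$. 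To see $h$ is an adequate moduli space: since $q$ is affine its diagonal is a closed immersion and hence adequately affine, so the cancellation property for adequate affineness promotes the adequate affineness of $q\circ h$ to that of $h$; and applying the exact functor $q_*$ (which, for affine $q$, reflects isomorphisms of quasi-coherent sheaves) to the unit $\mathcal{O}_V\to h_*\mathcal{O}_\mathscr{Z}$ yields $\mathcal{A}\xrightarrow{\sim}\mathcal{A}$, whence $\mathcal{O}_V\to h_*\mathcal{O}_\mathscr{Z}$ is itself an isomorphism. Finally, if $U$ is a scheme then $V=\underline{\Spec}_U(\mathcal{A})$ is a scheme, giving the schematic statement.

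I expect the main obstacle to be purely at the level of the permanence properties of adequate affineness rather than anything geometric: the crux is the fppf descent of adequate affineness along $\varphi$, together with its stability under base change, which here replaces the cohomological-affineness descent one would invoke in characteristic $0$. I would locate the precise statements in Alper's treatment of adequately affine morphisms (the base-change and descent analogues of the items already cited, e.g.\ around \cite[Proposition 4.2.1]{MR3272912}). A secondary point to treat carefully is the cancellation step for $h$, where one must verify that the hypotheses under which adequate affineness cancels are met, namely that $q$ is affine (so separated with adequately affine diagonal).
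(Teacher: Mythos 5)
Your proposal is correct and follows essentially the same route as the paper: adequate affineness of $\varphi$ is deduced by faithfully flat descent along $\varphi$ itself, identifying its self-base-change with $\mathscr{G}\times_k\mathscr{Y}\to\mathscr{Y}$ via the torsor isomorphism, and the moduli space of $\varphi^{-1}(\mathscr{U})$ is obtained as the relative $\Spec$ over $U$ of the pushforward of the structure sheaf. The only difference is that you spell out the verification that $\mathscr{Z}\to\underline{\Spec}_U(\mathcal{A})$ is an adequate moduli space, whereas the paper delegates this to \cite[Remark 5.1.2]{MR3272912}.
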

\begin{proof}
    By \cite[Proposition 4.2.1 (4)]{MR3272912} adequately affineness
    can be checked after a faithfully flat base change.
    As a $\mathscr{G}$-torsor is faithfully flat and
    \[
     (\mathrm{act},\mathrm{pr}_2): \mathscr{G}\times_{k} \mathscr{Y} \to \mathscr{Y}\times_{\mathscr{X}} \mathscr{Y}
    \]
    is an isomorphism by definition, the adequately affineness
    of $\varphi$ follows from the assumption on $\mathscr{G}$
    and \cite[Proposition 4.2.1 (6)]{MR3272912} applied to
    \[
    \begin{tikzcd}
        \mathscr{G} \times_k \mathscr{Y} \ar[r] \ar[d] & \mathscr{G} \ar[d] \\
        \mathscr{Y} \ar[r] & \mathrm{Spec}(k) \ar[ul,phantom,"\lrcorner"].
    \end{tikzcd}
    \]
    Moreover, if $\mathscr{U}\subseteq \mathscr{X}$ admits an adequate moduli space $\mathscr{U} \to U$, then 
    by the above argument the $\mathscr{G}$-torsor $\varphi^{-1}(\mathscr{U}) \to \mathscr{U}$
    is adequately affine and so is the composition $\psi: \varphi^{-1}(\mathscr{U})\to \mathscr{U}\to U$.
    Thus, the adequate moduli space of $\varphi^{-1}(\mathscr{U})$ is given
    by $\varphi^{-1}(\mathscr{U}) \to V:=\Spec(\psi_{\ast}\mathcal{O}_{\mathscr{\varphi}^{-1}(\mathscr{U})})$, see \cite[Remark 5.1.2]{MR3272912}, 
    which is affine over $U$.
    Then we conclude that $V$ is a scheme if $U$ is.
\end{proof}
\begin{rmk}
For morphisms between moduli stacks of principal bundles (over a general base) induced by a central isogeny of reductive groups, this result can alternatively be proven using \cite[Lemma A.4]{DA23} and the induced morphism between moduli spaces is even finite in this case. For the general question in the introduction that whether $\mathscr{B}un_G^{ss} \subseteq \mathscr{B}un_G$ is the unique maximal open substack that admits a proper moduli space, this stronger assertion can help reduce it to the case where $G$ is almost simple and simply connected. 
\end{rmk}
We fix some notations for the rest of the subsection.
Let $G$ be a reductive group, $\tilde G$ be the universal cover of the derived subgroup $[G,G]$ of $G$, and $Z^0$ 
be the reduced identity component of the center of $G$. By \cite[Lemma 5.3.2]{MR2628848} and the discussion following it,
there exist an extension of reductive groups 
\[
    1\to \tilde G \to \hat G\xrightarrow{\dt} \mathbf{G}_m\to 1
\]
and an extension $\hat\pi:\hat G \to G$ of $\tilde G \to G$ such that the induced map $\pi_1(\hat G) \to \pi_1(G)$ maps $1 \in \mathbf{Z}=\pi_1(\gm)=\pi_1(\hat G)$ to $d \in \pi_1(G)$. Moreover, we have a central isogeny
\[
    1 \to \mu \to Z^0\times \hat G\xrightarrow{(z^0,\hat g)\mapsto (z^0\cdot \hat\pi (\hat g),\dt(\hat g))} G\times \gm \to 1
\]
such that the induced morphism
    \[
        \varphi: \Bun^0_{Z^0}\times \Bun^1_{\hat G} \to \Bun^d_G \times \mathscr{P}ic^1
    \]
is a $\Bun_{\mu}$-torsor. For any line bundle $L$ of degree $1$ over $C$ we denote by $\Bun_{\hat G,L}$ the stack defined by the fiber product
\[
    \begin{tikzcd}
        \Bun_{\hat G,L} \ar[r,"\lambda_L"] \ar[d] & \Bun_{\hat G}^1 \ar[d,"\dt_{\ast}"]\\
        \Spec(k)\ar[r,"L"'] & \Pic^1 \ar[ul,phantom,"\lrcorner"].
    \end{tikzcd}
\]
By base change we again obtain a $\Bun_{\mu}$-torsor
\[
    \varphi_L: \Bun^0_{Z^0} \times \Bun_{\hat G,L} \to \Bun^d_{G}.
\]
Thus, Theorem \ref{1535-2} for $\mathscr{B}un^d_G$ follows from the analogous statement for $\Bun^0_{Z^0}\times \Bun_{\hat G,L}$ and the fact that the morphism $\varphi_L$ maps semistable objects to semistable objects. To this end we need a suitable stability condition on $\Bun^0_{Z^0} \times \Bun_{\hat G,L}$, in particular on $\Bun_{\hat G,L}$.
\begin{defn}
A geometric point $x \in \Bun_{\hat G,L}(\mathbf{K})$ is \emph{semistable} if $\lambda_L(x) \in \Bun_{\hat G}^{1,ss}(\mathbf{K})$, i.e.,
\[
\Bun_{\hat G,L}^{ss}:=\lambda_L^{-1}\left(\Bun_{\hat G}^{1,ss}\right) \subseteq \Bun_{\hat G,L}.
\]
Similarly, we define
\[
\left(\Bun^0_{Z^0}\times \Bun_{\hat G,L}\right)^{ss}:=\Bun^{0,ss}_{Z^0} \times \Bun_{\hat G,L}^{ss}.
\]
\end{defn}
\begin{rmk}\label{1146-1}
Consequently, the morphism $\varphi_L$ maps semistable objects to semistable objects as
$\varphi$ does so and we have the following commutative diagram
\[
    \begin{tikzcd}
        \Bun^0_{Z^0} \times \Bun_{\hat G,L} \ar[r,"\varphi_L"] \ar[d] & \Bun_{G}^d \times \mathrm{Spec}(k) \ar[d,"\mathrm{id}\times L"]\\
        \Bun^0_{Z^0} \times \Bun_{\hat G}^1 \ar[r,"\varphi"'] & \Bun_{G}^d \times \Pic^1 \ar[ul,phantom,"\lrcorner"].
    \end{tikzcd}
\]
\end{rmk}
The stability condition on $\Bun_{\hat G,L}$ can be described using line bundles. If $\mathcal{L}_{\det}$ is the determinant line bundle of cohomology over $\Bun_{\hat{G}}$ and $\mathcal{L}_{\det,L}$ is its pullback to $\Bun_{\hat{G},L}$, then by \cite[Corollary 1.16]{MR3758902} we have $\Bun_{\hat G}^{1,ss}=(\Bun_{\hat G}^1)^{\Theta\text{-}ss}_{\mathcal{L}_{\det}}$ and we claim that
\begin{equation}\label{0326}
\Bun_{\hat G,L}^{ss}=(\Bun_{\hat G,L})^{\Theta\text{-}ss}_{\mathcal{L}_{\det,L}}.
\end{equation}
This is a consequence of the following result.
\begin{lem}\label{0232}
Let $\mathcal{N}$ be a line bundle over $\Bun_{\hat G}^1$ such that the reduced identity component $\hat Z^0$ of the center of $\hat G$ acts trivially. Then the $\Theta$-semistable locus of $\Bun_{\hat G}^1$ with respect to $\mathcal{N}$
    pulls back to the $\Theta$-semistable locus of $\Bun_{\hat G,L}$ with respect to $\lambda_L^*\mathcal{N}$, i.e.,
    \[
        (\Bun_{\hat G,L})^{\Theta\text{-}ss}_{\lambda_L^*\mathcal{N}} = \lambda_L^{-1}\left((\Bun_{\hat G}^1)^{\Theta\text{-}ss}_{\mathcal{N}}\right).
    \]
\end{lem}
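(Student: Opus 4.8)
The plan is to prove the two inclusions separately; the content is concentrated in the ``hard'' inclusion, where an arbitrary test morphism is corrected by a central cocharacter so as to descend into the fibre $\Bun_{\hat G,L}$. Write $\Theta_{\mathbf K}:=[\mathbf{A}^1_{\mathbf K}/\mathbf{G}_{m,\mathbf K}]$, and recall that $\Theta$-semistability of a point $y$ with respect to a line bundle is tested by the sign of $\mathrm{wt}_{\mathbf{G}_{m,\mathbf K}}$ over all morphisms $\Theta_{\mathbf K}\to\mathscr X$ sending $1$ to $y$ and $0$ to a non-isomorphic point. The two structural inputs I would isolate at the outset are: first, that $\hat Z^0$ acts trivially on $\mathrm{Lie}(\hat G)$ via the adjoint representation (automatic for a central torus), so that a central cocharacter induces the trivial filtration and hence does not change the limit $\mathbf K$-point of a test morphism; and second, that $\dt|_{\hat Z^0}:\hat Z^0\to\mathbf{G}_m$ is an isogeny (so $\hat Z^0\cong\mathbf{G}_m$), so that central cocharacters can realise any prescribed $\mathbf{G}_m$-weight under $\dt_{\ast}$ after clearing a denominator. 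The hypothesis of the lemma supplies the third ingredient: $\hat Z^0$ acts trivially on $\mathcal N$, so central cocharacters do not change the $\mathcal N$-weight either.

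For the inclusion $\lambda_L^{-1}\big((\Bun_{\hat G}^1)^{\Theta\text{-}ss}_{\mathcal N}\big)\subseteq(\Bun_{\hat G,L})^{\Theta\text{-}ss}_{\lambda_L^*\mathcal N}$, I would take $x$ with $\lambda_L(x)$ semistable and a test morphism $f:\Theta_{\mathbf K}\to\Bun_{\hat G,L}$ with $f(1)\cong x$ and $f(0)\ncong x$. By functoriality of the weight, $\mathrm{wt}_{\mathbf{G}_{m,\mathbf K}}(f^*\lambda_L^*\mathcal N)=\mathrm{wt}_{\mathbf{G}_{m,\mathbf K}}((\lambda_L\circ f)^*\mathcal N)$. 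Because $f$ factors through the fibre, $\dt_{\ast}\circ f$ is the constant morphism $L$ and so has weight $0$; a short argument using this shows that $f(0)\ncong x$ forces $(\lambda_L\circ f)(0)\ncong\lambda_L(x)$, so the semistability of $\lambda_L(x)$ applies directly and the weight is $\leq 0$. Thus no ``returning'' morphisms upstairs can arise from genuine degenerations in the fibre, and the inclusion follows.

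For the reverse inclusion I would verify semistability of $\lambda_L(x)$ directly. Given a test morphism $g:\Theta_{\mathbf K}\to\Bun_{\hat G}^1$ with $g(1)\cong\lambda_L(x)$ and $g(0)\ncong\lambda_L(x)$, the composite $\dt_{\ast}\circ g$ lands in the residual gerbe of $[L]\in\pic^1$, since the induced morphism to the algebraic space $\pic^1$ factors through the good moduli space $\Spec(\mathbf K)$ of $\Theta_{\mathbf K}$ as in Lemma \ref{lb-from}; the only obstruction to $g$ factoring through $\Bun_{\hat G,L}$ is then the $\mathbf{G}_{m,\mathbf K}$-weight $w$ by which $\dt_{\ast}\circ g$ differs from the constant morphism at $L$. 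After reparametrising $g$ along $t\mapsto t^N$ for a suitable $N>0$ (which multiplies all weights by $N$ and preserves limit points) I would add the central cocharacter of $\hat Z^0$ whose $\dt$-image cancels $Nw$, using that $\dt|_{\hat Z^0}$ is an isogeny. The corrected morphism $\tilde g$ then has $\dt_{\ast}\circ\tilde g$ of weight $0$ and so factors as $\tilde g=\lambda_L\circ h$; moreover $\tilde g(0)\cong g(0)\ncong\lambda_L(x)$ since central cocharacters preserve the limit point, whence $h(0)\ncong x$, while $h(1)\cong x$. Since $\hat Z^0$ acts trivially on $\mathcal N$, the correction leaves the $\mathcal N$-weight unchanged, so
\[
\mathrm{wt}_{\mathbf{G}_{m,\mathbf K}}(h^*\lambda_L^*\mathcal N)=\mathrm{wt}_{\mathbf{G}_{m,\mathbf K}}(\tilde g^*\mathcal N)=N\cdot\mathrm{wt}_{\mathbf{G}_{m,\mathbf K}}(g^*\mathcal N).
\]
As $x$ is semistable in the fibre the left-hand side is $\leq 0$, hence $\mathrm{wt}_{\mathbf{G}_{m,\mathbf K}}(g^*\mathcal N)\leq 0$, proving $\lambda_L(x)$ semistable.

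I expect the main obstacle to be making the central correction rigorous: realising ``adding a central cocharacter'' as an honest modification of $g$ (for instance through the twisting action of $\Bun_{\hat Z^0}$ on $\Bun_{\hat G}$, or directly on $\mathbf{G}_m$-equivariant $\hat G$-bundles over $\mathbf{A}^1\times C$), and then checking the two properties that drive the argument — that it shifts $\dt_{\ast}$ by exactly the prescribed weight while fixing the underlying limit point, and that it fixes $\mathrm{wt}_{\mathbf{G}_{m,\mathbf K}}(-^*\mathcal N)$. The latter is precisely where the hypothesis that $\hat Z^0$ acts trivially on $\mathcal N$ enters, and the former is where the triviality of the adjoint action of $\hat Z^0$ and the isogeny property of $\dt|_{\hat Z^0}$ are used. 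The endpoint bookkeeping and the integrality reparametrisation are then routine.
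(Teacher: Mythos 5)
Your proposal is correct and follows essentially the same route as the paper: the easy inclusion by composing test morphisms, and the hard inclusion by noting that the test morphism's image in $\Pic^1$ lands in the residual gerbe of $[L]$, identifying the obstruction to lifting as a $\mathbf{G}_m$-weight, and killing it by a power-map reparametrisation combined with a twist by a central cocharacter of $\hat Z^0$ (using that $\dt|_{\hat Z^0}$ is an isogeny, that the twist fixes the limit point, and that triviality of the $\hat Z^0$-action on $\mathcal N$ preserves the weight). The only cosmetic differences are that the paper explicitly interposes the closed substack $\Bun_{\hat G,\cong L}$ and performs the two corrective moves in the opposite order.
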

\begin{proof}
Let $\Theta:=[\mathbf{A}^1/\mathbf{G}_m]$ be the quotient stack.
By definition
\[
\lambda_L^{-1}\left((\Bun_{\hat G}^1)^{\Theta\text{-}ss}_{\mathcal{N}}\right) \subseteq (\Bun_{\hat G,L})^{\Theta\text{-}ss}_{\lambda_L^*\mathcal{N}}
\]
as any $\Theta$-testing morphism of a point in $\Bun_{\hat G,L}$ with respect to $\lambda_L^*\mathcal{N}$ is a $\Theta$-testing morphism of its image in $\Bun_{\hat G}$ with respect to $\mathcal{N}$. So it remains to prove the converse. The following Cartesian diagram
\[
\begin{tikzcd}
    \lambda_L:\ \Bun_{\hat G,L} \ar[r,"g"] \ar[d] &\Bun_{\hat G,\cong L} \ar[r,hook,"\lambda_{\cong L}"] \ar[d] & \Bun_{\hat G}^1 \ar[d,"\dt_*"] \\
    L:\ \mathrm{Spec}(k) \ar[r] & \mathrm{B}\mathbf{G}_m \ar[ul,phantom,"\lrcorner"] \ar[r,hook] & \Pic^1 \ar[ul,phantom,"\lrcorner"]
\end{tikzcd}
\]
factorizes the morphism $\lambda_L$ as a closed immersion $\lambda_{\cong L}: \Bun_{\hat G,\cong L} \to \Bun_{\hat G}^1$ followed by a $\mathbf{G}_m$-torsor $g: \Bun_{\hat G,L}\to \Bun_{\hat G,\cong L}$. Since any morphism $\Theta \to \Bun_{\hat G}^1$ mapping $1$ into $\Bun_{\hat G,\cong L}$ factors through $\lambda_{\cong L}$, the $\Theta$-semistable locus of $\mathcal{N}$ pulls back to that of $\lambda_{\cong L}^*\mathcal{N}$. It suffices to consider the $\mathbf{G}_m$-torsor part, i.e., if $x \in \Bun_{\hat G,L}(\mathbf{K})$ is $\Theta$-semistable with respect to $\lambda_L^*\mathcal{N}$, then we show that $g(x) \in \Bun_{\hat G,\cong L}(\mathbf{K})$ is $\Theta$-semistable with respect to $\lambda_{\cong L}^*\mathcal{N}$. That is, for any morphism $f: \Theta \to \Bun_{\hat G,\cong L}$ mapping $1$ to $g(x)$, we show
\[
\mathrm{wt}_{\mathbf{G}_m}(f^*\lambda_{\cong L}^*\mathcal{N}) \leq 0.
\]
Clearly, this holds if $f$ lifts to $\Bun_{\hat G,L}$ and we claim that we can assume this by modifying $f$, with the sign of the weight unchanged. Note that a morphism $f: \Theta \to \Bun_{\hat G,\cong L}$ corresponds to two morphisms $\Theta \to \Bun_{\hat G}^1$ and $\Theta \to \mathrm{B}\mathbf{G}_m$ together with an isomorphism when further mapped to $\Pic^1$, i.e., a $\hat G$-bundle $\mathcal{P}$ over $C \times \Theta$ and a line bundle $\mathcal{M}$ over $\Theta$ together with an isomorphism
\[
\dt_*(\mathcal{P}) \cong \mathrm{pr}_C^*L \otimes \mathrm{pr}_{\Theta}^*\mathcal{M} \text{ over } C \times \Theta.
\]
Then $f$ lifts to $\Bun_{\hat G,L}$ if and only if $\mathcal{M}$ is trivial.

For the first morphism $\Theta \to \Bun_{\hat G}^1$, there is a modification arising from $\hat Z^0$. 
Choose a cocharacter $\sigma: \gm \to \hat Z^0$ such that the composition $\gm \xrightarrow{\sigma} \hat Z^0 \hookrightarrow \hat{G} \xrightarrow{\dt} \gm$ corresponds to some negative integer $\ell < 0$ 
(this is possible since $\hat Z^0$ is a split torus and $\hat G$ is generated by $\hat Z^0$ and the derived subgroup $[\hat G,\hat G]$). This yields a morphism $\Theta \to \mathrm{B}\mathbf{G}_m \xrightarrow{\sigma_*} \mathrm{B}\hat Z^0 \to \Bun_{\hat Z^0}^0$, where the morphism $\mathrm{B}\hat Z^0 \to \Bun_{\hat Z^0}^0$ is given by pull back along the projection $C \times S \to S$ for any test scheme $S$. Then we can modify the original $\Theta \to \Bun_{\hat G}^1$ by coupling it with the above morphism $\Theta \to \Bun_{\hat Z^0}^0$ via $\Bun_{\hat Z^0}^0 \times \Bun_{\hat G}^1 \to \Bun_{\hat G}^1$ induced by multiplication 
$\hat Z^0 \times \hat{G} \to \hat{G}$. The image $f(1)$ does not change under this modification. This yields a $\hat G$-bundle $\mathcal{P}'$ over $C \times \Theta$ with 
\[
\dt_*(\mathcal{P}') \cong \dt_*(\mathcal{P}) \otimes \mathrm{pr}_{\Theta}^*\mathcal{M}^{\otimes \ell}.
\]
As $\hat Z^0$ acts trivially on $\mathcal{N}$ we claim that 
this modification does not change the weight.
Indeed, the composition $\hat Z^0 \hookrightarrow \hat{G} \xrightarrow{\dt} \gm$ is surjective as $\dt$ is. 
Then
$\dt_{\ast}:\Bun_{\hat G}^1 \to \Pic^1$ descends to the rigidifications of $\Bun_{\hat G}^1$ and $\Pic^1$ divided by their central automorphisms such that the following diagram commutes (see \cite[Theorem 5.1.5]{MR2007376})
    \[
    \begin{tikzcd}
        \Bun_{\hat G}^1 \ar[r,"\dt_*"] \ar[d] & \Pic^1 \ar[d]\\
        (\Bun_{\hat G}^1)^{\hat Z^0} \ar[r] & (\Pic^1)^{\mathbf{G}_m} \cong \pic^1.
    \end{tikzcd}
    \]
    Note that as $\hat Z^0$ is a split torus, $\Bun_{\hat G}^1 \to (\Bun_{\hat G}^1)^{\hat Z^0}$ is a relative good moduli space.
    Since $\hat Z^0$ acts trivially on $\mathcal{N}$, it descends to the rigidification $(\Bun_{\hat G}^1)^{\hat Z^0}$ by the analogue of \cite[Theorem 10.3]{MR3237451} and we are done.

For the second morphism $\Theta \to \mathrm{B}\mathbf{G}_m$, we can precompose it with $(-)^{n}: \Theta \to \Theta$, for any positive integer $n>0$. Again this modification does not change the sign of the weight and gives a $\hat G$-bundle $\mathcal{P}''$ over $C \times \Theta$ with
\[
\dt_*(\mathcal{P}'') \cong \dt_*(\mathcal{P}') \otimes \mathrm{pr}_{\Theta}^*\mathcal{M}^{\otimes n}.
\]
Therefore, we can take $n=-(\ell+1)$ to kill the factor $\mathcal{M}$.
\end{proof}
To conclude the proof of Theorem \ref{tba}, it suffices to show 
\begin{prop}\label{claim2}
The semistable locus $(\Bun^0_{Z^0}\times \Bun_{\hat G,L})^{ss} \subseteq \Bun^0_{Z^0}\times \Bun_{\hat G,L}$ is the unique maximal open substack that admits a schematic adequate moduli space.
\end{prop}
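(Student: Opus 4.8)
The plan is to transport the line-bundle analysis of the almost simple and simply connected case to the irreducible smooth stack $\mathscr{X}:=\Bun^0_{Z^0}\times\Bun_{\hat G,L}$, the two new inputs being the passage from $\hat G$ to $\tilde G$ furnished by Lemma \ref{0232} together with \eqref{0326}, and an argument that kills the central ``weight'' line bundles. Existence is the easy half: since $Z^0$ is a split torus, every point of $\Bun^0_{Z^0}$ is semistable, so $\Bun^{0,ss}_{Z^0}=\Bun^0_{Z^0}$ has a schematic good moduli space (a product of Picard varieties); the locus $\Bun_{\hat G,L}^{ss}=\lambda_L^{-1}(\Bun_{\hat G}^{1,ss})$ has a schematic adequate moduli space, realized as the fibre over $L\in\pic^1$ of the projective moduli space of $\Bun_{\hat G}^{1,ss}$; and a product of schematic adequate moduli spaces is again one. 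Hence $\mathscr{X}^{ss}$ admits a schematic adequate moduli space.

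For maximality, let $\mathscr{U}\subseteq\mathscr{X}$ admit a schematic adequate moduli space $\mathscr{U}\to U$. Covering $U$ by affines and arguing exactly as in the reduction preceding the almost simple case, the base changes $\mathscr{U}_i\to U_i$ are adequate moduli spaces with $U_i$ quasi-projective, so Theorem \ref{theorem-adequately-semistable-locally-noetherian} and Lemma \ref{1408-1} yield line bundles $\mathcal{L}_i$ on $\mathscr{X}$ with $\emptyset\neq\mathscr{U}_i\subseteq(\mathscr{X})^{a\text{-}ss}_{\mathcal{L}_i}\subseteq(\mathscr{X})^{\Theta\text{-}ss}_{\mathcal{L}_i}$. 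It then suffices to show that non-emptiness of the adequately semistable locus forces $\mathscr{U}_i\subseteq\mathscr{X}^{ss}$.

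The heart is the reduction of $\mathcal{L}_i$. Applying Lemma \ref{lb-from} to the composite $\mathscr{X}\to\Bun^0_{Z^0}\to\pic^0$ discards the factors pulled back from the abelian variety. To remove the remaining weight line bundles — those on which one of the central tori $\hat Z^0\subseteq\hat G$ or $Z^0$ acts with nonzero weight — I argue by irreducibility. At a point $(E,P)$ with $P$ \emph{stable} one couples a genuine $\Theta$-degeneration of $P$ (for $g_C\geq1$ the associated graded of a Borel reduction is not isomorphic to $P$) with a pure central gerbe-scaling of cocharacter $n$; the latter leaves the image of $1$ unchanged and contributes weight $n\cdot w$ to a weight-$w$ bundle, so for $w\neq 0$ a suitable choice of $n$ makes the total weight positive and destabilizes $(E,P)$. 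Thus a nonzero weight would make $(\mathscr{X})^{\Theta\text{-}ss}_{\mathcal{L}_i}$ miss the dense stable locus; being open in the irreducible $\mathscr{X}$, it would then be empty, contradicting $\mathscr{U}_i\neq\emptyset$. Hence $\hat Z^0$ acts trivially, and using $\mathrm{Pic}(\Bun_{\tilde G})\cong\mathbf{Z}$ together with the fixed-determinant construction, $\mathcal{L}_i$ becomes $\mathcal{L}_{\det,L}^{\otimes e_i}$ modulo bundles irrelevant for $\Theta$-stability.

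Finally I would run the sign analysis. By Lemma \ref{0232} and \eqref{0326}, $(\mathscr{X})^{\Theta\text{-}ss}_{\mathcal{L}_{\det,L}^{\otimes e}}=\mathscr{X}^{ss}$ for $e>0$ and is empty for $e<0$, the latter by destabilizing every point through the parabolic reduction of \eqref{negative} pulled back along $\lambda_L$; for $e=0$ the adequately semistable locus is empty because $\mathscr{X}$ is not quasi-compact in the $\Bun_{\hat G,L}$-direction, so no non-vanishing locus of a section is adequately affine. As $\mathscr{U}_i$ is non-empty this forces $e_i>0$ and hence $\mathscr{U}_i\subseteq\mathscr{X}^{ss}$ for every $i$, giving $\mathscr{U}\subseteq\mathscr{X}^{ss}$; uniqueness of the maximal such substack follows since $\mathscr{X}^{ss}$ itself admits a schematic adequate moduli space and contains every other one. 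I expect the main obstacle to be the control of $\mathrm{Pic}(\mathscr{X})$, i.e. organizing the weight-killing step so that the central tori of $Z^0$ and $\hat G$ are genuinely reduced to the rank-one Picard group of $\Bun_{\tilde G}$ — which is precisely where the fixed-determinant construction $\Bun_{\hat G,L}$ and Lemma \ref{0232} are doing the essential work.
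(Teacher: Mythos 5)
Your architecture matches the paper's: reduce to quasi-projective adequate moduli spaces, trap $\mathscr{U}_i$ inside an adequately (hence $\Theta$-) semistable locus via Theorem \ref{theorem-adequately-semistable-locally-noetherian} and Lemma \ref{1408-1}, normalize the line bundle against $\mathcal{L}_{\det,L}$, and run the sign analysis through Lemma \ref{0232} and \eqref{0326}. The two places where you deviate are exactly where problems appear. First, your mechanism for killing the central weights --- coupling a genuine degeneration at a \emph{stable} point with a central cocharacter twist contributing $n\cdot w$ --- is sound where it applies, but it presupposes a non-empty dense stable locus, which you yourself restrict to $g_C\geq 1$; stable $\hat G$-bundles need not exist in low genus, while the proposition (like Theorem \ref{tba}) carries no genus hypothesis. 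The paper's Lemma \ref{wt=0} avoids this entirely: a non-zero section $s\in\Gamma(\mathscr{X},\mathcal{L}^{\otimes m})$ witnessing adequate semistability restricts to a non-zero section over $\mathrm{B}\mathbf{T}_{\mathbf{K}}$ at an adequately semistable point, forcing the central weight to vanish there and hence everywhere by connectedness. This uses only non-emptiness of $(\mathscr{X})^{a\text{-}ss}_{\mathcal{L}}$ and no geometry of the curve, so you should substitute it to obtain the stated generality.

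Second, $\mathrm{Pic}(\Bun_{\tilde G})\cong\mathbf{Z}$ holds only per almost simple, simply connected factor: in general $\tilde G\cong\prod_{i=1}^r\tilde G_i$ and $\pic(\Bun_{\hat G,L})\cong\mathbf{Z}^r$, so after your reductions $\mathcal{L}_i$ is of the form $\mathcal{L}_0\boxtimes\bigboxtimes_{j}\mathcal{L}_{\det,j,L}^{\otimes e_j}$ with a priori independent exponents, not a single power of $\mathcal{L}_{\det,L}$. Your closing sign analysis treats one exponent $e$; it must instead be run factor by factor, using that the $\Theta$-semistable locus of a box product is the product of the $\Theta$-semistable loci (destabilize one factor with the trivial filtration on the others) and that each factor is non-empty, to force every $e_j>0$ separately as in \eqref{det-fun2}. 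You flag the control of $\mathrm{Pic}(\mathscr{X})$ as the main obstacle, and this bookkeeping --- together with the section argument ruling out $e_j=0$ on each factor --- is precisely what the paper's proof supplies.
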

Indeed, if $\mathscr{U} \subseteq \mathscr{B}un_G^d$ is an open substack that admits a schematic adequate moduli space, 
then by Lemma \ref{lemma-torsor-ams} so does the preimage
$\varphi_L^{-1}(\mathscr{U}) \subseteq \Bun^0_{Z^0} \times \Bun_{\hat G,L}$. 
By Proposition \ref{claim2}
\[
\varphi_L^{-1}(\mathscr{U}) \subseteq (\Bun^0_{Z^0}\times \Bun_{\hat G,L})^{ss},
\]
and hence 
\[
\mathscr{U}=\varphi_L(\varphi_L^{-1}(\mathscr{U})) \subseteq \varphi_L((\Bun^0_{Z^0}\times \Bun_{\hat G,L})^{ss}) \subseteq \Bun^{d,ss}_{G},
\]
where the first equality holds since $\varphi_L$ is a torsor and the final inclusion holds by Remark \ref{1146-1}.
\begin{proof}[Proof of Proposition $\ref{claim2}$]
If $\mathscr{U} \subseteq \Bun^0_{Z^0} \times \Bun_{\hat G,L}$ is an open substack that admits a schematic adequate moduli space, then we show $\mathscr{U} \subseteq (\Bun^0_{Z^0} \times \Bun_{\hat G,L})^{ss}$. As before we may assume that the adequate moduli space is quasi-projective. Since $\Bun^0_{Z^0} \times \Bun_{\hat G,L}$ is smooth (see \cite[Proposition 4.1 and Corollary 4.2]{MR3013030}), by Theorem \ref{theorem-adequately-semistable-locally-noetherian} there exists a line bundle
$\mathcal{L}$ over $\Bun^0_{Z^0} \times \Bun_{\hat G,L}$ such that 
\[
\mathscr{U} \subseteq (\Bun^0_{Z^0} \times \Bun_{\hat G,L})_{\mathcal{L}}^{a\text{-}ss} \subseteq (\Bun^0_{Z^0} \times \Bun_{\hat G,L})_\mathcal{L}^{\Theta\text{-}ss}.
\]
To proceed we need some information about $\mathcal{L}$. By \cite[p. 57, (11)]{MR2628848} we have an isomorphism
\[
    \Bun_{\hat G,L} \cong \prod_{i=1}^r\Bun_{\hat G_i,L},
\]
where $\hat G_i$ is an extension $1 \to \tilde G_i \to \hat G_i \to \gm \to 1$ and $\tilde G\cong \prod_{i=1}^r \tilde G_i$ is the decomposition into
almost simple and simply connected factors. By \cite[Lemma 2.1.4, Proposition 4.4.7 (i), Corollary 4.4.5, and Theorem 4.2.1 (i)]{MR2628848} we 
can describe the Picard group as follows:
\begin{align*}
    \pic(\Bun^0_{Z^0} \times \Bun_{\hat G,L}) &\cong \pic(\Bun^0_{Z^0}) \oplus \pic(\Bun_{\hat G,L}) \\
    &\cong \pic(\Bun^0_{Z^0}) \oplus \bigoplus_{i=1}^r \pic(\Bun_{\hat{G}_i,L}) \\
    &\cong \pic(\Bun^0_{Z^0}) \oplus \bigoplus_{i=1}^r \mathbf{Z}.
\end{align*}
Let $\mathcal{L}_{\det,i}$ be the determinant line bundle of cohomology over $\Bun_{\hat G_i}$ and $\mathcal{L}_{\det,i,L}$ be its pull back to $\Bun_{\hat G_i,L}$. Recall that both the $\Theta$-semistable and adequately semistable locus of a line bundle remain unchanged after replacing the line bundle by a positive tensor power. After such a replacement for $\mathcal{L}$ and $\mathcal{L}_{\det,L}$ we have
\begin{equation}\label{det-fun2}
\mathcal{L}_{\det,L}=\bigboxtimes_{i=1}^r \mathcal{L}_{\det,i,L}^{\otimes c_i} \text{ and } \mathcal{L}=\mathcal{L}_0 \boxtimes \bigboxtimes_{i=1}^r \mathcal{L}_{\det,i,L}^{\otimes e_i}
\end{equation}
for some line bundle $\mathcal{L}_0$ over $\Bun^0_{Z^0}$ and integers $c_i, e_i$. Note that the $\Theta$-semistable locus of $\mathcal{L}$ is the product of the $\Theta$-semistable loci of $\mathcal{L}_0$ and $\mathcal{L}_{\det,i,L}^{\otimes e_i}$ as we can destabilize separately using a filtration on one component and the trivial filtration on the other components. This means
\[
(\Bun^0_{Z^0}\times \Bun_{\hat G,L})_{\mathcal{L}}^{\Theta\text{-}ss}=(\Bun^0_{Z^0})_{\mathcal{L}_0}^{\Theta\text{-}ss} \times \prod_{i=1}^r (\Bun_{\hat G_i,L})^{\Theta\text{-}ss}_{\mathcal{L}_{\det,i,L}^{\otimes e_i}}.
\]
To obtain a non-empty adequately semistable locus, there is some restriction on $\mathcal{L}_0$.
\begin{lem}\label{wt=0}
Let $\mathbf{T}$ be a split torus. Let $\mathscr{X}$ be a connected algebraic stack that is a $\mathbf{T}$-gerbe over some stack $\overline{\mathscr{X}}$ and $\mathcal{L}$ be a line bundle over $\mathscr{X}$. If $(\mathscr{X})_{\mathcal{L}}^{a\text{-}ss} \neq \emptyset$, 
then the central $\mathbf{T}$-automorphisms act trivially on $\mathcal{L}$.
\end{lem}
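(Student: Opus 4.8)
The plan is to show that non-emptiness of $(\mathscr{X})_{\mathcal{L}}^{a\text{-}ss}$ already forces the existence of a \emph{nonzero} global section of some positive power of $\mathcal{L}$, and then to observe that no such section can exist unless the central $\mathbf{T}$ acts with weight zero. In particular, the adequate affineness of the non-vanishing locus will play no role; only the non-vanishing of the section is used.

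First I would record the weight of the central automorphisms. Since $\mathscr{X} \to \overline{\mathscr{X}}$ is a $\mathbf{T}$-gerbe, the inertia of $\mathscr{X}$ contains a central copy of $\mathbf{T}$, and the canonical linearization of $\mathcal{L}$ along the inertia restricts to an action of this central $\mathbf{T}$ on $\mathcal{L}$. This action is encoded by a character of $\mathbf{T}$ that is locally constant on $\mathscr{X}$; as $\mathscr{X}$ is connected it is a single character $\chi \in X^*(\mathbf{T})$. Saying that the central $\mathbf{T}$-automorphisms act trivially on $\mathcal{L}$ is exactly saying $\chi = 0$, so it suffices to rule out $\chi \neq 0$.

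Next I would analyze global sections. Let $m>0$ and $s \in \Gamma(\mathscr{X},\mathcal{L}^{\otimes m})$. For any geometric point $x \in \mathscr{X}(\mathbf{K})$, the central $\mathbf{T}$ acts on the fiber $\mathcal{L}^{\otimes m}|_x$ through the character $m\chi$, and the value $s(x)$ must be invariant under this action because $s$ is a genuine global section on a stack; concretely, restricting $s$ along the residual gerbe $\mathrm{B}\mathbf{T}_{\mathbf{K}} \hookrightarrow \mathscr{X}$ and pulling back along $\mathrm{Spec}(\mathbf{K}) \to \mathrm{B}\mathbf{T}_{\mathbf{K}}$ exhibits $s(x)$ as a $\mathbf{T}$-invariant vector of the weight-$m\chi$ representation. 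Since $\mathbf{T}$ is a split torus, $X^*(\mathbf{T})$ is torsion free, so $\chi \neq 0$ forces $m\chi \neq 0$; then the only invariant vector is $0$, whence $s(x)=0$. As $x$ was arbitrary, $s=0$. Thus if $\chi \neq 0$ we get $\Gamma(\mathscr{X},\mathcal{L}^{\otimes m})=0$ for every $m>0$.

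Finally I would derive the contradiction. By Definition \ref{definition-adequately-semistable}, a point of $(\mathscr{X})_{\mathcal{L}}^{a\text{-}ss}$ comes equipped with a section $s \in \Gamma(\mathscr{X},\mathcal{L}^{\otimes m})$ for some $m>0$ that is nonzero at that point, so in particular $s \neq 0$. Hence $(\mathscr{X})_{\mathcal{L}}^{a\text{-}ss} \neq \emptyset$ forces $\Gamma(\mathscr{X},\mathcal{L}^{\otimes m}) \neq 0$ for some $m>0$, contradicting the previous paragraph unless $\chi=0$. The one place where care is needed is the first step: verifying that the central $\mathbf{T}$-action on $\mathcal{L}$ is encoded by a well-defined single character $\chi$ and that global sections are genuinely $\mathbf{T}$-invariant at each point. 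Both are formal consequences of the $\mathbf{T}$-gerbe structure together with the connectedness of $\mathscr{X}$, so I expect no serious obstacle beyond bookkeeping.
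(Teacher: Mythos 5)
Your proposal is correct and follows essentially the same route as the paper's proof: both restrict a nonvanishing section of $\mathcal{L}^{\otimes m}$ along the residual gerbe $\mathrm{B}\mathbf{T}_{\mathbf{K}}\hookrightarrow\mathscr{X}$ at a point of $(\mathscr{X})_{\mathcal{L}}^{a\text{-}ss}$, use that $\Gamma(\mathrm{B}\mathbf{T}_{\mathbf{K}},\mathcal{N})=0$ unless the $\mathbf{T}$-weight is trivial, and invoke connectedness together with local constancy of the weight; like the paper, you correctly observe that only $s(x)\neq 0$ is needed and adequate affineness plays no role. Your explicit appeal to torsion-freeness of $X^*(\mathbf{T})$ to pass from $m\chi=0$ to $\chi=0$ is a small point the paper leaves implicit.
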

\begin{proof}
Let $x \in (\mathscr{X})_{\mathcal{L}}^{a\text{-}ss}(\mathbf{K})$ be a geometric point, i.e., there exists a section $s \in \Gamma(\mathscr{X},\mathcal{L}^{\otimes m})$ for some integer $m>0$ with $s(x) \neq 0$. 
Consider the morphism 
$f: \mathrm{B}\mathbf{T}_{\mathbf{K}} \hookrightarrow \mathrm{B}\mathrm{Aut}_{\mathscr{X}(\mathbf{K})}(x) \xrightarrow{x} \mathscr{X}$.
Then we have $f^*s \in \Gamma(\mathrm{B}\mathbf{T}_{\mathbf{K}},f^*\mathcal{L}^{\otimes m}) \neq 0$. 
However, for any line bundle $\mathcal{N}$ over $\mathrm{B}\mathbf{T}_{\mathbf{K}}$
\[
\Gamma(\mathrm{B}\mathbf{T}_{\mathbf{K}},\mathcal{N})=\Gamma(\mathrm{Spec}(\mathbf{K}),\mathcal{N})^{\mathbf{T}_{\mathbf{K}}}=\begin{cases}
    \mathbf{K} \cdot e & \text{if } \mathrm{wt}_{\mathbf{T}_{\mathbf{K}}}(\mathcal{N}) \text{ is trivial} \\
    0 & \text{otherwise,}
\end{cases}
\]
where $\Gamma(\mathrm{Spec}(\mathbf{K}),\mathcal{N})=\mathbf{K} \cdot e$ and $e$ is the unique non-vanishing section (up to scalar) of $\mathcal{N}$. In particular, we see $\mathrm{wt}_{\mathbf{T}_{\mathbf{K}}}(f^*\mathcal{L}^{\otimes m})$ is trivial, i.e., the central $\mathbf{T}$-automorphisms act trivially on the fiber $\mathcal{L}_x$. Since $\mathscr{X}$ is connected and the $\mathbf{T}$-weight is locally constant in a family, we are done.
\end{proof}
\begin{claim}\label{L0}
    $(\Bun^0_{Z^0})_{\mathcal{L}_0}^{\Theta\text{-}ss}=\Bun^0_{Z^0}=\Bun^{0,ss}_{Z^0}$.
\end{claim}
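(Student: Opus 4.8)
The plan is to establish the two equalities separately, the second being essentially formal. For $\Bun^0_{Z^0}=\Bun^{0,ss}_{Z^0}$, the point is that $Z^0$ is a torus: its adjoint representation on $\mathrm{Lie}(Z^0)$ is trivial, so for every $Z^0$-bundle $\mathcal{P}$ the adjoint bundle $\mathrm{Ad}_*\mathcal{P}$ is a trivial vector bundle on $C$ and the fibre $\mathcal{L}_{\det}|_{\mathcal{P}}=\det H^*(C,\mathrm{Ad}_*\mathcal{P})^{-1}$ is constant. Hence $\mathcal{L}_{\det}$ descends to the good moduli space and Lemma \ref{lb-from} gives $\Bun^{0,ss}_{Z^0}=(\Bun^0_{Z^0})^{\Theta\text{-}ss}_{\mathcal{L}_{\det}}=\Bun^0_{Z^0}$; equivalently, a torus has no proper parabolic subgroup, so every $Z^0$-bundle is semistable by definition.

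For the first equality I would use that, writing $Z^0\cong\mathbf{G}_m^{s}$, we have $\Bun^0_{Z^0}\cong(\Pic^0)^{s}$, a $Z^0$-gerbe over the abelian variety $(\pic^0)^{s}$. We may assume $\mathscr{U}\neq\emptyset$ (otherwise Proposition \ref{claim2} is vacuous), so that the locus $(\Bun^0_{Z^0}\times\Bun_{\hat G,L})^{a\text{-}ss}_{\mathcal{L}}$ containing $\mathscr{U}$ is non-empty. Regarding $\Bun^0_{Z^0}\times\Bun_{\hat G,L}$ as a $Z^0$-gerbe over $(\pic^0)^{s}\times\Bun_{\hat G,L}$, with the gerbe structure concentrated in the first factor, Lemma \ref{wt=0} applies with $\mathbf{T}=Z^0$ and shows that the central $Z^0$-automorphisms act trivially on $\mathcal{L}=\mathcal{L}_0\boxtimes\bigboxtimes_{i=1}^{r}\mathcal{L}_{\det,i,L}^{\otimes e_i}$. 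Since this $Z^0$ acts trivially on the second factor, it must act trivially on $\mathcal{L}_0$ itself.

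Once the central weight of $\mathcal{L}_0$ vanishes, $\mathcal{L}_0$ descends along the rigidification $\Bun^0_{Z^0}\to(\Bun^0_{Z^0})^{Z^0}=(\pic^0)^{s}$ by the analogue of \cite[Theorem 10.3]{MR3237451}, exactly as in the proof of Lemma \ref{0232}. As $(\pic^0)^{s}$ is an algebraic space, Lemma \ref{lb-from} (with $\mathcal{L}'=\mathcal{O}$) then yields $(\Bun^0_{Z^0})^{\Theta\text{-}ss}_{\mathcal{L}_0}=\Bun^0_{Z^0}$, which is what we want.

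I expect the only real subtlety to be the middle step: pinning down the central $Z^0$ inside the automorphism groups of the product stack so that Lemma \ref{wt=0} genuinely applies, and then justifying the descent of $\mathcal{L}_0$ to the algebraic space $(\pic^0)^{s}$. A more direct route, which sidesteps Lemma \ref{wt=0} but relies on the convention $f(0)\ncong x$ in Lemma \ref{1408-1}(iii), is to note that any test morphism $f:[\mathbf{A}^1/\mathbf{G}_m]\to\Bun^0_{Z^0}$ becomes constant after composing with the good moduli space morphism $\Bun^0_{Z^0}\to(\pic^0)^{s}$; hence $f(0)$ and $f(1)$ lie over the same point of this algebraic space and are therefore isomorphic, so there are no destabilising test morphisms at all and every point is $\Theta$-semistable for any $\mathcal{L}_0$.
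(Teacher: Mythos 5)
Your proof follows the paper's argument essentially verbatim: both equalities are obtained the same way — all $Z^0$-bundles are semistable because $Z^0$ is a torus, and the $\Theta$-semistable locus of $\mathcal{L}_0$ is everything because Lemma \ref{wt=0} forces the central $Z^0$-weight of $\mathcal{L}_0$ to vanish, so $\mathcal{L}_0$ descends to $(\pic^0)^s$ by the analogue of \cite[Theorem 10.3]{MR3237451} and Lemma \ref{lb-from} applies. The alternative route you sketch at the end (there are no destabilising test morphisms on a gerbe over an algebraic space, by the convention $f(0)\ncong x$) is also sound and is essentially the paper's own remark preceding Lemma \ref{lb-from}.
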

\begin{proof}
We fix an isomorphism $Z^0 \cong \mathbf{G}_m^s$. Then $\Bun^0_{Z^0} \cong \prod_{i=1}^s \Pic^0$ is a trivial $\mathbf{G}_m^s$-gerbe over the product $\prod_{i=1}^s \pic^0$ of Picard varieties. In particular
\[
\Bun^{0,ss}_{Z^0}=\Bun^0_{Z^0}.
\]
As the adequately semistable locus of $\mathcal{L}$ is non-empty, the central $Z^0$-automorphisms act trivially on $\mathcal{L}$ by Lemma \ref{wt=0}.
By definition the same holds for each $\mathcal{L}_{\det,i,L}$. Thus, the central $Z^0$-automorphisms act trivially on $\mathcal{L}_0$ as well.
By \cite[Theorem 10.3]{MR3237451},
the line bundle $\mathcal{L}_0$ descends to the good moduli space $\Bun^0_{Z^0} \to \prod_{i=1}^s \mathrm{Pic}^0$. By Lemma \ref{lb-from} such 
a line bundle does not contribute to the $\Theta$-semistable locus.
\end{proof}
Note that $(\Bun_{\hat G_i,L})^{\Theta\text{-}ss}_{\mathcal{L}_{\det,i,L}^{\otimes e_i}} \neq \emptyset$ for each $i$ as $(\Bun_{Z^0}^0 \times \Bun_{\hat G,L})^{\Theta\text{-}ss}_{\mathcal{L}}\neq \emptyset$ and \eqref{det-fun2}. Furthermore, by Lemma \ref{0232} we have
\[
(\Bun_{\hat G_i,L})_{\mathcal{L}_{\det,i,L}}^{\Theta\text{-}ss}=\lambda_L^{-1}\left((\Bun_{\hat G_i})_{\mathcal{L}_{\det,i}}^{\Theta\text{-}ss}\right).
\]
As in the almost simple and simply connected case we have $e_i > 0$ for each $i$. Similarly, we also have $c_i > 0$ for each $i$ as $(\Bun_{\hat G,L})^{\Theta\text{-}ss}_{\mathcal{L}_{\det,L}} \neq \emptyset$. Finally we compute
\begin{align*}
(\Bun^0_{Z^0}\times \Bun_{\hat G,L})_{\mathcal{L}}^{\Theta\text{-}ss}&=
(\Bun^0_{Z^0})_{\mathcal{L}_0}^{\Theta\text{-}ss} \times \prod_{i=1}^r (\Bun_{\hat G_i,L})^{\Theta\text{-}ss}_{\mathcal{L}_{\det,i,L}^{\otimes e_i}} \\
&=\Bun^{0,ss}_{Z^0} \times \prod_{i=1}^r (\Bun_{\hat G_i,L})^{\Theta\text{-}ss}_{\mathcal{L}_{\det,i,L}} \quad \text{(Claim \ref{L0})}\\
&=\Bun^{0,ss}_{Z^0} \times (\Bun_{\hat G,L})_{\mathcal{L}_{\det,L}}^{\Theta\text{-}ss} \quad \text{(by \eqref{det-fun2})} \\
&=\Bun^{0,ss}_{Z^0} \times \Bun_{\hat G,L}^{ss} \quad \text{(by \eqref{0326})} \\
&=(\Bun^0_{Z^0} \times \Bun_{\hat G,L})^{ss} \quad \text{(by definition)}.
\end{align*}
This shows that $\mathscr{U} \subseteq (\Bun^0_{Z^0} \times \Bun_{\hat G,L})^{ss}$ and we are done.
\end{proof}
\subsubsection{The vector bundles case}\label{vb-case}
For readers only interested in vector bundles, i.e., $G=\mathrm{GL}_n$, there is an alternative argument. Similarly as before, it suffices to show any line bundle $\mathcal{L}$ over $\mathscr{B}un^d_n$ such that $(\mathscr{B}un_n^d)_{\mathcal{L}}^{a\text{-}ss} \neq \emptyset$ satisfies $(\mathscr{B}un_n^d)_{\mathcal{L}}^{a\text{-}ss} \subseteq \mathscr{B}un_n^{d,ss}$. The claim is clear for $n=1$. Assume $n \geq 2$ in the following. By \cite[Theorem 5.3.1 (iv)]{MR2628848}, the morphism $\det: \mathrm{GL}_n \to \mathbf{G}_m$ induces a commutative diagram of commutative groups:
\[
\begin{tikzcd}
0 \ar[r] & \mathrm{Hom}(\pi_1(\mathbf{G}_m),J_C) \ar[r] \ar[d,"\det_*"'] & \mathrm{Pic}(\mathscr{P}ic^d) \ar[r] \ar[d,"\det^*"] & \mathrm{NS}(\mathscr{P}ic^d) \ar[r] \ar[d] & 0\\
0 \ar[r] & \mathrm{Hom}(\pi_1(\mathrm{GL}_n),J_C) \ar[r] & \mathrm{Pic}(\mathscr{B}un^d_n) \ar[r] \ar[d] & \mathrm{NS}(\mathscr{B}un^d_n) \ar[r] & 0\\
 &  & \mathrm{Pic}(\mathscr{B}un_{n,L}) \ar[d,equal,"\text{\cite[Prop 4.2.3 and Thm 4.2.1 (i)]{MR2628848}}"] &  & \\
 & & \mathbf{Z},
\end{tikzcd}
\]
where
\begin{itemize}
\item 
the left-most column is an isomorphism induced by $\det_*: \pi_1(\mathrm{GL}_n) \xrightarrow{\sim} \pi_1(\mathbf{G}_m)$, where $J_C$ denotes the Jacobian of $C$.
\item 
the middle column is exact, induced by the Cartesian diagram
\[
\begin{tikzcd}
\mathscr{B}un_{n,L} \ar[r] \ar[d] & \mathscr{B}un_n^d \ar[d,"\det"] \\
\mathrm{Spec}(k) \ar[r,"L"'] & \mathscr{P}ic^d, \ar[ul,phantom,"\lrcorner"]
\end{tikzcd}
\]
where $L$ is a fixed line bundle of degree $d$ over $C$. The morphism $\mathrm{Pic}(\mathscr{B}un^d_n) \to \mathrm{Pic}(\mathscr{B}un_{n,L})$ is surjective by \cite[Theorem 3.1]{MR2925604}.
\item 
the right-most column is injective by \cite[Proposition 5.2.11]{MR2628848}, where $\mathrm{NS}(-)$ denotes the N\'{e}ron-Severi group.
\end{itemize}
Applying the Snake Lemma yields a short exact sequence
\[
0 \to \mathrm{Pic}(\mathscr{P}ic^d) \xrightarrow{\mathrm{det}^*} \mathrm{Pic}(\mathscr{B}un_n^d) \to \mathrm{Pic}(\mathscr{B}un_{n,L}) \to 0.
\]
Further, it is split since the morphism $\det: \mathscr{B}un_n^d \to \mathscr{P}ic^d$ has a section $\mathcal{N} \mapsto \mathcal{N} \oplus \mathcal{O}_C^{\oplus n-1}$. Then for any line bundle $\mathcal{L}$ over $\mathscr{B}un_n^d$, replacing it by some positive multiple we may assume
\[
\mathcal{L} \cong {\det}^* \mathcal{L}_0 \otimes \mathcal{L}_{\det}^{\otimes e} \text{ for some line bundle } \mathcal{L}_0 \text{ over } \mathscr{P}ic^d \text{ and integer } e.
\]
Note that the stack $\mathscr{B}un_n^d$ is a $\mathbf{G}_m$-gerbe over the rigidified stack $\overline{\mathscr{B}un}_n^d:=[\mathscr{B}un_n^d/\mathrm{B}\mathbf{G}_m]$ obtained by dividing all automorphisms by $\mathbf{G}_m$ (see \cite[Theorem 5.1.5]{MR2007376}).
By Lemma \ref{wt=0} the central $\mathbf{G}_m$-automorphisms act trivially on $\mathcal{L}$. Since in the definition of $\mathcal{L}_{\det}$ we use the determinant of $H^*(C,\mathcal{E}nd(\mathcal{E}))$ instead of $H^*(C,\mathcal{E})$, the central $\mathbf{G}_m$-automorphisms act trivially on $\mathcal{L}_{\det}$ and hence trivially on $\mathcal{L}_0$. Then $\mathcal{L}_0$ descends to the good moduli space $g: \mathscr{P}ic^d \to \mathrm{Pic}^d$ (see \cite[Theorem 10.3]{MR3237451}). Finally, we obtain
\[
\mathcal{L} \cong (g \circ \mathrm{det})^*\mathcal{N} \otimes \mathcal{L}_{\det}^{\otimes e} \text{ for some line bundle } \mathcal{N} \text{ over }\mathrm{Pic}^d.
\]
First we exclude the case $e=0$.
\begin{claim}
If $(\mathscr{B}un_n^d)_{\mathcal{L}}^{a\text{-}ss} \neq \emptyset$, then $e \neq 0$.
\end{claim}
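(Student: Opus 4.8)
The plan is to imitate the treatment of the case $e=0$ in the almost simple, simply connected setting, where one shows that the adequately semistable locus is empty because the non-vanishing locus of any section of the relevant line bundle fails to be quasi-compact, and hence cannot be adequately affine (recall that an adequately affine stack over $\Spec(k)$ is in particular quasi-compact). Write $h := g\circ\det\colon \mathscr{B}un_n^d\to\mathrm{Pic}^d$. The hypothesis $e=0$ means $\mathcal{L}\cong h^*\mathcal{N}$ factors through the algebraic space $\mathrm{Pic}^d$; equivalently, $\mathcal{L}$ lies in the image of $\det^*$, so by the split short exact sequence established above its restriction to every fixed-determinant fibre is trivial. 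I would argue by contradiction, fixing a geometric point $x\in(\mathscr{B}un_n^d)_{\mathcal{L}}^{a\text{-}ss}(\mathbf{K})$ together with a section $s\in\Gamma(\mathscr{B}un_n^d,\mathcal{L}^{\otimes m})$, $m>0$, with $s(x)\neq 0$ and $(\mathscr{B}un_n^d)_s$ adequately affine, hence quasi-compact.

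The key step is to restrict to the fibre of $h$ through $x$. Set $p:=h(x)$, a closed point of the variety $\mathrm{Pic}^d$, and $F:=h^{-1}(p)$; since $h$ is continuous and $\{p\}$ is closed, $F$ is a closed substack of $\mathscr{B}un_n^d$. As $\mathcal{L}=h^*\mathcal{N}$, the restriction $\mathcal{L}^{\otimes m}|_F$ is the pullback of the one-dimensional space $\mathcal{N}_p^{\otimes m}$, hence is the trivial line bundle $\mathcal{O}_F$, so that $s|_F\in\Gamma(F,\mathcal{O}_F)$. The fibre $F$ is the stack of rank $n$ bundles with determinant isomorphic to a line bundle $M$ representing $p$; it is connected and satisfies $\Gamma(F,\mathcal{O}_F)=k$. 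Therefore $s|_F$ is a constant, which is non-zero because $s(x)\neq 0$; consequently $s$ vanishes nowhere on $F$, i.e. $F\subseteq(\mathscr{B}un_n^d)_s$.

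To finish, $F$ is a closed substack of the quasi-compact stack $(\mathscr{B}un_n^d)_s$ and so would itself be quasi-compact. But $F$ is not of finite type, since it contains rank $n$ bundles of fixed determinant with arbitrarily large Harder--Narasimhan polygons; this contradiction forces $(\mathscr{B}un_n^d)_{\mathcal{L}}^{a\text{-}ss}=\emptyset$ when $e=0$, i.e. $e\neq 0$. The main obstacle is to justify the two properties of $F$ used above: that it carries only constant global functions and that it is not quasi-compact. These are the $\mathrm{SL}_n$-analogues of the inputs $\Gamma(\mathscr{B}un_G,\mathcal{O})=k$ and failure of quasi-compactness exploited in the almost simple, simply connected case; I would deduce $\Gamma(F,\mathcal{O}_F)=k$ from the corresponding statement for the semisimple simply connected group $\mathrm{SL}_n$ via \cite[Theorem 5.3.1 (i)]{MR2628848} (using that $F$ and the fixed-determinant stack differ by a $\mathbf{G}_m$-torsor, along which global functions inject), and the non-quasi-compactness from the standard unbounded behaviour of fixed-determinant bundles.
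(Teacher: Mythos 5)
Your argument is correct and takes essentially the same route as the paper: both proofs restrict to the fixed-determinant fibre $\mathscr{B}un_{n,\cong L}$, where $\mathcal{L}$ becomes trivial, and then exploit that this fibre is connected with only constant global functions yet is not quasi-compact, contradicting the quasi-compactness forced by adequate affineness. The only cosmetic difference is that the paper transports the adequate-semistability condition to the fibre via base change of adequately affine morphisms (\cite[Lemma 5.2.11]{MR3272912}), whereas you observe directly that the fibre sits as a closed substack inside the quasi-compact non-vanishing locus $(\mathscr{B}un_n^d)_s$; both reductions are valid.
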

\begin{proof}
If $e=0$, then $\mathcal{L} \cong (g \circ \det)^*\mathcal{N}$. Let $L \in \mathrm{Pic}^d(k)$ be a closed point such that the fiber product $\mathscr{X}:=(\mathscr{B}un_n^d)_{\mathcal{L}}^{a\text{-}ss} \times_{\mathrm{Pic}^d} \mathrm{Spec}(k)$ is non-empty, i.e.,
\[
\begin{tikzcd}
\mathscr{X} \ar[r] \ar[d,hook] & \mathscr{B}un_{n,\cong L} \ar[d,hook] \ar[rr] &  \ & \mathrm{Spec}(k) \ar[d,hook,"L"] \\
(\mathscr{B}un_n^d)_{\mathcal{L}}^{a\text{-}ss} \ar[r] & \mathscr{B}un_n^d \ar[r,"\det"'] \ar[ul,phantom,"\lrcorner"] &  \mathscr{P}ic^d \ar[r,"g"'] & \ar[ul,phantom, "\lrcorner"] \mathrm{Pic}^d.
\end{tikzcd}
\]
By \cite[Lemma 5.2.11]{MR3272912} we have
\[
\emptyset \neq \mathscr{X} \subseteq 
(\mathscr{B}un_{n,\cong L})^{a\text{-}ss}_{\mathcal{L}'} \text{ for } \mathcal{L}':=\mathcal{L}|_{\mathscr{B}un_{n,\cong L}}.
\]
Since $\mathcal{L}$ is pulled back from $\mathrm{Pic}^d$, it follows that $\mathcal{L}' \cong \mathcal{O}_{\mathscr{B}un_{n,\cong L}}$. Note that since $\mathscr{B}un_{n,\cong L}$ is connected, reduced, and satisfies the existence part of the valuative criterion for properness, every global section of $\mathcal{O}_{\mathscr{B}un_{n,\cong L}}$ is constant (see also, e.g. \cite[Proposition 4.4.7 (i)]{MR2628848}). This implies that $(\mathscr{B}un_{n,\cong L})^{a\text{-}ss}_{\mathcal{L}'}=\emptyset$, since $\mathscr{B}un_{n,\cong L}$ is not quasi-compact, a contradiction.
\end{proof}
Hereafter we assume $e \neq 0$. By Lemma \ref{1408-1} and Lemma \ref{lb-from} we have 
\[
(\mathscr{B}un_n^d)_{\mathcal{L}}^{a\text{-}ss} \subseteq (\mathscr{B}un_n^d)_{\mathcal{L}}^{\Theta\text{-}ss}=(\mathscr{B}un_n^d)_{\mathcal{L}_{\det}^{\otimes e}}^{\Theta\text{-}ss}.
\]
Further, we can compute that
\begin{align*}
(\mathscr{B}un_n^d)_{\mathcal{L}}^{a\text{-}ss} \subseteq (\mathscr{B}un_n^d)_{\mathcal{L}}^{\Theta\text{-}ss}=(\mathscr{B}un_n^d)_{\mathcal{L}_{\det}^{\otimes e}}^{\Theta\text{-}ss} &=\begin{cases}
(\mathscr{B}un_n^d)_{\mathcal{L}_{\det}}^{\Theta\text{-}ss} & \text{if } e>0 \\
(\mathscr{B}un_n^d)_{\mathcal{L}_{\det}^{-1}}^{\Theta\text{-}ss} & \text{if } e<0
\end{cases} \\
&=\begin{cases}
\mathscr{B}un_n^{d,ss} & \text{if } e>0 \text{ (by \cite[Lemma 1.11]{MR3758902})} \\
\emptyset & \text{if } e<0.
\end{cases}
\end{align*}
Indeed, any vector bundle $\mathcal{E}$ of rank $n \geq 2$ admits a sub-line bundle $\mathcal{L}$ with $\mu(\mathcal{L})<\mu(\mathcal{E})$. By \cite[1.E.b and 1.E.c]{MR3758902} the two step filtration $0 \subseteq \mathcal{L} \subseteq \mathcal{E}$ defines a testing morphism $f: [\mathbf{A}^1/\mathbf{G}_{m}] \to \mathscr{B}un_n^d$ with $f(1) \cong \mathcal{E}$ and it is computed that
\[
\mathrm{wt}_{\mathbf{G}_{m}}(f^*\mathcal{L}_{\det}^{-1})=-\mathrm{wt}_{\mathbf{G}_{m}}(f^*\mathcal{L}_{\det})=-2(n-1)(\mu(\mathcal{L})-\mu(\mathcal{E}/\mathcal{L}))>0.
\]
This shows that no vector bundle of rank $n \geq 2$ is $\Theta$-semistable with respect to $\mathcal{L}_{\det}^{-1}$.
\subsection{Applications}
Recall that the moduli stack of simple vector bundles admits a (in general non-separated) good moduli space (see \cite[Theorem 1]{MR0184252} or via rigidification by the central $\gm$ automorphisms as in \cite[Theorem 5.1.5]{MR2007376}). It is known to experts that this moduli space is not a scheme and such fact can be deduced directly from Theorem \ref{tba}.
\begin{cor}\label{1938-new}
If there exist simple unstable vector bundles of rank $n \geq 2$ and degree $d$ (e.g. if $g_C \geq 4$), then the good moduli space of simple vector bundles of rank $n$ and degree $d$ is only an algebraic space, not a scheme.
\end{cor}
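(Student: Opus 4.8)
The plan is to use Theorem~\ref{tba} (the characteristic $0$ version being Theorem~\ref{1535-2}) as a black box and argue by contradiction. Suppose the good moduli space $M^{\mathrm{si}}$ of simple vector bundles of rank $n$ and degree $d$ were a scheme. The moduli stack $\mathscr{B}un_n^{d,\mathrm{si}} \subseteq \mathscr{B}un_n^d$ of simple vector bundles is an open substack (simplicity is an open condition), and it admits a good moduli space by the cited results of Mumford or via rigidification. If that good moduli space is schematic, then $\mathscr{B}un_n^{d,\mathrm{si}}$ is an open substack of $\mathscr{B}un_n^d$ admitting a schematic (adequate/good) moduli space, so by the maximality statement of Theorem~\ref{tba} we must have $\mathscr{B}un_n^{d,\mathrm{si}} \subseteq \mathscr{B}un_n^{d,ss}$.

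The contradiction then comes from the hypothesis: there exists a simple unstable vector bundle $\mathcal{E}$ of rank $n$ and degree $d$. Such an $\mathcal{E}$ defines a geometric point of $\mathscr{B}un_n^{d,\mathrm{si}}$ that does \emph{not} lie in $\mathscr{B}un_n^{d,ss}$, directly violating the inclusion $\mathscr{B}un_n^{d,\mathrm{si}} \subseteq \mathscr{B}un_n^{d,ss}$ forced above. Hence the good moduli space cannot be a scheme; being a good moduli space it is in any case an algebraic space, which gives the stated conclusion.

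It remains to justify the parenthetical existence claim, namely that simple unstable bundles of rank $n \geq 2$ exist when $g_C \geq 4$. The natural construction is an extension: take line bundles $\mathcal{L}_1, \mathcal{L}_2$ on $C$ with $\deg \mathcal{L}_1$ large relative to $\deg \mathcal{L}_2$, so that a nonsplit extension
\[
0 \to \mathcal{L}_1 \to \mathcal{E} \to \mathcal{L}_2 \to 0
\]
is destabilized by the sub-line-bundle $\mathcal{L}_1$ (forcing $\mathcal{E}$ to be unstable), while simplicity of $\mathcal{E}$ is arranged by choosing the extension class generically and the $\mathcal{L}_i$ with no nonzero maps $\mathcal{L}_1 \to \mathcal{L}_2$; the dimension count $\dim \mathrm{Ext}^1(\mathcal{L}_2,\mathcal{L}_1) = \deg(\mathcal{L}_1 \otimes \mathcal{L}_2^{-1}) + g_C - 1$ being positive is what requires $g_C$ large enough. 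For higher rank one iterates this, building $\mathcal{E}$ as a generic extension of a simple bundle by a line bundle of suitably high degree.

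The main obstacle is not the stacky implication, which is immediate once the simple locus is identified as an open substack containing unstable points, but rather the verification that a genuinely simple \emph{and} unstable bundle exists. The delicate part is ensuring simplicity survives the extension construction: one must check that the only endomorphisms of $\mathcal{E}$ are scalars, which requires controlling $\mathrm{Hom}(\mathcal{L}_2,\mathcal{L}_1)$ and the action of the extension class, and it is precisely here that the genus bound $g_C \geq 4$ enters to guarantee enough room in the relevant $\mathrm{Ext}$ and $\mathrm{Hom}$ groups.
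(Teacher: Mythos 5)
Your reduction to Theorem \ref{tba} is exactly the paper's argument: the simple locus is open, it admits a good (hence adequate) moduli space, and if that space were a scheme the maximality statement would force $\mathscr{B}un_n^{d,\mathrm{si}}\subseteq\mathscr{B}un_n^{d,ss}$, contradicting the hypothesized simple unstable bundle. That part is correct and complete.

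The genuine gap is in your justification of the parenthetical existence claim, which you correctly identify as the delicate point but then get wrong in the details. First, in the rank $2$ extension $0\to\mathcal{L}_1\to\mathcal{E}\to\mathcal{L}_2\to 0$ with $\deg\mathcal{L}_1>\deg\mathcal{L}_2$, the condition $\mathrm{Hom}(\mathcal{L}_1,\mathcal{L}_2)=0$ that you impose is automatic and irrelevant; the obstruction to simplicity is $\mathrm{Hom}(\mathcal{L}_2,\mathcal{L}_1)=H^0(\mathcal{L}_1\otimes\mathcal{L}_2^{-1})$, since any nonzero map from the quotient to the sub yields a nonzero nilpotent endomorphism of $\mathcal{E}$. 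Killing this forces $\mathcal{L}_1\otimes\mathcal{L}_2^{-1}$ to be non-effective of positive degree, hence $1\le\deg(\mathcal{L}_1\otimes\mathcal{L}_2^{-1})\le g_C-1$: the degree gap must be \emph{small}, not large. Relatedly, the relevant dimension is $\dim\mathrm{Ext}^1(\mathcal{L}_2,\mathcal{L}_1)=h^1(\mathcal{L}_1\otimes\mathcal{L}_2^{-1})=g_C-1-\deg(\mathcal{L}_1\otimes\mathcal{L}_2^{-1})$ (when $h^0=0$), not $\deg(\mathcal{L}_1\otimes\mathcal{L}_2^{-1})+g_C-1$; you have computed $\mathrm{Ext}^1(\mathcal{L}_1,\mathcal{L}_2)$ instead, and with your formula no genus constraint would ever appear. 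Second, the higher-rank iteration ``extension of a simple bundle by a line bundle of suitably high degree'' fails outright: if $0\to\mathcal{L}\to\mathcal{E}\to\mathcal{F}\to 0$ with $\deg\mathcal{L}\gg 0$ and $\mathcal{F}$ simple, then $\mathrm{Hom}(\mathcal{F},\mathcal{L})=H^0(\mathcal{F}^{\vee}\otimes\mathcal{L})\ne 0$, and again every nonzero element gives a nilpotent endomorphism of $\mathcal{E}$. The paper's construction (Remark \ref{simple-exist}) reverses the roles: after twisting so that $0<d\le n$, take a \emph{stable} sub $\mathcal{V}$ of rank $n-1$ and small positive slope with $H^0(C,\mathcal{V})=0$ and $H^1(C,\mathcal{V})\ne 0$, and quotient $\mathcal{O}_C$; stability of $\mathcal{V}$ of positive slope kills $\mathrm{Hom}(\mathcal{V},\mathcal{O}_C)$, and $H^0(\mathcal{V})=0$ kills the residual $\mathrm{Hom}(\mathcal{O}_C,\mathcal{V})$. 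You would need to restructure your construction along these lines for the claim ``e.g.\ if $g_C\ge 4$'' to be justified.
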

\begin{rmk}\label{simple-exist}
Simple unstable vector bundles of rank $n \geq 2$ and degree $d$ are quite abundant. Here we sketch a construction generalizing \cite[Remark 12.3]{MR0184252}. Twisting by a degree $1$ line bundle we may assume that $0 < d \leq n$. Choose a stable vector bundle $\mathcal{V}$ of rank $n-1$ and degree $d$ such that $H^0(C,\mathcal{V})=0$ and $H^1(C,\mathcal{V}) \neq 0$ (such a vector bundle exists whenever $g_C \geq 2$ and $d<(n-1)(g_C-1)$). For any non-trivial extension $0 \to \mathcal{V} \to \mathcal{E} \to \mathcal{O}_C \to 0$ we claim that $\mathcal{E}$ is unstable and simple. Indeed, observe that any non-zero endomorphism $\psi$ of $\mathcal{E}$ induces an endomorphism of $\mathcal{V}$ since $\mathcal{V}$ is stable of positive slope. As $\mathcal{V}$ is stable this endomorphism is given by a scalar, say $\lambda$. Then the endomorphism $\psi-\lambda \cdot \mathrm{id}_{\mathcal{E}}$ of $\mathcal{E}$ yields a global section of $\mathcal{V}$, otherwise the defining sequence splits. By assumption $\mathcal{V}$ has no non-zero global sections. This shows that $\psi$ itself is a scalar and hence $\mathcal{E}$ is simple.
The conditions $0 < d \leq n \text{ (mod } n), g_C \geq 2$ and $d<(n-1)(g_C-1)$ are satisfied in the following cases: 
\begin{itemize}
\item 
$g_C \geq 4$.
\item
$g_C=3$ and $n \geq 3$, or $n=2$ and $d$ is odd.
\item 
$g_C=2$ and neither of $d, d+1$ is a multiple of $n$.
\end{itemize}
This construction can also be used to show that the good moduli space of simple vector bundles is non-separated, see Corollary \ref{simple-non-S}.
\end{rmk}
Finally, we remark that Theorem \ref{tba} provides another example that the local structure theorem in \cite[Theorem 1.1]{MR4088350} is optimal. To be precise we have the following:
\begin{cor}
The stack $\mathscr{B}un_G$ is only \'{e}tale-locally a quotient stack around unstable bundles with linearly reductive stabilizer groups, not Zariski-locally.
\end{cor}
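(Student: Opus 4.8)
The plan is to treat the two halves of the statement separately: the positive (\'etale-local) assertion will be a direct invocation of the local structure theorem, while the negative (Zariski-local) assertion will be a short contradiction argument powered entirely by Theorem~\ref{tba}. First I would record that unstable bundles with linearly reductive stabilizer groups genuinely occur, so that the statement is not vacuous. A simple vector bundle has automorphism group $\mathbf{G}_m$, a torus and hence linearly reductive, and simple unstable bundles exist by Remark~\ref{simple-exist}; for a general reductive $G$ one instead takes an unstable bundle whose automorphism group is the center $Z(G)$, which is of multiplicative type and therefore linearly reductive in every characteristic. Since $\mathscr{B}un_G$ is smooth with affine diagonal, the local structure theorem \cite[Theorem~1.1]{MR4088350} applies at any such point $x$ and produces an \'etale neighborhood of the form $[\Spec A/G_x]$ with $G_x$ the linearly reductive stabilizer. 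This settles the \'etale-local part.

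For the negative part I would argue by contradiction. Suppose that around some unstable point $x$ with linearly reductive stabilizer the stack $\mathscr{B}un_G$ were Zariski-locally a quotient stack, i.e. there existed a Zariski-open substack $\mathscr{U} \subseteq \mathscr{B}un_G$ with $x \in \mathscr{U}$ and an isomorphism $\mathscr{U} \cong [\Spec A/H]$ for some linearly reductive group $H$ acting on an affine scheme $\Spec A$. The key observation is that such a quotient stack is cohomologically affine, so that $\mathscr{U} = [\Spec A/H] \to \Spec(A^H)$ is a good moduli space, in particular an adequate one (see \cite{MR3237451} and \cite[Proposition~4.2.1~(2)]{MR3272912}), whose target $\Spec(A^H)$ is \emph{affine} and hence schematic.

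At this point Theorem~\ref{tba} supplies the contradiction: it asserts that $\mathscr{B}un_G^{ss}$ is the unique maximal open substack admitting a schematic adequate moduli space, so every open substack carrying a schematic adequate moduli space is contained in $\mathscr{B}un_G^{ss}$. Applying this to $\mathscr{U}$ forces $\mathscr{U} \subseteq \mathscr{B}un_G^{ss}$, which is impossible since the unstable point $x$ lies in $\mathscr{U}$ but not in $\mathscr{B}un_G^{ss}$; hence no such Zariski-local presentation exists. The only delicate point I expect is the conversion step of the second paragraph: one must be careful that a hypothetical Zariski chart really has the admissible shape, namely an affine scheme modulo a linearly reductive group, so that it yields a schematic adequate moduli space and thereby falls within the scope of Theorem~\ref{tba}. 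Once this is granted, the maximality statement does all of the remaining work, and the argument is essentially formal.
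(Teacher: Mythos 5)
Your argument is correct and is exactly the one the paper intends: the corollary is stated there without proof as an immediate consequence of the local structure theorem (for the \'etale part) and of Theorem~\ref{tba} applied to a hypothetical Zariski chart $[\Spec A/H]$, whose good moduli space $\Spec(A^H)$ is affine and hence schematic, forcing the chart into $\mathscr{B}un_G^{ss}$. The only soft spot is your unjustified assertion that for general reductive $G$ there exist unstable bundles with automorphism group $Z(G)$; this affects only the non-vacuity of the statement (which the paper itself only addresses for vector bundles via Remark~\ref{simple-exist}), not the logic of the proof.
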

\section{Existence criteria for vector bundles}\label{existence}
In this section, we translate the existence criteria for algebraic stacks to admit separated adequate moduli spaces stated in \cite{MR4665776} for vector bundles.
\begin{thm}[(\cite{MR4665776}, Theorem 5.4)]\label{thm0}
Let $\mathscr{X}$ be a quasi-compact algebraic stack, locally of finite type and with affine diagonal over a field $k$. Suppose $\mathscr{X}$ is locally reductive. Then $\mathscr{X}$ admits a separated adequate moduli space $X$ if and only if 
\begin{itemize}
\item 
$\mathscr{X}$ is $\Theta$-reductive and
\item
$\mathscr{X}$ is S-complete.
\end{itemize}
The separated adequate moduli space $X$ is proper if and only if $\mathscr{X}$ in addition satisfies the existence part of the valuative criterion for properness.
\end{thm}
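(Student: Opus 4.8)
The plan is to prove both implications through the étale-local structure theory of stacks with good moduli spaces, reducing everything to affine GIT quotients and then controlling how these local pictures glue. The starting point is the local structure theorem of Alper--Hall--Rydh: local reductivity guarantees that around each point $x \in \mathscr{X}$ with linearly reductive stabilizer $G_x$ there is an affine étale chart $[\Spec A / G_x] \to \mathscr{X}$, on which the adequate moduli space is the affine quotient $\Spec(A^{G_x})$ (here finite generation of invariants, i.e. adequate affineness, is what makes this a scheme). The theorem then amounts to the assertion that these local quotients glue to a global separated algebraic space $X$ precisely when the two stated conditions hold.

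For the existence (``if'') direction, I would assume $\mathscr{X}$ is $\Theta$-reductive and S-complete and assemble the charts $\Spec(A^{G_x})$ into an algebraic space, using that the formation of an adequate moduli space is étale-local on the target. First I would use $\Theta$-reductivity --- the filling criterion for maps $\Theta_R \setminus \{0\} \to \mathscr{X}$ over a DVR $R$ --- to show that every geometric point specializes, within its orbit closure, to a \emph{unique} closed point; the canonical degeneration produced by such a filling gives a well-defined set-theoretic map $\mathscr{X} \to X$ identifying points with a common closed degeneration. Next I would use S-completeness, namely the filling criterion for maps out of the test stack $\overline{\mathrm{ST}}_R = [\Spec(R[s,t]/(st-\pi))/\gm]$, to prove that this identification is \emph{separated}: two closed points with isomorphic image in $X$ are already isomorphic in $\mathscr{X}$. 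Combining the two, the charts glue along open subspaces of $X$, and one checks directly that $\mathscr{X} \to X$ is initial among maps to algebraic spaces and adequately affine, hence an adequate moduli space.

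For the converse (``only if''), suppose $\pi \colon \mathscr{X} \to X$ is a separated adequate moduli space. To verify $\Theta$-reductivity I would take a map $\Theta_R \setminus \{0\} \to \mathscr{X}$; composing with $\pi$ and using that $X$ is an algebraic space (so carries no stacky obstruction to extending over the missing point), the map to $X$ extends, and the adequate-moduli-space property together with affineness of the diagonal lets me lift the extension back to $\mathscr{X}$. For S-completeness I would argue analogously with $\overline{\mathrm{ST}}_R$: separatedness of $X$ forces the two one-parameter limits to have equal image in $X$, and uniqueness of the closed point in a fibre of $\pi$ promotes this to an isomorphism in $\mathscr{X}$, giving the required filling. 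Finally, for the properness addendum, once $X$ is separated its properness is equivalent to being universally closed of finite type; universal closedness translates, via $\pi$, into exactly the existence part of the valuative criterion for $\mathscr{X}$, which is the stated condition.

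The hard part will be the gluing step in the existence direction. The delicate point is not the existence of local GIT charts but showing that they are \emph{saturated} --- that each chart is the preimage of an open subscheme of the candidate moduli space --- so that they patch along $X$ rather than merely along $\mathscr{X}$. This requires upgrading the set-theoretic identification of points into an honest étale equivalence relation on the charts, and it is precisely here that S-completeness (making the relation a monomorphism) and $\Theta$-reductivity (making it closed under the relevant degenerations) are both indispensable; the Luna-slice and Rydh-type descent arguments verifying representability of this relation are the technical heart of the proof.
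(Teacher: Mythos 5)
This theorem is not proved in the paper: it is quoted verbatim from \cite[Theorem 5.4]{MR4665776} and used as a black box, so there is no internal proof to compare your sketch against. Measured instead against the proof in the cited reference, your outline does follow the same architecture: étale quotient presentations $[\Spec(A)/\mathrm{GL}_n]\to\mathscr{X}$ around closed points supplied by local reductivity, refinement of these to saturated (``strongly étale'') charts whose affine quotients glue to $X$, and, for the converse, descent of the test maps $\Theta_R-\{0\}\to\mathscr{X}$ and $\overline{\mathrm{ST}}_R-\{0\}\to\mathscr{X}$ through $\pi$ to $\mathrm{Spec}(R)$ followed by a direct verification on the fiber product. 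You also correctly locate the technical heart in the saturation of the charts.

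That said, as written the proposal has genuine gaps at exactly the points where the real work happens. In the only-if direction, extending the composite $\Theta_R-\{0\}\to X$ over the puncture is indeed automatic (it factors through $\mathrm{Spec}(R)$ since the missing point has codimension $2$), but the assertion that ``the adequate-moduli-space property together with affineness of the diagonal lets me lift the extension back to $\mathscr{X}$'' is not a formal consequence of anything you have stated: after base change one must prove $\Theta$-reductivity and S-completeness of $[\Spec(A_R)/\mathrm{GL}_{N,R}]$ directly, which is a concrete and nontrivial statement about extending graded submodules (see \cite[Propositions 3.21 and 3.44]{MR4665776}, and compare Remark \ref{1505-5} of this paper). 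In the if direction, the claims that the charts admit a well-defined identification of points and can be made saturated rest on openness statements --- that the locus where an étale morphism preserves closedness of points and is stabilizer-preserving is open, and that closed points of $\mathscr{X}$ have closed preimage behaviour under the presentations --- which occupy most of \cite[\S\S 4--5]{MR4665776}; your sketch names this difficulty but does not address it (and note that for adequate moduli spaces in positive characteristic the stabilizers need only be geometrically reductive, so the charts are $[\Spec(A)/\mathrm{GL}_n]$ rather than $[\Spec(A)/G_x]$ with $G_x$ linearly reductive). In short, the proposal is a correct road map of the known proof rather than a proof.
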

\subsection{$\Theta$-reductivity}
Denote by 
\begin{itemize}
\item
$\mathrm{B}\mathbf{G}_m:=[\mathrm{Spec}(\mathbf{Z})/\mathbf{G}_m]$ the classifying stack of the multiplicative group $\mathbf{G}_m$.
\item
$\Theta:=[\mathbf{A}^1/\mathbf{G}_m]$ the quotient stack defined by the standard contracting action of $\mathbf{G}_m$ on the affine line $\mathbf{A}^1=\mathrm{Spec}(\mathbf{Z}[t])$.
\end{itemize}
Both stacks are defined over $\mathrm{Spec}(\mathbf{Z})$ and thus pull back to any base. For any DVR $R$ with fraction field $K$ and residue field $\kappa$, let $\Theta_R:=\Theta \times \mathrm{Spec}(R)$ and let $0:=[0/\mathbf{G}_m] \times \mathrm{Spec}(\kappa)$ be its unique closed point.
\begin{defn}[(\cite{MR4665776}, Definition 3.10)]
A locally noetherian algebraic stack $\mathscr{X}$ over a field $k$ is $\Theta$\emph{-reductive} if for every DVR $R$, any commutative diagram
\[
\begin{tikzcd}
\Theta_R-\{0\} \ar[r] \ar[d,hook] & \mathscr{X} \ar[d] \\
\Theta_R \ar[ur,dashed,"{\exists !}"'] \ar[r] & \mathrm{Spec}(k)
\end{tikzcd}
\]
of solid arrows can be uniquely filled in.
\end{defn}
\begin{rmk}\label{rmk:theta-red}
Observe that $\Theta_R-\{0\}$ is covered by the two open substacks
\[
[\mathbf{A}^1-\{0\}/\mathbf{G}_m] \times \mathrm{Spec}(R) \cong \mathrm{Spec}(R) \text{ and } [\mathbf{A}^1/\mathbf{G}_m] \times \mathrm{Spec}(K)=\Theta_K
\]
glued along $\mathrm{Spec}(K)$. Thus, a morphism $\Theta_R-\{0\} \to \mathscr{X}$ corresponds to the data of morphisms $\mathrm{Spec}(R) \to \mathscr{X}$ and $\Theta_K \to \mathscr{X}$ together with an isomorphism of their restrictions to $\mathrm{Spec}(K)$.
\end{rmk}
To investigate $\Theta$-reductivity for open substacks of $\mathscr{B}un_n^d$, we enlarge the target and classify morphisms $\Theta \to \mathscr{C}oh_n^d$ to the stack of coherent sheaves. By definition these morphisms correspond to coherent sheaves over $C \times \mathbf{A}^1$ flat over $\mathbf{A}^1$ and $\mathbf{G}_m$-equivariant for the action defined on the coordinate parameter, which can be further characterized using the Rees construction.
\begin{lem}[(\cite{MR3758902}, Lemma 1.10)]\label{1106}
Let $\mathscr{U} \subseteq \mathscr{C}oh_n^d$ be an open substack. Let $T$ be a locally noetherian scheme over $k$. A morphism $\Theta \times T \to \mathscr{U}$ is equivalent to the data of 
\begin{itemize}
\item 
a coherent sheaf $\mathcal{E}_T$ over $C \times T$, flat over $T$ such that for any point $t \in T$ the fiber $\mathcal{E}_t$ is coherent of rank $n$ and degree $d$.
\item 
a finite $\mathbf{Z}$-graded ascending filtration $\mathcal{E}_T^\bullet$ of $\mathcal{E}_T$ such that each $\mathcal{E}_T^i/\mathcal{E}_T^{i-1}$ is flat over $T$ and the associated graded sheaf $\mathrm{gr}(\mathcal{E}_T^\bullet) \in \mathscr{U}(T)$.
\end{itemize}
\end{lem}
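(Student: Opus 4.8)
The plan is to unwind the functor of points of $\mathscr{C}oh_n^d$, pass to $\mathbf{G}_m$-equivariant sheaves via the presentation $\Theta = [\mathbf{A}^1/\mathbf{G}_m]$, and then apply the Rees construction. First I would recall that a morphism $\Theta \times T \to \mathscr{C}oh_n^d$ is by definition a coherent sheaf $\mathcal{F}$ on $C \times \Theta \times T$, flat over $\Theta \times T$, whose fibers are coherent of rank $n$ and degree $d$. Since $C \times \mathbf{A}^1 \times T \to C \times \Theta \times T$ is a $\mathbf{G}_m$-torsor, hence faithfully flat, fppf descent identifies $\mathcal{F}$ with a $\mathbf{G}_m$-equivariant coherent sheaf $\widetilde{\mathcal{F}}$ on $C \times \mathbf{A}^1 \times T$, and identifies flatness over $\Theta \times T$ with flatness of $\widetilde{\mathcal{F}}$ over $\mathbf{A}^1 \times T$. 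A $\mathbf{G}_m$-equivariant coherent sheaf on $C \times \mathbf{A}^1 \times T$ is the same datum as a $\mathbf{Z}$-graded coherent $\mathcal{O}_{C\times T}[t]$-module $\widetilde{\mathcal{F}} = \bigoplus_i \mathcal{F}_i$, the action of $t$ supplying $\mathcal{O}_{C\times T}$-linear maps between the graded pieces.

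The heart of the argument is the Rees equivalence between such flat graded modules and finite filtrations. Because $k[t]$ is a graded principal ideal domain, a $\mathbf{Z}$-graded coherent $\mathcal{O}_{C\times T}[t]$-module is flat over $\mathbf{A}^1 \times T$ precisely when it is $t$-torsion-free, i.e. when each map given by $t$ is injective. From this I would read off an ascending filtration $\mathcal{E}_T^\bullet$ of $\mathcal{E}_T := \widetilde{\mathcal{F}}|_{\{1\}\times T}$, the inclusions $\mathcal{E}_T^{i-1} \hookrightarrow \mathcal{E}_T^i$ being recorded by $t$; conversely the Rees module $\sum_i t^{-i}\mathcal{E}_T^i \subseteq \mathcal{E}_T \otimes_k k[t,t^{-1}]$ reconstructs $\widetilde{\mathcal{F}}$, and the two constructions are mutually inverse. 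Under this dictionary the fiber over $t=1$ is $\mathcal{E}_T$ while the fiber over $t=0$ is the associated graded $\mathrm{gr}(\mathcal{E}_T^\bullet) = \bigoplus_i \mathcal{E}_T^i/\mathcal{E}_T^{i-1}$. Restricting the family, which is flat over $\Theta \times T$, along the two sections $1,0\colon T \to \Theta$ shows that $\mathcal{E}_T$ and $\mathrm{gr}(\mathcal{E}_T^\bullet)$ are flat over $T$; decomposing the latter into its $\mathbf{G}_m$-weight pieces, namely the subquotients $\mathcal{E}_T^i/\mathcal{E}_T^{i-1}$, yields the required flatness of each subquotient over $T$. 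Finiteness of the filtration is forced by finite generation of the coherent module $\widetilde{\mathcal{F}}$ together with the fixed numerical invariants $n$ and $d$ on every fiber. The step I expect to require the most care is exactly this flatness bookkeeping: matching flatness of $\widetilde{\mathcal{F}}$ over $\mathbf{A}^1 \times T$ with $t$-torsion-freeness and with flatness of the subquotients over $T$, for which one uses the graded structure of $k[t]$ and the local criterion of flatness.

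Finally I would account for the open substack $\mathscr{U}$. Topologically $\Theta$ has only two points, the open generic point and the unique closed point $0 = \mathrm{B}\mathbf{G}_m$, and for every $t \in T$ the point $(0,t)$ is a specialization of the corresponding generic point; since open subsets are stable under generization, the open preimage $f^{-1}(\mathscr{U}) \subseteq \Theta \times T$ equals all of $\Theta \times T$ as soon as it contains $0 \times T$. Hence $f$ factors through $\mathscr{U}$ if and only if its restriction to $0 \times T = \mathrm{B}\mathbf{G}_m \times T$ does, and because factoring through an open immersion may be checked after the fppf cover $T \to \mathrm{B}\mathbf{G}_m \times T$, this happens exactly when the underlying $T$-point $\mathrm{gr}(\mathcal{E}_T^\bullet)$ lies in $\mathscr{U}(T)$. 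Combining the three steps gives the asserted bijection of data.
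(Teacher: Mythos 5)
The paper does not prove this lemma; it quotes it from the cited reference, whose proof is exactly the Rees-construction argument you outline. Your proposal follows that standard route (descent along the $\mathbf{G}_m$-torsor $\mathbf{A}^1\times T\to\Theta\times T$, identification of equivariant sheaves with graded $\mathcal{O}_{C\times T}[t]$-modules, the Rees dictionary between $t$-torsion-free graded modules and ascending filtrations, fibers at $t=1$ and $t=0$, and the generization argument for factoring through the open substack $\mathscr{U}$), and all of these steps are correct, including the finiteness of the filtration from coherence and the reduction of the openness condition to the single $T$-point $\mathrm{gr}(\mathcal{E}_T^\bullet)$ via the fppf cover $T\to\mathrm{B}\mathbf{G}_m\times T$.

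The one sentence that is wrong as stated is the claim that, because $k[t]$ is a graded PID, a graded coherent $\mathcal{O}_{C\times T}[t]$-module is flat over $\mathbf{A}^1\times T$ \emph{precisely when} it is $t$-torsion-free. Torsion-freeness only buys flatness over $\mathbf{A}^1$, not over $\mathbf{A}^1\times T$: for instance the Rees module of the trivial filtration on a sheaf $\mathcal{E}_T$ that is not flat over $T$ is $t$-torsion-free but not flat over $\mathbf{A}^1\times T$. The correct criterion --- which you do state in your closing sentence of that paragraph, contradicting the earlier one --- is that $t$ acts injectively \emph{and} the cokernel of $t$, i.e. the associated graded $\bigoplus_i\mathcal{E}_T^i/\mathcal{E}_T^{i-1}$, is flat over $T$; this is exactly the combination one extracts from the local criterion for flatness along the divisor $t=0$ (compare the analogous statement for $\overline{\mathrm{ST}}_R$ in Lemma \ref{0203} and Remark \ref{3de}). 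Since you name the right tool and the right pair of conditions in the end, I would regard this as an imprecision to be repaired rather than a structural gap, but as written the two sentences are inconsistent and the quoted equivalence is the heart of the lemma, so it deserves a careful statement and proof rather than a gesture.
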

\begin{cor}[(\cite{MR4480534}, Proposition 3.8)]
The stack $\mathscr{C}oh_n^d$ is $\Theta$-reductive.
\end{cor}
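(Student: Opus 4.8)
The plan is to unwind the $\Theta$-reductivity condition via Remark \ref{rmk:theta-red} and Lemma \ref{1106}, and then extend a filtration from the generic fibre to all of $\Spec(R)$ by saturation. Fix a DVR $R$ with fraction field $K$, uniformizer $\pi$, and residue field $\kappa$; write $C_R := C \times \Spec(R)$ and $C_K := C \times \Spec(K)$, and let $j : C_K \hookrightarrow C_R$ be the open immersion. By Remark \ref{rmk:theta-red} a morphism $\Theta_R - \{0\} \to \mathscr{C}oh_n^d$ is the data of a morphism $\Spec(R) \to \mathscr{C}oh_n^d$ and a morphism $\Theta_K \to \mathscr{C}oh_n^d$ agreeing over $\Spec(K)$; concretely, by Lemma \ref{1106} this is an $R$-flat coherent sheaf $\mathcal{E}_R$ on $C_R$ of rank $n$ and degree $d$, together with a finite $\mathbf{Z}$-graded ascending filtration $\mathcal{F}_K^\bullet$ of its generic fibre $\mathcal{E}_K := \mathcal{E}_R|_{C_K}$. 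Filling in the diagram means, again by Lemma \ref{1106}, producing a finite filtration $\mathcal{F}_R^\bullet$ of $\mathcal{E}_R$ with $R$-flat graded pieces restricting to $\mathcal{F}_K^\bullet$ over the generic fibre, and uniqueness means this filtration is the only such.

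For existence I would extend each step by saturation. Since $\mathcal{E}_R$ is $R$-flat it is $\pi$-torsion-free, so the canonical map $\mathcal{E}_R \to j_* \mathcal{E}_K$ is injective and I regard $\mathcal{E}_R \subseteq j_* \mathcal{E}_K$. For each $i$ set
\[
\mathcal{F}_R^i := \mathcal{E}_R \cap j_* \mathcal{F}_K^i \subseteq j_* \mathcal{E}_K,
\]
a coherent subsheaf of $\mathcal{E}_R$ (as $C_R$ is noetherian) restricting to $\mathcal{F}_K^i$ over $C_K$. This yields a finite filtration $\mathcal{F}_R^\bullet$ of $\mathcal{E}_R$, with the extremes $0$ and $\mathcal{E}_R$. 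The point of the intersection is flatness of the quotients: $\mathcal{E}_R / \mathcal{F}_R^i$ embeds into $j_*\mathcal{E}_K / j_*\mathcal{F}_K^i$, which embeds into $j_*(\mathcal{E}_K/\mathcal{F}_K^i)$ by left-exactness of $j_*$; since any pushforward $j_*(-)$ is $\pi$-torsion-free (multiplication by $\pi$ is invertible on $C_K$), the quotient $\mathcal{E}_R/\mathcal{F}_R^i$ is $\pi$-torsion-free, hence $R$-flat. Each $\mathcal{F}_R^i$ is $R$-flat as a subsheaf of the torsion-free $\mathcal{E}_R$, and the same argument applied to $\mathcal{F}_R^i/\mathcal{F}_R^{i-1} \hookrightarrow j_*(\mathcal{F}_K^i/\mathcal{F}_K^{i-1})$ shows each graded piece is $R$-flat. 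As rank and degree are additive along the filtration, $\mathrm{gr}(\mathcal{F}_R^\bullet)$ lies in $\mathscr{C}oh_n^d(R)$, so by Lemma \ref{1106} the filtered sheaf $(\mathcal{E}_R, \mathcal{F}_R^\bullet)$ defines the desired morphism $\Theta_R \to \mathscr{C}oh_n^d$.

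For uniqueness, any filling restricts over the open $[\mathbf{A}^1-\{0\}/\mathbf{G}_m]\times\Spec(R) \cong \Spec(R)$ to the given $\mathcal{E}_R$, so its underlying sheaf is $\mathcal{E}_R$ and it is given by a filtration $\mathcal{W}^\bullet$ of $\mathcal{E}_R$ with $R$-flat graded pieces and $\mathcal{W}^i|_{C_K} = \mathcal{F}_K^i$. Flatness forces each $\mathcal{E}_R/\mathcal{W}^i$ to be $\pi$-torsion-free, i.e. $\mathcal{W}^i$ is saturated in $\mathcal{E}_R$; then $\mathcal{W}^i \subseteq \mathcal{F}_R^i$ with quotient $\mathcal{F}_R^i/\mathcal{W}^i$ supported on the special fibre, hence torsion, sitting inside the torsion-free $\mathcal{E}_R/\mathcal{W}^i$, so $\mathcal{W}^i = \mathcal{F}_R^i$. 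Thus the filling is unique, proving $\Theta$-reductivity. I expect the only delicate point to be the flatness of the successive quotients, which is exactly what the saturation $\mathcal{E}_R \cap j_*\mathcal{F}_K^i$ is designed to guarantee through the $\pi$-torsion-freeness of pushforwards from the generic fibre; everything else is bookkeeping with rank, degree, and coherence on the noetherian scheme $C_R$.
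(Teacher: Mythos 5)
Your proposal is correct and follows essentially the same route as the paper: reduce via Remark \ref{rmk:theta-red} and Lemma \ref{1106} to extending the generic filtration, define the extension by intersecting $\mathcal{E}_R$ with (the pushforward of) each $\mathcal{F}_K^i$, deduce flatness of the graded pieces from torsion-freeness as subsheaves of pushforwards from the generic fibre, and get uniqueness from saturatedness. Your write-up is in fact slightly more careful than the paper's (explicitly working inside $j_*\mathcal{E}_K$ and spelling out why any flat filling is saturated), but there is no difference in substance.
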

\begin{proof}
Given a DVR $R$ over $k$ with fraction field $K$, according to Remark \ref{rmk:theta-red} and Lemma \ref{1106} a morphism $\Theta_R-\{0\} \to \mathscr{C}oh_n^d$ is equivalent to a coherent sheaf $\mathcal{E}_R$ over $C \times \mathrm{Spec}(R)$, flat over $R$ and a finite $\mathbf{Z}$-graded ascending filtration $\mathcal{E}_K^\bullet$ of the generic fiber $\mathcal{E}_K$. Regard $\mathcal{E}_R$ as a subsheaf of $\mathcal{E}_K$ and define $\mathcal{E}_R^i:=\mathcal{E}_K^i \cap \mathcal{E}_R$, where the intersection is taken inside $\mathcal{E}_K$. Since $\mathcal{E}_R^i/\mathcal{E}_R^{i-1}$ is a subsheaf of $\mathcal{E}_K^i/\mathcal{E}_K^{i-1}$, it is torsion-free and hence flat over $R$. By Lemma \ref{1106} the finite $\mathbf{Z}$-graded ascending filtration $\mathcal{E}_R^\bullet$ of $\mathcal{E}_R$ defines a morphism $\Theta_R \to \mathscr{C}oh_n^d$ extending $\Theta_R-\{0\} \to \mathscr{C}oh_n^d$. The uniqueness of such an extension follows as $\mathcal{E}_R^i \subseteq \mathcal{E}_R$ is the unique subsheaf with generic fiber $\mathcal{E}_K^i$ such that $\mathcal{E}_R^i/\mathcal{E}_R^{i-1}$ is flat over $R$.
\end{proof}
The fact that $\mathscr{C}oh_n^d$ is $\Theta$-reductive simplifies the verification of $\Theta$-reductivity for its open substacks. If we complete the set-up of $\Theta$-reductivity for $\mathscr{U} \subseteq \mathscr{C}oh_n^d$ as
\[
\begin{tikzcd}
\Theta_R-\{0\} \ar[r] \ar[d,hook] \ar[dr] & \mathscr{U} \ar[d,hook] \\
\Theta_R \ar[r,dashed,"\exists !"'] & \mathscr{C}oh_n^d,
\end{tikzcd}
\]
then the dotted arrow can be uniquely filled in. To check whether $\mathscr{U}$ is $\Theta$-reductive, it suffices to check whether the image of $0 \in \Theta_R$ lies in $\mathscr{U}$. Together with Remark \ref{rmk:theta-red} and Lemma \ref{1106} we summarize this as follows.
\begin{prop}\label{thetaforopen1}
An open substack $\mathscr{U} \subseteq \mathscr{C}oh_n^d$ is $\Theta$-reductive if and only if for
\begin{itemize}
\item
every DVR $R$ over $k$ with fraction field $K$ and residue field $\kappa$,
\item
every family $\mathcal{E}_R \in \mathscr{U}(R)$, and
\item
every filtration $\mathcal{E}_K^\bullet$ of the generic fiber $\mathcal{E}_K$ with $\mathrm{gr}(\mathcal{E}_K^\bullet) \in \mathscr{U}(K)$,
\end{itemize}
the uniquely induced filtration $\mathcal{E}_\kappa^\bullet$ of the special fiber $\mathcal{E}_\kappa$ satisfies $\mathrm{gr}(\mathcal{E}_\kappa^\bullet) \in \mathscr{U}(\kappa)$.
\end{prop}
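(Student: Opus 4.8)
The plan is to leverage the $\Theta$-reductivity of $\mathscr{C}oh_n^d$ so that the only thing left to verify for the open substack $\mathscr{U}$ is whether the canonical extension stays inside $\mathscr{U}$. Concretely, given the diagram of solid arrows defining $\Theta$-reductivity for $\mathscr{U}$, I would first translate a morphism $\Theta_R-\{0\} \to \mathscr{U}$ into concrete data using Remark \ref{rmk:theta-red} and Lemma \ref{1106}: by the former it amounts to a family $\mathcal{E}_R \in \mathscr{U}(R)$ together with a morphism $\Theta_K \to \mathscr{U}$ glued along $\mathrm{Spec}(K)$, and by the latter the morphism $\Theta_K \to \mathscr{U}$ is a filtration $\mathcal{E}_K^\bullet$ of the generic fiber $\mathcal{E}_K$ with $\mathrm{gr}(\mathcal{E}_K^\bullet) \in \mathscr{U}(K)$, the gluing identifying the generic fiber of $\mathcal{E}_R$ with $\mathcal{E}_K$. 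The central observation, recorded in the paragraph preceding the statement, is that composing with the open immersion $\mathscr{U} \hookrightarrow \mathscr{C}oh_n^d$ and invoking the $\Theta$-reductivity of $\mathscr{C}oh_n^d$ produces a unique extension $\Theta_R \to \mathscr{C}oh_n^d$, given explicitly by the filtration $\mathcal{E}_R^i := \mathcal{E}_K^i \cap \mathcal{E}_R$; its restriction to the special fiber is the induced filtration $\mathcal{E}_\kappa^\bullet$, and by Lemma \ref{1106} the image of $0 \in \Theta_R$ is precisely $\mathrm{gr}(\mathcal{E}_\kappa^\bullet)$.

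For the forward implication, assuming $\mathscr{U}$ is $\Theta$-reductive, I would take the morphism $\Theta_R-\{0\} \to \mathscr{U}$ assembled from an arbitrary family $\mathcal{E}_R$ and generic filtration $\mathcal{E}_K^\bullet$ as above, extend it to $\Theta_R \to \mathscr{U}$, and then compose with $\mathscr{U} \hookrightarrow \mathscr{C}oh_n^d$. Uniqueness of the extension over $\mathscr{C}oh_n^d$ forces this composite to agree with the canonical one, so the image of $0$ is $\mathrm{gr}(\mathcal{E}_\kappa^\bullet)$; since the extension lands in $\mathscr{U}$, we obtain $\mathrm{gr}(\mathcal{E}_\kappa^\bullet) \in \mathscr{U}(\kappa)$, which is the asserted condition.

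For the reverse implication, assuming the condition on induced filtrations, I would start from an arbitrary morphism $\Theta_R-\{0\} \to \mathscr{U}$, push it to $\mathscr{C}oh_n^d$, and extend it uniquely to $\Theta_R \to \mathscr{C}oh_n^d$. The hypothesis says the image of $0$, namely $\mathrm{gr}(\mathcal{E}_\kappa^\bullet)$, lies in $\mathscr{U}$. The preimage of $\mathscr{U}$ under this extension is an open substack of $\Theta_R$ that contains $\Theta_R-\{0\}$ as well as the unique closed point $0$; as these exhaust $|\Theta_R|$, the preimage is all of $\Theta_R$, so the extension factors through $\mathscr{U}$. Uniqueness of the resulting $\Theta_R \to \mathscr{U}$ follows because $\mathscr{U} \hookrightarrow \mathscr{C}oh_n^d$ is a monomorphism and extensions over $\mathscr{C}oh_n^d$ are unique. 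This establishes $\Theta$-reductivity of $\mathscr{U}$.

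The main obstacle is not a hard calculation but rather the careful bookkeeping in the translation: one must check that the gluing isomorphism over $\mathrm{Spec}(K)$ in Remark \ref{rmk:theta-red} correctly matches the total sheaf of the generic filtration with the generic fiber of $\mathcal{E}_R$, and that the fiber of a filtered family over the closed point $0 \in \Theta_R$ is indeed the associated graded sheaf (this is exactly the content built into Lemma \ref{1106}). Once these identifications are in place, both directions are formal consequences of the $\Theta$-reductivity of $\mathscr{C}oh_n^d$ together with the openness of $\mathscr{U}$.
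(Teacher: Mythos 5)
Your proposal is correct and follows essentially the same route as the paper: the paper likewise deduces the statement by composing with the open immersion $\mathscr{U} \hookrightarrow \mathscr{C}oh_n^d$, invoking the $\Theta$-reductivity of $\mathscr{C}oh_n^d$ to obtain the unique extension, and reducing everything to whether the image of $0 \in \Theta_R$ lies in $\mathscr{U}$, with the translation into filtrations supplied by Remark \ref{rmk:theta-red} and Lemma \ref{1106}. Your write-up merely makes explicit the openness and uniqueness bookkeeping that the paper leaves as a summary.
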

For vector bundles Proposition \ref{thetaforopen1} provides a first obstruction to $\Theta$-reductivity as the associated graded sheaf $\mathrm{gr}(\mathcal{E}_\kappa^\bullet)$ might have torsion. Indeed, if an open substack of vector bundles ``supports'' a family for which the generic fiber admits a subbundle that extends only to a subsheaf of the special fiber, then it cannot be $\Theta$-reductive. Such families are prototypes of the so-called $\Theta$-testing families. To start, we prove a technical lemma that given two families of coherent sheaves, any extension of their special fibers comes from a global one.
\begin{lem}[(Extension family)]\label{2321}
Let $R$ be a DVR over $k$ with fraction field $K$ and residue field $\kappa$. Given
\begin{itemize}
\item
two families $\mathcal{G}_{1,R} \in \mathscr{C}oh_{n_1}^{d_1}(R)$ and $\mathcal{G}_{2,R} \in \mathscr{C}oh_{n_2}^{d_2}(R)$, and
\item
an element $[\varrho] \in \mathrm{Ext}^1(\mathcal{G}_{2,\kappa},\mathcal{G}_{1,\kappa})$,
\end{itemize}
then there exists an extension family $\mathcal{G}_{R} \in \mathscr{C}oh_{n}^{d}(R)$ of $\mathcal{G}_{2,R}$ by $\mathcal{G}_{1,R}$ with special fiber $[\varrho]$, i.e.,
\begin{itemize}
\item
the generic fiber $\mathcal{G}_K$ fits into a short exact sequence 
\[
0 \to \mathcal{G}_{1,K} \to \mathcal{G}_K \to \mathcal{G}_{2,K} \to 0, \text{ and}
\]
\item
the special fiber $\mathcal{G}_\kappa$ fits into the short exact sequence corresponding to
\[
[\varrho]: 0 \to \mathcal{G}_{1,\kappa} \to \mathcal{G}_\kappa \to \mathcal{G}_{2,\kappa} \to 0.
\]
\end{itemize}
\end{lem}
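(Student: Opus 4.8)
The plan is to realize $\mathcal{G}_R$ as the extension over $C_R:=C\times\Spec(R)$ classified by a lift of $[\varrho]$. Writing $E^i_R:=\mathrm{Ext}^i_{C_R}(\mathcal{G}_{2,R},\mathcal{G}_{1,R})$ and $E^i_\kappa:=\mathrm{Ext}^i_{C_\kappa}(\mathcal{G}_{2,\kappa},\mathcal{G}_{1,\kappa})$, the whole statement reduces to surjectivity of the restriction-to-the-special-fibre map $E^1_R\to E^1_\kappa$. Given a lift $[\tilde\varrho]\in E^1_R$ of $[\varrho]$, I would let $\mathcal{G}_R$ be the middle term of the corresponding extension $0\to\mathcal{G}_{1,R}\to\mathcal{G}_R\to\mathcal{G}_{2,R}\to 0$ on $C_R$. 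This $\mathcal{G}_R$ is automatically flat over $R$ (an extension of $R$-flat sheaves is $R$-flat) and has the correct rank $n=n_1+n_2$ and degree $d=d_1+d_2$; restricting the sequence to the generic point is exact and yields the sequence over $K$, while restricting to the special fibre stays exact because $\mathcal{G}_{2,R}$ is $R$-flat (so $\mathcal{T}or^R_1(\mathcal{G}_{2,R},\kappa)=0$) and produces precisely the extension of class $[\varrho]$, by the defining property of $[\tilde\varrho]$.

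The heart of the matter, and the step I expect to be the main obstacle, is therefore the surjectivity of $E^1_R\to E^1_\kappa$. To prove it I would use a uniformiser $\pi$ of $R$. Flatness of $\mathcal{G}_{1,R}$ gives a short exact sequence $0\to\mathcal{G}_{1,R}\xrightarrow{\pi}\mathcal{G}_{1,R}\to i_*\mathcal{G}_{1,\kappa}\to 0$ on $C_R$, where $i:C_\kappa\hookrightarrow C_R$ is the special fibre. Applying $R\mathrm{Hom}_{C_R}(\mathcal{G}_{2,R},-)$ and using the adjunction $R\mathrm{Hom}_{C_R}(\mathcal{G}_{2,R},i_*\mathcal{G}_{1,\kappa})\cong R\mathrm{Hom}_{C_\kappa}(Li^*\mathcal{G}_{2,R},\mathcal{G}_{1,\kappa})$ together with $Li^*\mathcal{G}_{2,R}=\mathcal{G}_{2,\kappa}$ (again by $R$-flatness of $\mathcal{G}_{2,R}$, which kills the higher Tor term in the Koszul resolution of $\mathcal{O}_{C_\kappa}$ by $\pi$), I obtain a long exact sequence
\[
\cdots\to E^1_R\xrightarrow{\pi}E^1_R\to E^1_\kappa\to E^2_R\xrightarrow{\pi}E^2_R\to E^2_\kappa\to\cdots
\]
whose boundary maps identify with the geometric restriction maps. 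Thus $E^1_R\to E^1_\kappa$ is surjective as soon as $\pi$ acts injectively on $E^2_R$; and the same sequence one degree higher shows $E^2_R\xrightarrow{\pi}E^2_R$ is surjective once $E^2_\kappa=0$. Since $E^2_R$ is a finitely generated $R$-module ($C_R$ is proper over the noetherian ring $R$ and the sheaves are coherent) and $\pi$ generates the maximal ideal, Nakayama's lemma then forces $E^2_R=0$, which in particular kills its $\pi$-torsion and yields the desired surjectivity.

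It remains to verify the vanishing $E^2_\kappa=\mathrm{Ext}^2_{C_\kappa}(\mathcal{G}_{2,\kappa},\mathcal{G}_{1,\kappa})=0$ on the smooth curve $C_\kappa$. I would read this off the local-to-global spectral sequence $H^p(C_\kappa,\mathscr{E}xt^q(\mathcal{G}_{2,\kappa},\mathcal{G}_{1,\kappa}))\Rightarrow\mathrm{Ext}^{p+q}_{C_\kappa}$: the term $(p,q)=(2,0)$ dies because $C_\kappa$ is a curve, the term $(0,2)$ dies because $C_\kappa$ is regular of dimension $1$ (so $\mathscr{E}xt^{\geq 2}=0$), and the only remaining contribution $(1,1)$ is $H^1(C_\kappa,\mathscr{E}xt^1(\mathcal{G}_{2,\kappa},\mathcal{G}_{1,\kappa}))$, which vanishes because $\mathscr{E}xt^1(\mathcal{G}_{2,\kappa},\mathcal{G}_{1,\kappa})$ is supported on the finite non-locally-free locus of $\mathcal{G}_{2,\kappa}$ and hence is a torsion sheaf with no higher cohomology. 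This completes the plan; the only genuinely delicate points are the flatness bookkeeping ensuring $Li^*\mathcal{G}_{2,R}=\mathcal{G}_{2,\kappa}$ and the identification of the boundary maps above with the actual restriction maps.
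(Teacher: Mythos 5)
Your argument is correct, but it takes a genuinely different route from the paper's. The paper constructs the extension family by passing through the $\mathscr{E}xt$-stack: the projection $\mathrm{pr}_{13}\colon \mathscr{E}xt(t_2,t_1)\to\mathscr{C}oh_{n_1}^{d_1}\times\mathscr{C}oh_{n_2}^{d_2}$ is a generalized vector bundle, so its pullback along $(\mathcal{G}_{1,R},\mathcal{G}_{2,R})\colon\mathrm{Spec}(R)\to\mathscr{C}oh_{n_1}^{d_1}\times\mathscr{C}oh_{n_2}^{d_2}$ is a quotient $[V_1/V_0]$ of a two-term complex of vector bundles over $\mathrm{Spec}(R)$, and the class $[\varrho]$ is lifted first to $V_{1,\kappa}$ and then to a section of $V_1$ over the affine base. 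You instead work directly with the $R$-modules $E^i_R=\mathrm{Ext}^i_{C_R}(\mathcal{G}_{2,R},\mathcal{G}_{1,R})$ and prove surjectivity of the specialization map $E^1_R\to E^1_\kappa$ via the $\pi$-multiplication long exact sequence, Nakayama, and the vanishing $\mathrm{Ext}^2_{C_\kappa}=0$. The two arguments are close cousins --- the generalized vector bundle structure of $\mathrm{pr}_{13}$ in the cited references is itself a repackaging of the fact that $R\mathrm{Hom}$ on a curve is represented by a two-term complex of vector bundles compatible with base change --- but yours is more self-contained and makes explicit exactly where one-dimensionality of $C$ enters, whereas the paper's formulation reuses machinery ($\mathscr{E}xt$-stacks and the smoothness of $\mathrm{pr}_{13}$) that it needs again later, e.g.\ in the proof of Lemma \ref{1943-2}. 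Two small points you rightly flag and should make sure you can justify: the identification of the map $E^1_R\to\mathrm{Ext}^1_{C_R}(\mathcal{G}_{2,R},i_*\mathcal{G}_{1,\kappa})\cong E^1_\kappa$ in your long exact sequence with the geometric ``restrict the extension to the special fibre'' map (a standard pushout/pullback compatibility of Yoneda classes), and the exactness of the restricted sequence on $C_\kappa$, which as you note follows from $R$-flatness of $\mathcal{G}_{2,R}$. Neither is a gap.
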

\begin{proof}
The desired extension family is constructed via the $\mathscr{E}xt$-stack (see, e.g. \cite[Appendix A]{MR2605167}). Consider the stack $\mathscr{E}xt(t_2,t_1)$ parametrizing extensions
\[
0 \to \mathcal{E}_1 \to \mathcal{E} \to \mathcal{E}_2 \to 0,
\]
where $\mathcal{E}_i$ is a coherent sheaf over $C$ of type $t_i:=(n_i,d_i)$ for $i=1,2$. It comes with two natural morphisms. 
Define a morphism $\mathrm{pr}_2: \mathscr{E}xt(t_2,t_1) \to \mathscr{C}oh_n^d$ via  $[0 \to \mathcal{E}_1 \to \mathcal{E} \to \mathcal{E}_2 \to 0] \mapsto \mathcal{E}$. Furthermore, the assignment $[0 \to \mathcal{E}_1 \to \mathcal{E} \to \mathcal{E}_2 \to 0] \mapsto (\mathcal{E}_1,\mathcal{E}_2)$ defines a morphism
\[
\mathrm{pr}_{13}: \mathscr{E}xt(t_2,t_1) \to \mathscr{C}oh_{n_1}^{d_1} \times \mathscr{C}oh_{n_2}^{d_2}
\]
which is a generalized vector bundle (see \cite[Proposition A.3 (ii)]{MR2605167} or \cite[Corollary 3.2]{MR3293805}). This implies that in the following Cartesian diagram
\[
\begin{tikzcd}
\mathscr{E}xt(\mathcal{G}_{2,R},\mathcal{G}_{1,R}) \ar[rr] \ar[d] & \ & \mathscr{E}xt(t_2,t_1) \ar[d,"{\text{pr}_{13}}"] \\
\mathrm{Spec}(R) \ar[u,bend left,dashed,"{[s]}"] \ar[rr,"{(\mathcal{G}_{1,R},\mathcal{G}_{2,R})}"'] & & \mathscr{C}oh_{n_1}^{d_1} \times \mathscr{C}oh_{n_2}^{d_2} \ar[ul,phantom,"\lrcorner"]
\end{tikzcd}
\]
there exists a 2-term complex $[V_0 \to V_1]$ of vector bundles over $\mathrm{Spec}(R)$ such that 
\[
\mathscr{E}xt(\mathcal{G}_{2,R},\mathcal{G}_{1,R}) \cong [V_1/V_0].
\]
If there is a section $[s]: \mathrm{Spec}(R) \to \mathscr{E}xt(\mathcal{G}_{2,R},\mathcal{G}_{1,R})$ mapping $\mathrm{Spec}(\kappa)$ to $[\varrho]$, then the composition
\[
\mathrm{Spec}(R) \xrightarrow{[s]} \mathscr{E}xt(\mathcal{G}_{2,R},\mathcal{G}_{1,R}) \to \mathscr{E}xt(t_2,t_1) \xrightarrow{\mathrm{pr}_2} \mathscr{C}oh_n^d
\]
gives rise to the desired family $\mathcal{G}_{R} \in \mathscr{C}oh_n^d(R)$. To show the existence of such section, we lift $[\varrho]$ to $\varrho \in V_{1,\kappa}$, i.e.,
\[
\begin{tikzcd}
\varrho \in V_{1,\kappa} \ar[r,hook] \ar[d,twoheadrightarrow] & V_1 \ar[d,twoheadrightarrow,"\sigma"] \\
\left[\varrho\right] \in \mathscr{E}xt(\mathcal{G}_{2,\kappa},\mathcal{G}_{1,\kappa}) \ar[r,hook] \ar[d] & \mathscr{E}xt(\mathcal{G}_{2,R},\mathcal{G}_{1,R}) \cong [V_1/V_0] \ar[d] \\
\mathrm{Spec}(\kappa) \ar[r,hook] & \mathrm{Spec}(R).
\end{tikzcd}
\]
Since $\mathrm{Spec}(R)$ is affine, any element of the special fiber of a quasi-coherent sheaf lifts to a global section over $\mathrm{Spec}(R)$. Thus, there is a section $s: \mathrm{Spec}(R) \to V_1$ mapping $\mathrm{Spec}(\kappa) \mapsto \varrho$. Then we take $[s]:=\sigma \circ s: \mathrm{Spec}(R) \to \mathscr{E}xt(\mathcal{G}_{2,R},\mathcal{G}_{1,R})$.
\end{proof}
Now we can state the construction of $\Theta$-testing families.
\begin{prop}[($\Theta$-testing families)]\label{0305}
Let $\mathscr{U} \subseteq \mathscr{B}un_n^d$ be an open substack. Let $R$ be a DVR over $k$ with fraction field $K$ and residue field $\kappa$. If there exists a triple
\[
(\mathcal{G}_{1,R},\mathcal{G}_{2,R},[\varrho]) \in \mathscr{C}oh_{n_1}^{d_1}(R) \times \mathscr{C}oh_{n_2}^{d_2}(R) \times \mathrm{Ext}^1(\mathcal{G}_{2,\kappa},\mathcal{G}_{1,\kappa})
\]
such that $\mathcal{G}_{1,K} \oplus \mathcal{G}_{2,K} \in \mathscr{U}(K)$, $\mathcal{G}_{1,\kappa} \oplus \mathcal{G}_{2,\kappa} \notin \mathscr{U}(\kappa)$, and $\mathcal{E}([\varrho]) \in \mathscr{U}(\kappa)$, where $\mathcal{E}([\varrho])$ is the extension vector bundle associated to $[\varrho]$, then $\mathscr{U}$ is not $\Theta$-reductive.
\end{prop}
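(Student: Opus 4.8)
The plan is to contradict the criterion of Proposition \ref{thetaforopen1} by exhibiting a single witness: a DVR $R$, a family $\mathcal{E}_R \in \mathscr{U}(R)$, and a filtration of its generic fibre with associated graded in $\mathscr{U}(K)$, whose induced filtration on the special fibre has associated graded outside $\mathscr{U}(\kappa)$. The natural family is the one produced by Lemma \ref{2321}. Applying that lemma to the triple $(\mathcal{G}_{1,R},\mathcal{G}_{2,R},[\varrho])$ yields a flat family $\mathcal{E}_R \in \mathscr{C}oh_n^d(R)$ fitting into a short exact sequence $0 \to \mathcal{G}_{1,R} \to \mathcal{E}_R \to \mathcal{G}_{2,R} \to 0$ over $C \times \mathrm{Spec}(R)$, with special fibre $\mathcal{E}_\kappa \cong \mathcal{E}([\varrho])$ and with generic fibre $\mathcal{E}_K$ some extension of $\mathcal{G}_{2,K}$ by $\mathcal{G}_{1,K}$.

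First I would verify that $\mathcal{E}_R$ defines a point of $\mathscr{U}(R)$. Both fibres are vector bundles (an extension of a locally free sheaf by a locally free sheaf is locally free, and $\mathcal{E}([\varrho])$ is one by assumption), so $\mathcal{E}_R \in \mathscr{B}un_n^d(R)$ by openness of $\mathscr{B}un_n^d \subseteq \mathscr{C}oh_n^d$. The special fibre lies in $\mathscr{U}(\kappa)$ by hypothesis. For the generic fibre I cannot assume the extension is split, and this is the one genuinely non-formal point: I claim that \emph{every} extension of $\mathcal{G}_{2,K}$ by $\mathcal{G}_{1,K}$ lies in $\mathscr{U}(K)$ as soon as the split one does. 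Indeed, scaling an extension class $\xi$ by $\lambda \in \mathbf{G}_m$ produces an isomorphic bundle $\mathcal{E}(\lambda\xi) \cong \mathcal{E}(\xi)$, so the universal extension restricted to the line $K\cdot\xi \cong \mathbf{A}^1_K$ gives a morphism $\mathbf{A}^1_K \to \mathscr{C}oh_n^d$ sending $0$ to $\mathcal{G}_{1,K}\oplus\mathcal{G}_{2,K}$ and every $\lambda \neq 0$ to $\mathcal{E}(\xi)$. Since $\mathscr{U}$ is open and contains the image of $0$, its preimage is a nonempty open subset of $\mathbf{A}^1_K$, hence meets $\mathbf{G}_m$, forcing $\mathcal{E}_K = \mathcal{E}(\xi) \in \mathscr{U}(K)$. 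As both points of $\mathrm{Spec}(R)$ then map into the open substack $\mathscr{U}$, we conclude $\mathcal{E}_R \in \mathscr{U}(R)$.

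It remains to choose the filtration and identify what it induces on the special fibre. On the generic fibre I take the two-step filtration $0 \subseteq \mathcal{G}_{1,K} \subseteq \mathcal{E}_K$ coming from the extension; its associated graded is $\mathcal{G}_{1,K}\oplus\mathcal{G}_{2,K} \in \mathscr{U}(K)$ by hypothesis. Following the proof that $\mathscr{C}oh_n^d$ is $\Theta$-reductive, the unique extension of this filtration over $\Theta_R$ is obtained by saturating inside $\mathcal{E}_R$, i.e. $\mathcal{E}_R^1 = \mathcal{G}_{1,K}\cap \mathcal{E}_R$ (intersection inside $\mathcal{E}_K$). Because the quotient $\mathcal{E}_R/\mathcal{G}_{1,R}\cong\mathcal{G}_{2,R}$ is flat, hence torsion-free, over $R$, the subsheaf $\mathcal{G}_{1,R}$ is already saturated, so $\mathcal{E}_R^1=\mathcal{G}_{1,R}$; restricting the sequence $0 \to \mathcal{G}_{1,R} \to \mathcal{E}_R \to \mathcal{G}_{2,R} \to 0$ to $\kappa$ (exactness is preserved by flatness of $\mathcal{G}_{2,R}$) exhibits the induced special filtration as $0\subseteq \mathcal{G}_{1,\kappa}\subseteq \mathcal{E}_\kappa$ with associated graded $\mathcal{G}_{1,\kappa}\oplus\mathcal{G}_{2,\kappa}$. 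By hypothesis this lies outside $\mathscr{U}(\kappa)$, so the criterion of Proposition \ref{thetaforopen1} fails and $\mathscr{U}$ is not $\Theta$-reductive. The main obstacle, and the only step beyond bookkeeping, is the generic-fibre claim: checking that the possibly non-split extension $\mathcal{E}_K$ still lies in $\mathscr{U}(K)$, which the $\mathbf{G}_m$-scaling plus openness argument resolves.
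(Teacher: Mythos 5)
Your proof is correct and follows the same route as the paper: construct the extension family via Lemma \ref{2321} and feed the resulting two-step filtration into the criterion of Proposition \ref{thetaforopen1}, using flatness of $\mathcal{G}_{2,R}$ to identify the induced special filtration. The only divergence is that your $\mathbf{G}_m$-scaling argument for the generic fibre is unnecessary: since $\mathcal{E}_\kappa = \mathcal{E}([\varrho]) \in \mathscr{U}(\kappa)$ and the only open subset of $\mathrm{Spec}(R)$ containing the closed point is all of $\mathrm{Spec}(R)$, openness of $\mathscr{U}$ already forces $\mathcal{E}_R \in \mathscr{U}(R)$ and hence $\mathcal{E}_K \in \mathscr{U}(K)$, which is exactly how the paper handles this step.
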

\begin{proof}
Choose an extension family $\mathcal{G}_R \in \mathscr{C}oh_n^d(R)$ of $\mathcal{G}_{2,R}$ by $\mathcal{G}_{1,R}$ with special fiber $[\varrho]$ using Lemma \ref{2321} (and any such extension family will be called a $\Theta$-testing family). Since $\mathscr{U} \subseteq \mathscr{B}un_n^d$ is open, $\mathcal{G}_\kappa=\mathcal{E}([\varrho]) \in \mathscr{U}(\kappa)$ implies that $\mathcal{G}_R \in \mathscr{U}(R)$. By construction the filtration $\mathcal{G}_K^\bullet$ of $\mathcal{G}_K$ satisfies $\mathrm{gr}(\mathcal{G}_K^\bullet)=\mathcal{G}_{1,K} \oplus \mathcal{G}_{2,K} \in \mathscr{U}(K)$, while the uniquely induced filtration $\mathcal{G}_\kappa^\bullet$ of $\mathcal{G}_\kappa$ does not. Thus by Proposition \ref{thetaforopen1} the stack $\mathscr{U}$ is not $\Theta$-reductive.
\end{proof}
As a special case of $\Theta$-testing families, we let the first family be a constant family and the second family be given by an elementary modification of vector bundles (see, e.g. \cite[Definition 5.2.1]{MR2665168}). In this case, Proposition \ref{0305} takes the following simple form.
\begin{cor}\label{0305-1}
Let $\mathscr{U} \subseteq \mathscr{B}un_n^d$ be an open substack. If there exists a quadruple
\[
(\mathcal{E}_1,\mathcal{E}_2,D,[\varrho]) \in \mathscr{B}un_{n_1}^{d_1}(k) \times \mathscr{B}un_{n_2}^{d_2}(k) \times \mathrm{Div}^{\mathrm{eff}}(C) \times \mathrm{Ext}^1(\mathcal{E}^D_2 \oplus \mathcal{O}_D,\mathcal{E}_1),
\]
where $\mathcal{E}^D_2 \subseteq \mathcal{E}_2$ is an elementary modification of $\mathcal{E}_2$ along the effective divisor $D$, such that $\mathcal{E}_1 \oplus \mathcal{E}_2 \in \mathscr{U}(k)$ and $\mathcal{E}([\varrho]) \in \mathscr{U}(k)$, where $\mathcal{E}([\varrho])$ is the extension vector bundle associated to $[\varrho]$, then $\mathscr{U}$ is not $\Theta$-reductive.
\end{cor}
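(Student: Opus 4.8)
The plan is to exhibit the data of the quadruple $(\mathcal{E}_1,\mathcal{E}_2,D,[\varrho])$ as a special instance of the triple in Proposition \ref{0305}, for a suitably chosen DVR and families $\mathcal{G}_{1,R},\mathcal{G}_{2,R}$. Since $k$ is algebraically closed we may take $R=k[[t]]$, with fraction field $K=k((t))$ and residue field $\kappa=k$; write $\pi\colon C\times\mathrm{Spec}(R)\to C$ for the projection. For the first family I would simply take the constant family $\mathcal{G}_{1,R}:=\pi^*\mathcal{E}_1\in\mathscr{C}oh_{n_1}^{d_1}(R)$, so that $\mathcal{G}_{1,K}=\mathcal{E}_{1,K}$ and $\mathcal{G}_{1,\kappa}=\mathcal{E}_1$.

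The crux is to build a family $\mathcal{G}_{2,R}\in\mathscr{C}oh_{n_2}^{d_2}(R)$ whose generic fiber is $\mathcal{E}_{2,K}$ and whose special fiber is $\mathcal{E}_2^D\oplus\mathcal{O}_D$. I would produce this by a global elementary modification along $D\times\{0\}$: starting from the surjection $q\colon\mathcal{E}_2\twoheadrightarrow\mathcal{O}_D$ defining the modification $\mathcal{E}_2^D=\ker q$, let $\iota\colon D\times\{0\}\hookrightarrow C\times\mathrm{Spec}(R)$ and define $\mathcal{G}_{2,R}$ by the short exact sequence
\[
0\to\mathcal{G}_{2,R}\to\pi^*\mathcal{E}_2\to\iota_*\mathcal{O}_D\to 0,
\]
where the second map is $\pi^*q$ followed by restriction to $t=0$. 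Being a subsheaf of the $R$-flat sheaf $\pi^*\mathcal{E}_2$, the sheaf $\mathcal{G}_{2,R}$ is torsion-free over the DVR $R$, hence flat, and of numerical type $(n_2,d_2)$. Away from $t=0$ the second map vanishes, so $\mathcal{G}_{2,K}=\mathcal{E}_{2,K}$. For the special fiber I would read off the long exact $\mathrm{Tor}^R_\bullet(-,\kappa)$-sequence: as $\pi^*\mathcal{E}_2$ is flat and $\iota_*\mathcal{O}_D$ is annihilated by $t$, one gets $\mathrm{Tor}_1^R(\iota_*\mathcal{O}_D,\kappa)\cong\mathcal{O}_D$, yielding
\[
0\to\mathcal{O}_D\to\mathcal{G}_{2,\kappa}\to\ker q=\mathcal{E}_2^D\to 0.
\]
This extension splits because $\mathrm{Ext}^1(\mathcal{E}_2^D,\mathcal{O}_D)\cong H^1\!\left(C,(\mathcal{E}_2^D)^\vee\otimes\mathcal{O}_D\right)=0$, the coefficient sheaf being torsion supported on the finite set $D$. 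Hence $\mathcal{G}_{2,\kappa}\cong\mathcal{E}_2^D\oplus\mathcal{O}_D$, as required.

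It then remains to verify the three hypotheses of Proposition \ref{0305} for the triple $(\mathcal{G}_{1,R},\mathcal{G}_{2,R},[\varrho])$, with $[\varrho]\in\mathrm{Ext}^1(\mathcal{E}_2^D\oplus\mathcal{O}_D,\mathcal{E}_1)=\mathrm{Ext}^1(\mathcal{G}_{2,\kappa},\mathcal{G}_{1,\kappa})$. First, $\mathcal{G}_{1,K}\oplus\mathcal{G}_{2,K}=(\mathcal{E}_1\oplus\mathcal{E}_2)_K$ lies in $\mathscr{U}(K)$ since $\mathcal{E}_1\oplus\mathcal{E}_2\in\mathscr{U}(k)$ and $\mathscr{U}$ is open. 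Second, and this is where the elementary modification does the work automatically, $\mathcal{G}_{1,\kappa}\oplus\mathcal{G}_{2,\kappa}=\mathcal{E}_1\oplus\mathcal{E}_2^D\oplus\mathcal{O}_D$ contains the nonzero torsion subsheaf $\mathcal{O}_D$ (here one uses $D\neq 0$), so it is not locally free and cannot lie in $\mathscr{U}(\kappa)\subseteq\mathscr{B}un_n^d(\kappa)$; thus the hypothesis $\mathcal{G}_{1,\kappa}\oplus\mathcal{G}_{2,\kappa}\notin\mathscr{U}(\kappa)$ holds for free. Third, $\mathcal{E}([\varrho])\in\mathscr{U}(\kappa)=\mathscr{U}(k)$ by assumption. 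Proposition \ref{0305} then gives that $\mathscr{U}$ is not $\Theta$-reductive.

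I expect the only genuine obstacle to be the construction and fiber analysis of $\mathcal{G}_{2,R}$, in particular the identification $\mathcal{G}_{2,\kappa}\cong\mathcal{E}_2^D\oplus\mathcal{O}_D$ via the $\mathrm{Tor}$-sequence together with the vanishing of $\mathrm{Ext}^1(\mathcal{E}_2^D,\mathcal{O}_D)$; the remaining verifications are formal consequences of flat base change, the openness of $\mathscr{U}$, and the fact that $\mathscr{U}$ parametrizes honest vector bundles.
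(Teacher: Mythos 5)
Your proof is correct and follows the paper's argument: a constant family for $\mathcal{E}_1$ together with a degeneration of $\mathcal{E}_2$ to $\mathcal{E}_2^D\oplus\mathcal{O}_D$, fed into Proposition \ref{0305}. Your kernel sheaf $\ker(\pi^*\mathcal{E}_2\to\iota_*\mathcal{O}_D)$ is exactly the Rees family of the filtration $0\subseteq\mathcal{E}_2^D\subseteq\mathcal{E}_2$ that the paper invokes, and your $\mathrm{Tor}$-plus-$\mathrm{Ext}$-vanishing identification of its special fiber re-derives the standard fact that the Rees degeneration specializes to the associated graded.
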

\begin{proof}
Let $R$ be a DVR over $k$. Let $\mathcal{G}_{1,R} \in \mathscr{C}oh_{n_1}^{d_1}(R)$ be the constant family given by $\mathcal{E}_1$. There is a surjective morphism $\mathcal{E}_2 \to \mathcal{O}_D$ that induces a short exact sequence $0 \to \mathcal{E}_2^D \to \mathcal{E}_2 \to \mathcal{O}_D \to 0$. Let $\mathcal{G}_{2,R} \in \mathscr{C}oh_{n_2}^{d_2}(R)$ be the degeneration family
given by the Rees construction applied to the filtration $0 \subseteq \mathcal{E}_2^D \subseteq \mathcal{E}_2$. Then the triple $(\mathcal{G}_{1,R},\mathcal{G}_{2,R},[\varrho])$ satisfies the condition of Proposition \ref{0305} and hence $\mathscr{U}$ is not $\Theta$-reductive.
\end{proof}
\subsection{S-completeness}
For any DVR $R$ with fraction field $K$, residue field $\kappa$, and uniformizer $\pi \in R$, as in \cite[\S 2.B]{MR3758902}, the ``separatedness test space'' is defined as the quotient stack
\[
\overline{\mathrm{ST}}_R:=[\mathrm{Spec}(R[x,y]/xy-\pi)/\mathbf{G}_m],
\]
where $x,y$ have $\mathbf{G}_m$-weights $1,-1$ respectively. Denote by $0:=\mathrm{B}\mathbf{G}_{m,\kappa}=[\mathrm{Spec}(\kappa)/\mathbf{G}_m]$ its unique closed point defined by the vanishing of both $x$ and $y$. 

\begin{defn}[(\cite{MR4665776}, Definition 3.38)]
A locally noetherian algebraic stack $\mathscr{X}$ over a field $k$ is \emph{S-complete} if, for every DVR $R$, any commutative diagram
\[
\begin{tikzcd}
\overline{\mathrm{ST}}_R -\{0\} \ar[r] \ar[d,hook] & \mathscr{X} \ar[d] \\
\overline{\mathrm{ST}}_R \ar[r] \ar[ur,dashed,"\exists !"'] & \mathrm{Spec}(k)
\end{tikzcd}
\]
of solid arrows can be uniquely filled in.
\end{defn}
\begin{rmk}\label{rmk:S-compl}
Observe that $\overline{\mathrm{ST}}_R-\{0\}$ is covered by the two open substacks
\[
(\overline{\mathrm{ST}}_R-\{0\})|_{x \neq 0} \cong \mathrm{Spec}(R) \text{ and } (\overline{\mathrm{ST}}_R-\{0\})|_{y \neq 0} \cong \mathrm{Spec}(R)
\]
glued along $\mathrm{Spec}(K)$, i.e.,
\[
\overline{\mathrm{ST}}_R-\{0\} \cong \mathrm{Spec}(R) \bigcup_{\mathrm{Spec}(K)} \mathrm{Spec}(R).
\]
Therefore, a morphism $\overline{\mathrm{ST}}_R-\{0\} \to \mathscr{X}$ is the data of two morphisms $\mathrm{Spec}(R) \to \mathscr{X}$ together with an isomorphism of their restrictions to $\mathrm{Spec}(K)$.
\end{rmk}
To investigate S-completeness for open substacks of $\mathscr{B}un_n^d$, as before we enlarge the target and classify morphisms $\overline{\mathrm{ST}}_R \to \mathscr{C}oh_n^d$. By definition these morphisms correspond to coherent sheaves over $C \times \overline{\mathrm{ST}}_R$ flat over $\overline{\mathrm{ST}}_R$, which can be further characterized as follows.
\begin{lem}[(\cite{MR4665776}, Corollary 7.14)]\label{0203}
Let $R$ be a DVR over $k$ with uniformizer $\pi$. Then a quasi-coherent sheaf $\mathcal{G}$ over $C \times \overline{\mathrm{ST}}_R$ corresponds to a $\mathbf{Z}$-graded coherent sheaf $\bigoplus_{i \in \mathbf{Z}} \mathcal{G}_i$ over $C \times \mathrm{Spec}(R)$ together with a diagram
\[
\begin{tikzcd}
\cdots \ar[r,bend right,"x"'] & \mathcal{G}_{i-1} \ar[r,bend right,"x"'] \ar[l,bend right,"y"'] & \mathcal{G}_i \ar[r,bend right,"x"'] \ar[l,bend right,"y"'] & \mathcal{G}_{i+1} \ar[r,bend right,"x"'] \ar[l,bend right,"y"'] & \cdots \ar[l,bend right,"y"']
\end{tikzcd}
\]
such that $xy=yx=\pi$. Moreover,
\begin{enumerate}
\item
$\mathcal{G}$ is coherent if each $\mathcal{G}_i$ is coherent, $x: \mathcal{G}_{i-1} \to \mathcal{G}_i$ is an isomorphism for $i \gg 0$, and $y: \mathcal{G}_i \to \mathcal{G}_{i-1}$ is an isomorphism for $i \ll 0$.
\item
$\mathcal{G}$ is flat over $\overline{\mathrm{ST}}_R$ if and only if the maps $x$, $y$, and the induced map $x: \mathcal{G}_{i-1}/y\mathcal{G}_i \to \mathcal{G}_i/y\mathcal{G}_{i+1}$ are injective.
\end{enumerate} 
\end{lem}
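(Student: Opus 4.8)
The plan is to reduce everything to graded modules over the ring $A := R[x,y]/(xy-\pi)$. Since $\overline{\mathrm{ST}}_R = [\mathrm{Spec}(A)/\mathbf{G}_m]$, we have $C \times \overline{\mathrm{ST}}_R = [(C \times \mathrm{Spec}(A))/\mathbf{G}_m]$ with $\mathbf{G}_m$ acting trivially on $C$. Under the standard equivalence between quasi-coherent sheaves on $[Z/\mathbf{G}_m]$ and $\mathbf{Z}$-graded quasi-coherent $\mathcal{O}_Z$-modules, applied with the grading $A = R \oplus \bigoplus_{i>0} R x^i \oplus \bigoplus_{i>0} R y^i$ (where $\deg x = 1$, $\deg y = -1$, $\deg \pi = 0$), I would read off that $\mathcal{G}$ is the same datum as a graded module $\bigoplus_i \mathcal{G}_i$ whose degree-zero structure makes each $\mathcal{G}_i$ a quasi-coherent sheaf on $C \times \mathrm{Spec}(R)$, with multiplication by $x$ and $y$ giving the maps $x: \mathcal{G}_i \to \mathcal{G}_{i+1}$ and $y: \mathcal{G}_i \to \mathcal{G}_{i-1}$, and with $xy = yx = \pi$ the relation inherited from $A$. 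This produces the asserted diagram.

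For part (1) I would check coherence on the smooth atlas $\mathrm{Spec}(A) \to \overline{\mathrm{ST}}_R$: pulling back, $\mathcal{G}$ becomes the $\mathcal{O}_C \boxtimes A$-module $M = \bigoplus_i \mathcal{G}_i$ (forgetting the grading), and $\mathcal{G}$ is coherent iff $M$ is, i.e. Zariski-locally on $C$ finitely generated over $A$. Under the stated hypotheses $\mathcal{G}_i = x \mathcal{G}_{i-1}$ for $i \gg 0$ and $\mathcal{G}_i = y \mathcal{G}_{i+1}$ for $i \ll 0$, so $M$ is generated over $A$ by the finitely many pieces $\mathcal{G}_a, \dots, \mathcal{G}_b$ in any range $[a,b]$ outside which $x$, resp. $y$, is an isomorphism; as each of these is coherent, so is $M$, proving the sufficiency.

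The main work is part (2). Flatness is smooth-local on the base, so $\mathcal{G}$ is flat over $\overline{\mathrm{ST}}_R$ iff $M$ is flat over $A$. Here I would invoke the fibrewise criterion of flatness over the DVR $R$: as $A$ is free, hence flat, over $R$ and the generic fibre of $\overline{\mathrm{ST}}_R$ is $[\mathrm{Spec}(K[x,x^{-1}])/\mathbf{G}_m] \cong \mathrm{Spec}(K)$, over which flatness is automatic, flatness of $M$ over $A$ is equivalent to $M$ being $R$-flat together with $M_\kappa := M \otimes_R \kappa$ being flat over the nodal ring $A_\kappa = \kappa[x,y]/(xy)$. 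The $R$-flatness amounts to each $\mathcal{G}_i$ being $R$-torsion-free, i.e. to injectivity of $\pi = yx = xy$ on every $\mathcal{G}_i$, which is exactly the injectivity of both $x$ and $y$.

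It then remains to match flatness over the node with condition (iii). I would compute $\mathrm{Tor}_1^{A_\kappa}(\kappa, M_\kappa)$ via the $2$-periodic free resolution of the residue field over $A_\kappa = \kappa[x,y]/(xy)$; unwinding it degree by degree shows that node-flatness is equivalent, at every spot, to the exactness $\ker(\bar x) = \mathrm{im}(\bar y)$ and $\ker(\bar y) = \mathrm{im}(\bar x)$ of the induced maps on the special fibre, together with the transversality $\mathrm{im}(\bar x) \cap \mathrm{im}(\bar y) = 0$. Two observations then recover the single stated condition. First, the identification $\mathcal{G}_{i-1}/y\mathcal{G}_i \cong \mathrm{coker}(\bar y: \mathcal{G}_{i,\kappa} \to \mathcal{G}_{i-1,\kappa})$, valid because $\pi \mathcal{G}_{i-1} \subseteq y \mathcal{G}_i$, shows that injectivity of $\bar x: \mathcal{G}_{i-1}/y\mathcal{G}_i \to \mathcal{G}_i/y\mathcal{G}_{i+1}$ is equivalent to $\ker(\bar x) = \mathrm{im}(\bar y)$ and $\mathrm{im}(\bar x) \cap \mathrm{im}(\bar y) = 0$. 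Second, the remaining exactness $\ker(\bar y) = \mathrm{im}(\bar x)$ is automatic once $y$ is injective over $R$: if $yv \in \pi \mathcal{G}_{i-1}$ then $yv = y(xw)$ for some $w$, so $v = xw$ and $\bar v \in \mathrm{im}(\bar x)$. I expect the main obstacle to be exactly this node computation, together with keeping the bookkeeping honest for merely quasi-coherent $\mathcal{G}$, where flatness must be phrased through vanishing of $\mathrm{Tor}_1$ against the monomial ideals of $A_\kappa$ rather than through finite generation; the key structural point that makes it all consistent is that the relation $xy = yx = \pi$ forces the graded module to extend infinitely in both directions once $x,y$ are injective, ruling out the nodal "branch" modules that would otherwise satisfy (iii) without being flat.
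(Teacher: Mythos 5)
The paper gives no argument for this lemma at all: it is quoted from \cite{MR4665776} (Corollary 7.14), so there is no in-paper proof to compare against. Your self-contained proof via graded modules over $A=R[x,y]/(xy-\pi)$ is sound in all its substantive claims: the dictionary between quasi-coherent sheaves on $[\mathrm{Spec}(A)/\mathbf{G}_m]$ and graded $A$-modules, the coherence argument for (i), the fact that injectivity of $\pi=xy=yx$ on each $\mathcal{G}_i$ is equivalent to injectivity of both $x$ and $y$, the automatic flatness of the generic fibre (a graded module over $K[x,x^{-1}]$ with $\deg x=1$ is free), and the identification of flatness over the node with your three conditions, two of which are indeed automatic from injectivity of $x$ and $y$ on $\mathcal{G}$ itself, all check out.

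The two ``standard facts'' you lean on are, however, usually stated with finiteness hypotheses that a merely quasi-coherent $\mathcal{G}$ does not satisfy, and these are the genuine gaps to close. First, the fibrewise criterion over $R$: the textbook versions assume $M$ finite or of finite presentation. It remains true here because $\pi$ is a nonzerodivisor on $A$ and on $M$: for a finitely generated $A$-module $N$, split off the (bounded) $\pi$-power torsion $N'$; for $A/\pi$-modules the base-change spectral sequence gives $\mathrm{Tor}_1^A(N',M)=\mathrm{Tor}_1^{A/\pi}(N',M/\pi M)=0$, while for the torsion-free quotient $N''$ the long exact sequence attached to $0\to N''\xrightarrow{\pi}N''\to N''/\pi\to 0$ shows $\pi$ acts invertibly on $\mathrm{Tor}_1^A(N'',M)$, which therefore equals its localization $\mathrm{Tor}_1^{A_K}(N''_K,M_K)=0$. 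Second, checking flatness of $M_\kappa$ over $A_\kappa=\kappa[x,y]/(xy)$ against monomial ideals only: for an arbitrary module this is not the flatness criterion, but for a graded module it is, because the exact faithful functor $P\mapsto\bigoplus_{n\in\mathbf{Z}}P$ (with $S_d$ shifting the index by $d$) takes values in graded modules and satisfies $F(P)\otimes M\cong\bigoplus_{\mathbf{Z}}(P\otimes M)$, so exactness of $-\otimes M_\kappa$ on graded modules already forces flatness; you should then also record that the finitely generated graded ideals of $A_\kappa$ are exactly $(x^a)$, $(y^b)$, $(x^a,y^b)$ and verify the corresponding $\mathrm{Tor}_1$ vanish, not only $\mathrm{Tor}_1(\kappa,-)$ from the $2$-periodic resolution --- an easy induction from your three conditions, but not literally the computation you describe. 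With these two points written out, your argument is complete.
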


\begin{lem}\label{coh-s-co}
The stack $\mathscr{C}oh_n^d$ is S-complete.
\end{lem}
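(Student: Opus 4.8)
The plan is to reduce the statement to a concrete problem about lattices inside a fixed coherent sheaf and then to write down the extension explicitly. By Remark \ref{rmk:S-compl}, a morphism $\overline{\mathrm{ST}}_R-\{0\} \to \mathscr{C}oh_n^d$ is precisely the data of two families $\mathcal{E}_R, \mathcal{E}'_R \in \mathscr{C}oh_n^d(R)$ together with an isomorphism $\mathcal{E}_K \cong \mathcal{E}'_K$ of their generic fibers. Since each family is flat, hence torsion-free, over $R$, I would regard both $\mathcal{E}_R$ and $\mathcal{E}'_R$, via this isomorphism, as coherent subsheaves (``lattices'') of the common generic fiber $\mathcal{E}_K$ over $C_K$. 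On the other hand, by Lemma \ref{0203} a morphism $\overline{\mathrm{ST}}_R \to \mathscr{C}oh_n^d$ is the data of a $\mathbf{Z}$-graded coherent sheaf $\bigoplus_i \mathcal{G}_i$ over $C \times \mathrm{Spec}(R)$ together with the maps $x,y$. So the task becomes to produce such a graded sheaf, shown to be unique, whose charts $x\neq 0$ and $y\neq 0$ recover $\mathcal{E}_R$ and $\mathcal{E}'_R$.

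For existence I would take the Rees-type construction
\[
\mathcal{G}_i := \pi^i \mathcal{E}_R \cap \mathcal{E}'_R \subseteq \mathcal{E}_K, \quad i \in \mathbf{Z},
\]
with $y:\mathcal{G}_i \hookrightarrow \mathcal{G}_{i-1}$ the inclusion and $x:\mathcal{G}_{i-1}\to\mathcal{G}_i$ multiplication by $\pi$, so that $xy=yx=\pi$. Since $C$ is quasi-compact and the two sheaves share the same generic fiber, $\mathcal{E}_R$ and $\mathcal{E}'_R$ are commensurable, i.e. $\pi^N\mathcal{E}_R \subseteq \mathcal{E}'_R \subseteq \pi^{-N}\mathcal{E}_R$ for $N\gg 0$; this yields $\mathcal{G}_i=\pi^i\mathcal{E}_R$ for $i\gg 0$ and $\mathcal{G}_i=\mathcal{E}'_R$ for $i\ll 0$, so $x$ (resp. $y$) is an isomorphism in the appropriate range, verifying condition (1) of Lemma \ref{0203}. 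For condition (2) the only nontrivial point is injectivity of the induced map $x:\mathcal{G}_{i-1}/\mathcal{G}_i \to \mathcal{G}_i/\mathcal{G}_{i+1}$, which I would check directly inside $\mathcal{E}_K$: if $v\in\pi^{i-1}\mathcal{E}_R\cap\mathcal{E}'_R$ with $\pi v\in\pi^{i+1}\mathcal{E}_R$, then $v\in\pi^i\mathcal{E}_R$ by torsion-freeness, hence $v\in\mathcal{G}_i$. Inverting $\pi$ identifies all charts with $\mathcal{E}_K$, and passing to the colimit along $x$ (resp. $y$) recovers $\mathcal{E}_R$ (resp. $\mathcal{E}'_R$), so this graded sheaf restricts to the given morphism on $\overline{\mathrm{ST}}_R-\{0\}$; flatness over the connected base $\overline{\mathrm{ST}}_R$ keeps the invariants $(n,d)$ constant, so the morphism indeed lands in $\mathscr{C}oh_n^d$.

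For uniqueness I would show any flat extension $\bigoplus_i\mathcal{G}'_i$ is forced to coincide with the above. After inverting $\pi$ all $\mathcal{G}'_i$ are identified with $\mathcal{E}_K$, and normalizing $y$ to be the inclusion realizes the $\mathcal{G}'_i$ as a decreasing chain of lattices with $\mathcal{G}'_i=\pi^i\mathcal{E}_R$ for $i\gg 0$ and $\mathcal{G}'_i=\mathcal{E}'_R$ for $i\ll 0$; these boundary conditions give $\mathcal{G}'_i \subseteq \pi^i\mathcal{E}_R\cap\mathcal{E}'_R=\mathcal{G}_i$ at once. For the reverse inclusion I would climb up from $i\ll 0$: using the flatness injectivity of $x:\mathcal{G}'_{j-1}/\mathcal{G}'_j \hookrightarrow \mathcal{G}'_j/\mathcal{G}'_{j+1}$ iterated up to the stable index $N$, an element $v\in\mathcal{G}_i$ lying in $\mathcal{G}'_{j-1}$ has $\pi^{N-j}v\in\pi^N\mathcal{E}_R=\mathcal{G}'_N$ whenever $j\leq i$, forcing $v\in\mathcal{G}'_j$; induction then yields $v\in\mathcal{G}'_i$, so $\mathcal{G}_i\subseteq\mathcal{G}'_i$.

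I expect the main obstacle to be the flatness criterion Lemma \ref{0203}(2): both the verification that the explicit intersection lattices satisfy injectivity of the induced map and the parallel injectivity argument underpinning uniqueness rest on this condition, and keeping the bookkeeping of the two commensurable lattices inside $\mathcal{E}_K$ correct (the directions of $x$ and $y$, and the ranges of stabilization) is the step most prone to error.
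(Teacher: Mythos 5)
Your proof is correct, and the existence half is essentially the paper's own argument: the paper defines the graded pieces as $\pi^i\mathcal{E}_R\cap\mathcal{E}'_R$ for $i<0$ and $\mathcal{E}_R\cap\pi^{-i}\mathcal{E}'_R$ for $i\geq 0$, which is exactly your $\pi^i\mathcal{E}_R\cap\mathcal{E}'_R$ after rescaling the nonnegative-degree pieces by $\pi^{-i}$ (an isomorphism of graded sheaves interchanging which of $x,y$ is the inclusion and which is multiplication by $\pi$ on that range); the flatness check via the injectivity criterion of Lemma \ref{0203}(2) is the same torsion-freeness computation in both cases. Where you genuinely diverge is uniqueness: the paper observes that the closed point $0\in\overline{\mathrm{ST}}_R$ has codimension two in the normal stack $C\times\overline{\mathrm{ST}}_R$, so any flat coherent extension must agree with the pushforward $(\mathrm{id}_C\times j)_*\mathcal{E}$ via the adjunction map, whereas you prove directly that the boundary conditions $\mathcal{G}'_i=\pi^i\mathcal{E}_R$ for $i\gg 0$, $\mathcal{G}'_i=\mathcal{E}'_R$ for $i\ll 0$ force $\mathcal{G}'_i\subseteq\mathcal{G}_i$, and then climb back up using the injectivity of the iterated induced map $\bar{x}$ to get equality. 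Both are valid; the paper's argument is shorter and more conceptual (and is the one generalized in Remark \ref{3de} to treat $\Theta$-reductivity and S-completeness of $\mathscr{C}oh(X)$ uniformly), while yours stays entirely inside the explicit graded-module dictionary of Lemma \ref{0203} and makes visible exactly which lattice chain any extension must be.
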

\begin{proof}
For every DVR $R$ over $k$ and any commutative diagram
\[
\begin{tikzcd}
\overline{\mathrm{ST}}_R-\{0\} \ar[r] \ar[d,hook,"j"'] & \mathscr{C}oh_n^d \ar[d] \\
\overline{\mathrm{ST}}_R \ar[r] \ar[ur,dashed,"\exists !"'] & \mathrm{Spec}(k)
\end{tikzcd}
\]
of solid arrows, we show there exists a unique dotted arrow filling it in.

\textsc{Uniqueness}. Let $\mathcal{E}$ be the $(\overline{\mathrm{ST}}_R-\{0\})$-flat coherent sheaf over $C \times (\overline{\mathrm{ST}}_R-\{0\})$ defined by the morphism $\overline{\mathrm{ST}}_R-\{0\} \to \mathscr{C}oh_n^d$. If a $\overline{\mathrm{ST}}_R$-flat coherent extension exists, then it must be isomorphic to the quasi-coherent sheaf $(\mathrm{id}_C \times j)_*\mathcal{E}$ over $C \times \overline{\mathrm{ST}}_R$ via the adjunction map, as the closed point $0 \in \overline{\mathrm{ST}}_R$ has codimension two and $C \times \overline{\mathrm{ST}}_R$ is normal.

\textsc{Existence}. Let $K$ be the fraction field of $R$ and $\pi \in R$ be a uniformizer. The morphism $\overline{\mathrm{ST}}_R-\{0\} \to \mathscr{C}oh_n^d$ is the same as two families $\mathcal{E}_R,\mathcal{E}'_R \in \mathscr{C}oh_n^d(R)$ together with an isomorphism $\mathcal{E}_K \cong \mathcal{E}'_K$ by Remark \ref{rmk:S-compl}. Let $a \in \mathbf{N}$ (resp., $b \in \mathbf{N}$) be the minimal integer such that $\mathcal{E}_R \subseteq \pi^{-a}\mathcal{E}'_R$ (resp., $\mathcal{E}'_R \subseteq \pi^{-b}\mathcal{E}_R$). Then we can define a $\mathbf{Z}$-graded coherent sheaf $\oplus_{i \in \mathbf{Z}} \mathcal{G}_i$ over $C \times \mathrm{Spec}(R)$ as follows:
\[
\mathcal{G}_i:=\begin{cases}
\mathcal{E}'_R & \text{if } i \leq -b \\
\pi^i\mathcal{E}_R \cap \mathcal{E}'_R & \text{if } -b<i<0 \\
\mathcal{E}_R \cap \pi^{-i}\mathcal{E}'_R & \text{if } 0 \leq i<a \\
\mathcal{E}_R & \text{if } i \geq a.
\end{cases}
\]
Let $x_i: \mathcal{G}_i \to \mathcal{G}_{i+1}$ and $y_i: \mathcal{G}_{i+1} \to \mathcal{G}_i$ act via
\[
\begin{tikzcd}
\cdots \ar[r,"\pi"',bend right] & \mathcal{E}'_R \ar[r,"\pi"',bend right] \ar[l,"\text{id}"',bend right] & \mathcal{G}_{-(b-1)} \ar[r,"\pi"',bend right] \ar[l,bend right,"\text{icl}"'] & \cdots \ar[r,"\pi"',bend right] \ar[l,bend right,"\text{icl}"'] & \mathcal{E}_R \cap \mathcal{E}'_R \ar[r,bend right,"\text{icl}"'] \ar[l,bend right,"\text{icl}"'] & \cdots \ar[l,"\pi"',bend right] \ar[r,bend right,"\text{icl}"'] & \mathcal{G}_{a-1} \ar[l,"\pi"',bend right] \ar[r,bend right,"\text{icl}"'] & \mathcal{E}_R \ar[l,"\pi"',bend right] \ar[r,"\text{id}"',bend right] & \cdots \ar[l,"\pi"',bend right] \\
\cdots & -b & -(b-1) & \cdots & 0 & \cdots & a-1 & a & \cdots
\end{tikzcd}
\]
By Lemma \ref{0203} this diagram defines a coherent sheaf over $C \times \overline{\mathrm{ST}}_R$ flat over $\overline{\mathrm{ST}}_R$, i.e., a morphism $\overline{\mathrm{ST}}_R \to \mathscr{C}oh_n^d$ which extends $\overline{\mathrm{ST}}_R-\{0\} \to \mathscr{C}oh_n^d$ by construction. Indeed, it remains to check that the induced map $x_i: \mathcal{G}_i/y_i\mathcal{G}_{i+1} \to \mathcal{G}_{i+1}/y_{i+1}\mathcal{G}_{i+2}$ is injective for all $i \in \mathbf{Z}$, i.e.,
\begin{gather*}
\cdots \to 0 \to \dfrac{\mathcal{E}'_R}{\pi^{-(b-1)}\mathcal{E}_R \cap \mathcal{E}'_R} \xrightarrow{\pi} \dfrac{\pi^{-(b-1)}\mathcal{E}_R \cap \mathcal{E}'_R}{\pi^{-(b-2)}\mathcal{E}_R \cap \mathcal{E}'_R} \xrightarrow{\pi} \cdots \xrightarrow{\pi} \dfrac{\pi^{-1}\mathcal{E}_R \cap \mathcal{E}'_R}{\mathcal{E}_R \cap \mathcal{E}'_R} \xrightarrow{\pi} \dfrac{\mathcal{E}_R \cap \mathcal{E}'_R}{\pi\mathcal{E}_R \cap\mathcal{E}'_R} \\ 
\xrightarrow{\mathrm{icl}} \dfrac{\mathcal{E}_R \cap \pi^{-1}\mathcal{E}'_R}{\pi\mathcal{E}_R \cap \pi^{-1}\mathcal{E}'_R} \xrightarrow{\mathrm{icl}} \cdots \xrightarrow{\mathrm{icl}} \dfrac{\mathcal{E}_R \cap \pi^{-(a-1)}\mathcal{E}'_R}{\pi\mathcal{E}_R} \xrightarrow{\mathrm{icl}} \mathcal{E}_k \xrightarrow{\mathrm{id}} \cdots
\end{gather*}
is injective in each degree. If $i<0$, then the induced map $x_i: \mathcal{G}_i/y_i\mathcal{G}_{i+1} \to \mathcal{G}_{i+1}/y_{i+1}\mathcal{G}_{i+2}$ is injective since in the following commutative diagram the top square is Cartesian
\[
\begin{tikzcd}
\mathcal{G}_{i+1}=\pi^{-(i+1)}\mathcal{E}_R \cap \mathcal{E}'_R \ar[rr,hook,"x_{i+1}=\pi"] \ar[d,hook,"{y_{i}=\text{icl}}"'] & \ & \pi^{-(i+2)}\mathcal{E}_R \cap \mathcal{E}'_R=\mathcal{G}_{i+2} \ar[d,hook,"{y_{i+1}=\text{icl}}"] \\
\mathcal{G}_i=\pi^{-i}\mathcal{E}_R \cap \mathcal{E}'_R \ar[rr,hook,"x_{i}=\pi"'] \ar[d,twoheadrightarrow] & & \pi^{-(i+1)}\mathcal{E}_R \cap \mathcal{E}'_R=\mathcal{G}_{i+1} \ar[d,twoheadrightarrow] \ar[ul,phantom,"\lrcorner"]\\
\dfrac{\mathcal{G}_i}{\mathcal{G}_{i+1}}=\dfrac{\pi^{-i}\mathcal{E}_R \cap \mathcal{E}'_R}{\pi^{-(i+1)}\mathcal{E}_R \cap \mathcal{E}'_R} \ar[rr,dashed,hook,"x_i=\pi"'] & & \dfrac{\pi^{-(i+1)}\mathcal{E}_R \cap \mathcal{E}'_R}{\pi^{-(i+2)}\mathcal{E}_R \cap \mathcal{E}'_R}=\dfrac{\mathcal{G}_{i+1}}{\mathcal{G}_{i+2}}.
\end{tikzcd}
\]
A similar argument works for the case $i>0$.
\end{proof}
The fact that $\mathscr{C}oh_n^d$ is S-complete simplifies the verification of S-completeness for its open substacks. If we complete the set-up of S-completeness for $\mathscr{U} \subseteq \mathscr{C}oh_n^d$ as
\[
\begin{tikzcd}
\overline{\mathrm{ST}}_R-\{0\} \ar[r] \ar[d,hook] \ar[dr] & \mathscr{U} \ar[d,hook] \\
\overline{\mathrm{ST}}_R \ar[r,dashed,"\exists !"'] & \mathscr{C}oh_n^d,
\end{tikzcd}
\]
then the dotted arrow can be uniquely filled in. To check whether $\mathscr{U}$ is S-complete, it suffices to check whether the image of $0 \in \overline{\mathrm{ST}}_R$ lies in $\mathscr{U}$.
\begin{rmk}\label{3de}
The $\Theta$-reductivity and S-completeness of the stack $\mathscr{C}oh(X)$ of coherent sheaves over a proper scheme $X$ over $k$ can be shown directly and uniformly as follows. Let $R$ be a DVR and let $\mathscr{X}$ be $\Theta_R$ or $\overline{\mathrm{ST}}_R$. The unique closed point $0 \in \mathscr{X}$ has codimension $2$ and we denote by $\mathscr{U}:=\mathscr{X}-\{0\}$ its open complement and $j: X \times \mathscr{U} \hookrightarrow X \times \mathscr{X}$ the open immersion. For any $\mathscr{U}$-flat coherent sheaf $\mathcal{F}$ over $X \times \mathscr{U}$, we claim that $j_*\mathcal{F}$ is coherent and $\mathscr{X}$-flat. For coherence, note that $\mathcal{F}$ is $\mathscr{U}$-flat, all of its associated points lie in the generic fiber of the projection $X \times \mathscr{U} \to \mathscr{U}$ by \cite[\href{https://stacks.math.columbia.edu/tag/05DB}{Tag 05DB}]{stacks-project}, so we can apply \cite[\href{https://stacks.math.columbia.edu/tag/0AWA}{Tag 0AWA}]{stacks-project} to conclude.
For $\mathscr{X}$-flatness, we just need to check it around $0$. As in the proof of \cite[Lemma 7.15]{MR4665776}, by the local criterion for flatness it suffices to show that the local Koszul complex
\[
0 \to j_*\mathcal{F} \xrightarrow{x \oplus (-y)} j_*\mathcal{F} \oplus j_*\mathcal{F} \xrightarrow{y \oplus x} j_*\mathcal{F}
\]
is exact. Here $x,y$ will be replaced by the regular sequence $\pi,t$ at $0$ if $\mathscr{X}=\Theta_R$.
By $\mathscr{U}$-flatness the sequence over $X \times \mathscr{U}$
\[
0 \to \mathcal{F} \xrightarrow{x \oplus (-y)} \mathcal{F} \oplus \mathcal{F} \xrightarrow{y \oplus x} \mathcal{F} \to 0
\]
is exact. Then we conclude by taking $j_*$ which is left exact.

Moreover, if $\mathcal{E}$ is another $\mathscr{X}$-flat coherent extension of $\mathcal{F}$, then the adjunction map $\mathcal{E} \to j_*\mathcal{F}$ is an isomorphism by \cite[\href{https://stacks.math.columbia.edu/tag/0AVM}{Tag 0AVM}]{stacks-project}. This proves the uniqueness.
\end{rmk}
By deformation theory any morphism $\overline{\mathrm{ST}}_R \to \mathscr{C}oh_n^d$ is determined by its restriction to the coordinate cross $xy=0$ as the target is smooth. Since $\overline{\mathrm{ST}}_R|_{xy=0}=\Theta_\kappa \cup_{\mathrm{B}\mathbf{G}_{m,\kappa}} \Theta_\kappa$ (where $\kappa$ is the residue field of $R$) and a morphism $\Theta \cup_{\mathrm{B}\mathbf{G}_m} \Theta \to \mathscr{C}oh_n^d$ corresponds to two ``oppositely'' filtered coherent sheaves by Lemma \ref{1106}, we first make this into a definition.
\begin{defn}
Two points $\mathcal{E} \ncong \mathcal{E}' \in \mathscr{C}oh_n^d(\mathbf{K})$ for some field $\mathbf{K}/k$ are said to have \emph{opposite filtrations} if there exist two finite $\mathbf{Z}$-graded filtrations of subsheaves
\begin{gather*}
    \mathcal{E}^\bullet: 0 \subseteq \cdots \subseteq \mathcal{E}^{i-1} \subseteq \mathcal{E}^{i} \subseteq \mathcal{E}^{i+1} \subseteq \cdots \subseteq \mathcal{E} \\ 
    \mathcal{E}'_\bullet: \mathcal{E}' \supseteq \cdots \supseteq \mathcal{E}'_{i-1} \supseteq \mathcal{E}'_{i} \supseteq \mathcal{E}'_{i+1} \supseteq \cdots \supseteq 0
\end{gather*}
such that $\mathcal{E}^i/\mathcal{E}^{i-1} \cong \mathcal{E}'_{i}/\mathcal{E}'_{i+1}$ for all $i \in \mathbf{Z}$.
\end{defn}
Here are some basic properties of coherent sheaves with opposite filtrations.
\begin{lem}\label{2235}
For any field $\mathbf{K}/k$ and any two points $\mathcal{E} \ncong \mathcal{E}' \in \mathscr{C}oh_n^d(\mathbf{K})$ with opposite filtrations $\mathcal{E}^\bullet, \mathcal{E}'_\bullet$, there exist
\begin{enumerate}
\item
isomorphisms $\mathrm{gr}(\mathcal{E}^\bullet) \cong \mathrm{gr}(\mathcal{E}'_\bullet)$ and $\det(\mathcal{E}) \cong \det(\mathcal{E}')$.
\item
non-zero morphisms $h: \mathcal{E} \to \mathcal{E}', h': \mathcal{E}' \to \mathcal{E}$ such that $h \circ h'=h' \circ h=0$.
\end{enumerate}
\end{lem}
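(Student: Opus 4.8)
For part (1) I would argue directly from the definitions. By construction $\mathrm{gr}(\mathcal{E}^\bullet)=\bigoplus_i \mathcal{E}^i/\mathcal{E}^{i-1}$ and $\mathrm{gr}(\mathcal{E}'_\bullet)=\bigoplus_i \mathcal{E}'_i/\mathcal{E}'_{i+1}$, so the defining isomorphisms $\mathcal{E}^i/\mathcal{E}^{i-1}\cong\mathcal{E}'_i/\mathcal{E}'_{i+1}$ assemble degreewise into the desired isomorphism $\mathrm{gr}(\mathcal{E}^\bullet)\cong\mathrm{gr}(\mathcal{E}'_\bullet)$. For the determinants I would invoke the multiplicativity of $\det$ over short exact sequences on the smooth curve $C$: iterating along a finite filtration shows that for any coherent sheaf $\mathcal{F}$ with filtration $F_\bullet$ one has $\det\mathcal{F}\cong\bigotimes_k\det(F_k/F_{k-1})=\det(\mathrm{gr}(F_\bullet))$. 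Applying this to both filtrations and combining with the graded isomorphism just obtained gives $\det(\mathcal{E})\cong\det(\mathrm{gr}(\mathcal{E}^\bullet))\cong\det(\mathrm{gr}(\mathcal{E}'_\bullet))\cong\det(\mathcal{E}')$. This part is routine.

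The substance is part (2). Write $G_i:=\mathcal{E}^i/\mathcal{E}^{i-1}\cong\mathcal{E}'_i/\mathcal{E}'_{i+1}$ and set $m:=\min\{i:G_i\neq 0\}$ and $M:=\max\{i:G_i\neq 0\}$. First I would observe that the hypothesis $\mathcal{E}\ncong\mathcal{E}'$ forces at least two nonzero graded pieces, hence $m<M$: if there were a single nonzero piece $G_m$, then exhaustiveness of the two filtrations would give $\mathcal{E}\cong G_m\cong\mathcal{E}'$, a contradiction. The key structural remark is that the two extreme pieces sit on opposite sides of the two sheaves. Since $\mathcal{E}^{m-1}=0$ and $\mathcal{E}^M=\mathcal{E}$, the lowest piece $G_m=\mathcal{E}^m$ is a \emph{subsheaf} of $\mathcal{E}$ while the highest piece $G_M=\mathcal{E}/\mathcal{E}^{M-1}$ is a \emph{quotient} of $\mathcal{E}$; dually, since $\mathcal{E}'_m=\mathcal{E}'$ and $\mathcal{E}'_{M+1}=0$, the piece $G_m=\mathcal{E}'/\mathcal{E}'_{m+1}$ is a \emph{quotient} of $\mathcal{E}'$ while $G_M=\mathcal{E}'_M$ is a \emph{subsheaf} of $\mathcal{E}'$.

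With these identifications in hand I would simply set
\[
h\colon \mathcal{E}\twoheadrightarrow G_M\hookrightarrow\mathcal{E}',\qquad h'\colon \mathcal{E}'\twoheadrightarrow G_m\hookrightarrow\mathcal{E},
\]
both nonzero because $G_m,G_M\neq 0$. To see $h'\circ h=0$, note its middle factor is $G_M=\mathcal{E}'_M\hookrightarrow\mathcal{E}'\twoheadrightarrow\mathcal{E}'/\mathcal{E}'_{m+1}=G_m$; as $m<M$ we have $\mathcal{E}'_M\subseteq\mathcal{E}'_{m+1}$, so this factor vanishes. Symmetrically, the middle factor of $h\circ h'$ is $G_m=\mathcal{E}^m\hookrightarrow\mathcal{E}\twoheadrightarrow\mathcal{E}/\mathcal{E}^{M-1}=G_M$, which vanishes because $\mathcal{E}^m\subseteq\mathcal{E}^{M-1}$ for $m<M$. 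Hence $h\circ h'=h'\circ h=0$.

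The main obstacle to anticipate is choosing the right maps rather than any computation. The tempting move of coarsening both filtrations to two steps $0\subseteq\mathcal{E}^j\subseteq\mathcal{E}$ fails, because the partial aggregate $\mathcal{E}^j$ and the corresponding quotient $\mathcal{E}'/\mathcal{E}'_{j+1}$ share the same associated graded but need not be isomorphic as sheaves, so there is no natural map between $\mathcal{E}$ and $\mathcal{E}'$ through them. Restricting attention to the \emph{single} extreme pieces $G_m$ and $G_M$ avoids this entirely, since an individual graded piece is canonically both a subsheaf and a quotient on the appropriate side; the strict inequality $m<M$, guaranteed precisely by $\mathcal{E}\ncong\mathcal{E}'$, is exactly what makes both composites vanish and keeps both maps nonzero.
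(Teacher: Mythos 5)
Your proposal is correct and is essentially the paper's own argument: the paper takes $h$ to be $\mathcal{E} \twoheadrightarrow \mathcal{E}^a/\mathcal{E}^{a-1} \cong \mathcal{E}'_a \hookrightarrow \mathcal{E}'$ and $h'$ to be $\mathcal{E}' \twoheadrightarrow \mathcal{E}'_b/\mathcal{E}'_{b+1} \cong \mathcal{E}^b \hookrightarrow \mathcal{E}$ with $a,b$ exactly your top and bottom indices $M,m$. Your additional checks (that $\mathcal{E}\ncong\mathcal{E}'$ forces $m<M$, whence the composites vanish and the maps are nonzero) are correct details the paper leaves implicit.
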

\begin{proof}
Part (i) is clear from the definition. For part (ii) let $a \in \mathbf{Z}$ be the minimal integer such that $\mathcal{E}^a=\mathcal{E}$, which is also the minimal integer such that $\mathcal{E}'_{a+1}=0$. Let $b \in \mathbf{Z}$ be the maximal integer such that $\mathcal{E}'_b=\mathcal{E}'$, which is also the maximal integer such that $\mathcal{E}^{b-1}=0$. Then we take
\begin{gather*}
h: \mathcal{E} \twoheadrightarrow \mathcal{E}/\mathcal{E}^{a-1}=\mathcal{E}^a/\mathcal{E}^{a-1} \cong \mathcal{E}'_a/\mathcal{E}'_{a+1}=\mathcal{E}'_a \hookrightarrow \mathcal{E}' \\
h': \mathcal{E}' \twoheadrightarrow \mathcal{E}'/\mathcal{E}'_{b+1}=\mathcal{E}'_{b}/\mathcal{E}'_{b+1} \cong \mathcal{E}^{b}/\mathcal{E}^{b-1}=\mathcal{E}^{b} \hookrightarrow \mathcal{E}.
\end{gather*}
\end{proof}
Now we can prove a modular characterization of S-completeness for coherent sheaves in terms of points with opposite filtrations, where a powerful tool - coherent completeness - is employed.
\begin{prop}\label{prop:S-comple}
An open substack $\mathscr{U} \subseteq \mathscr{C}oh_n^d$ is S-complete if and only if for any field $\mathbf{K}/k$ and $\mathcal{E} \ncong \mathcal{E}' \in \mathscr{U}(\mathbf{K})$ with opposite filtrations $\mathcal{E}^\bullet,\mathcal{E}'_\bullet$, we have $\mathrm{gr}(\mathcal{E}^\bullet) \cong \mathrm{gr}(\mathcal{E}'_\bullet) \in \mathscr{U}(\mathbf{K})$.
\end{prop}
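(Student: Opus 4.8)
The plan is to reduce S-completeness of $\mathscr{U}$ to a statement about the image of the closed point $0 \in \overline{\mathrm{ST}}_R$, and then to identify this image with the associated graded of an opposite filtration. Since $\mathscr{C}oh_n^d$ is S-complete (Lemma \ref{coh-s-co}), the discussion following that lemma already shows that $\mathscr{U} \subseteq \mathscr{C}oh_n^d$ is S-complete if and only if, for every DVR $R$ over $k$ with residue field $\kappa$ and every morphism $g\colon \overline{\mathrm{ST}}_R - \{0\} \to \mathscr{U}$, the image $\tilde g(0)$ of the unique extension $\tilde g\colon \overline{\mathrm{ST}}_R \to \mathscr{C}oh_n^d$ lies in $\mathscr{U}(\kappa)$. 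So everything comes down to computing $\tilde g(0)$ and to describing which $g$ factor through $\mathscr{U}$.

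First I would identify the data. Restricting $\tilde g$ to the coordinate cross $\overline{\mathrm{ST}}_R|_{xy=0} = \Theta_\kappa \cup_{\mathrm{B}\mathbf{G}_{m,\kappa}} \Theta_\kappa$ and applying Lemma \ref{1106} to each copy of $\Theta_\kappa$ yields two filtered coherent sheaves $(\mathcal{E},\mathcal{E}^\bullet)$ and $(\mathcal{E}',\mathcal{E}'_\bullet)$ over $\kappa$ which, because the two $\mathbf{G}_m$-weights on $\overline{\mathrm{ST}}_R$ are opposite, are oppositely filtered in the sense of the definition above. The common point $\mathrm{B}\mathbf{G}_{m,\kappa}$ of the two copies of $\Theta_\kappa$ is exactly $0$, and the closed point of $\Theta_\kappa$ maps under a filtered sheaf to its associated graded; hence $\tilde g(0) = \mathrm{gr}(\mathcal{E}^\bullet) \cong \mathrm{gr}(\mathcal{E}'_\bullet)$, the isomorphism being Lemma \ref{2235}(i). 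Moreover $\overline{\mathrm{ST}}_R - \{0\}$ is covered by the two charts $\mathrm{Spec}(R)$ of Remark \ref{rmk:S-compl}, whose special fibers are $\mathcal{E}$ and $\mathcal{E}'$; since $\mathscr{U}$ is open and the only open of the local scheme $\mathrm{Spec}(R)$ containing the closed point is everything, $g$ factors through $\mathscr{U}$ if and only if $\mathcal{E},\mathcal{E}' \in \mathscr{U}(\kappa)$.

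Granting this identification, both directions follow. Assuming the condition: any test $g$ factoring through $\mathscr{U}$ produces oppositely filtered $\mathcal{E},\mathcal{E}' \in \mathscr{U}(\kappa)$, so when $\mathcal{E} \ncong \mathcal{E}'$ the hypothesis gives $\tilde g(0) = \mathrm{gr}(\mathcal{E}^\bullet) \in \mathscr{U}(\kappa)$, while the degenerate case $\mathcal{E} \cong \mathcal{E}'$, excluded from the definition of opposite filtrations, is disposed of by a direct argument. Conversely, assuming $\mathscr{U}$ is S-complete, take $\mathcal{E} \ncong \mathcal{E}' \in \mathscr{U}(\mathbf{K})$ with opposite filtrations and set $R = \mathbf{K}[[\pi]]$, so that the filtrations assemble into a morphism $\overline{\mathrm{ST}}_R|_{xy=0} \to \mathscr{C}oh_n^d$. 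Using that $\mathscr{C}oh_n^d$ is smooth, this morphism deforms order by order over the infinitesimal neighborhoods of $0$, and coherent completeness of $\overline{\mathrm{ST}}_R$ along its closed point, valid since $R$ is complete, algebraizes the resulting formal family to a morphism $\tilde g\colon \overline{\mathrm{ST}}_R \to \mathscr{C}oh_n^d$ extending the coordinate-cross data. Its restriction $g$ to $\overline{\mathrm{ST}}_R - \{0\}$ has special fibers $\mathcal{E},\mathcal{E}' \in \mathscr{U}(\mathbf{K})$, hence factors through $\mathscr{U}$ by the openness argument of the previous step, and S-completeness of $\mathscr{U}$ then forces $\tilde g(0) \in \mathscr{U}$; since $\tilde g(0) = \mathrm{gr}(\mathcal{E}^\bullet)$, this is exactly the desired conclusion.

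The hard part is the correspondence between morphisms out of $\overline{\mathrm{ST}}_R$ and opposite-filtration data used in the converse: one must genuinely produce a global $\overline{\mathrm{ST}}_R$-flat family from the combinatorial filtration datum, and this is precisely where smoothness (unobstructed deformations, automatically flat because they are families classified by $\mathscr{C}oh_n^d$) together with coherent completeness of $\overline{\mathrm{ST}}_R$ along $0$ does the essential work. Keeping careful track of flatness, via the criterion of Lemma \ref{0203}(2), and treating the degenerate case $\mathcal{E} \cong \mathcal{E}'$ are the remaining bookkeeping points.
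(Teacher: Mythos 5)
Your proposal is correct and follows essentially the same route as the paper: reduce to checking whether the unique extension to $\overline{\mathrm{ST}}_R$ sends $0$ into $\mathscr{U}$ (using S-completeness of $\mathscr{C}oh_n^d$), identify the restriction to the coordinate cross $\Theta_\kappa \cup_{\mathrm{B}\mathbf{G}_{m,\kappa}} \Theta_\kappa$ with a pair of oppositely filtered sheaves whose common associated graded is the image of $0$, and for the converse assemble the opposite filtrations into a family over $\mathrm{Spec}(A/\pi)$ via the Rees construction, then lift infinitesimally using smoothness and algebraize by coherent completeness. The only cosmetic difference is that the paper extracts the opposite filtrations in the forward direction from the explicit $\pi$-adic intersection construction of Lemma \ref{coh-s-co} together with \cite[Corollary 7.14]{MR4665776}, whereas you read them off directly from Lemma \ref{1106} on each branch of the cross; these amount to the same identification.
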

\begin{proof}
\textsc{If Part}: Let $R$ be a DVR over $k$ with residue field $\kappa$. For any commutative diagram
\[
\begin{tikzcd}
\overline{\mathrm{ST}}_R-\{0\} \ar[r] \ar[d,hook] & \mathscr{U} \ar[d,hook] \\
\overline{\mathrm{ST}}_R \ar[r,"{\exists ! f}"'] \ar[ur,dashed] & \mathscr{C}oh_n^d
\end{tikzcd}
\]
of solid arrows, where $f$ is the unique morphism given by Lemma \ref{coh-s-co}, we show there exists a dotted arrow filling it in, i.e., $f(0) \in \mathscr{U}$. Keep the notation in the proof of Lemma \ref{coh-s-co}. By \cite[Corollary 7.14]{MR4665776} the restriction $f|_{x=0}$ (resp., $f|_{y=0}$) gives rise to a filtration $\mathcal{E}_\kappa^{\bullet}$ (resp., $(\mathcal{E}'_\kappa)_{\bullet}$) of $\mathcal{E}_\kappa$ (resp., $\mathcal{E}'_\kappa$)
\begin{gather*}
\mathcal{E}_\kappa^{\bullet}: \cdots \to 0 \to \dfrac{\mathcal{E}'_R}{\pi^{-(b-1)}\mathcal{E}_R \cap \mathcal{E}'_R} \to \cdots \to \dfrac{\mathcal{E}_R \cap \pi^{-(a-1)}\mathcal{E}'_R}{\pi\mathcal{E}_R} \to \mathcal{E}_\kappa \to \cdots \\
\left(\text{resp., } \cdots \leftarrow \mathcal{E}'_\kappa \leftarrow \dfrac{\pi^{-(b-1)}\mathcal{E}_R \cap \mathcal{E}'_R}{\pi\mathcal{E}'_R} \leftarrow \cdots \leftarrow \dfrac{\mathcal{E}_R}{\mathcal{E}_R \cap \pi^{-(a-1)}\mathcal{E}'_R} \leftarrow 0 \leftarrow \cdots :(\mathcal{E}'_\kappa)_{\bullet} \right).
\end{gather*}
Furthermore, we have $f(0)=\mathrm{gr}(\mathcal{E}_\kappa^{\bullet}) \cong \mathrm{gr}((\mathcal{E}'_\kappa)_{\bullet})$. By construction there exists an isomorphism 
\[
\mathcal{E}_\kappa^{i}/\mathcal{E}_\kappa^{i-1} \cong (\mathcal{E}_\kappa')_{i}/(\mathcal{E}_\kappa')_{i+1} \text{ for all } i \in \mathbf{Z},
\]
i.e., the two filtrations $\mathcal{E}_\kappa^{\bullet}$ and $(\mathcal{E}'_\kappa)_{\bullet}$ are opposite. Then by assumption $\mathrm{gr}(\mathcal{E}_\kappa^{\bullet}) \cong \mathrm{gr}((\mathcal{E}'_\kappa)_{\bullet}) \in \mathscr{U}(\kappa)$, i.e., $f(0) \in \mathscr{U}(\kappa)$ and we are done.

\textsc{Only If Part}: Let $\mathcal{E},\mathcal{E}' \in \mathscr{U}(\mathbf{K})$ be two points (for some field $\mathbf{K}/k$) with opposite filtrations $\mathcal{E}^\bullet,\mathcal{E}'_\bullet$. We show $\mathrm{gr}(\mathcal{E}^\bullet) \cong \mathrm{gr}(\mathcal{E}'_\bullet) \in \mathscr{U}(\mathbf{K})$. For this let $R:=\mathbf{K}[[\pi]]$ and $A:=R[x,y]/xy-\pi$. Let $\mathbf{G}_m$ act on $A$ such that $x$ and $y$ have $\mathbf{G}_m$-weights $1$ and $-1$ respectively. The two opposite filtrations $\mathcal{E}^\bullet,\mathcal{E}'_\bullet$ define a family over the coordinate cross $\mathrm{Spec}(\mathbf{K}[x,y]/xy) \cong \mathrm{Spec}(A/\pi)$. Indeed, applying the Rees construction to $\mathcal{E}^\bullet$ (resp., $\mathcal{E}'_\bullet$) gives rise to a $\mathbf{G}_m$-invariant family over $\mathrm{Spec}(\mathbf{K}[x])=\mathrm{Spec}(A/(\pi,y))$ (resp., $\mathrm{Spec}(\mathbf{K}[y])=\mathrm{Spec}(A/(\pi,x))$). Using the isomorphisms $\mathcal{E}^i/\mathcal{E}^{i-1} \cong \mathcal{E}'_{i}/\mathcal{E}'_{i+1}$ to glue these two families 
at the origin $\mathrm{Spec}(\mathbf{K})=\mathrm{Spec}(A/(\pi,x,y))$, we obtain a $\mathbf{G}_m$-invariant family over 
\[
\mathrm{Spec}(\mathbf{K}[x]) \cup_{\mathrm{Spec}(\mathbf{K})} \mathrm{Spec}(\mathbf{K}[y]) \cong \mathrm{Spec}(\mathbf{K}[x,y]/xy) \cong \mathrm{Spec}(A/\pi).
\]
This defines a morphism $f_1: [\mathrm{Spec}(A/\pi)/\mathbf{G}_m] \to \mathscr{C}oh_n^d$. Then we want to find compatible lifts $f_i: [\mathrm{Spec}(A/\pi^i)/\mathbf{G}_m] \to \mathscr{C}oh_n^d$ of $f_1$ for all $i \geq 1$, i.e., a commutative diagram
\[
\begin{tikzcd}
\left[\mathrm{Spec}(A/\pi^{i-1})/\mathbf{G}_m\right] \ar[rr,"f_{i-1}"] \ar[d,hook] & & \mathscr{C}oh_n^d \ar[d] \\
\left[\mathrm{Spec}(A/\pi^i)/\mathbf{G}_m\right] \ar[urr,dashed,"f_i"'] \ar[rr] & & \mathrm{Spec}(k).
\end{tikzcd}
\]
Since $\mathscr{C}oh_n^d$ is smooth over $k$, any morphism $\mathrm{Spec}(A/\pi) \to \mathscr{C}oh_n^d$ can be lifted infinitesimally to $\mathrm{Spec}(A/\pi^i)$ for all $i \geq 1$. The obstruction to the existence of a $\mathbf{G}_m$-equivariant lifting lies in $H^1\left(\mathrm{B}\mathbf{G}_m,f_{i-1}^*\mathbf{T}_{\mathscr{C}oh_n^d/k}|_{\mathrm{B}\mathbf{G}_m} \otimes (\pi^{i-1})/(\pi^i)\right)=0$. Thus there exists a compatible family of morphisms $[\mathrm{Spec}(A/\pi^i)/\mathbf{G}_m] \to \mathscr{C}oh_n^d$. Finally by coherent completeness (see \cite[Corollary 1.5]{MR4009673}) this yields a morphism 
\[
f: \overline{\mathrm{ST}}_R=[\mathrm{Spec}(A)/\mathbf{G}_m] \to \mathscr{C}oh_n^d.
\]
such that $f(0)=\mathrm{gr}(\mathcal{E}^\bullet) \cong \mathrm{gr}(\mathcal{E}'_\bullet)$ and $\overline{\mathrm{ST}}_R-\{0\}$ maps to $\mathscr{U} \subseteq \mathscr{C}oh_n^d$. Since $\mathscr{U}$ is S-complete, $f(0) \in \mathscr{U}$, i.e., $\mathrm{gr}(\mathcal{E}^\bullet) \cong \mathrm{gr}(\mathcal{E}'_\bullet) \in \mathscr{U}(\mathbf{K})$.
\end{proof}
To verify S-completeness using Proposition \ref{prop:S-comple}, we need to know when two coherent sheaves have opposite filtrations. In some situations this can be fully understood, e.g. the stack $\mathscr{B}un_n^{d,s}$ of stable vector bundles has no points with opposite filtrations by Lemma \ref{2235} (ii) and thus is S-complete. To illustrate this characterization a little bit more we present one example and one non-example. First we give a quick proof that $\mathscr{B}un_n^{d,ss}$ is S-complete. For a different proof in a more general setup, see \cite[Lemma 8.4]{MR4665776}.
\begin{cor}
The stack $\mathscr{B}un_n^{d,ss}$ of semistable vector bundles is S-complete, i.e., the adequate moduli space of semistable vector bundles of rank $n$ and degree $d$ is separated.
\end{cor}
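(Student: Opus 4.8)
The plan is to apply the modular characterization of S-completeness in Proposition \ref{prop:S-comple}. Since $\mathscr{B}un_n^{d,ss}$ is an open substack of $\mathscr{C}oh_n^d$, it suffices to prove that whenever $\mathcal{E} \ncong \mathcal{E}' \in \mathscr{B}un_n^{d,ss}(\mathbf{K})$ carry opposite filtrations $\mathcal{E}^\bullet,\mathcal{E}'_\bullet$, the common associated graded $\mathrm{gr}(\mathcal{E}^\bullet) \cong \mathrm{gr}(\mathcal{E}'_\bullet)$ (isomorphic by Lemma \ref{2235} (i)) is again a semistable vector bundle of rank $n$ and degree $d$. Write $\mathrm{gr}^i := \mathcal{E}^i/\mathcal{E}^{i-1} \cong \mathcal{E}'_i/\mathcal{E}'_{i+1}$ and set $r_i := \mathrm{rk}(\mathrm{gr}^i)$, $e_i := \deg(\mathrm{gr}^i)$, and $a_i := n e_i - d r_i$; since rank and degree are additive for coherent sheaves on a curve, these quantities are well defined even before we know the graded pieces are torsion-free.

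The heart of the argument is a slope computation forced by having \emph{two} opposite filtrations of semistable bundles. The subsheaf $\mathcal{E}^i \subseteq \mathcal{E}$ has degree $\sum_{j \le i} e_j$ and rank $\sum_{j \le i} r_j$, so semistability of $\mathcal{E}$ gives $\sum_{j \le i} a_j \le 0$ for every $i$. Dually, the subsheaf $\mathcal{E}'_i \subseteq \mathcal{E}'$ has degree $\sum_{j \ge i} e_j$ and rank $\sum_{j \ge i} r_j$, so semistability of $\mathcal{E}'$ gives $\sum_{j \ge i} a_j \le 0$ for every $i$. As $\sum_j a_j = nd - dn = 0$, these two families of inequalities are complementary and force $\sum_{j \le i} a_j = 0$ for all $i$, hence $a_i = 0$ for all $i$. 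Thus every $\mathrm{gr}^i$, and every $\mathcal{E}^i$, has slope exactly $\mu(\mathcal{E}) = d/n$. I expect this bookkeeping to be the crux of the proof: it is precisely the \emph{opposition} of the two filtrations (one supplying the ``sub'' inequalities for $\mathcal{E}$, the other the ``sub'' inequalities for $\mathcal{E}'$) that pins down all partial sums, whereas a single semistable bundle would yield only half of them, which is not enough.

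It then remains to upgrade these slope equalities to torsion-freeness and semistability of the graded pieces. Since $\mathcal{E}^i \subseteq \mathcal{E}$ is a subsheaf of the semistable $\mathcal{E}$ with $\mu(\mathcal{E}^i) = \mu(\mathcal{E})$, its saturation in $\mathcal{E}$ would preserve the rank but strictly raise the degree unless $\mathcal{E}^i$ is already saturated; semistability of $\mathcal{E}$ forbids such an increase, so each $\mathcal{E}^i$ is a subbundle with locally free quotient. Consequently $\mathcal{E}^{i-1}$ is saturated inside $\mathcal{E}^i$, so $\mathrm{gr}^i$ is torsion-free, i.e. a vector bundle. Moreover each $\mathcal{E}^i$ is semistable (its subsheaves are subsheaves of $\mathcal{E}$), and $\mathrm{gr}^i$, being the quotient of the semistable $\mathcal{E}^i$ by the subsheaf $\mathcal{E}^{i-1}$ of equal slope, is semistable as well by the standard additivity argument.

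Finally, $\mathrm{gr}(\mathcal{E}^\bullet) = \bigoplus_i \mathrm{gr}^i$ is a direct sum of semistable vector bundles all of slope $d/n$, hence a semistable vector bundle of rank $n$ and degree $d$; therefore it lies in $\mathscr{B}un_n^{d,ss}(\mathbf{K})$. By Proposition \ref{prop:S-comple} this shows that $\mathscr{B}un_n^{d,ss}$ is S-complete, and the separatedness of its adequate moduli space then follows as in Theorem \ref{thm0}.
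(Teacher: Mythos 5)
Your proposal is correct and follows essentially the same route as the paper: both reduce via Proposition \ref{prop:S-comple} to showing that every nonzero $\mathcal{E}^i$ has slope $d/n$, by playing the semistability of $\mathcal{E}$ against the subsheaves $\mathcal{E}^i$ off the semistability of $\mathcal{E}'$ against the subsheaves $\mathcal{E}'_i$. The only differences are organizational: the paper runs an induction starting from the composite $\mathcal{E}'\twoheadrightarrow\mathcal{E}^b\hookrightarrow\mathcal{E}$, whereas you package the same inequalities as complementary partial sums summing to zero, and you spell out the saturation/torsion-freeness step that the paper leaves implicit.
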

\begin{proof}
By Proposition \ref{prop:S-comple}, it suffices to prove that if $\mathcal{E} \ncong \mathcal{E}' \in \mathscr{B}un_n^{d,ss}(\mathbf{K})$ for some field $\mathbf{K}/k$ have opposite filtrations $\mathcal{E}^\bullet,\mathcal{E}'_\bullet$, 
then any $\mathcal{E}^{i}/\mathcal{E}^{i-1} \neq 0$ is semistable of slope $\mu:=d/n$, 
as this will imply that $\mathrm{gr}(\mathcal{E}^\bullet)=\mathrm{gr}(\mathcal{E}'_\bullet) \in \mathscr{B}un_n^{d,ss}(\mathbf{K})$. 
A simple computation about slope yields that it is equivalent to show any $\mathcal{E}^{i} \neq 0$ has slope $\mu$ (and hence is automatically semistable). Let $b \in \mathbf{Z}$ be the maximal integer such that $\mathcal{E}^{b-1}=0$. We prove by induction that
\[
\mu(\mathcal{E}^i)=\mu=\mu(\mathcal{E}'_{i+1}) \text{ for } i \geq b.
\]
Consider the morphism $\mathcal{E}' \twoheadrightarrow \mathcal{E}'/\mathcal{E}'_{b+1}=\mathcal{E}'_{b}/\mathcal{E}'_{b+1} \cong \mathcal{E}^{b}/\mathcal{E}^{b-1}=\mathcal{E}^{b} \hookrightarrow \mathcal{E}$. The semistability of $\mathcal{E}$ and $\mathcal{E}'$ yields that
\[
\mu=\mu(\mathcal{E}') \leq \mu(\mathcal{E}'/\mathcal{E}'_{b+1})=\mu(\mathcal{E}^b) \leq \mu(\mathcal{E})=\mu
\]
and hence $\mu(\mathcal{E}^b)=\mu=\mu(\mathcal{E}'_{b+1})$. Suppose $\mu(\mathcal{E}^{i-1})=\mu=\mu(\mathcal{E}'_{i})$, then $\mu(\mathcal{E}^{i})=\mu=\mu(\mathcal{E}'_{i+1})$ follows from the inequality
\[
\mu=\mu(\mathcal{E}^{i-1}) \geq \mu(\mathcal{E}^i) \geq \mu(\mathcal{E}^i/\mathcal{E}^{i-1})=\mu(\mathcal{E}'_{i}/\mathcal{E}'_{i+1}) \geq \mu(\mathcal{E}'_{i})=\mu.
\]
The separatedness of the adequate moduli space follows from \cite[Proposition 3.48 (2)]{MR4665776}.
\end{proof}
Next, we show that the good moduli space of simple vector bundles is in general non-separated. This is certainly well-known (e.g. in the coprime case it follows from a straightforward generalization of \cite[Remark 12.3]{MR0184252}) and we can give a direct proof here.
\begin{cor}\label{simple-non-S}
Let $n \geq 2$ and assume that the triple $(g_C,n,d)$ satisfies one of the conditions in Remark \ref{simple-exist} (e.g. if $g_C \geq 4$). 
Then the stack $\mathscr{B}un_n^{d,simple}$ of simple vector bundles is not S-complete, i.e., the good moduli space of simple vector bundles of rank $n$ and degree $d$ is non-separated.
\end{cor}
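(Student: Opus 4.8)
The plan is to apply the modular criterion for S-completeness, Proposition \ref{prop:S-comple}: I will exhibit a field $\mathbf{K}/k$ and two non-isomorphic simple vector bundles $\mathcal{E} \ncong \mathcal{E}'$ with opposite filtrations whose common associated graded is \emph{not} simple. Such a pair obstructs S-completeness, since being simple is an open condition. Because the good moduli space of simple bundles exists (as recalled before Corollary \ref{1938-new}) and S-completeness is a necessary condition for the separatedness of a good moduli space by \cite[Proposition 3.48 (2)]{MR4665776}, this forces the moduli space to be non-separated.

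After twisting by a line bundle, which induces an isomorphism of stacks preserving S-completeness, the reduction in Remark \ref{simple-exist} supplies a stable bundle $\mathcal{V}$ of rank $n-1$ and positive slope with $H^0(C,\mathcal{V})=0$ and $H^1(C,\mathcal{V}) \neq 0$. I will then take the two ``opposite'' extensions
\[
0 \to \mathcal{V} \to \mathcal{E} \to \mathcal{O}_C \to 0 \quad\text{and}\quad 0 \to \mathcal{O}_C \to \mathcal{E}' \to \mathcal{V} \to 0,
\]
both chosen non-split. The first is simple by Remark \ref{simple-exist}. For the second I first need $\mathrm{Ext}^1(\mathcal{V},\mathcal{O}_C)=H^1(C,\mathcal{V}^\vee) \neq 0$, which follows from Riemann--Roch: since $\mathcal{V}$ is stable of positive slope, $H^0(C,\mathcal{V}^\vee)=0$ and $\chi(\mathcal{V}^\vee)<0$, whence $H^1(C,\mathcal{V}^\vee)\neq 0$. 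These two extensions share the associated graded $\mathcal{V} \oplus \mathcal{O}_C$, and with the evident indexing they realize $\mathcal{E}$ and $\mathcal{E}'$ as points with opposite filtrations whose graded pieces are $\mathcal{V}$ and $\mathcal{O}_C$.

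The content then reduces to three verifications: (a) $\mathcal{E}'$ is simple, (b) $\mathcal{E} \ncong \mathcal{E}'$, and (c) $\mathcal{V}\oplus\mathcal{O}_C$ is not simple. For (c) the two vanishings $\mathrm{Hom}(\mathcal{O}_C,\mathcal{V})=H^0(C,\mathcal{V})=0$ and $\mathrm{Hom}(\mathcal{V},\mathcal{O}_C)=H^0(C,\mathcal{V}^\vee)=0$ give $\mathrm{End}(\mathcal{V}\oplus\mathcal{O}_C)=k\oplus k$, so the graded object is strictly semisimple and hence lies outside $\mathscr{B}un_n^{d,simple}(\mathbf{K})$. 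For (b) I observe that $\mathrm{Hom}(\mathcal{V},\mathcal{E}')=0$: a nonzero map $\mathcal{V}\to\mathcal{E}'$ composed with the projection to $\mathcal{V}$ is a nonzero scalar and would split the sequence, so it factors through $\mathcal{O}_C$ and vanishes by $\mathrm{Hom}(\mathcal{V},\mathcal{O}_C)=0$; in contrast $\mathrm{Hom}(\mathcal{V},\mathcal{E})\neq 0$ contains the inclusion, so $\mathcal{E}\ncong\mathcal{E}'$. For (a), which I expect to be the most delicate step, I would mirror the endomorphism computation of Remark \ref{simple-exist}: any $\psi\in\mathrm{End}(\mathcal{E}')$ carries the sub $\mathcal{O}_C$ into $\mathcal{O}_C$ because $\mathrm{Hom}(\mathcal{O}_C,\mathcal{V})=0$, so it acts by a scalar $\lambda$ there; then $\psi-\lambda\cdot\mathrm{id}$ factors through the quotient $\mathcal{V}$, and stability of $\mathcal{V}$ together with $\mathrm{Hom}(\mathcal{V},\mathcal{O}_C)=0$ and non-splitness forces it to vanish, so $\psi=\lambda\cdot\mathrm{id}$. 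With (a)--(c) in place, Proposition \ref{prop:S-comple} yields that $\mathscr{B}un_n^{d,simple}$ is not S-complete, and the non-separatedness of its good moduli space follows as explained above.
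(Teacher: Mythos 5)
Your proposal is correct and follows essentially the same route as the paper: the same choice of a stable $\mathcal{V}$ of rank $n-1$ with $H^0(C,\mathcal{V})=0$ and $H^1(C,\mathcal{V})\neq 0$, the same pair of non-split opposite extensions by $\mathcal{O}_C$, and the same appeal to Proposition \ref{prop:S-comple} together with \cite[Proposition 3.48 (2)]{MR4665776}. You in fact spell out a few points the paper leaves implicit (that $\mathcal{E}\ncong\mathcal{E}'$ and that the common graded $\mathcal{V}\oplus\mathcal{O}_C$ is not simple), and these verifications are accurate.
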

\begin{proof}
Twisting by a degree 1 line bundle we may assume that $0< d \leq n$. By Proposition \ref{prop:S-comple} it suffices to find two simple vector bundles with opposite filtrations. Let $\mathcal{V}$ be a stable vector bundle of rank $n-1$ and degree $d$ such that 
\[
H^0(C,\mathcal{V})=0 \text{ and } H^1(C,\mathcal{V}) \neq 0.
\]
Such a vector bundle exists by our assumption on $(g_C,n,d)$. Then $H^0(C,\mathcal{V}^{-1})=0$ as $\mathcal{V}^{-1}$ is stable of negative slope and $H^1(C,\mathcal{V}^{-1}) \neq 0$ by Riemann-Roch. A similar argument to the one given in Remark \ref{simple-exist} yields that any non-split extensions
\[
0 \to \mathcal{V} \to \mathcal{E} \to \mathcal{O}_C \to 0 \text{ and } 0 \to \mathcal{O}_C \to \mathcal{E}' \to \mathcal{V} \to 0
\]
are simple. Furthermore, they have opposite filtrations by construction and we conclude. The non-separatedness of the good moduli space follows from \cite[Proposition 3.48 (2)]{MR4665776}.
\end{proof}
There is a sufficient condition for vector bundles to have opposite filtrations, which can be seen as a partial inverse to Lemma \ref{2235}.
\begin{prop}\label{173e}
Given two points $\mathcal{E} \ncong \mathcal{E}'\in\mathscr{B}un_n^d(\mathbf{K})$ for some field $\mathbf{K}/k$. If there exist
\begin{enumerate}
\item
an isomorphism $\lambda: \det(\mathcal{E}) \xrightarrow{\sim} \det(\mathcal{E}')$, and
\item
morphisms $h: \mathcal{E} \to \mathcal{E}'$ of rank $r$ and $h': \mathcal{E}' \to \mathcal{E}$ of rank $n-r$ such that $h \circ h'=h' \circ h=0$ and the following diagram 
\begin{equation}\label{comm-up}
\begin{tikzcd}
\wedge^r(\mathcal{E}) \ar[rr,"{\wedge^r(h)}"] \ar[d,"{\varphi_\mathcal{E}}"',"\cong"] & & \wedge^r(\mathcal{E}') \ar[d,"{\varphi_{\mathcal{E}'}}","\cong"'] \\
\wedge^{n-r}(\mathcal{E})^{-1} \otimes \det(\mathcal{E}) \ar[rr,"{\wedge^{n-r}(h')^{-1} \otimes \lambda}"'] & & \wedge^{n-r}(\mathcal{E}')^{-1} \otimes \det(\mathcal{E}')
\end{tikzcd}
\end{equation}
commutes, where $\varphi_{\mathcal{E}}$ and $\varphi_{\mathcal{E}'}$ are the canonical isomorphisms,
\end{enumerate}
then $\mathcal{E}$ and $\mathcal{E}'$ have opposite filtrations.
\end{prop}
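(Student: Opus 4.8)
The plan is to produce the opposite filtrations explicitly from the two morphisms, using $h$ and $h'$ to cut out the outer graded pieces and reserving the hypotheses (i) and (ii) for the middle one. Write $K:=\ker(h)\subseteq\mathcal E$ and $I':=\mathrm{im}(h')\subseteq\mathcal E$, and dually $K':=\ker(h')\subseteq\mathcal E'$ and $I:=\mathrm{im}(h)\subseteq\mathcal E'$. Since $\mathcal E,\mathcal E'$ are vector bundles over the smooth curve $C$, all four are torsion-free subsheaves and hence again vector bundles. The relations $h\circ h'=h'\circ h=0$ give the inclusions $I'\subseteq K$ and $I\subseteq K'$, and counting ranks (using that $h$ has rank $r$ and $h'$ has rank $n-r$) shows both inclusions are isomorphisms at the generic point of $C$; thus $K/I'$ and $K'/I$ are torsion sheaves of the same length, supported on the locus where the ranks of $h$ and $h'$ drop.

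First I would set up the two three-step filtrations
\[
\mathcal E:\quad 0\subseteq I'\subseteq K\subseteq\mathcal E,\qquad
\mathcal E':\quad \mathcal E'\supseteq K'\supseteq I\supseteq 0,
\]
with associated graded pieces $\big(I',\,K/I',\,\mathcal E/K\big)$ and $\big(\mathcal E'/K',\,K'/I,\,I\big)$ in the matching indices. The outer pieces agree unconditionally: the coimage–image isomorphisms induced by $h$ and $h'$ give $\mathcal E/K\cong\mathrm{im}(h)=I$ and $\mathcal E'/K'\cong\mathrm{im}(h')=I'$, so the top piece of $\mathcal E$ coincides with the bottom piece of $\mathcal E'$ and vice versa. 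By the definition of opposite filtrations it therefore remains only to exhibit an isomorphism of the \emph{middle} graded pieces $K/I'\cong K'/I$; when these torsion sheaves vanish (equivalently, when $\mathrm{im}(h)$ and $\mathrm{im}(h')$ are already saturated) the filtrations collapse to the expected two-step ones, recovering the situation of Lemma~\ref{2235}.

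Matching the middle pieces is where conditions (i) and (ii) are needed, and I expect it to be the main obstacle. The sheaves $K/I'$ and $K'/I$ are exactly the two homology sheaves of the two-periodic complex $\cdots\to\mathcal E\xrightarrow{h}\mathcal E'\xrightarrow{h'}\mathcal E\xrightarrow{h}\mathcal E'\to\cdots$, and having equal length is far from enough to make them isomorphic. The key is that the commuting square \eqref{comm-up}, together with the trivialization $\lambda\colon\det(\mathcal E)\xrightarrow{\sim}\det(\mathcal E')$, realizes $\wedge^{n-r}(h')$ as the adjoint of $\wedge^r(h)$ for the determinant pairings $\varphi_\mathcal E,\varphi_{\mathcal E'}$ — concretely, it expresses $h'$ as a generalized adjugate (complementary-minor) of $h$ — and thereby exhibits the periodic complex as self-dual. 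I would use this self-duality to induce a perfect pairing between $K/I'$ and $K'/I$, identifying each with the local (Serre) dual of the other and so yielding an abstract isomorphism $K/I'\cong K'/I$, which is all the definition requires.

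The hardest point is thus to extract from the exterior-power identity \eqref{comm-up} an honest isomorphism of the two homology sheaves, rather than merely an equality of their lengths or of their determinants: the self-duality encoded by \eqref{comm-up} and $\lambda$ is precisely the extra, non-numerical input that forces the middle graded pieces to agree and completes the proof that $\mathcal E$ and $\mathcal E'$ have opposite filtrations.
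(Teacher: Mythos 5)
Your two filtrations $0 \subseteq \mathrm{im}(h') \subseteq \ker(h) \subseteq \mathcal{E}$ and $\mathcal{E}' \supseteq \ker(h') \supseteq \mathrm{im}(h) \supseteq 0$ are exactly the ones used in the paper (the saturation of $\mathrm{im}(h')$ is $\ker(h)$, since $\mathcal{E}/\ker(h)\cong\mathrm{im}(h)$ is torsion-free and the ranks agree), and your matching of the outer graded pieces via the coimage--image isomorphisms is also the paper's first step. The proof therefore stands or falls with the identification of the middle pieces $K/I'$ and $K'/I$, and here your argument has a genuine gap: you assert that \eqref{comm-up} makes the two-periodic complex $\cdots\to\mathcal{E}\xrightarrow{h}\mathcal{E}'\xrightarrow{h'}\mathcal{E}\to\cdots$ self-dual and hence induces a perfect pairing between its two homology sheaves, but neither claim is established, and the first is not a consequence of the hypotheses. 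Condition \eqref{comm-up} identifies $\wedge^r(h)$ with the transpose of $\wedge^{n-r}(h')$ up to the twist by $\lambda$; self-duality of the complex itself would require identifying $h$ with the transpose of $h'$ under isomorphisms of $\mathcal{E}$ and $\mathcal{E}'$ with their duals up to a line-bundle twist, which do not exist for general bundles. Concretely, over a local ring $R=\mathcal{O}_{C,p}$ with uniformizer $t$ the data $A=\mathrm{diag}(t,t,0,0)$, $B=\mathrm{diag}(0,0,t^2,1)$ (so $n=4$, $r=2$) satisfies $AB=BA=0$, has the correct ranks, and satisfies the exterior-power identity of \eqref{comm-up} with $\lambda=1$, yet its two homology modules are $\ker(A)/\mathrm{im}(B)\cong R/t^2$ and $\ker(B)/\mathrm{im}(A)\cong (R/t)^{\oplus 2}$: these have equal length but are neither isomorphic nor Matlis dual to one another. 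This local picture is realized by global morphisms between direct sums of line bundles, so no duality argument of the kind you sketch can produce the required isomorphism of middle pieces; the step you yourself flag as ``the hardest point'' is not bridgeable by the route you propose.

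For comparison, the paper closes this step by a different mechanism: using the diagram of sub- and quotient bundles produced by $h\circ h'=h'\circ h=0$, it identifies $\mathcal{G}_2=\ker(h')/\mathrm{im}(h)$ with the cokernel of the induced map $\det(\mathrm{im}(h))\to\det(\ker(h'))$ and $\mathcal{G}_1=\ker(h)/\mathrm{im}(h')$ with the cokernel of a corresponding morphism of determinant line bundles twisted by $\det(\mathcal{E})$, and then factors \eqref{comm-up} through these subbundles to conclude that the two line-bundle morphisms have isomorphic cokernels. If you rework your argument you should replace the duality heuristic by an explicit comparison of this kind; note also that identifying a torsion quotient of a same-rank inclusion of bundles with the cokernel of its determinant is itself a point requiring care when $\min(r,n-r)\geq 2$, as the local example above shows (there the quotient is not even locally cyclic), so the cleanest cases are $r=1$ and $r=n-1$, which are the only ones the paper subsequently uses.
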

\begin{proof}
Suppose there exist morphisms $h: \mathcal{E} \to \mathcal{E}'$ of rank $r$ and $h': \mathcal{E}' \to \mathcal{E}$ of rank $n-r$ satisfying the conditions in (ii). Let
$\mathcal{E}'_1:=\mathrm{Im}(h') \subseteq \mathcal{E}$ (resp., $\mathcal{E}_1:=\mathrm{Im}(h) \subseteq \mathcal{E}'$) and $\mathcal{E}_2 \subseteq \mathcal{E}$ (resp., $\mathcal{E}_2' \subseteq \mathcal{E}'$) be the subbundle generated by $\mathcal{E}'_1$ (resp., $\mathcal{E}_1$). The conditions $h\circ h'=0= h'\circ h$ yield the following commutative diagram with exact rows and columns
\begin{equation}\label{sad}
\begin{tikzcd}
& 0 \\
& \mathcal{G}_1 \ar[u] & & 0 \ar[d] \\
0 \ar[r] & \mathcal{E}_2 \ar[r] \ar[u] & \mathcal{E} \ar[r] \ar[d,"h",shift left] & \mathcal{E}_1 \ar[r] \ar[d,"i"] & 0  \\
0 & \mathcal{E}'_1 \ar[l] \ar[u,"i'"] & \mathcal{E}' \ar[l] \ar[u,shift left,"h'"] & \mathcal{E}'_2 \ar[l] \ar[d] & 0 \ar[l] \\
& 0 \ar[u] & & \mathcal{G}_2 \ar[d] \\
& & & 0,
\end{tikzcd}
\end{equation}
where $\mathcal{G}_1:=\mathcal{E}_2/\mathcal{E}'_1$ and $\mathcal{G}_2:=\mathcal{E}'_2/\mathcal{E}_1$ are torsion sheaves of rank $0$. By (i) and commutativity of \eqref{comm-up} it follows that $\mathcal{G}_1 \cong \mathcal{G}_2$ and therefore
\[
0 \subseteq \mathcal{E}'_1 \subseteq \mathcal{E}_2 \subseteq \mathcal{E} \text{ and } \mathcal{E}' \supseteq \mathcal{E}'_2 \supseteq \mathcal{E}_1 \supseteq 0
\]
are opposite filtrations of $\mathcal{E}$ and $\mathcal{E}'$. Indeed, an easy computation shows
\begin{gather*}
\mathcal{G}_2 \cong \mathrm{coker}\left(\det(\mathcal{E}_1) \xrightarrow{\det(i)} \det(\mathcal{E}'_2)\right), \text{ and} \\
\mathcal{G}_1 \cong \mathrm{coker}\left(\det(\mathcal{E}_2)^{-1} \otimes \det(\mathcal{E}) \xrightarrow{\det(i')^{-1} \otimes \lambda} \det(\mathcal{E}'_1)^{-1} \otimes \det(\mathcal{E}')\right).
\end{gather*}
Furthermore, using \eqref{sad} we can decompose \eqref{comm-up} as follows
\[
\begin{tikzcd}
& \det(\mathcal{E}_1) \ar[rr,hook,"\det(i)"] \ar[dd,"\cong"',near start] & & \det(\mathcal{E}'_2) \ar[dd,"\cong"] \ar[dl] \\
\wedge^r(\mathcal{E}) \ar[rr] \ar[dd,"\cong"'] \ar[ur] & & \wedge^r(\mathcal{E}') \ar[dd,"\cong",near start] \\
& \det(\mathcal{E}_2)^{-1} \otimes \det(\mathcal{E}) \ar[rr,hook,"\det(i')^{-1} \otimes \lambda",near start] & & \det(\mathcal{E}'_1)^{-1} \otimes \det(\mathcal{E}') \ar[dl] \\
\wedge^{n-r}(\mathcal{E})^{-1} \otimes \det(\mathcal{E}) \ar[rr] \ar[ur] & & \wedge^{n-r}(\mathcal{E}')^{-1} \otimes \det(\mathcal{E}').
\end{tikzcd}
\]
This gives rise to the following commutative diagram with exact rows
\[
\begin{tikzcd}
0 \ar[r]& \det(\mathcal{E}_1) \ar[rr,"\det(i)"] \ar[d,"\cong"'] & & 
\det(\mathcal{E}'_2) \ar[d,"\cong"] \ar[r] & \mathcal{G}_2 \ar[r] \ar[d] & 0 \\
0 \ar[r]& \det(\mathcal{E}_2)^{-1} \otimes \det(\mathcal{E}) \ar[rr,"\det(i')^{-1} \otimes \lambda"] \ar[d,equal] & & \det(\mathcal{E}'_1)^{-1} \otimes \det(\mathcal{E}') \ar[r] & \mathcal{G} \ar[r] & 0\\
0\ar[r]& \det(\mathcal{E}_2)^{-1} \otimes \det(\mathcal{E}) \ar[rr,"\det(i')^{-1} \otimes \mathrm{id}"'] & & \det(\mathcal{E}'_1)^{-1} \otimes \det(\mathcal{E}) \ar[r] \ar[u,"\mathrm{id} \otimes \lambda","\cong"'] & \mathcal{G}_1 \ar[r] \ar[u] & 0.
\end{tikzcd}
\]
Hence, $\mathcal{G}_1 \cong \mathcal{G} \cong \mathcal{G}_2$. 
\end{proof}
\begin{cor}\label{1738e}
Let $\mathbf{K}/k$ be a field. Two points $\mathcal{E} \ncong \mathcal{E}' \in \mathscr{B}un_n^d(\mathbf{K})$ have opposite filtrations if there exist an isomorphism $\det(\mathcal{E}) \xrightarrow{\sim} \det(\mathcal{E}')$, and a morphism $\mathcal{E} \to \mathcal{E}'$ of rank $n-1$.
\end{cor}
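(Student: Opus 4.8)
The plan is to recognise this statement as exactly the case $r=n-1$ of Proposition \ref{173e}. Taking $\lambda\colon\det(\mathcal{E})\xrightarrow{\sim}\det(\mathcal{E}')$ to be the given isomorphism and $h\colon\mathcal{E}\to\mathcal{E}'$ the given morphism of rank $n-1$, all the input of Proposition \ref{173e} is already present except for a second morphism $h'\colon\mathcal{E}'\to\mathcal{E}$ of rank $n-r=1$ with $h\circ h'=h'\circ h=0$ fitting into the commutative square \eqref{comm-up}. The whole problem therefore reduces to producing this $h'$, and the natural candidate is the (twisted) adjugate of $h$.

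First I would form $\wedge^{n-1}(h)\colon\wedge^{n-1}(\mathcal{E})\to\wedge^{n-1}(\mathcal{E}')$ and transport it through the canonical isomorphisms $\varphi_{\mathcal{E}}\colon\wedge^{n-1}(\mathcal{E})\xrightarrow{\sim}\mathcal{E}^{\vee}\otimes\det(\mathcal{E})$ and $\varphi_{\mathcal{E}'}$, obtaining a morphism $\Psi\colon\mathcal{E}^{\vee}\otimes\det(\mathcal{E})\to(\mathcal{E}')^{\vee}\otimes\det(\mathcal{E}')$. Because $\lambda$ identifies the two determinant line bundles, $\Psi$ is uniquely of the form $(h')^{\vee}\otimes\lambda$ for a well-defined morphism $(h')^{\vee}\colon\mathcal{E}^{\vee}\to(\mathcal{E}')^{\vee}$; dualising once more and using that a vector bundle is canonically its own double dual yields the desired $h'\colon\mathcal{E}'\to\mathcal{E}$. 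With this definition the square \eqref{comm-up} for $r=n-1$ commutes by construction, so the isomorphism required in (i) and the commutativity of \eqref{comm-up} in (ii) of Proposition \ref{173e} are in place automatically.

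What remains, and what I expect to be the crux, is to verify the two numerical conditions on $h'$: that it has rank $1$ and that $h\circ h'=h'\circ h=0$. Both compositions are global endomorphisms of the locally free sheaves $\mathcal{E}'$ and $\mathcal{E}$ over the integral scheme $C_{\mathbf{K}}$, hence vanish as soon as they vanish at the generic point $\eta$; similarly the rank of $h'$ may be read off at $\eta$. Over the function field the maps $h_{\eta}$ and $h'_{\eta}$ are ordinary linear maps of $n$-dimensional vector spaces, and $h'_{\eta}$ is precisely the classical adjugate of $h_{\eta}$. The standard adjugate identities then give $h_{\eta}\circ h'_{\eta}=h'_{\eta}\circ h_{\eta}=\det(h_{\eta})\cdot\mathrm{id}$, which vanishes because $h$ of rank $n-1$ forces $\wedge^{n}(h)=0$, and they also show that the adjugate of a rank $(n-1)$ map has rank exactly $1$. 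The only point to keep an eye on is that the twist by $\lambda$ cannot spoil the vanishing of the two compositions, since each is proportional to the identically-zero map $\wedge^{n}(h)$; granting this, feeding $(\lambda,h,h')$ into Proposition \ref{173e} shows that $\mathcal{E}$ and $\mathcal{E}'$ have opposite filtrations.
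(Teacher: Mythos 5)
Your proposal is correct and follows essentially the same route as the paper: the paper also defines $h'$ by forcing \eqref{comm-up} to commute (i.e.\ as the $\lambda$-twisted adjugate of $h$) and then invokes Proposition \ref{173e}. Your verification at the generic point, via the classical adjugate identities $h_\eta\circ h'_\eta=h'_\eta\circ h_\eta=\det(h_\eta)\cdot\mathrm{id}=0$ and the fact that the adjugate of a rank $(n-1)$ map has rank $1$, correctly supplies the details the paper leaves implicit.
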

\begin{proof}
Let $h: \mathcal{E} \to \mathcal{E}'$ be a morphism of rank $n-1$. Using the condition that (\ref{comm-up}) should commute we can define a rank $1$ morphism $h': \mathcal{E}' \to \mathcal{E}$, which satisfies $h \circ h'=h' \circ h=0$ and we are done by Proposition \ref{173e}.
\end{proof}
Another piece of information from the proof of Proposition \ref{prop:S-comple} is that coherent sheaves with opposite filtrations have a common generalization. This can be seen a non-separation phenomena in the topological sense and we formalize an algebraic analogue of it.
\begin{defn}
Two points $\mathcal{E} \ncong \mathcal{E}' \in \mathscr{C}oh_n^d(\mathbf{K})$ for some field $\mathbf{K}/k$ are said to be \emph{non-separated} if there exist
\begin{enumerate}
\item
a DVR $R$ over $k$ with fraction field $K$ and residue field $\mathbf{K}$, and
\item
two families $\mathcal{G}_R,\mathcal{G}'_R \in \mathscr{C}oh_n^d(R)$ with $\mathcal{G}_K \cong \mathcal{G}'_K$ such that 
\[
\mathcal{G}_\mathbf{K} \cong \mathcal{E} \text{ and } \mathcal{G}'_\mathbf{K} \cong \mathcal{E}'.
\]
\end{enumerate}
In this case we say the non-separation of $\mathcal{E}$ and $\mathcal{E}'$ is realized by $\mathcal{G}_R,\mathcal{G}'_R$.
\end{defn}
It turns out that for coherent sheaves, having opposite filtration and being non-separated are equivalent. This gives a geometric description of coherent sheaves with opposite filtrations.
\begin{cor}\label{1813}
Let $\mathbf{K}/k$ be a field. For any two points $\mathcal{E} \ncong \mathcal{E}' \in \mathscr{C}oh_n^d(\mathbf{K})$, the following are equivalent:
\begin{enumerate}
\item
$\mathcal{E}$ and $\mathcal{E}'$ have opposite filtrations.
\item
$\mathcal{E}$ and $\mathcal{E}'$ are non-separated.
\end{enumerate}
\end{cor}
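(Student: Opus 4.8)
The plan is to obtain both implications by unwinding the proof of Proposition \ref{prop:S-comple}. The common bridge is the identification in Remark \ref{rmk:S-compl}: a morphism $\overline{\mathrm{ST}}_R-\{0\} \to \mathscr{C}oh_n^d$ is exactly the data of two families in $\mathscr{C}oh_n^d(R)$ together with an isomorphism of their restrictions to $\mathrm{Spec}(K)$, which is precisely the data realizing a non-separation. Combined with the fact that $\mathscr{C}oh_n^d$ is S-complete (Lemma \ref{coh-s-co}), so that such a morphism extends uniquely over $\overline{\mathrm{ST}}_R$, the equivalence becomes a matter of reading off the two charts versus the coordinate cross.

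For (ii) $\Rightarrow$ (i), I would start from a non-separation of $\mathcal{E}$ and $\mathcal{E}'$ realized by $\mathcal{G}_R, \mathcal{G}'_R \in \mathscr{C}oh_n^d(R)$ with $\mathcal{G}_K \cong \mathcal{G}'_K$ and special fibers $\mathcal{G}_\mathbf{K} \cong \mathcal{E}$, $\mathcal{G}'_\mathbf{K} \cong \mathcal{E}'$ (so $\kappa=\mathbf{K}$). By Remark \ref{rmk:S-compl} this pair of families with the generic isomorphism is a morphism $\overline{\mathrm{ST}}_R-\{0\} \to \mathscr{C}oh_n^d$, which by Lemma \ref{coh-s-co} extends uniquely to $f:\overline{\mathrm{ST}}_R \to \mathscr{C}oh_n^d$. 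Restricting $f$ to the two components of the coordinate cross $xy=0$, exactly as in the \textsc{If Part} of the proof of Proposition \ref{prop:S-comple}, yields a filtration $\mathcal{E}_\kappa^\bullet$ of $\mathcal{G}_\kappa \cong \mathcal{E}$ and a filtration $(\mathcal{E}'_\kappa)_\bullet$ of $\mathcal{G}'_\kappa \cong \mathcal{E}'$ satisfying $\mathcal{E}_\kappa^i/\mathcal{E}_\kappa^{i-1} \cong (\mathcal{E}'_\kappa)_i/(\mathcal{E}'_\kappa)_{i+1}$ for all $i$; that is, $\mathcal{E}$ and $\mathcal{E}'$ have opposite filtrations.

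For (i) $\Rightarrow$ (ii), the construction is already contained in the \textsc{Only If Part} of the proof of Proposition \ref{prop:S-comple}: from opposite filtrations $\mathcal{E}^\bullet, \mathcal{E}'_\bullet$ one sets $R:=\mathbf{K}[[\pi]]$, applies the Rees construction to each filtration, glues the resulting $\mathbf{G}_m$-equivariant families along their common associated graded at the origin, lifts infinitesimally using smoothness of $\mathscr{C}oh_n^d$, and applies coherent completeness to produce $f:\overline{\mathrm{ST}}_R \to \mathscr{C}oh_n^d$. By Remark \ref{rmk:S-compl} its restriction to $\overline{\mathrm{ST}}_R-\{0\}$ is a pair of families $\mathcal{G}_R, \mathcal{G}'_R \in \mathscr{C}oh_n^d(R)$ with $\mathcal{G}_K \cong \mathcal{G}'_K$, and I would check that their special fibers are $\mathcal{G}_\mathbf{K} \cong \mathcal{E}$ and $\mathcal{G}'_\mathbf{K} \cong \mathcal{E}'$, realizing the non-separation over $R=\mathbf{K}[[\pi]]$.

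The main obstacle is precisely this last bookkeeping tying charts to fibers. On the chart $\{x \neq 0\} \cong \mathrm{Spec}(R)$ of $\overline{\mathrm{ST}}_R$, the special fiber sits at the point $(x=1,\,y=0)$, i.e. at $f(1)$ of the Rees family on the $\{y=0\}$ component, which is the total sheaf $\mathcal{E}$ rather than its associated graded; symmetrically the chart $\{y \neq 0\}$ recovers $\mathcal{E}'$. I would make this identification explicit so that the special fibers of $\mathcal{G}_R, \mathcal{G}'_R$ are $\mathcal{E}$ and $\mathcal{E}'$ and not $\mathrm{gr}(\mathcal{E}^\bullet)$. Everything else is a direct reinterpretation of the two halves of the proof of Proposition \ref{prop:S-comple}.
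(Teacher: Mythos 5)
Your proposal is correct and follows essentially the same route as the paper: both directions are obtained by unwinding the two halves of the proof of Proposition \ref{prop:S-comple}, identifying $\overline{\mathrm{ST}}_R-\{0\}$ with a pair of $R$-families glued over $K$ (Remark \ref{rmk:S-compl}) and the coordinate cross $xy=0$ with a pair of opposite filtrations. Your extra bookkeeping identifying the special fibers of $f|_{x\neq 0}$ and $f|_{y\neq 0}$ with the total sheaves $\mathcal{E}$ and $\mathcal{E}'$ (rather than $\mathrm{gr}(\mathcal{E}^\bullet)$) is accurate and makes explicit a point the paper leaves implicit.
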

\begin{proof}
(i) $\Rightarrow$ (ii): Suppose $\mathcal{E} \ncong \mathcal{E}' \in \mathscr{C}oh_n^d(\mathbf{K})$ have opposite filtrations $\mathcal{E}^\bullet,\mathcal{E}'_\bullet$. Consider the morphism $f: \overline{\mathrm{ST}}_R \to \mathscr{C}oh_n^d$ constructed in the proof of the \textsc{Only If Part} in Proposition \ref{prop:S-comple} using these two opposite filtrations. The non-separatedness of $\mathcal{E}$ and $\mathcal{E}'$ is realized by the families corresponding to $f|_{x \neq 0}$ and $f|_{y \neq 0}$.

(ii) $\Rightarrow$ (i): Suppose the non-separatedness of $\mathcal{E}$ and $\mathcal{E}'$ is realized by $\mathcal{G}_R,\mathcal{G}'_R$. This defines a morphism $\overline{\mathrm{ST}}_R - \{0\} \to \mathscr{C}oh_n^d$ extending to $f: \overline{\mathrm{ST}}_R \to \mathscr{C}oh_n^d$. By the proof of
the \textsc{If Part} in Proposition \ref{prop:S-comple}, the restrictions $f|_{x=0}$ and $f|_{y=0}$ define opposite filtrations of $\mathcal{E}$ and $\mathcal{E}'$.
\end{proof}
\begin{rmk}
The equivalence established in Corollary \ref{1813} relates us to several non-separation results in the literature. For example, Lemma \ref{2235} gives an algebraic proof of \cite[Proposition 2 and Corollary]{MR0529671} or \cite[Proposition 2.9]{MR0879544}, and Proposition \ref{173e} (resp., Corollary \ref{1738e}) gives an algebraic proof of \cite[Proposition 2]{MR0508172} (resp., \cite[Corollary 1]{MR0508172}).
\end{rmk}
\subsection{Local reductivity}
In positive characteristic the existence criterion of moduli spaces requires an additional condition called local reductivity, which we recall here.
\begin{defn}[(\cite{MR4665776}, Definition 2.1)]
If $\mathscr{X}$ is an algebraic stack and $x \in |\mathscr{X}|$ is a point, an \emph{\'{e}tale quotient presentation} around $x$ is a pointed \'{e}tale morphism $f: (\mathscr{W},w) \to (\mathscr{X},x)$ of algebraic stacks such that (i) $\mathscr{W} \cong [\mathrm{Spec}(A)/\mathrm{GL}_n]$ for some $n$; and (ii) $f$ induces an isomorphism of stabilizer groups at $w$, i.e., $\mathrm{Aut}_{\mathscr{W}}(p)=\mathrm{Aut}_{\mathscr{X}}(f(p))$ for any point $p \in \mathscr{W}(\mathbf{K})$ representing $w$, where $\mathbf{K}/k$ is a field.
\end{defn}
\begin{rmk}\label{rmk:G-GLn}
The notion of \'{e}tale quotient presentation remains unchanged if we require in (i) that $\mathscr{W} \cong [\mathrm{Spec}(A)/G]$ for some reductive group $G$, as for any closed embedding $G \to \mathrm{GL}_n$ we have $[\mathrm{Spec}(A)/G] \cong [((\mathrm{Spec}(A) \times \mathrm{GL}_n)/G)/\mathrm{GL}_n]$ and $(\mathrm{Spec}(A) \times \mathrm{GL}_n)/G$ is affine since $G$ is reductive.
\end{rmk}
\begin{defn}[(\cite{MR4665776}, Definition 2.5)]
A quasi-separated algebraic stack $\mathscr{X}$ with affine stabilizers is \emph{locally reductive} if every point of $\mathscr{X}$ specializes to a closed point and every closed point admits an \'{e}tale quotient presentation.
\end{defn}

Recall that in characteristic $0$ having a separated moduli space implies (actually is equivalent to) $\Theta$-reductivity and S-completeness (see \cite[Theorem A]{MR4665776}). For principal bundles this implication remains valid in positive characteristic.
\begin{prop}\label{lem:localreductive}
Let $C$ be a smooth projective connected curve over an algebraically closed field $k$ and let $G$ be a reductive group over $k$. Then a quasi-compact open substack $\mathscr{U} \subseteq \mathscr{B}un_G$ admits a separated adequate moduli space if and only if it is locally reductive, $\Theta$-reductive, and S-complete.
\end{prop}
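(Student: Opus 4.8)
The plan is to derive both directions from the existence criterion Theorem \ref{thm0}, the only delicate hypothesis being local reductivity, which in positive characteristic does not come for free. I would first record that $\mathscr{U}$ meets the remaining standing hypotheses of Theorem \ref{thm0}: since $\mathscr{B}un_G$ is smooth, locally of finite type and has affine diagonal over $k$ (see \cite[Proposition 4.1]{MR3013030}), these properties descend to the open substack $\mathscr{U}$, and with the assumed quasi-compactness they make $\mathscr{U}$ of finite type with affine diagonal, hence with affine stabilizers. The ``if'' direction is then immediate: local reductivity, $\Theta$-reductivity and S-completeness are precisely the hypotheses of Theorem \ref{thm0}, whose sufficiency direction produces a separated adequate moduli space, so no further work is needed when local reductivity is assumed outright.

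The substance lies in the ``only if'' direction, where from a separated adequate moduli space $\pi\colon\mathscr{U}\to U$ I would first extract local reductivity and then feed it into the necessity direction of Theorem \ref{thm0} to obtain $\Theta$-reductivity and S-completeness. Local reductivity has two parts. That every point specializes to a closed point follows because every fibre of an adequate moduli space contains a unique closed point, to which every point of that fibre specializes, and the closed points of $\mathscr{U}$ are exactly these. For the \'{e}tale quotient presentation at a closed point $x$, the key is that $\mathrm{Aut}(x)$ is reductive: the residual gerbe $\mathrm{B}\mathrm{Aut}(x)$ lies in a fibre of $\pi$ and is therefore adequately affine, which forces $\mathrm{Aut}(x)$ to be geometrically reductive. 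Applying the \'{e}tale-local structure theorem \cite[Theorem 1.1]{MR4088350} at $x$ --- in the form allowed by Remark \ref{rmk:G-GLn}, with quotients by a reductive group --- then supplies the required presentation $[\mathrm{Spec}(A)/\mathrm{Aut}(x)]\to\mathscr{U}$, \'{e}tale and inducing an isomorphism on stabilizers.

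The main obstacle is precisely this last step in positive characteristic, since a closed point of $\mathscr{U}$ typically carries a reductive but \emph{not} linearly reductive stabilizer --- for instance $\mathrm{GL}_m$, which arises as the automorphism group of a polystable bundle with multiplicities and fails to be linearly reductive when $\operatorname{char}(k)=p$. The linearly reductive Luna slice theorem then does not apply verbatim, and one must rely on the slice theorem for reductive stabilizers underlying Remark \ref{rmk:G-GLn}. In characteristic $0$ this difficulty evaporates, reductive being the same as linearly reductive, which explains why there a separated moduli space is already equivalent to $\Theta$-reductivity and S-completeness alone. Once local reductivity is in hand, both implications of the proposition follow directly from Theorem \ref{thm0}.
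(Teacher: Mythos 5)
There is a genuine gap in your ``only if'' direction, at the construction of the \'{e}tale quotient presentation around a closed point. The local structure theorem you invoke, \cite[Theorem 1.1]{MR4088350}, requires the stabilizer (or the chosen subgroup of it) to be \emph{linearly} reductive; in positive characteristic a closed point of $\mathscr{U}$ typically has stabilizer $\mathbf{G}_m^r$ times something like $\mathrm{GL}_m$, which is geometrically reductive but not linearly reductive, so that theorem simply does not apply. You notice exactly this obstacle, but your proposed fix --- ``the slice theorem for reductive stabilizers underlying Remark \ref{rmk:G-GLn}'' --- does not exist in the paper: Remark \ref{rmk:G-GLn} only says that the \emph{definition} of an \'{e}tale quotient presentation is unchanged if one allows quotients by an arbitrary reductive group; it is not a slice theorem and provides no construction. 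As written, the key step of local reductivity is therefore unproven. (The preliminary observations are fine: the fiberwise unique closed point gives the specialization condition, and adequate affineness of the residual gerbe does force $\mathrm{Aut}(x)$ to be geometrically reductive --- but that is not enough to run the cited slice theorem.)

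The paper avoids this entirely by a global argument. First, choosing a faithful representation $\phi:G\to\mathrm{GL}_n$ gives a representable morphism $\phi_*:\mathscr{B}un_G^d\to\mathscr{B}un_n^e$; since $\mathscr{U}$ is quasi-compact and $\mathscr{B}un_n^e$ is exhausted by quotient stacks for $\mathrm{GL}_N$-actions, $\mathscr{U}\cong[X/\mathrm{GL}_N]$ is a global quotient stack for a reductive group. Then, for $x\in X$ with closed orbit and image $u\in U$ under the composition $X\to\mathscr{U}\to U$ (which is adequately affine, hence affine by \cite[Theorem 4.3.1]{MR3272912}), the pullback $X\times_U V$ of any affine \'{e}tale neighbourhood $V\to U$ of $u$ is a $\mathrm{GL}_N$-invariant affine \'{e}tale neighbourhood of $x$; this produces the \'{e}tale quotient presentation with no slice theorem needed, and Remark \ref{rmk:G-GLn} is then only used to pass from reductive groups back to $\mathrm{GL}_N$. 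If you want to salvage your local approach you would need a genuinely different input (an adequate-moduli-space version of the local structure theorem for geometrically reductive stabilizers), which is neither stated nor cited in this paper.
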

\begin{proof}
Without loss of generality we may assume that $\mathscr{U} \subseteq \mathscr{B}un_G^d$ for some $d \in \pi_1(G)$. By Theorem \ref{thm0}, it remains to prove that if $\mathscr{U} \subseteq \mathscr{B}un_G^d$ admits an adequate moduli space (not necessarily separated) $\mathscr{U} \to U$, then $\mathscr{U}$ is locally reductive. This follows if $\mathscr{U}$ is a quotient stack for a reductive group action. Indeed, if $\mathscr{U}=[X/H]$ for a reductive group $H$ acting on an algebraic space $X$, then it suffices to show that \'{e}tale locally around any point $x \in X$ with closed $H$-orbit there is an $H$-invariant open affine neighbourhood. Let $u \in U$ be the image of $x$ under the composition $X \to \mathscr{U} \to U$ and let $V \to U$ be an \'{e}tale affine neighbourhood of $u$. Note that the morphism $X \to \mathscr{U}$ is affine.
Then the composition $X \to \mathscr{U} \to U$ is adequately affine and hence affine (see \cite[Theorem 4.3.1]{MR3272912}). Thus, $X \times_U V \to X$ is an $H$-invariant \'{e}tale affine neighbourhood of $x$ and we are done by Remark \ref{rmk:G-GLn}.

Therefore it remains to show $\mathscr{U}$ is a quotient stack for a reductive group action. Choose a closed embedding $\phi: G \to \mathrm{GL}_n$. This induces a representable morphism $\phi_*: \mathscr{B}un_{G}^d \to \mathscr{B}un_n^{e}$ (see \cite[Fact 2.3]{MR3013030}), where $e=\phi_*(d) \in \mathbf{Z}$. Since $\mathscr{U} \subseteq \mathscr{B}un_G^d$ is quasi-compact and $\mathscr{B}un_n^e$ has an exhaustive filtration given by quotient stacks for $\mathrm{GL}_N$-actions (see, e.g. \cite[Lemma 4.1.5 and 4.1.11]{Wang2011}), the restriction $\phi_*|_{\mathscr{U}}: \mathscr{U} \to \mathscr{B}un_n^e$ factors through such a quotient stack. As any algebraic stack admitting a representable morphism to a quotient stack is also a quotient stack for an action of the same group, we are done.
\end{proof}
\begin{rmk}\label{1505-5}
The $\Theta$-reductivity and S-completeness of $\mathscr{U}$ can be verified directly and uniformly. Let $R$ be a DVR over $k$ and let $\mathscr{X}$ be $\Theta_R$ or $\overline{\mathrm{ST}}_R$. For any morphism $\mathscr{X}-\{0\} \to \mathscr{U}$ by the universal property of the adequate moduli space (see \cite[Theorem 3.12]{Alper2019-1}) the composition $\mathscr{X}-\{0\} \to \mathscr{U} \to U$ factors through $\mathscr{X}-\{0\} \to \mathrm{Spec}(R) \to U$ (the same would hold for the composition of any extension $\mathscr{X} \to \mathscr{U} \to U$) since $U$ is separated. Then we can base change to $\mathrm{Spec}(R)$ and instead prove $\Theta$-reductivity and S-completeness of the fiber product $\mathscr{U} \times_U \mathrm{Spec}(R) \to \mathrm{Spec}(R)$. In the proof of Proposition \ref{lem:localreductive} we saw that $\mathscr{U} \cong [X/\mathrm{GL}_{N}]$ is a quotient stack and hence $\mathscr{U} \times_U \mathrm{Spec}(R) \cong [X_R/\mathrm{GL}_{N,R}]$. Since $X_R$ is an affine scheme and $\mathrm{GL}_{N,R}$ is a reductive group scheme over $R$, we conclude by \cite[Proposition 3.21 (2) and Proposition 3.44 (2)]{MR4665776}. 
\end{rmk}
\section{Rank 2}\label{trans-vb2}
Both $\Theta$-reductivity and S-completeness have simple consequences in rank 2. This allows us to apply Proposition \ref{lem:localreductive} and obtain the following.
\begin{thm}\label{1724}
Let $C$ be a smooth projective connected curve of genus $g_C>1$ over an algebraically closed field $k$. Then the open substack $\mathscr{B}un_2^{d,ss} \subseteq \mathscr{B}un_2^d$ of semistable vector bundles is the unique maximal open substack that admits a separated adequate moduli space.
\end{thm}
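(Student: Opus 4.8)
The plan is to combine the reduction in Proposition \ref{lem:localreductive} with the explicit rank-$2$ translations of $\Theta$-reductivity and S-completeness obtained above. I would first dispose of the easy half: $\mathscr{B}un_2^{d,ss}$ is quasi-compact, S-complete (by the corollary identifying semistable bundles as exactly the S-complete situation), and $\Theta$-reductive (checked through Proposition \ref{thetaforopen1}, since the associated graded of a filtration whose graded pieces are semistable of slope $d/2$ is again such), so Proposition \ref{lem:localreductive} gives it a separated adequate moduli space. It then suffices to prove maximality, namely that any open $\mathscr{U}\subseteq\mathscr{B}un_2^d$ admitting a separated adequate moduli space satisfies $\mathscr{U}\subseteq\mathscr{B}un_2^{d,ss}$; this makes $\mathscr{B}un_2^{d,ss}$ the unique maximal such substack.

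For maximality I would reduce to the quasi-compact case exactly as in the proof of Theorem \ref{tba}: cover the moduli space $U$ by affine opens $U_i$, base change to $\mathscr{U}_i:=\mathscr{U}\times_U U_i$, which is quasi-compact with separated adequate moduli space, and note it is enough to treat each $\mathscr{U}_i$. By Proposition \ref{lem:localreductive} every such $\mathscr{U}:=\mathscr{U}_i$ is locally reductive, $\Theta$-reductive and S-complete. Suppose toward a contradiction that $\mathscr{U}$ contains an unstable bundle $\mathcal{E}$, with Harder--Narasimhan data $0\subset\mathcal{L}\subset\mathcal{E}$, where $\deg\mathcal{L}=a>b=\deg\mathcal{M}$ and $\mathcal{M}=\mathcal{E}/\mathcal{L}$. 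Feeding the constant family $\mathcal{E}\otimes R$ together with this filtration into the criterion of Proposition \ref{thetaforopen1} (the induced filtration on the special fibre is again $0\subset\mathcal{L}\subset\mathcal{E}$), $\Theta$-reductivity forces the split bundle $\mathcal{L}\oplus\mathcal{M}$ into $\mathscr{U}$.

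The heart of the matter is to convert the presence of the split unstable bundle $\mathcal{L}\oplus\mathcal{M}$ into a violation of one of the two conditions. Since $n-1=1$ in rank $2$, Corollary \ref{1738e} applies: two bundles in $\mathscr{U}$ with isomorphic determinant admitting a rank-one morphism have opposite filtrations, and Proposition \ref{prop:S-comple} then requires their common associated graded to be a \emph{locally free} sheaf lying in $\mathscr{U}$. I would produce a second bundle $\mathcal{E}'\in\mathscr{U}$ with $\det\mathcal{E}'\cong\mathcal{L}\otimes\mathcal{M}$ together with a rank-one morphism $\mathcal{L}\oplus\mathcal{M}\to\mathcal{E}'$ whose image is \emph{not} saturated; by the explicit analysis of Proposition \ref{173e} the middle graded piece $\mathcal{G}_1\cong\mathcal{G}_2$ is then a nonzero torsion sheaf, so the associated graded is not a vector bundle and cannot lie in $\mathscr{B}un_2^d\supseteq\mathscr{U}$, contradicting S-completeness. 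Concretely such an unsaturated morphism arises from a nonzero section of $\mathcal{L}\otimes\mathcal{M}^{-1}$, i.e.\ an effective divisor $D$ with $\mathcal{O}(D)\cong\mathcal{L}\mathcal{M}^{-1}$; the same datum produces, via Corollary \ref{0305-1}, a $\Theta$-testing quadruple $(\mathcal{M},\mathcal{L},D,[\varrho])$ whose extension bundle $\mathcal{E}([\varrho])$ is an extension of $\mathcal{M}$ by $\mathcal{L}$ and hence lies in $\mathscr{U}$, directly contradicting $\Theta$-reductivity. Either route closes the argument once the section exists.

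The main obstacle I anticipate is precisely the production of this section inside $\mathscr{U}$: a priori $\mathcal{L}\mathcal{M}^{-1}$ need not be effective when $a-b<g_C$. When $a-b\ge g_C$ the step is immediate, since by Riemann--Roch every class in $\mathrm{Pic}^{a-b}(C)$ is then effective and openness of $\mathscr{U}$ around $\mathcal{L}\oplus\mathcal{M}$ supplies nearby split bundles $\mathcal{L}_1\oplus\mathcal{M}_1,\ \mathcal{L}_2\oplus\mathcal{M}_2\in\mathscr{U}$ of the same determinant with $\mathcal{L}_2\mathcal{M}_1^{-1}$ effective, giving the unsaturated morphism $\mathcal{L}_1\oplus\mathcal{M}_1\twoheadrightarrow\mathcal{M}_1\to\mathcal{L}_2\hookrightarrow\mathcal{L}_2\oplus\mathcal{M}_2$. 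The delicate range $1\le a-b<g_C$ is where I expect the real work to lie: here one must exploit the openness of $\mathscr{U}$ together with the geometry of the image of $C^{(a-b)}$ in $\mathrm{Pic}^{a-b}(C)$ — a Brill--Noether/incidence argument using the hypothesis $g_C>1$ — to locate within $\mathscr{U}$ a pair of split unstable bundles whose difference is effective. Granting this, the S-completeness (or equivalently the $\Theta$-reductivity) contradiction of the previous paragraph yields $\mathscr{U}\subseteq\mathscr{B}un_2^{d,ss}$, and the theorem follows.
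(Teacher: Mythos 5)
There are two genuine gaps here. The first is the step where you claim that $\Theta$-reductivity applied to the constant family $\mathcal{E}\otimes R$ with its Harder--Narasimhan filtration ``forces $\mathcal{L}\oplus\mathcal{M}$ into $\mathscr{U}$''. This is circular: in Proposition \ref{thetaforopen1} the hypothesis is that the associated graded of the filtration of the \emph{generic} fibre already lies in $\mathscr{U}$, and for a constant family with its constant filtration that graded is exactly $\mathcal{L}\oplus\mathcal{M}$ --- the very thing you are trying to put into $\mathscr{U}$. $\Theta$-reductivity never yields the associated graded of a filtration of a single object of $\mathscr{U}$; what does this is S-completeness via Proposition \ref{prop:S-comple}, and it needs a \emph{pair} of non-isomorphic bundles in $\mathscr{U}$ with opposite filtrations. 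The paper manufactures this pair in Proposition \ref{1704-beg} by ``flipping'' the Harder--Narasimhan filtration: using the open dense locus $\Delta\subseteq\mathrm{Pic}^0$ of Lemma \ref{1943-2} and a line bundle $\beta$ of degree $d_1-d_2$ with a non-zero section, one finds two unstable extensions $\mathcal{E}([x]),\mathcal{E}([y])\in\mathscr{U}$ with equal determinant and non-zero maps between them, hence opposite filtrations by Lemma \ref{172e}; only then does S-completeness (Lemma \ref{23}) force a direct sum of line bundles of different degrees into $\mathscr{U}$.

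The second gap is the one you flag yourself: producing, inside $\mathscr{U}$, a pair of split unstable bundles whose ``difference'' is effective in the range $1\le a-b<g_C$. No Brill--Noether or incidence analysis is needed. The paper's Lemma \ref{1801} fixes an \emph{arbitrary} effective divisor $D$ of degree $d_1-d_2$ (a sum of points always exists) and observes that the preimage $\Omega$ of $\mathscr{U}$ under $\oplus:\mathrm{Pic}^{d_1}\times\mathrm{Pic}^{d_2}\to\mathscr{B}un_2^d$ is a non-empty open subset of an irreducible variety, hence meets its preimage under the automorphism $(\mathcal{L}_1,\mathcal{L}_2)\mapsto(\mathcal{L}_2(D),\mathcal{L}_1(-D))$; any point of the intersection puts both $\mathcal{L}_1\oplus\mathcal{L}_2$ and $\mathcal{L}_2(D)\oplus\mathcal{L}_1(-D)$ into $\mathscr{U}$, and the $\Theta$-testing quadruple of Corollary \ref{0305-1} then contradicts $\Theta$-reductivity (Proposition \ref{24}). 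Your alternative contradiction via S-completeness and a non-saturated rank-one morphism is sound once such a pair exists, but both of your routes need exactly this density trick, so neither closes as written. With these two repairs the architecture becomes the paper's: $\Theta$-reductivity excludes direct sums of line bundles of different degrees, while S-completeness forces one in whenever $\mathscr{U}$ contains an unstable bundle.
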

The road map is: If $\mathscr{U} \subseteq \mathscr{B}un_2^d$ is an open substack, then
\begin{itemize}
\item
If $\mathscr{U}$ is $\Theta$-reductive, then it cannot contain any direct sum of line bundles of different degrees (Proposition \ref{24}).
\item
If $\mathscr{U}$ is S-complete, then it contains some direct sum of line bundles of different degrees, provided it contains an unstable vector bundle (Proposition \ref{1704-beg} and Lemma \ref{23}).
\end{itemize}
Thus, to obtain a separated adequate moduli space we must have $\mathscr{U} \subseteq \mathscr{B}un_2^{d,ss}$.
\subsection{More on $\Theta$-reductivity}
Recall that any open substack of $\mathscr{B}un_n^d$ supporting a 
$\Theta$-testing family cannot be $\Theta$-reductive (see Corollary \ref{0305-1}). This fact has a simple implication in rank 2.
\begin{prop}\label{24}
Let $\mathscr{U} \subseteq \mathscr{B}un_2^d$ be a $\Theta$-reductive open substack. Then $\mathscr{U}$ cannot contain any direct sum of line bundles of different degrees.
\end{prop}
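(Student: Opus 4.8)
The plan is to prove the contrapositive: assuming $\mathscr{U}$ contains a split bundle $L_1\oplus L_2$ with $\deg L_1\neq\deg L_2$, I will produce a $\Theta$-testing family as in Corollary~\ref{0305-1} and conclude that $\mathscr{U}$ is not $\Theta$-reductive. The decisive feature is that Corollary~\ref{0305-1} asks only for a quadruple $(\mathcal{E}_1,\mathcal{E}_2,D,[\varrho])$ with $\mathcal{E}_1\oplus\mathcal{E}_2\in\mathscr{U}$ and with the extension vector bundle $\mathcal{E}([\varrho])\in\mathscr{U}$: the special associated graded automatically acquires the torsion summand $\mathcal{O}_D$ and hence leaves $\mathscr{U}\subseteq\mathscr{B}un_2^d$. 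So the whole task is to pin down both \emph{ends} of the family inside $\mathscr{U}$, and the cleanest way to do this is to arrange the special fibre $\mathcal{E}([\varrho])$ to be isomorphic to $L_1\oplus L_2$ itself, after which no further knowledge of $\mathscr{U}$ is required.

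For the construction I would order the factors so that $\deg L_1>\deg L_2$ and set $\mathcal{E}_1:=L_2$, $\mathcal{E}_2:=L_1$. The key computation is that for line bundles the torsion in $\mathcal{E}([\varrho])/\mathcal{E}_1=\mathcal{E}_2^D\oplus\mathcal{O}_D$ is precisely the summand $\mathcal{O}_D$, so the saturation of $\mathcal{E}_1$ is forced to be $L_2(D)$ and the associated graded of $\mathcal{E}([\varrho])$ equals $L_2(D)\oplus L_1(-D)$. Choosing an effective divisor $D$ with $\mathcal{O}_C(D)\cong L_1\otimes L_2^{-1}$ (so $\deg D=\deg L_1-\deg L_2$) gives $L_2(D)\cong L_1$ and $\mathcal{E}_2^D=L_1(-D)\cong L_2$; taking $[\varrho]\in\mathrm{Ext}^1(\mathcal{E}_2^D\oplus\mathcal{O}_D,\mathcal{E}_1)$ to be the class whose $L_1(-D)$–component vanishes and whose $\mathcal{O}_D$–component is that of $0\to L_2\to L_2(D)\to\mathcal{O}_D\to 0$ then yields $\mathcal{E}([\varrho])\cong L_2\oplus L_2(D)\cong L_1\oplus L_2$. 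Thus $\mathcal{E}_1\oplus\mathcal{E}_2$ and $\mathcal{E}([\varrho])$ both equal $L_1\oplus L_2\in\mathscr{U}$, Corollary~\ref{0305-1} applies, and $\mathscr{U}$ fails to be $\Theta$-reductive.

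The one genuine obstacle is that $L_1\otimes L_2^{-1}$ need not be effective, so the divisor $D$ above may not exist for the given $L_1,L_2$; this is the step I expect to require the most care. To remove it I would either reduce to the effective case, using that $\{(A,B)\in\mathrm{Pic}^{d_1}\times\mathrm{Pic}^{d_2}:A\oplus B\in\mathscr{U}\}$ is open and nonempty and searching there for a pair with $A\otimes B^{-1}$ effective, or allow $\mathcal{E}([\varrho])$ to be a nonsplit bundle with associated graded $L_2(D)\oplus L_1(-D)$ and place it in $\mathscr{U}$ via the generization principle for open substacks, namely that any point whose specialization already lies in the open $\mathscr{U}$ must itself lie in $\mathscr{U}$. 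Verifying that one of these reductions always succeeds — equivalently, that the $\Theta$-testing family can be built from an arbitrary degree-unbalanced $L_1\oplus L_2$ and not only when the slope difference is represented by an effective divisor — is the crux on which the argument turns.
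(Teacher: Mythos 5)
Your core construction is essentially the paper's, and in the special case where $L_1\otimes L_2^{-1}$ is effective it is correct and even slightly cleaner, since both memberships required by Corollary~\ref{0305-1} collapse to the single hypothesis $L_1\oplus L_2\in\mathscr{U}$. But the point you flag as ``the one genuine obstacle'' is exactly where the proof lives, and neither of your proposed patches closes it. For your first fix: the locus $E=\{(A,B):A\otimes B^{-1}\text{ effective}\}$ is the preimage of $W_{d_1-d_2}\subseteq\mathrm{Pic}^{d_1-d_2}$ under the difference map, and whenever $0<d_1-d_2<g_C$ this is a \emph{proper closed} subvariety of $\mathrm{Pic}^{d_1}\times\mathrm{Pic}^{d_2}$; a nonempty (hence dense) open $\Omega=\oplus^{-1}(\mathscr{U})$ has no obligation to meet a proper closed subset, so searching in $\Omega$ for a pair with effective difference may come up empty. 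For your second fix: generization places the nonsplit extension $\mathcal{E}([\varrho])$ in $\mathscr{U}$ only if its Rees degeneration $L_2(D)\oplus L_1(-D)$ is already known to lie in $\mathscr{U}$ --- but that membership is precisely what you cannot yet guarantee, so the argument is circular.

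The missing idea is the paper's Lemma~\ref{1801}: fix an \emph{arbitrary} effective divisor $D$ of degree $d_1-d_2>0$ (one always exists --- take a sum of points; you do not need $\mathcal{O}_C(D)\cong L_1\otimes L_2^{-1}$), and observe that $(\mathcal{L}_1,\mathcal{L}_2)\mapsto(\mathcal{L}_2(D),\mathcal{L}_1(-D))$ is an \emph{automorphism} $\lambda_D$ of the irreducible variety $\mathrm{Pic}^{d_1}\times\mathrm{Pic}^{d_2}$. Hence $\Omega$ and $\lambda_D^{-1}(\Omega)$ are two nonempty open subsets of an irreducible variety and must intersect, producing a possibly different pair $(\mathcal{L}_1,\mathcal{L}_2)$ of the same bidegree with both $\mathcal{L}_1\oplus\mathcal{L}_2\in\mathscr{U}$ and $\mathcal{L}_2(D)\oplus\mathcal{L}_1(-D)\in\mathscr{U}$; your construction then runs verbatim with generic graded $\mathcal{L}_2(D)\oplus\mathcal{L}_1(-D)$ and special fibre $\mathcal{L}_1\oplus\mathcal{L}_2$. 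The lesson is that you should not insist on destabilizing the given split bundle itself, only \emph{some} split bundle of the same pair of degrees; irreducibility of the Picard variety lets you trade the given pair for one adapted to your chosen $D$.
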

To prove this we need the following technical lemma.
\begin{lem}\label{1801}
Let $\mathscr{U} \subseteq \mathscr{B}un_2^d$ be an open substack. If $\mathcal{V}_1 \oplus \mathcal{V}_2 \in \mathscr{U}(k)$ for some line bundles $\mathcal{V}_1,\mathcal{V}_2$ of degrees $d_1 > d_2$ respectively, then for any effective divisor $D$ on $C$ of degree $d_1-d_2$, there exist line bundles $\mathcal{L}_1,\mathcal{L}_2$ of degrees $d_1,d_2$ respectively such that
\[
\mathcal{L}_1 \oplus \mathcal{L}_2 \in \mathscr{U}(k) \text{ and } \mathcal{L}_2(D) \oplus \mathcal{L}_1(-D) \in \mathscr{U}(k).
\]
\end{lem}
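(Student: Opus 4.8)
The plan is to realize all the vector bundles in question as members of a single algebraic family parametrized by a product of Picard varieties, and then to exploit the irreducibility of that parameter space so that the two open conditions we must satisfy are forced to overlap.

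First I would set up the parameter space $P := \mathrm{Pic}^{d_1}(C) \times \mathrm{Pic}^{d_2}(C)$ together with the morphism $\Phi : P \to \mathscr{B}un_2^d$ sending $(\mathcal{L}_1,\mathcal{L}_2) \mapsto \mathcal{L}_1 \oplus \mathcal{L}_2$. Concretely, since $k$ is algebraically closed the curve $C$ has a rational point, so Poincar\'e line bundles $\mathcal{P}_i$ on $C \times \mathrm{Pic}^{d_i}(C)$ exist; pulling them back to $C \times P$ and forming the direct sum $\mathrm{pr}_{12}^*\mathcal{P}_1 \oplus \mathrm{pr}_{13}^*\mathcal{P}_2$ yields the rank $2$ family defining $\Phi$. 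As $\mathscr{U} \subseteq \mathscr{B}un_2^d$ is open, the preimage $W := \Phi^{-1}(\mathscr{U})$ is open in $P$, and it is non-empty because $(\mathcal{V}_1,\mathcal{V}_2) \in W$ by hypothesis.

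Next I would introduce the twist encoding the second condition. Because $\deg D = d_1 - d_2$, the assignment $\sigma(\mathcal{L}_1,\mathcal{L}_2) := (\mathcal{L}_2(D),\mathcal{L}_1(-D))$ preserves the bidegree $(d_1,d_2)$ and hence defines an endomorphism of $P$; a direct check gives $\sigma^2 = \mathrm{id}$, so $\sigma$ is an isomorphism. By construction $\Phi(\sigma(\mathcal{L}_1,\mathcal{L}_2)) = \mathcal{L}_2(D) \oplus \mathcal{L}_1(-D)$, so $\sigma^{-1}(W)$ is precisely the locus where the second required bundle lies in $\mathscr{U}$, and it is again a non-empty open subset of $P$.

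The decisive point is then purely topological: $P$ is a product of two torsors under the Jacobian $J_C$, hence irreducible, so the two non-empty open subsets $W$ and $\sigma^{-1}(W)$ must meet. Any geometric point $(\mathcal{L}_1,\mathcal{L}_2) \in W \cap \sigma^{-1}(W)$ then satisfies $\mathcal{L}_1 \oplus \mathcal{L}_2 \in \mathscr{U}(k)$ and $\mathcal{L}_2(D) \oplus \mathcal{L}_1(-D) \in \mathscr{U}(k)$, which is the claim. I do not expect a serious obstacle here; the only steps requiring care are the construction of the family defining $\Phi$ (handled by the existence of the Poincar\'e bundle over an algebraically closed field) and the verification that $\sigma$ genuinely maps $P$ to itself and is invertible, both of which amount to immediate degree bookkeeping.
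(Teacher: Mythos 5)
Your proposal is correct and is essentially the paper's own argument: the paper likewise pulls back $\mathscr{U}$ along the direct-sum morphism $\oplus:\mathrm{Pic}^{d_1}\times\mathrm{Pic}^{d_2}\to\mathscr{B}un_2^d$ to get a non-empty open set $\Omega$ containing $(\mathcal{V}_1,\mathcal{V}_2)$, defines the same twist automorphism $\lambda_D(\mathcal{L}_1,\mathcal{L}_2)=(\mathcal{L}_2(D),\mathcal{L}_1(-D))$, and concludes that $\lambda_D^{-1}(\Omega)\cap\Omega\neq\emptyset$ by irreducibility of the product of Picard varieties. You merely make explicit two points the paper leaves implicit (the Poincar\'e-bundle construction of the family and the irreducibility step), which is fine.
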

\begin{proof}
To start we note that any effective divisor $D$ on $C$ of degree $d_1-d_2$ induces an automorphism of $\mathrm{Pic}^{d_1} \times \mathrm{Pic}^{d_2}$:
\[
\lambda_D: \mathrm{Pic}^{d_1} \times \mathrm{Pic}^{d_2} \xrightarrow{\sim} \mathrm{Pic}^{d_1} \times \mathrm{Pic}^{d_2} \text{ mapping } (\mathcal{L}_1,\mathcal{L}_2) \mapsto (\mathcal{L}_2(D),\mathcal{L}_1(-D)).
\]
Let $\oplus: \mathrm{Pic}^{d_1} \times \mathrm{Pic}^{d_2} \to \mathscr{B}un_2^d$ be the direct sum morphism. The open subset $\Omega:=\oplus^{-1}(\mathscr{U}) \subseteq \mathrm{Pic}^{d_1} \times \mathrm{Pic}^{d_2}$ is non-empty as it contains $(\mathcal{V}_1,\mathcal{V}_2)$, which implies that $\lambda_D^{-1}(\Omega) \cap \Omega \neq \emptyset$. Then any point $(\mathcal{L}_1,\mathcal{L}_2)$ in this intersection satisfies the required conditions.
\end{proof}
\begin{proof}[Proof of Proposition $\ref{24}$]
Suppose  $\mathcal{V}_1 \oplus \mathcal{V}_2 \in \mathscr{U}(k)$ for some line bundles $\mathcal{V}_1,\mathcal{V}_2$ of degrees $d_1>d_2$ respectively. Fix an effective divisor $D$ on $C$ of degree $d_1-d_2$. By Lemma \ref{1801} there exist line bundles $\mathcal{L}_1,\mathcal{L}_2$ of degree $d_1,d_2$ respectively such that 
\[
\mathcal{L}_1 \oplus \mathcal{L}_2 \in \mathscr{U}(k) \text{ and } \mathcal{L}_2(D) \oplus \mathcal{L}_1(-D) \in \mathscr{U}(k).
\]
Let $[\varrho] \in \mathrm{Ext}^1(\mathcal{O}_D \oplus \mathcal{L}_2,\mathcal{L}_1(-D))$ be the extension class represented by
\[
0 \to \mathcal{L}_1(-D) \xrightarrow{(\text{incl},0)} \mathcal{L}_1 \oplus \mathcal{L}_2 \to \mathcal{O}_D \oplus \mathcal{L}_2 \to 0.
\]
By definition the quadruple
\[
(\mathcal{L}_2(D),\mathcal{L}_1(-D),D,[\varrho]) \in \mathrm{Pic}^{d_1}(k) \times \mathrm{Pic}^{d_2}(k) \times \mathrm{Div}^{\mathrm{eff}}(C) \times \mathrm{Ext}^1(\mathcal{O}_D \oplus \mathcal{L}_2,\mathcal{L}_1(-D))
\]
satisfies $\mathcal{L}_2(D) \oplus \mathcal{L}_1(-D) \in \mathscr{U}(k)$ and $\mathcal{E}([\varrho])=\mathcal{L}_1 \oplus \mathcal{L}_2 \in \mathscr{U}(k)$. Then $\mathscr{U}$ is not $\Theta$-reductive by Corollary \ref{0305-1}, a contradiction.
\end{proof}
\subsection{More on S-completeness}
Recall that S-completeness can be described in terms of vector bundles with opposite filtrations (see Proposition \ref{prop:S-comple}), which in rank 2 are simply the ones with the same determinant and non-zero morphisms between them.
\begin{lem}\label{172e}
Let $\mathbf{K}/k$ be a field.
Two points $\mathcal{E} \ncong \mathcal{E}' \in \mathscr{B}un_2^d(\mathbf{K})$ have opposite filtrations if and only if there exist an isomorphism $\det(\mathcal{E}) \xrightarrow{\sim} \det(\mathcal{E}')$, and a non-zero morphism $\mathcal{E} \to \mathcal{E}'$.
\end{lem}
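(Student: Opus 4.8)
The plan is to treat the two implications separately, leaning on the general rank-$n$ results already established and exploiting the fact that in rank $2$ a non-zero bundle morphism is either of rank $1$ or an isomorphism. The ``only if'' direction should be immediate from the general lemma, while the ``if'' direction requires one small rank-$2$ observation to match the hypotheses of the corollary.

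For the forward (``only if'') direction, I would simply invoke Lemma \ref{2235}: if $\mathcal{E}$ and $\mathcal{E}'$ carry opposite filtrations, then part (i) of that lemma produces the isomorphism $\det(\mathcal{E}) \xrightarrow{\sim} \det(\mathcal{E}')$, and part (ii) produces a non-zero morphism $h\colon \mathcal{E} \to \mathcal{E}'$ (in fact a pair $h,h'$ with $h \circ h' = h' \circ h = 0$). This direction needs no rank-specific input and holds verbatim for all $n$.

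For the backward (``if'') direction, suppose we are given an isomorphism $\det(\mathcal{E}) \xrightarrow{\sim} \det(\mathcal{E}')$ and a non-zero morphism $h\colon \mathcal{E} \to \mathcal{E}'$. Since $\mathcal{E}$ and $\mathcal{E}'$ have rank $2$, the rank of $h$ is either $1$ or $2$, and the key step is to rule out rank $2$. If $h$ had rank $2$, then $\det(h)\colon \det(\mathcal{E}) \to \det(\mathcal{E}')$ would be a non-zero morphism of line bundles of the same degree $d$, hence an isomorphism; this forces $h$ itself to be an isomorphism of vector bundles, contradicting $\mathcal{E} \ncong \mathcal{E}'$. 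Therefore $h$ has rank exactly $1 = n-1$, and Corollary \ref{1738e} applied with $n=2$ yields the opposite filtrations on $\mathcal{E}$ and $\mathcal{E}'$.

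The only point requiring genuine care is the rank-$2$ exclusion, which is precisely the rank-$2$ simplification that makes the bare hypothesis ``a non-zero morphism'' equivalent to the more restrictive ``rank-$(n-1)$ morphism'' hypothesis of Corollary \ref{1738e}. I would spell out that a non-zero map of line bundles of equal degree on the smooth projective curve $C$ is injective with torsion cokernel of degree $0$, hence an isomorphism, and that the nowhere-vanishing of $\det(h)$ is exactly the condition that $h$ be an isomorphism. I do not anticipate any further obstacle, as the remaining content is fully absorbed into Lemma \ref{2235} and Corollary \ref{1738e}.
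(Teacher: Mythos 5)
Your proposal is correct and follows the same route as the paper, which proves the ``only if'' direction by citing Lemma~\ref{2235} and the ``if'' direction by citing Corollary~\ref{1738e}. The only addition is your explicit verification that a non-zero morphism $h\colon\mathcal{E}\to\mathcal{E}'$ cannot have rank $2$ (since $\det(h)$ would then be a non-zero, hence nowhere-vanishing, map of degree-$d$ line bundles, forcing $h$ to be an isomorphism), a point the paper leaves implicit but which is needed to meet the rank-$(n-1)$ hypothesis of Corollary~\ref{1738e}.
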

\begin{proof}
\textsc{Only If Part}: Lemma \ref{2235}. \textsc{If Part}: Corollary \ref{1738e}.
\end{proof}
Similarly, S-completeness also has a simple implication in rank $2$.
\begin{lem}\label{23}
Let $\mathscr{U} \subseteq \mathscr{B}un_2^d$ be an S-complete open substack. If $\mathbf{K}/k$ is a field and $\mathcal{E} \ncong \mathcal{E}' \in \mathscr{U}(\mathbf{K})$ have opposite filtrations with $\mathcal{E}$ unstable, then $\mathscr{U}$ contains a direct sum of line bundles of different degrees.
\end{lem}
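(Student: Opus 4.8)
The plan is to run the given opposite-filtration pair through the S-completeness criterion of Proposition~\ref{prop:S-comple}, read off the associated graded bundle, and then exploit the instability of $\mathcal{E}$ to force its two line-bundle summands to have distinct degrees. First I would apply Proposition~\ref{prop:S-comple}: since $\mathcal{E} \ncong \mathcal{E}' \in \mathscr{U}(\mathbf{K})$ have opposite filtrations $\mathcal{E}^\bullet,\mathcal{E}'_\bullet$ and $\mathscr{U}$ is S-complete, we obtain $\mathrm{gr}(\mathcal{E}^\bullet) \cong \mathrm{gr}(\mathcal{E}'_\bullet) \in \mathscr{U}(\mathbf{K})$. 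Next I would pin down the shape of this graded object in rank $2$. As $\mathcal{E} \ncong \mathcal{E}'$, the filtration $\mathcal{E}^\bullet$ is non-trivial, so it is a single step $0 \subsetneq \mathcal{A} \subsetneq \mathcal{E}$ with $\mathcal{A}$ of rank $1$ and $\mathrm{gr}(\mathcal{E}^\bullet)=\mathcal{A}\oplus(\mathcal{E}/\mathcal{A})$. Here a small but essential point is that $\mathscr{U} \subseteq \mathscr{B}un_2^d$ consists of vector bundles, so $\mathrm{gr}(\mathcal{E}^\bullet)$ is locally free; this forces $\mathcal{E}/\mathcal{A}$ to be torsion-free, i.e. $\mathcal{A}$ is a saturated sub-line-bundle and both summands are genuine line bundles. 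Thus $\mathscr{U}(\mathbf{K})$ already contains the decomposable bundle $\mathcal{A}\oplus(\mathcal{E}/\mathcal{A})$.

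It remains to show $\deg\mathcal{A}\neq \deg(\mathcal{E}/\mathcal{A})$, equivalently $\deg\mathcal{A}\neq d/2$, and this is the step where instability is indispensable; I expect it to be the conceptual crux. Assume for contradiction that $\deg\mathcal{A}=d/2$. Since $\mathcal{E}$ is unstable there is a sub-line-bundle $\mathcal{M}\subseteq\mathcal{E}$ with $\deg\mathcal{M}>d/2$. The composite $\mathcal{M}\hookrightarrow\mathcal{E}\twoheadrightarrow\mathcal{E}/\mathcal{A}$ yields a contradiction in either case: if it vanishes then $\mathcal{M}\subseteq\mathcal{A}$, whence $\deg\mathcal{M}\le\deg\mathcal{A}=d/2$; if it is non-zero it is a non-zero morphism of line bundles $\mathcal{M}\to\mathcal{E}/\mathcal{A}$, whence again $\deg\mathcal{M}\le\deg(\mathcal{E}/\mathcal{A})=d/2$. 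Both contradict $\deg\mathcal{M}>d/2$, so $\deg\mathcal{A}\neq d/2$ and $\mathcal{A}\oplus(\mathcal{E}/\mathcal{A})\in\mathscr{U}(\mathbf{K})$ is a direct sum of line bundles of different degrees.

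Finally, to phrase the conclusion over $k$ (as needed when combining with Proposition~\ref{24}), I would note that decomposable bundles of this type form a locally closed substack of $\mathscr{B}un_2^d$ defined over $k$ and of finite type; its intersection with the open substack $\mathscr{U}$ is non-empty over $\mathbf{K}$, hence, since $k$ is algebraically closed, non-empty over $k$, producing the required direct sum in $\mathscr{U}(k)$. The main obstacle is really just the degree bookkeeping of the middle paragraph: once one observes that an unstable rank-$2$ bundle admits no saturated sub-line-bundle of slope $d/2$, every opposite filtration it enters must degenerate it to a sum of line bundles of distinct degree, and the rest is a formal application of S-completeness via Proposition~\ref{prop:S-comple}.
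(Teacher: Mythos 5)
Your proof is correct and follows essentially the same route as the paper: apply Proposition \ref{prop:S-comple} to place $\mathrm{gr}(\mathcal{E}^\bullet)$ in $\mathscr{U}(\mathbf{K})$, identify it as a direct sum of two line bundles, and use the instability of $\mathcal{E}$ to force the two degrees to differ. The paper compresses your middle paragraph into the single remark that $\mathrm{gr}(\mathcal{E}^\bullet)$ ``is unstable since $\mathcal{E}$ is''; your explicit destabilizing sub-line-bundle argument ruling out $\deg\mathcal{A}=d/2$, and the closing descent from $\mathbf{K}$ to $k$, simply make precise steps the paper leaves implicit.
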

\begin{proof}
If $\mathcal{E}^{\bullet},\mathcal{E}'_{\bullet}$ are opposite filtrations of $\mathcal{E}$ and $\mathcal{E}'$, then by Proposition \ref{prop:S-comple} we have $\mathrm{gr}(\mathcal{E}^{\bullet}) \in \mathscr{U}(\mathbf{K})$, which is unstable since $\mathcal{E}$ is.
\end{proof}

\subsection{Proof of Theorem $\ref{1724}$}
This can be deduced from the following result.
\begin{prop}\label{1704-beg}
Let $\mathscr{U} \subseteq \mathscr{B}un_2^d$ be an open substack containing an unstable vector bundle. Then there exist two points in $\mathscr{U}$ with opposite filtrations such that one of them is unstable.
\end{prop}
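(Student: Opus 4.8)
\emph{Strategy.} The plan is to produce, inside $\mathscr{U}$, an unstable ``W-type'' bundle and a ``V-type'' bundle built from the \emph{same} pair of line bundles, so that they automatically have opposite filtrations. Write the Harder--Narasimhan sequence of the given unstable bundle as $0 \to \mathcal{L}_1 \to \mathcal{E} \to \mathcal{L}_2 \to 0$ with $d_1 := \deg\mathcal{L}_1 > d_2 := \deg\mathcal{L}_2$. For a pair $(\mathcal{M}_1,\mathcal{M}_2) \in \mathrm{Pic}^{d_1}\times\mathrm{Pic}^{d_2}$, call a bundle \emph{of W-type} if it is an extension $0 \to \mathcal{M}_1 \to \mathcal{F} \to \mathcal{M}_2 \to 0$ and \emph{of V-type} if it is an extension $0 \to \mathcal{M}_2 \to \mathcal{F}' \to \mathcal{M}_1 \to 0$. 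A W-type bundle is unstable, and a W-type $\mathcal{F}$ and a V-type $\mathcal{F}'$ over the same pair satisfy $\det\mathcal{F} \cong \mathcal{M}_1\mathcal{M}_2 \cong \det\mathcal{F}'$ and admit the nonzero composite $\mathcal{F} \twoheadrightarrow \mathcal{M}_2 \hookrightarrow \mathcal{F}'$; hence by Lemma~\ref{172e} they have opposite filtrations as soon as $\mathcal{F} \ncong \mathcal{F}'$. Let $\Omega_W$ (resp. $\Omega_V$) be the set of pairs $(\mathcal{M}_1,\mathcal{M}_2)$ for which some W-type (resp. V-type) bundle lies in $\mathscr{U}$. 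Since the stack of extensions is a generalized vector bundle over $\mathrm{Pic}^{d_1}\times\mathrm{Pic}^{d_2}$ (as in the proof of Lemma~\ref{2321}), the relevant projection is smooth and hence open, so $\Omega_W$ and $\Omega_V$ are open; and $(\mathcal{L}_1,\mathcal{L}_2) \in \Omega_W$, so $\Omega_W \neq \emptyset$.

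\emph{The crux: $\Omega_V \neq \emptyset$.} I would show $\mathscr{U}$ contains a V-type bundle for the degrees $(d_1,d_2)$ by generizing $\mathcal{E}$ within $\mathscr{U}$. Fix an effective divisor $D$ of degree $d_1 - d_2$ and set $\mathcal{M}_2 := \mathcal{L}_1(-D)$, a degree-$d_2$ subsheaf of $\mathcal{L}_1 \subseteq \mathcal{E}$. The quotient $\mathcal{Q} := \mathcal{E}/\mathcal{M}_2$ is a rank-$1$ sheaf of degree $d_1$ with torsion subsheaf $\mathcal{L}_1|_D$ and torsion-free quotient $\mathcal{L}_2$; as $\mathcal{L}_2$ is locally free this splits, so $\mathcal{Q} \cong \mathcal{L}_2 \oplus \mathcal{L}_1|_D$. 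Because invertible modules over the Artinian ring $\mathcal{O}_D$ are free, $\mathcal{L}_1|_D \cong \mathcal{L}_2(D)|_D$, whence $\mathcal{Q}$ is exactly the associated graded of the filtration $\mathcal{L}_2 \subseteq \mathcal{L}_2(D)$. In particular the Rees construction of this filtration gives a family $\mathcal{G}_{2,R} \in \mathscr{C}oh_1^{d_1}(R)$ over a DVR with special fiber $\mathcal{Q}$ and generic fiber the line bundle $\mathcal{L}_2(D)$. Feeding $\mathcal{G}_{1,R} := \mathcal{M}_2$ (constant), $\mathcal{G}_{2,R}$, and the class $[\varrho] := [\mathcal{E}] \in \mathrm{Ext}^1(\mathcal{Q},\mathcal{M}_2)$ into the extension-family Lemma~\ref{2321} yields $\mathcal{G}_R$ with $\mathcal{G}_\kappa \cong \mathcal{E}$ and generic fiber an extension of $\mathcal{L}_2(D)$ by $\mathcal{L}_1(-D)$, i.e. a V-type bundle. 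Since $\mathcal{E} = \mathcal{G}_\kappa \in \mathscr{U}$ and $\mathscr{U}$ is open, the generic fiber $\mathcal{G}_K$ lies in $\mathscr{U}$ as well, so $(\mathcal{L}_2(D),\mathcal{L}_1(-D)) \in \Omega_V$.

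\emph{Conclusion.} Now $\Omega_W$ and $\Omega_V$ are non-empty open subsets of the irreducible variety $\mathrm{Pic}^{d_1}\times\mathrm{Pic}^{d_2}$, so they meet at some $(\mathcal{M}_1,\mathcal{M}_2)$. Over this pair there is a W-type bundle $\tilde{\mathcal{E}} \in \mathscr{U}$, necessarily unstable, and a V-type bundle $\tilde{\mathcal{E}}' \in \mathscr{U}$. Since the V-type bundles of $\mathscr{U}$ over $(\mathcal{M}_1,\mathcal{M}_2)$ form a non-empty open subset of the positive-dimensional space $\mathrm{Ext}^1(\mathcal{M}_1,\mathcal{M}_2)$, of dimension $d_1-d_2+g_C-1 \geq g_C$, I may take $\tilde{\mathcal{E}}'$ to be a non-split extension. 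A non-split V-type bundle is indecomposable and carries a saturated degree-$d_2$ sub-line bundle, whereas any bundle isomorphic to the W-type $\tilde{\mathcal{E}}$ and carrying in addition a saturated degree-$d_2$ sub-line bundle would possess two distinct saturated sub-line bundles of degrees $d_1$ and $d_2$, forcing it to split; hence $\tilde{\mathcal{E}} \ncong \tilde{\mathcal{E}}'$. By the first paragraph, $\tilde{\mathcal{E}}$ and $\tilde{\mathcal{E}}'$ are the required points of $\mathscr{U}$ with opposite filtrations, one of them ($\tilde{\mathcal{E}}$) unstable. The main obstacle is the middle step---guaranteeing a V-type member of $\mathscr{U}$---for which the extension-family construction of Lemma~\ref{2321}, degenerating the line bundle $\mathcal{L}_2(D)$ to $\mathcal{Q}$, is the essential tool; the remaining work is only the openness of the extension loci together with the irreducibility of $\mathrm{Pic}^{d_1}\times\mathrm{Pic}^{d_2}$.
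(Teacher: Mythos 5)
Your proof is correct in substance but takes a genuinely different route from the paper. The paper (via Lemma~\ref{1943-2}) varies a twist $\mathcal{L}\in\mathrm{Pic}^0$ of the Harder--Narasimhan graded pieces, producing \emph{two unstable} extensions $\mathcal{E}([x])$, $\mathcal{E}([y])$ in $\mathscr{U}$ with destabilizing subbundles $\mathcal{L}\otimes\mathcal{L}_1$ and $\mathcal{L}^{-1}\otimes\mathcal{L}_2\otimes\beta$, and manufactures the required non-zero morphism between them from a global section of an auxiliary line bundle $\beta$ of degree $d_1-d_2$. You instead work over $\mathrm{Pic}^{d_1}\times\mathrm{Pic}^{d_2}$ and arrange a W-type and a V-type extension over the \emph{same} pair, so the non-zero morphism is the tautological composite $\mathcal{F}\twoheadrightarrow\mathcal{M}_2\hookrightarrow\mathcal{F}'$; the real work is then showing $\Omega_V\neq\emptyset$, which you do by exhibiting $\mathcal{E}$ itself as the special fiber of an extension family whose generic fiber is V-type --- exactly the elementary-modification/Rees degeneration that the paper deploys in Corollary~\ref{0305-1} and Proposition~\ref{24} to \emph{obstruct} $\Theta$-reductivity, here run in reverse to generize. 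Both arguments then finish by intersecting two non-empty open subsets of an irreducible Picard variety and invoking Lemma~\ref{172e}. Your version is closer to the ``flipping the HN filtration'' slogan the paper attributes to \cite{MR0508172}, and it is more careful than the paper's own write-up on the point that the two bundles must be non-isomorphic before Lemma~\ref{172e} applies.

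One inaccuracy in the last step: a non-split extension $0\to\mathcal{M}_2\to\mathcal{F}'\to\mathcal{M}_1\to 0$ with $\deg\mathcal{M}_2<\deg\mathcal{M}_1$ need \emph{not} be indecomposable (e.g.\ a direct sum of two line bundles whose degrees both lie strictly between $d_2$ and $d_1$ can carry such a non-split extension structure, since $\mathrm{Hom}(\mathcal{M}_1,\mathcal{F}')$ may vanish outright). Fortunately you do not need indecomposability: if $\tilde{\mathcal{E}}\cong\tilde{\mathcal{E}}'$, then $\tilde{\mathcal{E}}'$ acquires a saturated sub-line bundle of degree $d_1$ from the Harder--Narasimhan filtration of $\tilde{\mathcal{E}}$; this subbundle cannot map to zero in the V-quotient $\mathcal{M}_1$ (it would then embed into $\mathcal{M}_2$, impossible by degree), so it maps isomorphically onto $\mathcal{M}_1$ and splits the V-sequence --- contradicting your choice of a non-split class. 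With that one-line repair (and noting that the positivity of $\dim\mathrm{Ext}^1(\mathcal{M}_1,\mathcal{M}_2)=d_1-d_2+g_C-1$ uses $g_C\geq 1$, harmless in the context of Theorem~\ref{1724}), the argument is complete.
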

The argument is essentially from the proof of \cite[Proposition 4]{MR0508172}, i.e., we can produce such two points via flipping the Harder-Narasimhan filtration of the unstable vector bundle. To achieve this we need the following technical lemma, motivated by \cite[\S 2]{MR250496}.
\begin{lem}\label{1943-2}
Let $\mathscr{U} \subseteq \mathscr{B}un_2^d$ be an open substack containing an unstable vector bundle $\mathcal{E}$ with Harder-Narasimhan filtration $0 \to \mathcal{L}_1 \to \mathcal{E} \to \mathcal{L}_2 \to 0$. Define
\[
\Delta:=\{\mathcal{L} \in \mathrm{Pic}^0: \exists [e] \in \mathrm{Ext}^1(\mathcal{L}^{-1} \otimes \mathcal{L}_2,\mathcal{L} \otimes \mathcal{L}_1) \text{ such that } \mathcal{E}([e]) \in \mathscr{U}\},
\]
where $\mathcal{E}([e])$ is the extension vector bundle associated to $[e]$. Then $\Delta \subseteq \mathrm{Pic}^0$ is open and dense.
\end{lem}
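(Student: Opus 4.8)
The plan is to first reduce the density assertion to openness together with one explicit point, and then to prove openness by spreading out the extensions over $\mathrm{Pic}^0$ into a single family and invoking the openness of $\mathscr{U}$. For the reduction, recall that $\mathrm{Pic}^0$ is an abelian variety over the algebraically closed field $k$, hence integral and in particular irreducible; thus every non-empty open subset is automatically dense. So it suffices to show that $\Delta$ is open and non-empty. Non-emptiness is immediate: taking $\mathcal{L}=\mathcal{O}_C$ and letting $[e_0]\in\mathrm{Ext}^1(\mathcal{L}_2,\mathcal{L}_1)$ be the extension class of the Harder--Narasimhan sequence $0\to\mathcal{L}_1\to\mathcal{E}\to\mathcal{L}_2\to0$, we have $\mathcal{E}([e_0])\cong\mathcal{E}\in\mathscr{U}$, so $\mathcal{O}_C\in\Delta$.

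For openness I would exhibit $\Delta$ as the image of an open substack under a smooth morphism to $\mathrm{Pic}^0$. Fix a Poincar\'e line bundle $\mathcal{P}$ on $C\times\mathrm{Pic}^0$ (which exists since $C$ has a $k$-rational point), and write $\mathrm{pr}_C$ for the projection to $C$. The families $\mathcal{P}\otimes\mathrm{pr}_C^*\mathcal{L}_1$ and $\mathcal{P}^{-1}\otimes\mathrm{pr}_C^*\mathcal{L}_2$ restrict over $\mathcal{L}$ to $\mathcal{L}\otimes\mathcal{L}_1$ and $\mathcal{L}^{-1}\otimes\mathcal{L}_2$, and so determine a morphism
\[
\mathrm{Pic}^0\longrightarrow\mathscr{C}oh_1^{\deg\mathcal{L}_1}\times\mathscr{C}oh_1^{\deg\mathcal{L}_2},\qquad \mathcal{L}\mapsto(\mathcal{L}\otimes\mathcal{L}_1,\ \mathcal{L}^{-1}\otimes\mathcal{L}_2).
\]
Pulling back the $\mathscr{E}xt$-stack $\mathscr{E}xt(t_2,t_1)$ of Lemma \ref{2321} with $t_1=(1,\deg\mathcal{L}_1)$ and $t_2=(1,\deg\mathcal{L}_2)$ along $\mathrm{pr}_{13}$, I obtain a stack $\mathscr{E}xt_{\mathrm{Pic}^0}\to\mathrm{Pic}^0$ whose groupoid of $\mathbf{K}$-points over $\mathcal{L}$ has isomorphism classes $\mathrm{Ext}^1(\mathcal{L}^{-1}\otimes\mathcal{L}_2,\mathcal{L}\otimes\mathcal{L}_1)$, carrying a universal extension. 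Composing with the morphism $\mathrm{pr}_2$ recording the middle term yields $q:\mathscr{E}xt_{\mathrm{Pic}^0}\to\mathscr{B}un_2^d$ (an extension of vector bundles over a smooth curve is again a vector bundle), and by construction $q$ sends the point corresponding to $([e],\mathcal{L})$ to $\mathcal{E}([e])$.

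Since $\mathscr{U}\subseteq\mathscr{B}un_2^d$ is open, $q^{-1}(\mathscr{U})$ is an open substack of $\mathscr{E}xt_{\mathrm{Pic}^0}$, and by the above description of $q$ its image under the projection $\mathscr{E}xt_{\mathrm{Pic}^0}\to\mathrm{Pic}^0$ is precisely $\Delta$. The crucial point, which I expect to be the main obstacle, is the smoothness of this projection: because the dimension of $\mathrm{Ext}^1(\mathcal{L}^{-1}\otimes\mathcal{L}_2,\mathcal{L}\otimes\mathcal{L}_1)=H^1(C,\mathcal{L}^{\otimes2}\otimes\mathcal{L}_1\otimes\mathcal{L}_2^{-1})$ may jump as $\mathcal{L}$ varies, there is in general no vector bundle of extension classes over $\mathrm{Pic}^0$, and one cannot argue naively with a projective bundle of extensions. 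This is exactly what the $\mathscr{E}xt$-stack repairs: as recalled in the proof of Lemma \ref{2321}, $\mathrm{pr}_{13}$ is a generalized vector bundle, so $\mathscr{E}xt_{\mathrm{Pic}^0}\cong[V_1/V_0]$ for a two-term complex $[V_0\to V_1]$ of vector bundles on $\mathrm{Pic}^0$, and the projection $[V_1/V_0]\to\mathrm{Pic}^0$ is smooth irrespective of any jumping. As smooth morphisms are open, $\Delta$ is open; combined with $\mathcal{O}_C\in\Delta$ and the irreducibility of $\mathrm{Pic}^0$, this shows that $\Delta$ is open and dense.
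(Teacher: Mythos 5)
Your proof is correct and follows essentially the same route as the paper: both establish $\mathcal{O}_C \in \Delta$ and then realize $\Delta$ as the image under the smooth projection $\mathrm{pr}_{13}$ of the preimage of $\mathscr{U}$ in the $\mathscr{E}xt$-stack pulled back over $\mathrm{Pic}^0$, using that $\mathrm{pr}_{13}$ is a generalized vector bundle $[V_1/V_0]$ so that jumping of $\dim\mathrm{Ext}^1$ is harmless. The only cosmetic difference is that you build the universal families of line bundles via a Poincar\'e bundle where the paper pulls back the universal families along the twisting maps $\vartheta_i:\mathrm{Pic}^0\to\mathrm{Pic}^{d_i}$; these are the same construction.
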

\begin{proof}
By assumption $\Delta$ is non-empty as it contains $\mathcal{O}_C$. It remains to show $\Delta \subseteq \mathrm{Pic}^0$ is open. 

Let $d_i:=\deg(\mathcal{L}_i)$ for $i=1,2$ and let $\vartheta_1: \mathrm{Pic}^0 \to \mathrm{Pic}^{d_1}$ (resp., $\vartheta_2: \mathrm{Pic}^0 \to \mathrm{Pic}^{d_2}$) be the morphism given by $\mathcal{L} \mapsto \mathcal{L} \otimes \mathcal{L}_1$ (resp., $\mathcal{L} \mapsto \mathcal{L}^{-1} \otimes \mathcal{L}_2$). Denote by $\mathcal{P}_i \to C \times \mathrm{Pic}^{d_i}$ the universal family and $\mathcal{Q}_i:=(\mathrm{id}_C \times \vartheta_i)^*\mathcal{P}_i$ its pull-back to $C \times \mathrm{Pic}^0$. The two families define a Cartesian diagram (see Lemma \ref{2321})
\[
\begin{tikzcd}
\mathscr{E}xt(\mathcal{Q}_2,\mathcal{Q}_1) \ar[d,"{\mathrm{pr}_{13}}"'] \ar[r,"\psi"]& \mathscr{E}xt(d_2,d_1) \ar[d,"{\mathrm{pr}_{13}}"] \ar[r,"{\mathrm{pr}_2}"] & \mathscr{C}oh_2^d \\
\mathrm{Pic}^0 \ar[r,"{(\mathcal{Q}_1,\mathcal{Q}_2)}"'] & \mathscr{C}oh_1^{d_1} \times \mathscr{C}oh_1^{d_2}. \ar[ul,phantom,"\lrcorner"]
\end{tikzcd}
\]
Then $\mathrm{pr}_{13}^{-1}(\mathcal{L})=\mathscr{E}xt(\mathcal{Q}_2,\mathcal{Q}_1)|_{\mathcal{L}}=[\mathrm{Ext}^1(\mathcal{L}^{-1} \otimes \mathcal{L}_2,\mathcal{L} \otimes \mathcal{L}_1)/\mathrm{Hom}(\mathcal{L}^{-1} \otimes \mathcal{L}_2,\mathcal{L} \otimes \mathcal{L}_1)]$ for any $\mathcal{L} \in \mathrm{Pic}^0$. By definition $\Delta=\mathrm{pr}_{13} \circ (\mathrm{pr}_2\circ \psi)^{-1}(\mathscr{U})$ which is open in $\mathrm{Pic}^0$ since $\mathrm{pr}_{13}$ is smooth.
\end{proof}
\begin{proof}[Proof of Proposition $\ref{1704-beg}$]
Let $\mathcal{E} \in \mathscr{U}(k)$ be an unstable vector bundle with Harder-Narasimhan filtration $0 \to \mathcal{L}_1 \to \mathcal{E} \to \mathcal{L}_2 \to 0$. Fix a line bundle $\beta \in \mathrm{Pic}^{d_1-d_2}$ admitting a non-zero global section where $d_i:=\deg(\mathcal{L}_i)$. Consider the open dense subset $\Delta \subseteq \mathrm{Pic}^0$ defined in Lemma \ref{1943-2}. Then the intersection $\Delta \cap (\mathcal{L}_1 \otimes \mathcal{L}_2^{-1} \otimes \beta^{-1} \otimes \Delta)^{-1}$ is non-empty. 
For any point $\mathcal{L}$ in this intersection, i.e., $\mathcal{L}\in \Delta$ and $\mathcal{L}^{-1} \otimes \mathcal{L}_1^{-1} \otimes \mathcal{L}_2 \otimes \beta \in \Delta$, by definition there exist $[x] \in \mathrm{Ext}^1(\mathcal{L}^{-1} \otimes \mathcal{L}_2,\mathcal{L} \otimes \mathcal{L}_1)$ and $[y] \in \mathrm{Ext}^1(\mathcal{L} \otimes \mathcal{L}_1 \otimes \beta^{-1},\mathcal{L}^{-1} \otimes \mathcal{L}_2 \otimes \beta)$ such that both $\mathcal{E}([x])$ and $\mathcal{E}([y])$ lie in $\mathscr{U}$, i.e.,
\begin{gather*}
0 \to \mathcal{L} \otimes \mathcal{L}_1 \to \mathcal{E}([x]) \to \mathcal{L}^{-1} \otimes \mathcal{L}_2 \to 0 \\
0 \to \mathcal{L}^{-1} \otimes \mathcal{L}_2 \otimes \beta \to \mathcal{E}([y]) \to \mathcal{L} \otimes \mathcal{L}_1 \otimes \beta^{-1} \to 0.
\end{gather*}
Then $\mathcal{E}([x])$ and $\mathcal{E}([y])$ are both unstable with the same determinant, and there exist non-zero morphisms between them. By Lemma \ref{172e} they have opposite filtrations and we are done.
\end{proof}
\subsection{Generalizing to arbitrary base field}\label{sub:generalizing}
The maximality type results Theorem \ref{1535-2} and \ref{thmA} hold over an arbitrary field $k$ since we can reduce to the case that the base field is algebraically closed. To indicate the base curve we temporarily use $\mathscr{B}un_G(C)$ instead of $\mathscr{B}un_G$. Fix an algebraic closure $\bar{k}$ of $k$.
\begin{lem}
\label{lem-arbitrary}
Let $C$ be a smooth projective geometrically connected curve over a field $k$ and let $G$ be a geometrically connected reductive group over $k$. If the open substack $\mathscr{B}un_{G_{\bar{k}}}^{ss}(C_{\bar{k}}) \subseteq \mathscr{B}un_{G_{\bar{k}}}(C_{\bar{k}})$ of semistable principal $G_{\bar{k}}$-bundles over $C_{\bar{k}}$ is the unique maximal open substack that admits a (schematic or separated) adequate moduli space, then so is the open substack $\mathscr{B}un_{G}^{ss}(C) \subseteq \mathscr{B}un_{G}(C)$ of semistable principal $G$-bundles over $C$.
\end{lem}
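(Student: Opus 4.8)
The plan is to deduce the statement over $k$ from the one over $\bar{k}$ by faithfully flat base change along $\mathrm{Spec}(\bar{k}) \to \mathrm{Spec}(k)$. Since $C$ is geometrically connected and $G$ is geometrically connected reductive, base change induces an isomorphism $\mathscr{B}un_G(C)_{\bar{k}} \cong \mathscr{B}un_{G_{\bar{k}}}(C_{\bar{k}})$, and this identification carries $\mathscr{B}un_G^{ss}(C)_{\bar{k}}$ onto $\mathscr{B}un_{G_{\bar{k}}}^{ss}(C_{\bar{k}})$, because semistability of a principal bundle is insensitive to extension of the base field. The first thing I would record is that $\mathscr{B}un_G^{ss}(C)$ already admits a schematic (and separated) adequate moduli space over $k$; this is classical, from the construction of the moduli space of semistable $G$-bundles over an arbitrary field (see, e.g. \cite{MR2657374, MR2450609}). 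It therefore remains to prove the maximality, i.e. that every open substack $\mathscr{U} \subseteq \mathscr{B}un_G(C)$ admitting a schematic (resp.\ separated) adequate moduli space is contained in $\mathscr{B}un_G^{ss}(C)$.

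So let $\pi\colon \mathscr{U} \to U$ be such an adequate moduli space. Base changing to $\bar{k}$, the open immersion $\mathscr{U} \hookrightarrow \mathscr{B}un_G(C)$ pulls back to an open immersion $\mathscr{U}_{\bar{k}} \hookrightarrow \mathscr{B}un_{G_{\bar{k}}}(C_{\bar{k}})$. Since $\mathrm{Spec}(\bar{k}) \to \mathrm{Spec}(k)$ is flat and adequate moduli spaces are stable under flat base change (see \cite[Proposition 5.2.9]{MR3272912}), the induced morphism $\pi_{\bar{k}}\colon \mathscr{U}_{\bar{k}} \to U_{\bar{k}}$ is again an adequate moduli space. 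Moreover $U_{\bar{k}}$ is again a scheme if $U$ is, and again separated if $U$ is, as both properties are stable under base change. Applying the hypothesis over $\bar{k}$ to $\mathscr{U}_{\bar{k}}$ then gives the inclusion $\mathscr{U}_{\bar{k}} \subseteq \mathscr{B}un_{G_{\bar{k}}}^{ss}(C_{\bar{k}}) = \mathscr{B}un_G^{ss}(C)_{\bar{k}}$.

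Finally I would descend this inclusion. An inclusion of open substacks of $\mathscr{B}un_G(C)$ is a containment of open subsets of the underlying topological space $|\mathscr{B}un_G(C)|$, and the projection $|\mathscr{B}un_G(C)_{\bar{k}}| \to |\mathscr{B}un_G(C)|$ is surjective because $\bar{k}/k$ is faithfully flat. Hence the containment $|\mathscr{U}_{\bar{k}}| \subseteq |\mathscr{B}un_G^{ss}(C)_{\bar{k}}|$ descends to $|\mathscr{U}| \subseteq |\mathscr{B}un_G^{ss}(C)|$, that is, $\mathscr{U} \subseteq \mathscr{B}un_G^{ss}(C)$, as desired.

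The routine parts of this argument are the two base-change stability statements (for adequate moduli spaces and for the schematic/separated property) together with the surjectivity used in the descent step. The one point deserving genuine care is the compatibility of the semistable locus with base change, $\mathscr{B}un_G^{ss}(C)_{\bar{k}} = \mathscr{B}un_{G_{\bar{k}}}^{ss}(C_{\bar{k}})$: for this one must know that a $G$-bundle is semistable if and only if its pullback to $C_{\bar{k}}$ is, which is where the geometric connectedness of $C$ and $G$ and the field-extension invariance of (semi)stability enter. This is the main, though standard, obstacle; everything else is formal descent.
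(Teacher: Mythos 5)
Your argument is correct and follows essentially the same route as the paper: base change the adequate moduli space along $\mathrm{Spec}(\bar k)\to\mathrm{Spec}(k)$ via \cite[Proposition 5.2.9 (1)]{MR3272912}, apply the hypothesis over $\bar k$, and descend the inclusion using the fact that a $G$-bundle is semistable once its pullback to $C_{\bar k}$ is. The only cosmetic difference is that you phrase the descent step via surjectivity on topological spaces and also record explicitly that $\mathscr{B}un_G^{ss}(C)$ itself admits the relevant moduli space over $k$, which the paper leaves implicit.
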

\begin{proof}
If $\mathscr{U} \subseteq \mathscr{B}un_G(C)$ is an open substack that admits a (schematic or separated) adequate moduli space, then so is the base change $\mathscr{U}_{\bar{k}} \subseteq \mathscr{B}un_{G}(C)_{\bar{k}}=\mathscr{B}un_{G_{\bar{k}}}(C_{\bar{k}})$ by \cite[Proposition 5.2.9 (1)]{MR3272912}. In a diagram:
\[
    \begin{tikzcd}
        \mathscr{U}_{\bar{k}} \ar[r] \ar[d,"\text{ams}"'] & \mathscr{U} \ar[d,"\text{ams}"] \\
        U_{\bar{k}} \ar[r] \ar[d] & U \ar[ul,phantom,"\lrcorner"] \ar[d] \\
        \mathrm{Spec}(\bar{k}) \ar[r] & \mathrm{Spec}(k) \ar[ul,phantom,"\lrcorner"].
    \end{tikzcd}
\]
By assumption, we obtain $\mathscr{U}_{\bar{k}} \subseteq \mathscr{B}un^{ss}_{G_{\bar{k}}}(C_{\bar{k}})$. As a $G$-bundle $\mathcal{P}$ over $C$ is semistable if the base change $\mathcal{P}_{\bar{k}}$ over $C_{\bar{k}}$ is semistable, this shows that $\mathscr{U} \subseteq \mathscr{B}un_G^{ss}(C)$.  
\end{proof}
\section{Higher rank}\label{eg-vb3}
In this section, we construct an open substack $\mathscr{U} \subseteq \mathscr{B}un_3^2$ that admits a separated non-proper adequate moduli space and is not contained in $\mathscr{B}un_3^{2,ss}$. 

Throughout this section we assume that $g_C>2$. This guarantees the existence of $(1,0)$-stable vector bundles over $C$ of arbitrary rank and degree (see \cite[Proposition 5.4 (i)]{MR541029}). Recall \cite[Definition 5.1]{MR541029}: for any pair of non-negative integers $(i,j)$, a vector bundle $\mathcal{E}$ over $C$ is said to be $(i,j)$\emph{-stable} if for any non-zero proper subbundle $0 \neq \mathcal{F} \subsetneq \mathcal{E}$ we have
\[
\dfrac{\deg(\mathcal{F})+i}{\mathrm{rk}(\mathcal{F})} < \dfrac{\deg(\mathcal{E})+i-j}{\mathrm{rk}(\mathcal{E})}.
\]
\subsection{Construction of $\mathscr{U} \subseteq \mathscr{B}un_3^{2}$}
Let $\lambda$ be the polygon in the rank-degree plane consisting of vertices $\{(0,0),(1,1),(3,2)\}$ and denote by $\mathscr{B}un_3^{2,\leq \lambda} \subseteq \mathscr{B}un_3^2$ the open substack consisting of vector bundles whose Harder-Narasimhan polygons lie below $\lambda$.
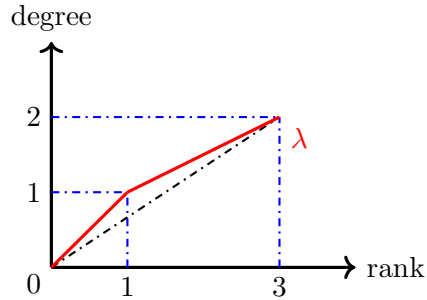
\begin{figure}[!ht]
\centering
\begin{tikzpicture}
\draw[->,very thick] (0,0) -- (4,0) node[right] {\text{rank}};
\draw[->,very thick] (0,0) -- (0,3) node[above] {\text{degree}};

\draw[thick,dash dot,color=blue] (0,1) -- (1,1);
\draw[thick,dash dot,color=blue] (1,0) -- (1,1);
\draw[thick,dash dot,color=blue] (0,2) -- (3,2);
\draw[thick,dash dot,color=blue] (3,0) -- (3,2);
\draw[thick,dash dot] (0,0) -- (3,2);

\draw[color=red,very thick] (0,0) -- (1,1);
\draw[color=red,very thick] (1,1) -- (3,2) node[below right] {$\lambda$};

\node[below=6pt,left] at (0,0) {$0$};
\node[left] at (0,1) {$1$};
\node[left] at (0,2) {$2$};
\node[below] at (1,0) {$1$};
\node[below] at (3,0) {$3$};
\end{tikzpicture}
\caption{The polygon $\lambda$ in the rank-degree plane}
\end{figure}
\begin{lem}\label{no}
Suppose $\mathcal{E} \in \mathscr{B}un_3^{2,\leq \lambda}(\mathbf{K})$ for some field $\mathbf{K}/k$ and $\mathcal{F} \subseteq \mathcal{E}$ is a subsheaf of rank at most $2$. Then $\deg(\mathcal{F}) \leq 1$. In particular, if $\deg(\mathcal{F})=1$, then $\mathcal{F} \subseteq \mathcal{E}$ is a subbundle.
\end{lem}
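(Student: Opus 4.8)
The plan is to read the bound off the Harder–Narasimhan polygon directly. The key input is the standard domination property: for any vector bundle $\mathcal{E}$ over a smooth projective curve and any subsheaf $\mathcal{F} \subseteq \mathcal{E}$ of rank $r$, the point $(r,\deg\mathcal{F})$ lies on or below the HN polygon of $\mathcal{E}$. By hypothesis $\mathcal{E} \in \mathscr{B}un_3^{2,\leq\lambda}(\mathbf{K})$, so its HN polygon lies on or below $\lambda$; chaining the two, the point $(r,\deg\mathcal{F})$ lies on or below $\lambda$, i.e. $\deg\mathcal{F} \leq \lambda(r)$, where $\lambda(r)$ denotes the height of $\lambda$ over the abscissa $r$.

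First I would record the relevant heights. From the vertices $(0,0),(1,1),(3,2)$, the first edge of $\lambda$ has slope $1$ and the second has slope $1/2$, so $\lambda(1)=1$ and $\lambda(2)=\tfrac{3}{2}$. Hence a rank-$1$ subsheaf satisfies $\deg\mathcal{F}\leq 1$, and a rank-$2$ subsheaf satisfies $\deg\mathcal{F}\leq\tfrac{3}{2}$; since the degree is an integer, this forces $\deg\mathcal{F}\leq 1$ in both cases. (The rank-$0$ case is trivial, as $\mathcal{E}$ is torsion-free and so has no nonzero torsion subsheaf.) This settles the first assertion.

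For the ``in particular'' clause I would pass to the saturation $\bar{\mathcal{F}}\subseteq\mathcal{E}$, the smallest subbundle containing $\mathcal{F}$. It has the same rank as $\mathcal{F}$, and since $\bar{\mathcal{F}}/\mathcal{F}$ is a torsion sheaf of nonnegative degree one has $\deg\bar{\mathcal{F}}\geq\deg\mathcal{F}$, with equality if and only if $\mathcal{F}=\bar{\mathcal{F}}$ is already a subbundle. Assuming $\deg\mathcal{F}=1$ and applying the bound of the first part to the subbundle $\bar{\mathcal{F}}$ gives $1=\deg\mathcal{F}\leq\deg\bar{\mathcal{F}}\leq\lambda(\mathrm{rk}\,\mathcal{F})\leq\tfrac{3}{2}$, so $\deg\bar{\mathcal{F}}=1=\deg\mathcal{F}$, whence $\mathcal{F}=\bar{\mathcal{F}}$ is a subbundle.

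The argument is routine once the HN-polygon domination property is in hand, so there is no serious obstacle; the points demanding care are (i) using the integrality of degrees to upgrade the rank-$2$ bound $\deg\mathcal{F}\leq\tfrac{3}{2}$ to $\deg\mathcal{F}\leq 1$, and (ii) in the final step applying the polygon bound to the saturation $\bar{\mathcal{F}}$ rather than to $\mathcal{F}$, so that the nonnegativity of $\deg(\bar{\mathcal{F}}/\mathcal{F})$ pins $\bar{\mathcal{F}}$ and $\mathcal{F}$ to the same degree. I would also note that everything is valid over the possibly non-algebraically-closed field $\mathbf{K}$, since both the HN filtration and the domination property hold over an arbitrary base field for a smooth projective geometrically connected curve.
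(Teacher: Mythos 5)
Your proof is correct and takes essentially the same route as the paper, which simply observes that a subsheaf of rank at most $2$ and degree $>1$ would force the Harder--Narasimhan polygon of $\mathcal{E}$ strictly above $\lambda$; your write-up just makes explicit the heights $\lambda(1)=1$, $\lambda(2)=\tfrac{3}{2}$, the integrality step, and the saturation argument for the ``in particular'' clause.
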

\begin{proof}
If $\deg(\mathcal{F})>1$, then the Harder-Narasimhan polygon of $\mathcal{E}$ lies strictly above $\lambda$.
\end{proof}
The desired open substack $\mathscr{U} \subseteq \mathscr{B}un_3^{2}$ will be an open substack of $\mathscr{B}un_3^{2,\leq \lambda}$. Since we want it to be essentially different from $\mathscr{B}un_3^{2,ss}$, we need to include some unstable vector bundles. However, not every unstable vector bundle in $\mathscr{B}un_3^{2,\leq \lambda}$ is allowed. The S-completeness gives a first restriction: those points $\mathcal{E} \in \mathscr{B}un_3^{2,\leq \lambda}$ with Harder-Narasimhan filtration $0 \to \mathcal{L} \to \mathcal{E} \to \mathcal{F} \to 0$ such that $\mathrm{Hom}(\mathcal{F},\mathcal{L}) \neq 0$ should be removed. Then $\Theta$-reductivity also removes those points $\mathcal{E} \in \mathscr{B}un_3^{2,\leq \lambda}$ fitting into a short exact sequence $0 \to \mathcal{F} \to \mathcal{E} \to \mathcal{L} \to 0$ such that $\mathrm{Hom}(\mathcal{F},\mathcal{L}) \neq 0$. To retain $\Theta$-reductivity in this new substack, it suffices to remove, in both situations, those points $\mathcal{E} \in \mathscr{B}un_3^{2,\leq \lambda}$ such that $\mathcal{F}$ is not $(1,0)$-stable. Note that $\mathrm{Hom}(\mathcal{F},\mathcal{L}) \neq 0$ implies that $\mathcal{F}$ is not $(1,0)$-stable as any non-zero morphism from $\mathcal{F}$ to $\mathcal{L}$ would yield a sub-line bundle of degree $\geq 0$. In summary we should cut out the following subset
\[
\mathscr{A}:=\left\langle \mathcal{E} \in |\mathscr{B}un_3^{2,\leq \lambda}|:
\begin{matrix}
\mathcal{E} \text{ fits into } 0 \to \mathcal{L} \to \mathcal{E} \to \mathcal{F} \to 0 \text{ or }\\
0 \to \mathcal{F} \to \mathcal{E} \to \mathcal{L} \to 0 \\
\text{for some } \mathcal{L} \in \mathscr{B}un_1^1 \text{ and } \mathcal{F} \in \mathscr{B}un_2^1-\mathscr{B}un_2^{1,s}(1,0) 
\end{matrix}
\right\rangle,
\]
where $\mathscr{B}un_2^{1,s}(1,0) \subseteq \mathscr{B}un_2^1$ denotes the open substack of $(1,0)$-stable vector bundles. Note that a rank $2$, degree $1$ vector bundle is $(1,0)$-stable if and only if all of its sub-line bundles have degrees $<0$. To start we show $\mathscr{A}$ is a closed subset.
\begin{lem}
The subset $\mathscr{A} \subseteq |\mathscr{B}un_3^{2,\leq \lambda}|$ is closed.
\end{lem}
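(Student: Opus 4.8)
The plan is to exhibit $\mathscr{A}$ as a finite union of images of proper morphisms to $\mathscr{B}un_3^{2,\leq\lambda}$; since proper morphisms are closed this proves $\mathscr{A}$ is closed. Write $\mathscr{A}=\mathscr{A}_1\cup\mathscr{A}_2$, where $\mathscr{A}_1$ (resp.\ $\mathscr{A}_2$) consists of those $\mathcal{E}$ admitting a sequence $0\to\mathcal{L}\to\mathcal{E}\to\mathcal{F}\to 0$ (resp.\ $0\to\mathcal{F}\to\mathcal{E}\to\mathcal{L}\to 0$) with $\mathcal{L}$ a line bundle of degree $1$ and $\mathcal{F}\in\mathscr{B}un_2^1-\mathscr{B}un_2^{1,s}(1,0)$. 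It suffices to show that each $\mathscr{A}_i$ is closed.

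For $\mathscr{A}_1$ I would use the relative Quot space. On $C\times\mathscr{B}un_3^{2,\leq\lambda}$ let $\mathcal{U}$ be the universal bundle and form the relative Quot space $Q_1\to\mathscr{B}un_3^{2,\leq\lambda}$ parametrizing quotients of $\mathcal{U}$ of rank $2$ and degree $1$; since $C$ is projective this is proper over the base. The key input is Lemma \ref{no}: if $\mathcal{E}\in\mathscr{B}un_3^{2,\leq\lambda}$ and $\mathcal{E}\twoheadrightarrow\mathcal{F}$ is such a quotient, then $\mathcal{F}$ is automatically locally free and the kernel $\mathcal{L}$ is a sub-line-bundle of degree $1$, since any torsion in $\mathcal{F}$ of length $\ell>0$ would enlarge the kernel to a rank-$1$ subsheaf of degree $1+\ell>1$, contradicting Lemma \ref{no}. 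Thus every point of $Q_1$ records exactly a sequence of the shape appearing in $\mathscr{A}_1$. The universal quotient over $Q_1$ is then a family of rank-$2$, degree-$1$ bundles, and as $(1,0)$-stability is an open condition in families, the locus $\mathscr{Z}_1\subseteq Q_1$ where the quotient is non-$(1,0)$-stable is closed. The composite $\mathscr{Z}_1\hookrightarrow Q_1\to\mathscr{B}un_3^{2,\leq\lambda}$ is therefore proper, and its image is precisely $\mathscr{A}_1$ (every point of $\mathscr{Z}_1$ lands in $\mathscr{A}_1$, and conversely every $\mathcal{E}\in\mathscr{A}_1$ lifts to a point of $\mathscr{Z}_1$ by definition), so $\mathscr{A}_1$ is closed.

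For $\mathscr{A}_2$ the same argument applies to the relative Quot space $Q_2\to\mathscr{B}un_3^{2,\leq\lambda}$ of rank-$1$, degree-$1$ quotients $\mathcal{U}\twoheadrightarrow\mathcal{L}$: by Lemma \ref{no} the quotient $\mathcal{L}$ is torsion-free and the kernel $\mathcal{F}$ is a rank-$2$, degree-$1$ subbundle, and the non-$(1,0)$-stable locus of the universal kernel cuts out a closed $\mathscr{Z}_2\subseteq Q_2$ whose image is $\mathscr{A}_2$. Hence $\mathscr{A}=\mathscr{A}_1\cup\mathscr{A}_2$ is closed. The only points I expect to require care are the properness of the relative Quot space over the \emph{stack} $\mathscr{B}un_3^{2,\leq\lambda}$ --- most transparently checked after passing to a smooth scheme atlas $S\to\mathscr{B}un_3^{2,\leq\lambda}$, where the $Q_i$ become honest Quot schemes, proper over $S$ by Grothendieck, and closedness can be tested on $S$ --- and the verification, via Lemma \ref{no}, that the numerical type forces local freeness, so that the $(1,0)$-stability locus is genuinely closed and its image matches $\mathscr{A}_i$ exactly.
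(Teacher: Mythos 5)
Your proof is correct, but it takes a genuinely different route from the paper's. The paper observes that $\mathscr{A}$ is constructible and then checks stability under specialization: given a family $\mathcal{E}_R$ over a DVR whose generic fibre lies in $\mathscr{A}$, one degenerates the defining short exact sequence, uses Lemma \ref{no} to see that the rank-$2$ piece of the special fibre is still a subbundle (resp.\ locally free quotient) of the expected degree, and concludes because failure of $(1,0)$-stability is preserved under specialization. You instead exhibit $\mathscr{A}=\mathscr{A}_1\cup\mathscr{A}_2$ as a union of images of proper morphisms, namely the non-$(1,0)$-stable loci inside the relative Quot spaces of rank-$2$ (resp.\ rank-$1$) degree-$1$ quotients of the universal bundle; Lemma \ref{no} enters in the same way, to rule out torsion in the quotients and force the kernels to be subbundles of the right type, so that the image of each $\mathscr{Z}_i$ is exactly $\mathscr{A}_i$. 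Both arguments thus rest on the same two inputs, Lemma \ref{no} and the openness of $(1,0)$-stability in families. What your version buys is that constructibility of $\mathscr{A}$ comes for free as the image of a finite-type morphism, so it need not be asserted separately as in the paper; the price is invoking representability and properness of the relative Quot space over the stack, which you correctly reduce to the classical case by passing to a smooth atlas. The paper's version is shorter but leaves the constructibility of $\mathscr{A}$ as an unproved (though standard) assertion.
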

\begin{proof}
The subset $\mathscr{A} \subseteq |\mathscr{B}un_3^{2,\leq \lambda}|$ is constructible, to conclude we claim it is stable under specialization. Let $R$ be a DVR over $k$ with fraction field $K$ and residue field $\kappa$. For any family $\mathcal{E}_R \in \mathscr{B}un_3^{2,\leq \lambda}(R)$ with $\mathcal{E}_K \in \mathscr{A}(K)$, we show $\mathcal{E}_\kappa \in \mathscr{A}(\kappa)$. If $0 \to \mathcal{F}_K \to \mathcal{E}_K \to \mathcal{L}_K \to 0$ is a defining sequence of $\mathcal{E}_K \in \mathscr{A}(K)$, we consider its degeneration $0 \to \mathcal{F}_\kappa \to \mathcal{E}_\kappa \to \mathcal{L}_\kappa \to 0$ and then $\mathcal{F}_\kappa \subseteq \mathcal{E}_\kappa$ is a subbundle by Lemma \ref{no}. Since $\mathcal{F}_K$ is not (1,0)-stable, neither is $\mathcal{F}_\kappa$. This shows that $\mathcal{E}_\kappa \in \mathscr{A}(\kappa)$. The same argument applies, if $\mathcal{E}_K \in \mathscr{A}(K)$ is realized by a sequence of the form $0 \to \mathcal{L}_K \to \mathcal{E}_K \to \mathcal{F}_K \to 0$. 
\end{proof}
Then we equip the closed subset $\mathscr{A} \subseteq |\mathscr{B}un_3^{2,\leq \lambda}|$ with the reduced induced algebraic stack structure so it becomes a closed substack of $\mathscr{B}un_3^{2,\leq \lambda}$. Let 
\[
\mathscr{U}:=\mathscr{B}un_3^{2,\leq \lambda}-\mathscr{A} \subseteq \mathscr{B}un_3^{2,\leq \lambda} \subseteq \mathscr{B}un_3^{2},
\]
i.e.,
\[
\mathscr{U}=\left\langle \mathcal{E} \in \mathscr{B}un_3^{2,\leq \lambda}:
\begin{matrix}
\text{If } \mathcal{E} \text{ fits into } 0 \to \mathcal{L} \to \mathcal{E} \to \mathcal{F} \to 0 \text{ or } 0 \to \mathcal{F} \to \mathcal{E} \to \mathcal{L} \to 0 \\
\text{for some } \mathcal{L} \in \mathscr{B}un_1^1 \text{ and } \mathcal{F} \in \mathscr{B}un_2^1, \text{ then } \mathcal{F} \text{ is } (1,0)\text{-stable.}
\end{matrix}
\right\rangle,
\]
and it is an open substack. To close we show that $\mathscr{U}$ contains unstable vector bundles.
\begin{lem}\label{dec}
Suppose $\mathcal{L} \in \mathscr{B}un_1^1(\mathbf{K})$ and $\mathcal{F} \in \mathscr{B}un_2^{1}(\mathbf{K})$ for some field $\mathbf{K}/k$. Then $\mathcal{L} \oplus \mathcal{F} \in \mathscr{U}(\mathbf{K})$ if and only if $\mathcal{F}$ is $(1,0)$-stable. This exhausts all decomposable vector bundles in $\mathscr{U}$. In particular, the open substack $\mathscr{U}$ is non-empty and $\mathscr{U} \nsubseteq \mathscr{B}un_3^{2,ss}$.
\end{lem}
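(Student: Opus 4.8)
The plan is to prove the two implications of the equivalence separately, and then read off the classification of decomposable bundles together with the non-emptiness from the degree bound in Lemma~\ref{no}. The reverse implication is immediate: if $\mathcal{L}\oplus\mathcal{F}\in\mathscr{U}(\mathbf{K})$ then in particular $\mathcal{L}\oplus\mathcal{F}\in\mathscr{B}un_3^{2,\leq\lambda}(\mathbf{K})$, and applying the defining property of $\mathscr{U}$ to the tautological split sequence $0\to\mathcal{L}\to\mathcal{L}\oplus\mathcal{F}\to\mathcal{F}\to0$ (with $\mathcal{L}\in\mathscr{B}un_1^1$ and $\mathcal{F}\in\mathscr{B}un_2^1$) forces $\mathcal{F}$ to be $(1,0)$-stable.

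For the forward implication, set $\mathcal{E}:=\mathcal{L}\oplus\mathcal{F}$ with $\mathcal{F}$ assumed $(1,0)$-stable. First I would check $\mathcal{E}\in\mathscr{B}un_3^{2,\leq\lambda}$: since $\mathcal{F}$ is semistable of slope $1/2<1=\mu(\mathcal{L})$, the Harder--Narasimhan filtration of $\mathcal{E}$ is $0\subset\mathcal{L}\subset\mathcal{E}$, so its polygon has vertices $(0,0),(1,1),(3,2)$ and equals $\lambda$. It then remains to show $\mathcal{E}\notin\mathscr{A}$, i.e.\ that whenever $\mathcal{E}$ sits in a sequence $0\to\mathcal{L}'\to\mathcal{E}\to\mathcal{F}'\to0$ or $0\to\mathcal{F}'\to\mathcal{E}\to\mathcal{L}'\to0$ with $\mathcal{L}'\in\mathscr{B}un_1^1$ and $\mathcal{F}'\in\mathscr{B}un_2^1$, the rank-$2$ term $\mathcal{F}'$ is $(1,0)$-stable. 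The central claim I would establish is that in every such sequence $\mathcal{F}'\cong\mathcal{F}$.

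For a degree-$1$ line subbundle $\mathcal{L}'\hookrightarrow\mathcal{E}$ I would decompose the inclusion into its two components to $\mathcal{L}$ and to $\mathcal{F}$: the component to $\mathcal{L}$ cannot vanish, for otherwise $\mathcal{L}'$ would be a degree-$1$ sub-line bundle of $\mathcal{F}$, contradicting $(1,0)$-stability; hence it is an isomorphism of degree-$1$ line bundles, and the unipotent automorphism $(\ell,f)\mapsto(\ell,f-\psi(\ell))$ of $\mathcal{E}$ (with $\psi$ the component to $\mathcal{F}$) carries $\mathcal{L}'$ onto $\mathcal{L}\oplus0$, giving $\mathcal{F}'=\mathcal{E}/\mathcal{L}'\cong\mathcal{F}$. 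Dually, for a rank-$2$ degree-$1$ subbundle $\mathcal{F}'=\ker(q)$ with quotient $q\colon\mathcal{E}\to\mathcal{L}'$, I would first show $q|_{\mathcal{L}}\neq0$: otherwise $q$ factors through $\mathcal{F}$ and its kernel is a degree-$0$ sub-line bundle of $\mathcal{F}$, again contradicting $(1,0)$-stability; as $q|_{\mathcal{L}}$ is then an isomorphism, the projection $\mathcal{E}\to\mathcal{F}$ restricts to an isomorphism $\mathcal{F}'\xrightarrow{\sim}\mathcal{F}$. The main obstacle is precisely this second case: one must rule out that the degree-$1$ line bundle $\mathcal{L}$ slips into $\mathcal{F}'$ as a sub-line bundle, and the degree-$0$-kernel contradiction with the $(1,0)$-stability of $\mathcal{F}$ is the crucial input. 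In both cases $\mathcal{F}'\cong\mathcal{F}$ is $(1,0)$-stable, whence $\mathcal{E}\in\mathscr{U}$.

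For the remaining assertions, Lemma~\ref{no} bounds every rank-$\leq2$ subbundle of a member of $\mathscr{B}un_3^{2,\leq\lambda}$ by degree $\leq1$. A splitting into three line bundles of degrees $a_1,a_2,a_3$ would force $a_i+a_j\leq1$ for each pair, hence $2\deg\mathcal{E}=2(a_1+a_2+a_3)\leq3$, which is impossible; therefore every decomposable bundle in $\mathscr{B}un_3^{2,\leq\lambda}$ is a direct sum of a line bundle and a rank-$2$ bundle, both necessarily of degree $1$ (each degree is $\leq1$ and they sum to $2$). Combined with the equivalence just proved, this identifies the decomposable members of $\mathscr{U}$ exactly as claimed. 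Finally, choosing a $(1,0)$-stable rank-$2$ degree-$1$ bundle $\mathcal{F}$, which exists because $g_C>2$ by \cite[Proposition 5.4 (i)]{MR541029}, and any $\mathcal{L}\in\mathscr{B}un_1^1$, the forward implication gives $\mathcal{L}\oplus\mathcal{F}\in\mathscr{U}$; as $\mathcal{L}$ is a sub-line bundle of slope $1>2/3=\mu(\mathcal{L}\oplus\mathcal{F})$, this bundle is unstable, so $\mathscr{U}\neq\emptyset$ and $\mathscr{U}\nsubseteq\mathscr{B}un_3^{2,ss}$.
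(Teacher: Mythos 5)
Your proof is correct and follows essentially the same route as the paper: the ``only if'' direction is read off from the definition of $\mathscr{U}$, the ``if'' direction rules out membership in $\mathscr{A}$ by a case analysis on the two types of sequences using that $(1,0)$-stability of $\mathcal{F}$ forbids sub-line bundles of degree $\geq 0$, and the classification of decomposables plus non-emptiness follow from Lemma \ref{no}. The only (immaterial) variation is that for sequences $0\to\mathcal{L}'\to\mathcal{E}\to\mathcal{F}'\to 0$ you argue via the components of $\mathcal{L}'\hookrightarrow\mathcal{L}\oplus\mathcal{F}$ and an explicit unipotent automorphism, where the paper simply invokes the uniqueness of the Harder--Narasimhan filtration.
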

\begin{proof}
As $\mathscr{B}un_3^{2,\leq \lambda}$ does not contain direct sums of line bundles, direct sums of the form $\mathcal{L} \oplus \mathcal{F}$ exhaust all decomposable vector bundles in $\mathscr{U}$ by Lemma \ref{no}.

\textsc{Only If Part}: This is clear from the definition of $\mathscr{U}$. \textsc{If Part}: By the uniqueness of the Harder-Narasimhan filtration $\mathcal{L}\oplus\mathcal{F}$ cannot be realized as a point in $\mathscr{A}$ via a sequence of the form $0 \to \mathcal{L}' \to \mathcal{L} \oplus \mathcal{F} \to \mathcal{F}' \to 0$ for some $\mathcal{L}' \in \mathscr{B}un_1^1(\mathbf{K})$ and $\mathcal{F}' \in \mathscr{B}un_2^1(\mathbf{K})$. If $\mathcal{L} \oplus \mathcal{F}$ fits into a sequence $0 \to \mathcal{F}' \to \mathcal{L} \oplus \mathcal{F} \to \mathcal{L}' \to 0$, then we consider the composition $\mathcal{L} \hookrightarrow \mathcal{L} \oplus \mathcal{F} \twoheadrightarrow \mathcal{L}'$.

If it is non-zero, then $\mathcal{L} \cong \mathcal{L}'$ and hence $\mathcal{F}' \cong \mathcal{F}$. This sequence does not realize $\mathcal{L} \oplus \mathcal{F}$ as a point in $\mathscr{A}$. If it is zero, then it factors through $\mathcal{L} \hookrightarrow \mathcal{F}'$ and hence $\mathcal{F}' \cong \mathcal{L} \oplus \mathcal{L}_0$ for some degree $0$ sub-line bundle $\mathcal{L}_0 \subseteq\mathcal{F}$, a contradiction. This shows that $\mathcal{L} \oplus \mathcal{F} \notin \mathscr{A}(\mathbf{K})$.
\end{proof}
\begin{cor}\label{d}
Suppose $\mathcal{E} \in \mathscr{B}un_3^{2,\leq \lambda}(\mathbf{K})$ for some field $\mathbf{K}/k$ fits into a short exact sequence 
$0 \to \mathcal{L} \to \mathcal{E} \to \mathcal{F} \to 0$ or $0 \to \mathcal{F} \to \mathcal{E} \to \mathcal{L} \to 0$
for some $\mathcal{L} \in \mathscr{B}un_1^1(\mathbf{K})$ and $\mathcal{F} \in \mathscr{B}un_2^1(\mathbf{K})$.
Then $\mathcal{E} \in \mathscr{U}(\mathbf{K})$ if and only if $\mathcal{L} \oplus \mathcal{F} \in \mathscr{U}(\mathbf{K})$, if and only if $\mathcal{F}$ is $(1,0)$-stable.
\end{cor}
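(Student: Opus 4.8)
The plan is to prove the cycle of implications
\[
\mathcal{E} \in \mathscr{U}(\mathbf{K}) \;\Longrightarrow\; \mathcal{F} \text{ is } (1,0)\text{-stable} \;\Longrightarrow\; \mathcal{L} \oplus \mathcal{F} \in \mathscr{U}(\mathbf{K}) \;\Longrightarrow\; \mathcal{E} \in \mathscr{U}(\mathbf{K}),
\]
which closes the loop and delivers all three equivalences simultaneously. The middle implication is exactly Lemma \ref{dec}, so it can be invoked verbatim. The first implication is immediate from the construction of $\mathscr{U}$: since $\mathscr{U} = \mathscr{B}un_3^{2,\leq\lambda} - \mathscr{A}$, and the given short exact sequence exhibits $\mathcal{E}$ in one of the two shapes appearing in the description of $\mathscr{A}$, with rank-one piece $\mathcal{L} \in \mathscr{B}un_1^1$ and rank-two piece $\mathcal{F} \in \mathscr{B}un_2^1$, were $\mathcal{F}$ not $(1,0)$-stable this very sequence would place $\mathcal{E}$ in $\mathscr{A}$, contradicting $\mathcal{E} \in \mathscr{U}$.

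The substance lies in the last implication, for which I would argue by degeneration. First I would observe that the given sequence realizes a two-step filtration of $\mathcal{E}$ \emph{by subbundles}: in the shape $0 \to \mathcal{L} \to \mathcal{E} \to \mathcal{F} \to 0$ the quotient $\mathcal{F}$ is locally free, and in the shape $0 \to \mathcal{F} \to \mathcal{E} \to \mathcal{L} \to 0$ the quotient $\mathcal{L}$ is a line bundle, so in either case the subsheaf is saturated. Applying the Rees construction to this filtration (as in Lemma \ref{1106}) produces a $\mathbf{G}_m$-equivariant family over $\mathbf{A}^1$ whose fibre over each $t \neq 0$ is isomorphic to $\mathcal{E}$ and whose fibre over $0$ is the associated graded $\mathrm{gr} \cong \mathcal{L} \oplus \mathcal{F}$; as all fibres are vector bundles of rank $3$ and degree $2$, this is a morphism $\mathbf{A}^1 \to \mathscr{B}un_3^2$. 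The preimage of the open substack $\mathscr{U}$ is then an open, $\mathbf{G}_m$-invariant subscheme $S \subseteq \mathbf{A}^1$. If $\mathcal{L} \oplus \mathcal{F} \in \mathscr{U}$ then $0 \in S$; but a nonempty open subset of $\mathbf{A}^1$ containing the origin contains all but finitely many points, in particular some $t \neq 0$, whose fibre is $\mathcal{E}$, and hence $\mathcal{E} \in \mathscr{U}$.

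The main obstacle I anticipate is precisely this converse direction, and it is delicate for a reason the degeneration argument neatly circumvents. A priori $\mathcal{E}$ could fail to lie in $\mathscr{U}$ by being witnessed in $\mathscr{A}$ through some \emph{other} presentation $0 \to \mathcal{L}' \to \mathcal{E} \to \mathcal{F}' \to 0$ (or its dual) with a non-$(1,0)$-stable $\mathcal{F}' \ncong \mathcal{F}$, unrelated to the given one. A direct attack would have to exclude every competing presentation by hand, comparing the various sub-line bundles of $\mathcal{E}$ against the $(1,0)$-stable piece $\mathcal{F}$ by means of Lemma \ref{no} and saturation of subsheaves — a workable but case-heavy analysis spread over the two shapes of sequence. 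The degeneration argument avoids all of this: it uses only the openness of $\mathscr{U}$ in $\mathscr{B}un_3^2$ together with the $\mathbf{G}_m$-invariance of the Rees family, so that membership of the single associated-graded point $\mathcal{L} \oplus \mathcal{F}$ propagates to the generic point $\mathcal{E}$ without ever inspecting alternative filtrations. I therefore expect the proof to be short, the only points requiring care being the verification that the filtration is by subbundles (so the Rees family genuinely lands in $\mathscr{B}un_3^2$) and the elementary fact about nonempty opens in $\mathbf{A}^1$ through the origin.
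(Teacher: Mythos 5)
Your proposal is correct and follows essentially the same route as the paper: the paper also deduces the implication from $(1,0)$-stability of $\mathcal{F}$ (equivalently $\mathcal{L}\oplus\mathcal{F}\in\mathscr{U}$, via Lemma \ref{dec}) to $\mathcal{E}\in\mathscr{U}$ from openness of $\mathscr{U}$ together with the specialization of $\mathcal{E}$ to its associated graded $\mathcal{L}\oplus\mathcal{F}$, which is exactly your Rees-construction argument spelled out, and it handles the other direction by the contrapositive reading of the definition of $\mathscr{A}$. Your organization as a cycle of three implications rather than two ``if/only if'' halves is a cosmetic difference only.
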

\begin{proof}
\textsc{If Part}: This is clear, since $\mathscr{U}$ is open. \textsc{Only If Part}: If $\mathcal{L} \oplus \mathcal{F} \notin \mathscr{U}(\mathbf{K})$, then $\mathcal{F}$ is not $(1,0)$-stable by Lemma \ref{dec}. This shows that $\mathcal{E} \notin \mathscr{U}(\mathbf{K})$.
\end{proof}
The main result of this section is the following.
\begin{thm}\label{rank-3-main}
The open dense substack $\mathscr{U} \subseteq \mathscr{B}un_3^{2}$ admits a separated non-proper good moduli space. Furthermore, it is not contained in $\mathscr{B}un_3^{2,ss}$.
\end{thm}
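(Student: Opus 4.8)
The plan is to obtain the separated good moduli space from the criterion of Proposition \ref{lem:localreductive} and then to establish non-properness from the valuative part of Theorem \ref{thm0}. Since bundles with Harder--Narasimhan polygon $\le\lambda$ form a bounded family, $\mathscr{B}un_3^{2,\le\lambda}$ is quasi-compact and open in the smooth connected (hence irreducible) stack $\mathscr{B}un_3^2$, so $\mathscr{U}$ is a quasi-compact open substack; being non-empty by Lemma \ref{dec}, it is automatically dense, and the unstable objects $\mathcal{L}\oplus\mathcal{F}$ furnished by Lemma \ref{dec} show $\mathscr{U}\nsubseteq\mathscr{B}un_3^{2,ss}$. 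It thus remains to verify that $\mathscr{U}$ is locally reductive, $\Theta$-reductive and S-complete, and to exhibit one failure of the valuative criterion.

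The engine for the two reductivity conditions is that, by Lemma \ref{no}, every subsheaf of rank $\le 2$ has degree $\le 1$, so (as in the proof of Lemma \ref{dec}) no sum of three line bundles lies in $\mathscr{B}un_3^{2,\le\lambda}$, and by Lemma \ref{dec} every decomposable object of $\mathscr{U}$ is an $\mathcal{L}\oplus\mathcal{F}$ with $\mathcal{L}\in\mathscr{B}un_1^1$ and $\mathcal{F}\in\mathscr{B}un_2^1$ that is $(1,0)$-stable. For $\Theta$-reductivity I would apply Proposition \ref{thetaforopen1}: given $\mathcal{E}_R\in\mathscr{U}(R)$ and a filtration of $\mathcal{E}_K$ with $\mathrm{gr}(\mathcal{E}_K^\bullet)\in\mathscr{U}(K)$, the graded must be such an $\mathcal{L}_K\oplus\mathcal{F}_K$ in degrees $(1,1)$, hence the filtration is two-step and its saturated extension yields a special-fibre sequence $0\to\mathcal{L}_\kappa\to\mathcal{E}_\kappa\to\mathcal{F}_\kappa\to 0$ or $0\to\mathcal{F}_\kappa\to\mathcal{E}_\kappa\to\mathcal{L}_\kappa\to 0$ with pieces of degree $1$ that are subbundles (flatness preserves degree, Lemma \ref{no} gives saturation). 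As $\mathcal{E}_\kappa\in\mathscr{U}$, Corollary \ref{d} forces $\mathcal{F}_\kappa$ to be $(1,0)$-stable, so $\mathrm{gr}(\mathcal{E}_\kappa^\bullet)\in\mathscr{U}(\kappa)$ by Lemma \ref{dec}. For S-completeness I would use Proposition \ref{prop:S-comple}: two non-isomorphic points of $\mathscr{U}$ with opposite filtrations share a decomposable graded, and a degree count via Lemma \ref{no} applied to both $\mathcal{E}$ and $\mathcal{E}'$ again pins the two-step case to $\mathcal{L}\oplus\mathcal{F}$ in degrees $(1,1)$, handled as above by Corollary \ref{d}. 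The one a priori dangerous configuration is a three-step opposite filtration; the same bookkeeping confines it to degrees $(1,0,1)$, and I would eliminate it by observing that the rank-$2$ quotient $\mathcal{E}/\mathcal{L}$ then acquires a degree-$0$ sub-line-bundle and so fails $(1,0)$-stability, contradicting $\mathcal{E}\in\mathscr{U}$ through Corollary \ref{d}.

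For local reductivity I would determine the closed points of $\mathscr{U}$. Since the slope $2/3$ is not realised by any proper subbundle, every semistable rank-$3$ degree-$2$ bundle is stable, so the closed points are the stable bundles (automorphism group $\mathbf{G}_m$) together with the unstable polystable objects $\mathcal{L}\oplus\mathcal{F}$; for the latter the vanishing $\mathrm{Hom}(\mathcal{L},\mathcal{F})=\mathrm{Hom}(\mathcal{F},\mathcal{L})=0$, forced by $\mu(\mathcal{L})>\mu(\mathcal{F})$ and $(1,0)$-stability of $\mathcal{F}$, gives automorphism group $\mathbf{G}_m^2$. As these stabilizers are tori, hence linearly reductive in every characteristic, the local structure theorem \cite[Theorem 1.1]{MR4088350} supplies étale quotient presentations, and every point specializes to a closed point by quasi-compactness. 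Thus $\mathscr{U}$ is locally reductive, and Proposition \ref{lem:localreductive} produces the separated good (adequate) moduli space $U$.

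Finally, for non-properness I would violate the existence part of the valuative criterion and invoke Theorem \ref{thm0}. Choose a DVR $R$ and a family $\mathcal{F}_R\in\mathscr{B}un_2^1(R)$ with $\mathcal{F}_K$ $(1,0)$-stable and $\mathcal{F}_\kappa$ stable but \emph{not} $(1,0)$-stable; this is possible because the $(1,0)$-stable locus $\mathscr{B}un_2^{1,s}(1,0)$ is dense open with non-empty complement inside the proper moduli space $M_2^1$ of stable rank-$2$ degree-$1$ bundles. Fixing a constant degree-$1$ line bundle $\mathcal{L}$, the point $\mathcal{L}\oplus\mathcal{F}_K\in\mathscr{U}(K)$ admits no extension to $\mathscr{U}(R')$ over any DVR extension: any such $\mathcal{H}_{R'}$, upon saturating its (unique) degree-$1$ Harder--Narasimhan sub-line-bundle $\mathcal{L}_{R'}$, would yield a rank-$2$ degree-$1$ family $\mathcal{H}_{R'}/\mathcal{L}_{R'}$ with generic fibre $\mathcal{F}_{K'}$ but $(1,0)$-stable special fibre by Corollary \ref{d}, whereas $\mathcal{F}_{R'}$ has the same generic fibre and a stable, non-$(1,0)$-stable special fibre; this contradicts the separatedness of $M_2^1$. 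Hence $U$ is non-proper. The most delicate steps are the filtration bookkeeping that discards the three-step case in S-completeness and the comparison with $M_2^1$ in the last step; the remainder is a direct application of the machinery of §\ref{existence}.
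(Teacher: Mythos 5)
Your proposal is correct and follows essentially the same route as the paper: the same four verifications (local reductivity via the closed points $\mathcal{L}\oplus\mathcal{F}$ with torus stabilizers; $\Theta$-reductivity and S-completeness via Proposition \ref{thetaforopen1}, Proposition \ref{prop:S-comple} and Corollary \ref{d}, including ruling out length-three opposite filtrations by a degree count; and failure of the existence criterion by producing two non-isomorphic stable rank-$2$ limits of $\mathcal{F}_K$ and invoking Langton/separatedness of the stable rank-$2$ moduli), with only cosmetic variations such as using the quotient $\mathcal{E}/\mathcal{L}$ rather than the sub to kill the three-step filtration and taking the split point $\mathcal{L}\oplus\mathcal{F}_K$ as the non-extendable $K$-point. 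The one small omission is the final upgrade from the separated \emph{adequate} moduli space delivered by Theorem \ref{thm0} to a \emph{good} moduli space in positive characteristic, which the paper obtains from the linear reductivity of the closed-point stabilizers via \cite[Corollary 6.11]{Alper2019-1} --- an ingredient your local-reductivity step already supplies.
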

\subsection{Proof of Theorem $\ref{rank-3-main}$}
To show that $\mathscr{U}$ admits a separated non-proper adequate moduli space, we check the conditions of Theorem \ref{thm0}.
\begin{lem}\label{local-theta}
The open substack $\mathscr{U}$ is locally reductive.
\end{lem}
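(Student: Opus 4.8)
The plan is to verify the three requirements in the definition of local reductivity (\cite[Definition 2.5]{MR4665776}) directly, and to produce the étale quotient presentations at closed points via the local structure theorem \cite[Theorem 1.1]{MR4088350}. First I would record the standing hypotheses: $\mathscr{U}$ is quasi-separated, of finite type, and has affine stabilizers. These are inherited from $\mathscr{B}un_3^2$, which has affine diagonal and is locally of finite type, together with the boundedness of $\mathscr{B}un_3^{2,\leq \lambda}$; the stabilizer of a point $\mathcal{E}$ is the unit group $\mathrm{GL}_1(\mathrm{End}(\mathcal{E}))$ of a finite-dimensional algebra, which is affine.

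The first substantive step is to identify the closed points of $\mathscr{U}$ and to check that every point specializes to one of them. By the constraint imposed by the polygon $\lambda$, every \emph{unstable} bundle $\mathcal{E} \in \mathscr{U}$ has Harder--Narasimhan filtration of the form $0 \to \mathcal{L} \to \mathcal{E} \to \mathcal{F} \to 0$ with $\mathcal{L} \in \mathscr{B}un_1^1$ and $\mathcal{F} \in \mathscr{B}un_2^1$ (any other destabilizing type would force the polygon strictly above $\lambda$), and $\mathcal{F}$ is $(1,0)$-stable because $\mathcal{E} \in \mathscr{U}$. The associated graded $\mathcal{L} \oplus \mathcal{F}$ of this filtration lies in $\mathscr{U}$ by Corollary \ref{d}, so $\mathcal{E}$ specializes to $\mathcal{L} \oplus \mathcal{F}$, which cannot degenerate further since $\mathcal{F}$ is stable. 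As $\mathrm{rk}$ and $\deg$ are coprime there are no strictly semistable bundles, so every semistable point of $\mathscr{U}$ is stable, hence closed. Thus the closed points of $\mathscr{U}$ are precisely the stable bundles and the split unstable bundles $\mathcal{L} \oplus \mathcal{F}$ with $\mathcal{F}$ being $(1,0)$-stable, and every point of $\mathscr{U}$ specializes to one.

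Next I would compute the stabilizers at these closed points. A stable bundle has $\mathrm{Aut} = \mathbf{G}_m$. For a split point $\mathcal{L} \oplus \mathcal{F}$, I would show that $\mathrm{Hom}(\mathcal{L},\mathcal{F}) = \mathrm{Hom}(\mathcal{F},\mathcal{L}) = 0$: a non-zero morphism in either direction would produce a sub-line bundle of $\mathcal{F}$ of degree $\geq 0$, contradicting the $(1,0)$-stability of $\mathcal{F}$, all of whose sub-line bundles have negative degree. Hence $\mathrm{End}(\mathcal{L} \oplus \mathcal{F}) = \mathbf{K} \times \mathbf{K}$ and $\mathrm{Aut}(\mathcal{L} \oplus \mathcal{F}) = \mathbf{G}_m \times \mathbf{G}_m$. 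In either case the stabilizer is a torus, hence linearly reductive in every characteristic.

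Finally I would apply the local structure theorem \cite[Theorem 1.1]{MR4088350} at each closed point (a $k$-point, since $k$ is algebraically closed and $\mathscr{U}$ is of finite type) to obtain an étale quotient presentation $[\mathrm{Spec}(A)/\mathrm{Aut}(\mathcal{E})] \to \mathscr{U}$ inducing an isomorphism on stabilizers; by Remark \ref{rmk:G-GLn} this qualifies as an étale quotient presentation in the required sense, as $\mathbf{G}_m$ and $\mathbf{G}_m \times \mathbf{G}_m$ are reductive. Combined with the specialization property, this shows $\mathscr{U}$ is locally reductive. I expect the main obstacle to be the second and third steps: pinning down exactly which bundles are closed points --- in particular that unstable bundles degenerate \emph{into} $\mathscr{U}$ rather than out of it, where Corollary \ref{d} is essential --- and checking that the $(1,0)$-stability hypothesis forces the off-diagonal $\mathrm{Hom}$'s to vanish, so that the automorphism groups remain (linearly) reductive tori.
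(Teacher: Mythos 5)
Your proof is correct and takes essentially the same route as the paper: the paper reduces everything to checking that closed points of $\mathscr{U}$ have linearly reductive stabilizers by citing \cite[Remark 2.6]{MR4665776}, identifies those closed points as the stable bundles and the split bundles $\mathcal{L}\oplus\mathcal{F}$ with $\mathcal{F}$ being $(1,0)$-stable (via Corollary \ref{d} and Lemma \ref{dec}), and computes the stabilizers to be $\mathbf{G}_m$ and $\mathbf{G}_m^2$ exactly as you do. Your write-up simply unpacks the cited remark by verifying the specialization condition and producing the \'{e}tale quotient presentations from \cite[Theorem 1.1]{MR4088350} explicitly.
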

\begin{proof}
By \cite[Remark 2.6]{MR4665776}, it suffices to show closed points of $\mathscr{U}$ have linearly reductive stabilizers. If $\mathcal{E} \in \mathscr{U}(\mathbf{K})$ is a closed point, then 
\begin{itemize}
\item 
If $\mathcal{E}$ is stable, then its stabilizer is $\mathbf{G}_m$.
\item 
If $\mathcal{E}$ is unstable, then it must be decomposable by Corollary \ref{d} (as it is closed), i.e., of the form $\mathcal{L} \oplus \mathcal{F}$ such that $\mathcal{L} \in \mathscr{B}un_1^1(\mathbf{K})$ and $\mathcal{F} \in \mathscr{B}un_2^1(\mathbf{K})$ is $(1,0)$-stable by Lemma \ref{dec}. In this case the stabilizer is $\mathbf{G}_m^2$ since there is no morphism between $\mathcal{L}$ and $\mathcal{F}$.
\end{itemize}
In either case, the stabilizer of $\mathcal{E}$ is linearly reductive.
\end{proof}
To verify $\Theta$-reductivity we need to understand the decomposable vector bundles in $\mathscr{U}$. They have been determined in Lemma \ref{dec}.
\begin{lem}\label{theta}
The open substack $\mathscr{U}$ is $\Theta$-reductive.
\end{lem}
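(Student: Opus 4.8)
The plan is to apply the criterion of Proposition \ref{thetaforopen1}, viewing $\mathscr{U}$ as an open substack of $\mathscr{C}oh_3^2$. So I would fix a DVR $R$ over $k$ with fraction field $K$ and residue field $\kappa$, a family $\mathcal{E}_R \in \mathscr{U}(R)$, and a filtration $\mathcal{E}_K^\bullet$ of the generic fiber with $\mathrm{gr}(\mathcal{E}_K^\bullet) \in \mathscr{U}(K)$, and show that the induced filtration $\mathcal{E}_\kappa^\bullet$ of the special fiber satisfies $\mathrm{gr}(\mathcal{E}_\kappa^\bullet) \in \mathscr{U}(\kappa)$. Since the graded pieces of $\mathcal{E}_K^\bullet$ are vector bundles of total rank $3$, a non-trivial filtration has graded pieces of ranks $(1,2)$, $(2,1)$, or $(1,1,1)$. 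The three-step case can be discarded immediately: its associated graded would be a direct sum of three line bundles of degrees $a_1\geq a_2\geq a_3$ with $a_1+a_2+a_3=2$, and then Lemma \ref{no} applied to the rank-$2$ subsheaf spanned by the top two pieces gives $a_1+a_2=2-a_3\leq 1$, i.e. $a_3\geq 1$, while $a_1\leq 1$; hence all $a_i=1$ with sum $3\neq 2$, a contradiction. Thus $\mathscr{B}un_3^{2,\leq\lambda}$, and a fortiori $\mathscr{U}$, contains no such bundle, so the hypothesis $\mathrm{gr}(\mathcal{E}_K^\bullet)\in\mathscr{U}(K)$ is vacuous in this case.

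This leaves the two-step cases, say $0\subseteq \mathcal{E}_K^1\subseteq \mathcal{E}_K$ with graded pieces a line bundle $\mathcal{L}_K$ and a rank-$2$ bundle $\mathcal{G}_K$ (in one of the two orders). Applying Lemma \ref{no} to $\mathrm{gr}(\mathcal{E}_K^\bullet)=\mathcal{L}_K\oplus\mathcal{G}_K\in\mathscr{B}un_3^{2,\leq\lambda}$ bounds $\deg\mathcal{L}_K\leq 1$ and $\deg\mathcal{G}_K\leq 1$; as these sum to $2$, both equal $1$. Next I would extend the filtration to $\mathcal{E}_R$ by intersection, $\mathcal{E}_R^i:=\mathcal{E}_K^i\cap\mathcal{E}_R$, exactly as in the proof that $\mathscr{C}oh_3^2$ is $\Theta$-reductive, so that each $\mathcal{E}_R^i/\mathcal{E}_R^{i-1}$ is $R$-flat. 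The crucial point is that the induced graded pieces of the special fiber are torsion-free. Indeed, $\mathcal{E}_\kappa^1$ injects into the vector bundle $\mathcal{E}_\kappa$ (by $R$-flatness of $\mathcal{E}_R/\mathcal{E}_R^1$), hence is torsion-free; and if the quotient $\mathcal{E}_\kappa/\mathcal{E}_\kappa^1$ carried torsion $T$ of length $\ell>0$, its preimage in $\mathcal{E}_\kappa$ would be a subsheaf of the same rank as $\mathcal{E}_\kappa^1$ but of degree $1+\ell$, contradicting the bound of Lemma \ref{no} (degree $\leq 1$ for subsheaves of rank $\leq 2$). Therefore $\mathrm{gr}(\mathcal{E}_\kappa^\bullet)$ is a genuine vector bundle $\mathcal{L}'\oplus\mathcal{G}'$ with $\mathcal{L}'\in\mathscr{B}un_1^1(\kappa)$ and $\mathcal{G}'\in\mathscr{B}un_2^1(\kappa)$, matching the numerical type of the generic graded by flatness.

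To finish, I would observe that $\mathcal{E}_\kappa\in\mathscr{U}(\kappa)$ now sits in a short exact sequence $0\to\mathcal{L}'\to\mathcal{E}_\kappa\to\mathcal{G}'\to 0$ or $0\to\mathcal{G}'\to\mathcal{E}_\kappa\to\mathcal{L}'\to 0$, with $\mathcal{L}'$ of degree $1$ and $\mathcal{G}'$ of rank $2$ and degree $1$. Corollary \ref{d} then applies directly: since $\mathcal{E}_\kappa\in\mathscr{U}$, it gives $\mathrm{gr}(\mathcal{E}_\kappa^\bullet)=\mathcal{L}'\oplus\mathcal{G}'\in\mathscr{U}(\kappa)$, which is exactly what Proposition \ref{thetaforopen1} requires. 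I expect the main obstacle to be the torsion-freeness step: a priori the induced filtration on the special fiber can acquire torsion and leave $\mathscr{B}un_3^2$ altogether, and it is precisely the Harder-Narasimhan constraint encoded in $\lambda$ (via Lemma \ref{no}) that rules this out. The concluding application of Corollary \ref{d} is then essentially the $(1,0)$-stability bookkeeping already built into the definition of $\mathscr{U}$.
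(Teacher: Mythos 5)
Your proof is correct and follows essentially the same route as the paper's: both apply Proposition \ref{thetaforopen1}, use Lemma \ref{no} to pin down the shape of the filtration and to show that the induced filtration on the special fiber is by subbundles, and conclude with Corollary \ref{d}. The only cosmetic difference is that you re-derive the classification of the possible associated gradeds (ruling out three-step filtrations and fixing the degrees) directly from Lemma \ref{no}, where the paper simply cites Lemma \ref{dec}.
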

\begin{proof}
Using Proposition \ref{thetaforopen1}. Let $R$ be a DVR over $k$ with fraction field $K$ and residue field $\kappa$. For any family $\mathcal{E}_R \in \mathscr{U}(R)$ and any (non-trivial) filtration $\mathcal{E}^\bullet_K$ of $\mathcal{E}_K$ with $\mathrm{gr}(\mathcal{E}^\bullet_K) \in \mathscr{U}(K)$, by Lemma \ref{dec} $\mathrm{gr}(\mathcal{E}^\bullet_K)=\mathcal{L}_K \oplus \mathcal{F}_K \in \mathscr{U}(K)$ for some $\mathcal{L}_K \in \mathscr{B}un_1^1(K)$ and $\mathcal{F}_K \in \mathscr{B}un_2^{1}(K)$. Then the filtration $\mathcal{E}_K^\bullet$ is of the form
\[
0 \to \mathcal{L}_K \to \mathcal{E}_K \to \mathcal{F}_K \to 0 \text{ or } 0 \to \mathcal{F}_K \to \mathcal{E}_K \to \mathcal{L}_K \to 0.
\]
In the first case, the uniquely induced filtration $\mathcal{E}_\kappa^\bullet$ of $\mathcal{E}_\kappa$ is of the form
\[
0 \to \mathcal{L}_\kappa \to \mathcal{E}_\kappa \to \mathcal{F}_\kappa \to 0
\]
and we need to show $\mathrm{gr}(\mathcal{E}^\bullet_\kappa)=\mathcal{L}_\kappa \oplus \mathcal{F}_\kappa \in \mathscr{U}(\kappa)$. Note that $\mathcal{L}_\kappa \in \mathscr{B}un_1^1(\kappa)$ and it is a subbundle of $\mathcal{E}_\kappa$ by Lemma \ref{no}, i.e., $\mathcal{F}_\kappa \in \mathscr{B}un_2^1(\kappa)$. Since $\mathcal{E}_\kappa \in \mathscr{U}(\kappa)$, we are done by Corollary \ref{d}. 
The second case is similar.
\end{proof}
To verify S-completeness we need to know points in $\mathscr{U}$ with opposite filtrations, which can be completely determined as follows.
\begin{lem}\label{fil}
Let $\mathcal{E} \ncong \mathcal{E}' \in \mathscr{U}(\mathbf{K})$ for some field $\mathbf{K}/k$ be two points with opposite filtrations $\mathcal{E}^\bullet,\mathcal{E}'_\bullet$. Assume that $\mathcal{E}$ is unstable with Harder-Narasimhan filtration $0 \to \mathcal{L} \to \mathcal{E} \to \mathcal{F} \to 0$. Then the underlying unweighted filtrations of $\mathcal{E}^\bullet,\mathcal{E}'_\bullet$ take one of the following forms:
\begin{enumerate}
\item
$\mathcal{E}^\bullet:0 \subseteq \mathcal{L} \subseteq \mathcal{E}$ and $\mathcal{E}'_\bullet: \mathcal{E}' \supseteq \mathcal{F} \supseteq 0$.
\item
$\mathcal{E}^\bullet: 0 \subseteq \mathcal{F} \subseteq \mathcal{E}$ and $\mathcal{E}'_\bullet: \mathcal{E}' \supseteq \mathcal{L} \supseteq 0$.
\end{enumerate}
\end{lem}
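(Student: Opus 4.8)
The plan is to classify the opposite filtrations by the \emph{ranks} of their graded pieces, with the degree bound of Lemma \ref{no} as the main tool. First I would record the key consequence of $\mathcal{E} \in \mathscr{U}$: since $\mathcal{E}$ is unstable, its Harder--Narasimhan quotient $\mathcal{F}$ is $(1,0)$-stable, so every sub-line bundle of $\mathcal{F}$ has negative degree. From this I would deduce a rigidity statement that I expect to use repeatedly: the only rank-$1$ subsheaf of $\mathcal{E}$ of degree $1$ is $\mathcal{L}$ itself. Indeed, if $\mathcal{M} \subseteq \mathcal{E}$ is a sub-line bundle of degree $1$, the composite $\mathcal{M} \to \mathcal{E} \to \mathcal{F}$ must vanish, since otherwise it is injective and exhibits a degree-$1$ sub-line bundle of $\mathcal{F}$, contradicting $(1,0)$-stability; hence $\mathcal{M} \subseteq \mathcal{L}$, and $\mathcal{M}=\mathcal{L}$ by equality of ranks and degrees.

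Next I would exploit that the two filtrations share their associated graded (Lemma \ref{2235}(i)): writing the graded pieces of $\mathcal{E}^\bullet$ from bottom to top as $A_1,\dots,A_m$ with $\sum \mathrm{rk}(A_i)=3$ and $\sum \deg(A_i)=2$, the opposite filtration $\mathcal{E}'_\bullet$ realizes the same pieces in the reverse order, so the bottom subsheaf of $\mathcal{E}'$ is isomorphic to the top piece $A_m$. The crucial point is that Lemma \ref{no} applies to subsheaves of \emph{both} $\mathcal{E}$ and $\mathcal{E}'$: in each rank pattern the proper nonzero subsheaves occurring in $\mathcal{E}^\bullet$ and in the reversed filtration $\mathcal{E}'_\bullet$ all have rank $\le 2$, so each of their degrees is bounded by $1$. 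Running through the possible rank patterns $(1,2)$, $(2,1)$ and $(1,1,1)$, these inequalities together with $\sum \deg(A_i)=2$ pin down the degrees: in the two-step patterns they force each graded piece to have degree $1$, while in the three-step pattern they force the degree vector $(1,0,1)$.

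Finally I would convert the numerology into the two claimed shapes. In the $(1,2)$ pattern the bottom subsheaf $A_1$ has rank $1$ and degree $1$, hence is a subbundle by Lemma \ref{no}, equal to $\mathcal{L}$ by the rigidity above, with quotient $\mathcal{E}/\mathcal{L}=\mathcal{F}$; reading off $\mathcal{E}'$ then gives form (i). In the $(2,1)$ pattern $A_1$ is a rank-$2$ degree-$1$ subbundle $\mathcal{G}$ with line-bundle quotient, and comparing the inclusion $\mathcal{L}\hookrightarrow\mathcal{E}$ with the projection $\mathcal{E}\twoheadrightarrow\mathcal{E}/\mathcal{G}$ shows the composite cannot vanish, since a zero composite would place a degree-$0$ sub-line bundle inside $\mathcal{F}$; hence $\mathcal{E}\cong\mathcal{L}\oplus\mathcal{G}$ with $\mathcal{G}\cong\mathcal{F}$, giving form (ii). The three-step pattern is excluded by the same mechanism: the degrees $(1,0,1)$ make the rank-$2$ subbundle $\mathcal{E}^{a+1}$ of degree $1$ contain $\mathcal{L}$ with a degree-$0$ rank-$1$ quotient $\mathcal{E}^{a+1}/\mathcal{L}\subseteq\mathcal{F}$, once more contradicting the $(1,0)$-stability of $\mathcal{F}$. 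The main obstacle is exactly this bookkeeping: one must invoke the degree bound of Lemma \ref{no} simultaneously on $\mathcal{E}$ and on $\mathcal{E}'$ through the reversed filtration, so that the three-step case is eliminated and no exotic, possibly torsion-bearing, graded piece can survive.
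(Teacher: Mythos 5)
Your proposal is correct and follows essentially the same route as the paper's proof: apply the degree bound of Lemma \ref{no} to the partial sums of \emph{both} filtrations, use that complementary subsheaves' degrees sum to $\deg(\mathcal{E})=2$ to force every intermediate subsheaf to have degree exactly $1$ (which is also what kills any torsion graded piece and so justifies your list of rank patterns), and then eliminate the three-step case and identify the two-step cases via the $(1,0)$-stability coming from membership in $\mathscr{U}$. The paper phrases the final identifications slightly differently --- it invokes Corollary \ref{d} to get $(1,0)$-stability of the rank-$2$ subbundle and uses uniqueness of the Harder--Narasimhan filtration where you use your ``rigidity'' observation and the nonvanishing of $\mathcal{L}\to\mathcal{E}/\mathcal{G}$ --- but these are cosmetic variants of the same argument.
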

\begin{proof}
Let $a \in \mathbf{Z}$ be the minimal integer such that $\mathcal{E}^a=\mathcal{E}$, which is also the minimal integer such that $\mathcal{E}'_{a+1}=0$. Let $b \in \mathbf{Z}$ be the maximal integer such that $\mathcal{E}'_b=\mathcal{E}'$, which is also the maximal integer such that $\mathcal{E}^{b-1}=0$. Assume $\mathcal{E}^{i-1} \neq \mathcal{E}^{i}$ for $a \geq i \geq b$. By definition we have $\mathrm{rk}(\mathcal{E}/\mathcal{E}^{a-1})=\mathrm{rk}(\mathcal{E}'_a) \geq 1$ and $\mathrm{rk}(\mathcal{E}'/\mathcal{E}'_{b+1})=\mathrm{rk}(\mathcal{E}^b) \geq 1$. Thus,
\[
2 \geq \mathrm{rk}(\mathcal{E}^{a-1}) \geq \cdots \geq \mathrm{rk}(\mathcal{E}^b) \geq 1 \text{ and } 2 \geq \mathrm{rk}(\mathcal{E}'_{b+1}) \geq \cdots \geq \mathrm{rk}(\mathcal{E}'_a) \geq 1.
\]
Applying Lemma \ref{no} to $\mathcal{E}$ and $\mathcal{E}'$ yields
\[
\deg(\mathcal{E}^i) \leq 1 \text{ and } \deg(\mathcal{E}'_{i+1}) \leq 1 \text{ for } a-1 \geq i \geq b.
\]
However, the isomorphism $\mathcal{E}^i/\mathcal{E}^{i-1} \cong \mathcal{E}'_i/\mathcal{E}'_{i+1}$ implies
\[
\deg(\mathcal{E}^i)+\deg(\mathcal{E}'_{i+1})=2 \text{ for } a-1 \geq i \geq b.
\]
Then we obtain $\deg(\mathcal{E}^i)=\deg(\mathcal{E}'_{i+1})=1$. By Lemma \ref{no} this means each $\mathcal{E}^i$ (resp., $\mathcal{E}'_{i+1}$) is a subbundle of $\mathcal{E}$ (resp., $\mathcal{E}'$) and thus
\[
2 \geq \mathrm{rk}(\mathcal{E}^{a-1}) > \cdots > \mathrm{rk}(\mathcal{E}^b) \geq 1 \text{ and } 2 \geq \mathrm{rk}(\mathcal{E}'_{b+1}) > \cdots > \mathrm{rk}(\mathcal{E}'_a) \geq 1,
\]
In particular, $2 \geq a-b \geq 1$. But $a-b=2$ is impossible as in any filtration $\mathcal{E}^\bullet: 0 \subseteq \mathcal{E}^b \subseteq \mathcal{E}^{a-1} \subseteq \mathcal{E}$ of length $3$ we have $\mathcal{E}^{a-1}$ is $(1,0)$-stable by Corollary \ref{d}. This is a contradiction since $\mathcal{E}^b \subseteq\mathcal{E}^{a-1}$ is a degree $1$ sub-line bundle. Then $a-b=1$, i.e.,
\[
\mathcal{E}^\bullet: 0 \subseteq \mathcal{E}^b \subseteq \mathcal{E} \text{ and } \mathcal{E}'_\bullet: \mathcal{E}' \supseteq \mathcal{E}'_{b+1} \supseteq 0.
\]
There are 2 cases: either $\mathrm{rk}(\mathcal{E}^b)=1$ or $\mathrm{rk}(\mathcal{E}^b)=2$. If $\mathrm{rk}(\mathcal{E}^b)=1$, then we have $\mathcal{E}^b \cong \mathcal{L}$ by the uniqueness of Harder-Narasimhan filtration and $\mathcal{E}'_{b+1} \cong \mathcal{E}/\mathcal{E}^b \cong \mathcal{E}/\mathcal{L}=\mathcal{F}$. This is case (i). If $\mathrm{rk}(\mathcal{E}^b)=2$, then we know $\mathcal{E}^b$ is stable by Corollary \ref{d} and the non-zero composition $\mathcal{E}^b \hookrightarrow \mathcal{E} \twoheadrightarrow \mathcal{F}$ is an isomorphism, splitting the sequence $0 \to \mathcal{L} \to \mathcal{E} \to \mathcal{F} \to 0$. Thus, $\mathcal{E}'_{b+1} \cong \mathcal{E}/\mathcal{E}^b \cong \mathcal{E}/\mathcal{F}=\mathcal{L}$. This is case (ii).
\end{proof}
\begin{cor}\label{S-comp}
The open substack $\mathscr{U}$ is S-complete.
\end{cor}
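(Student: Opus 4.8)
The plan is to verify S-completeness through the modular criterion of Proposition~\ref{prop:S-comple}: it suffices to show that whenever $\mathcal{E} \ncong \mathcal{E}' \in \mathscr{U}(\mathbf{K})$ carry opposite filtrations $\mathcal{E}^\bullet, \mathcal{E}'_\bullet$, the common associated graded bundle $\mathrm{gr}(\mathcal{E}^\bullet) \cong \mathrm{gr}(\mathcal{E}'_\bullet)$ again lies in $\mathscr{U}(\mathbf{K})$. Thus everything reduces to identifying the possible opposite filtrations among objects of $\mathscr{U}$ and reading off their graded pieces.

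First I would dispose of the case in which both $\mathcal{E}$ and $\mathcal{E}'$ are semistable. Since $\gcd(3,2)=1$, semistability coincides with stability in $\mathscr{B}un_3^2$, so both would be simple. But by Lemma~\ref{2235}~(ii) opposite filtrations furnish nonzero maps $h\colon\mathcal{E}\to\mathcal{E}'$ and $h'\colon\mathcal{E}'\to\mathcal{E}$ with $h\circ h'=h'\circ h=0$; as $\mathcal{E},\mathcal{E}'$ are stable of the same slope, $h$ must be an isomorphism, forcing $h'=0$, a contradiction. Hence at least one of the two bundles is unstable, and since the opposite-filtration relation is symmetric I may assume $\mathcal{E}$ is unstable, with Harder--Narasimhan filtration $0\to\mathcal{L}\to\mathcal{E}\to\mathcal{F}\to 0$ (the constraint $\mathscr{U}\subseteq\mathscr{B}un_3^{2,\leq\lambda}$ forcing $\mathcal{L}$ to be a line bundle of degree $1$ and $\mathcal{F}$ a rank-$2$, degree-$1$ bundle).

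Now I invoke the classification of opposite filtrations already obtained in Lemma~\ref{fil}: the underlying filtrations are of the form (i) $0\subseteq\mathcal{L}\subseteq\mathcal{E}$, or (ii) $0\subseteq\mathcal{F}\subseteq\mathcal{E}$. In either case the associated graded is $\mathrm{gr}(\mathcal{E}^\bullet)\cong\mathcal{L}\oplus\mathcal{F}$, and $\mathcal{E}$ fits into a short exact sequence of the shape $0\to\mathcal{L}\to\mathcal{E}\to\mathcal{F}\to 0$ (case (i)) or $0\to\mathcal{F}\to\mathcal{E}\to\mathcal{L}\to 0$ (case (ii)) with $\mathcal{L}\in\mathscr{B}un_1^1$ and $\mathcal{F}\in\mathscr{B}un_2^1$. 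Since $\mathcal{E}\in\mathscr{U}(\mathbf{K})$, Corollary~\ref{d} then yields exactly $\mathcal{L}\oplus\mathcal{F}\in\mathscr{U}(\mathbf{K})$, i.e.\ $\mathrm{gr}(\mathcal{E}^\bullet)\in\mathscr{U}(\mathbf{K})$, completing the verification.

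The only genuinely delicate input is the shape of the opposite filtrations, i.e.\ Lemma~\ref{fil}; once that is in hand the corollary is formal. The point I would be most careful about is confirming that the associated graded of an unstable object of $\mathscr{U}$ is precisely the decomposable bundle $\mathcal{L}\oplus\mathcal{F}$ whose membership in $\mathscr{U}$ is governed by the ``if and only if'' of Corollary~\ref{d} (via Lemma~\ref{dec}); this is exactly what makes $\mathscr{U}$ large enough to be S-complete while still containing the unstable loci that keep it strictly bigger than $\mathscr{B}un_3^{2,ss}$.
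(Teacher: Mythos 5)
Your proof is correct and follows essentially the same route as the paper: reduce via Proposition \ref{prop:S-comple} to checking that the common associated graded of opposite filtrations lies in $\mathscr{U}$, invoke Lemma \ref{fil} to pin down the two possible shapes of those filtrations, and conclude with Corollary \ref{d}. The only addition is your explicit justification (via Lemma \ref{2235}~(ii) and stability) of the reduction to the case where $\mathcal{E}$ is unstable, which the paper leaves implicit; that step is sound.
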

\begin{proof}
Using Proposition \ref{prop:S-comple}. For any field $\mathbf{K}/k$ and any two points $\mathcal{E} \ncong \mathcal{E}' \in \mathscr{U}(\mathbf{K})$ with opposite filtrations $\mathcal{E}^\bullet,\mathcal{E}'_\bullet$, we may assume that $\mathcal{E}$ is unstable with Harder-Narasimhan filtration $0 \to \mathcal{L} \to \mathcal{E} \to \mathcal{F} \to 0$. By Lemma \ref{fil}, the underlying unweighted filtrations of such $\mathcal{E}^\bullet,\mathcal{E}'_\bullet$ take the form
\[
\begin{cases}
\mathcal{E}^\bullet: & 0 \subseteq\mathcal{F} \subseteq\mathcal{E} \\
\mathcal{E}'_\bullet: & 0 \subseteq\mathcal{L} \subseteq\mathcal{E}'
\end{cases}
\text{ or }
\begin{cases}
\mathcal{E}^\bullet: & 0 \subseteq\mathcal{L} \subseteq\mathcal{E} \\
\mathcal{E}'_\bullet: & 0 \subseteq\mathcal{F} \subseteq\mathcal{E}'.
\end{cases}
\]
In either case, $\mathcal{L} \oplus \mathcal{F} \in \mathscr{U}(\mathbf{K})$ by Corollary \ref{d}. This finishes the proof.
\end{proof}
Finally, the open substack $\mathscr{U}$ does not satisfy the existence part of the valuative criterion for properness since there are unextendable families.
\begin{lem}\label{nProper}
There exists a family $\mathcal{E}_R \in \mathscr{B}un_3^{2,ss}(R)$ for some DVR $R$ over $k$ with fraction field $K$ and residue field $\kappa$ such that
\begin{enumerate}
\item
the generic fiber $\mathcal{E}_K$ fits into a short exact sequence
\[
0 \to \mathcal{F}_K \to \mathcal{E}_K \to \mathcal{L}_K \to 0
\]
for some $\mathcal{L}_K \in \mathscr{B}un_1^1(K)$ and $\mathcal{F}_K \in \mathscr{B}un_2^1(K)$ with $\mathcal{F}_K$ being $(1,0)$-stable. In particular, $\mathcal{E}_K \in \mathscr{U}(K)$.
\item
the special fiber $\mathcal{E}_\kappa$ fits into a short exact sequence
\[
0 \to \mathcal{F}_\kappa \to \mathcal{E}_\kappa \to \mathcal{L}_\kappa \to 0
\]
for some $\mathcal{L}_\kappa \in \mathscr{B}un_1^1(\kappa)$ and $\mathcal{F}_\kappa \in \mathscr{B}un_2^1(\kappa)$ with $\mathcal{F}_\kappa$ being stable but not $(1,0)$-stable. In particular, $\mathcal{E}_\kappa \notin \mathscr{U}(\kappa)$.
\end{enumerate}
\end{lem}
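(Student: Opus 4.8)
The plan is to build the family in two stages. First I would construct a family $\mathcal{F}_R\in\mathscr{B}un_2^1(R)$ whose generic fibre $\mathcal{F}_K$ is $(1,0)$-stable but whose special fibre $\mathcal{F}_\kappa$ is stable yet not $(1,0)$-stable; then I would realize $\mathcal{E}_R$ as an extension $0\to\mathcal{F}_R\to\mathcal{E}_R\to\mathcal{L}_R\to 0$ of a constant degree-$1$ line bundle by $\mathcal{F}_R$ that stays non-split on both fibres. The slope bookkeeping below shows any such non-split extension is automatically semistable, so $\mathcal{E}_R\in\mathscr{B}un_3^{2,ss}(R)$, while the two short exact sequences of fibres together with Corollary \ref{d} give $\mathcal{E}_K\in\mathscr{U}(K)$ and $\mathcal{E}_\kappa\notin\mathscr{U}(\kappa)$, as required.

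For the first stage I would begin with an explicit stable-but-not-$(1,0)$-stable bundle. Fix line bundles $\mathcal{M},\mathcal{N}$ on $C$ of degrees $0$ and $1$; since $\deg(\mathcal{M}\otimes\mathcal{N}^{-1})=-1$ we get $\dim\mathrm{Ext}^1(\mathcal{N},\mathcal{M})=h^1(\mathcal{M}\otimes\mathcal{N}^{-1})=g_C>0$, so a non-split extension $0\to\mathcal{M}\to\mathcal{F}_\kappa\to\mathcal{N}\to 0$ exists. Any sub-line bundle of $\mathcal{F}_\kappa$ of degree $\geq 1$ would map isomorphically onto $\mathcal{N}$ and split the sequence, so $\mathcal{F}_\kappa$ is stable; since $\mathcal{M}$ is a sub-line bundle of degree $0$, it is not $(1,0)$-stable. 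Now I would use that $\mathscr{B}un_2^{1,s}$ is smooth and irreducible and that its open substack $\mathscr{B}un_2^{1,s}(1,0)$ is non-empty (this is where $g_C>2$ enters, via \cite[Proposition 5.4 (i)]{MR541029}); hence this open substack is dense and $[\mathcal{F}_\kappa]$ lies in its closure. Choosing a smooth atlas and a trait through a lift of $[\mathcal{F}_\kappa]$ whose generic point meets the $(1,0)$-stable locus, and pulling back the universal bundle, produces the desired $\mathcal{F}_R$ with $\mathcal{F}_\kappa$ as above and $\mathcal{F}_K$ $(1,0)$-stable.

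For the second stage, fix a degree-$1$ line bundle $L$ on $C$ and set $\mathcal{L}_R:=\mathrm{pr}_C^*L$. Both $\mathcal{F}_K$ and $\mathcal{F}_\kappa$ are stable, so $\mathcal{F}\otimes\mathcal{L}^{-1}$ is stable of slope $-\tfrac12$ on each fibre, whence $h^0=0$ and $h^1=2g_C-1$. Thus $R^1\pi_*(\mathcal{F}_R\otimes\mathcal{L}_R^{-1})$ is free of rank $2g_C-1$ over $R$ and commutes with base change, and $\mathrm{Ext}^1_{C\times\Spec R}(\mathcal{L}_R,\mathcal{F}_R)\cong R^{2g_C-1}$. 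I would pick a class $e$ generating a free summand, so that its restriction to each fibre is non-zero and the associated extension $0\to\mathcal{F}_R\to\mathcal{E}_R\to\mathcal{L}_R\to 0$ is non-split on both fibres; flatness of $\mathcal{E}_R$ over $R$ follows as it is an extension of the flat sheaf $\mathcal{L}_R$ by the flat sheaf $\mathcal{F}_R$. A direct check of subsheaves (a sub-line bundle either maps into $\mathcal{L}$ with degree $\leq 0$ by non-splitness or sits inside $\mathcal{F}$ with degree $\leq 0$, while a rank-$2$ subbundle has degree $\leq 1$) shows each fibre is semistable, so $\mathcal{E}_R\in\mathscr{B}un_3^{2,ss}(R)$, and the two displayed sequences yield (i) and (ii).

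The main obstacle is the first stage, namely producing a single family that \emph{loses} $(1,0)$-stability only in the limit: one must know both that stable-but-not-$(1,0)$-stable bundles exist and that they are genuine specializations of $(1,0)$-stable ones. The explicit extension settles existence, while the density of $\mathscr{B}un_2^{1,s}(1,0)$ in the smooth irreducible stack $\mathscr{B}un_2^{1,s}$, combined with a trait-through-a-point-of-the-closure argument, provides the degeneration; the extension step is then routine cohomology-and-base-change.
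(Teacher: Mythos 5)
Your proposal is correct and follows the paper's strategy: use irreducibility of $\mathscr{B}un_2^{1,s}$ to degenerate a $(1,0)$-stable bundle to a stable but not $(1,0)$-stable one, then extend by a degree-$1$ line bundle over the whole family. You fill in two details that the paper leaves implicit or handles differently --- an explicit non-split extension $0\to\mathcal{M}\to\mathcal{F}_\kappa\to\mathcal{N}\to 0$ witnessing that $\mathscr{B}un_2^{1,s}\setminus\mathscr{B}un_2^{1,s}(1,0)$ is non-empty, and a relative $\mathrm{Ext}$/cohomology-and-base-change construction of the extension family together with a direct fibrewise semistability check, where the paper instead invokes its Lemma \ref{2321} and openness of semistability --- but both routes are sound.
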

\begin{proof}
Since $\mathscr{B}un_2^{1,s}$ is irreducible, there exists a family $\mathcal{F}_R \in \mathscr{B}un_2^{1,s}(R)$ for some DVR $R$ over $k$ with fraction field $K$ and residue field $\kappa$ such that 
\[
\mathcal{F}_K \in \mathscr{B}un_2^{1,s}(1,0)(K) \text{ and } \mathcal{F}_\kappa \in \left(\mathscr{B}un_2^{1,s}-\mathscr{B}un_2^{1,s}(1,0)\right)(\kappa).
\]
Fix a family $\mathcal{L}_R \in \mathscr{B}un_1^1(R)$ and a non-split extension $[\varrho]: 0 \to \mathcal{F}_\kappa \to \mathcal{E}_\kappa \to \mathcal{L}_\kappa \to 0$, the same argument as in Lemma \ref{dec} shows that $\mathcal{E}_\kappa$ is stable. Let $\mathcal{E}_{R} \in \mathscr{C}oh_3^2(R)$ be an extension family of $\mathcal{L}_R$ by $\mathcal{F}_R$ with special fiber $[\varrho]$ (see Lemma \ref{2321}). Since $\mathcal{E}_\kappa \in \mathscr{B}un_3^{2,ss}(\kappa)$, by the openness of stability we have $\mathcal{E}_{R} \in \mathscr{B}un_3^{2,ss}(R)$.
\end{proof}
\begin{cor}\label{nPro}
The open substack $\mathscr{U}$ does not satisfy the existence part of the valuative criterion for properness.
\end{cor}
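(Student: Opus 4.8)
The plan is to combine Theorem \ref{thm0} with the unextendable family produced in Lemma \ref{nProper}. By Lemmas \ref{local-theta} and \ref{theta} and Corollary \ref{S-comp} the stack $\mathscr{U}$ is locally reductive, $\Theta$-reductive and S-complete, so Theorem \ref{thm0} already guarantees a separated adequate moduli space; what remains is to show that $\mathscr{U}$ fails the existence part of the valuative criterion for properness, which by Theorem \ref{thm0} is precisely what obstructs properness. Concretely, I would take the DVR $R$ (with fraction field $K$ and residue field $\kappa$) and the family $\mathcal{E}_R \in \mathscr{B}un_3^{2,ss}(R)$ of Lemma \ref{nProper}, whose generic fibre $\mathcal{E}_K$ lies in $\mathscr{U}(K)$ and defines a morphism $\mathrm{Spec}(K) \to \mathscr{U}$. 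The claim is that this morphism admits no extension to $\mathrm{Spec}(R') \to \mathscr{U}$ over any finite extension of DVRs $R \to R'$.

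Suppose, for contradiction, that such an extension $\tilde{\mathcal{E}}_{R'} \in \mathscr{U}(R')$ exists, so that $\tilde{\mathcal{E}}_{K'} \cong \mathcal{E}_{K'}$. By Lemma \ref{nProper}(i) the bundle $\mathcal{F}_{K'} \subseteq \mathcal{E}_{K'} \cong \tilde{\mathcal{E}}_{K'}$ is a rank-$2$, degree-$1$ subbundle which is $(1,0)$-stable. I would then form the \emph{limit subbundle}: the unique rank-$2$ subsheaf $\mathcal{G}_{R'} \subseteq \tilde{\mathcal{E}}_{R'}$ with generic fibre $\mathcal{F}_{K'}$ and $R'$-flat torsion-free quotient, exactly as in the proof that $\mathscr{C}oh_n^d$ is $\Theta$-reductive. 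Flatness keeps the degree constant, so $\deg \mathcal{G}_{\kappa'} = 1$, and Lemma \ref{no} then forces $\mathcal{G}_{\kappa'}$ to be a genuine subbundle of the special fibre $\tilde{\mathcal{E}}_{\kappa'}$, with line-bundle quotient $\mathcal{L}'' := \tilde{\mathcal{E}}_{\kappa'}/\mathcal{G}_{\kappa'} \in \mathscr{B}un_1^1(\kappa')$. Since $\tilde{\mathcal{E}}_{\kappa'} \in \mathscr{U}(\kappa')$ sits in the sequence $0 \to \mathcal{G}_{\kappa'} \to \tilde{\mathcal{E}}_{\kappa'} \to \mathcal{L}'' \to 0$, the very definition of $\mathscr{U}$ (the removal of $\mathscr{A}$) forces $\mathcal{G}_{\kappa'}$ to be $(1,0)$-stable.

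To finish I would compare $\mathcal{G}_{\kappa'}$ with the special fibre $\mathcal{F}_{\kappa'}$ of the family $\mathcal{F}_{R'}$ (the base change of the family $\mathcal{F}_R$ from Lemma \ref{nProper}). Both $\mathcal{F}_{R'}$ and $\mathcal{G}_{R'}$ are families of rank-$2$, degree-$1$ bundles over $R'$ with the same stable generic fibre $\mathcal{F}_{K'}$, and both special fibres are stable ($\mathcal{F}_{\kappa'}$ by Lemma \ref{nProper}(ii), a property preserved under the residue extension $\kappa'/\kappa$, and $\mathcal{G}_{\kappa'}$ because $(1,0)$-stability implies stability). Since the moduli space of stable rank-$2$ bundles is separated---for instance by Theorem \ref{thm0}, as $\mathscr{B}un_2^{1,s}$ is $\Theta$-reductive and S-complete---the two limits must agree: $\mathcal{G}_{\kappa'} \cong \mathcal{F}_{\kappa'}$. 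This is the desired contradiction, because $\mathcal{F}_{\kappa'}$ is \emph{not} $(1,0)$-stable by Lemma \ref{nProper}(ii), whereas $\mathcal{G}_{\kappa'}$ is. Hence no extension exists and $\mathscr{U}$ fails the existence part of the valuative criterion, so by Theorem \ref{thm0} its separated adequate moduli space is non-proper.

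The main obstacle, and the only nontrivial point, is to rule out \emph{every} possible extension inside $\mathscr{U}$---not merely the degeneration exhibited in Lemma \ref{nProper}, whose special fibre already leaves $\mathscr{U}$. The mechanism above is uniform: it requires no case distinction according to whether the hypothetical special fibre $\tilde{\mathcal{E}}_{\kappa'}$ is semistable or unstable, since the contradiction is extracted entirely from the limit subbundle $\mathcal{G}_{\kappa'}$ and the incompatible $(1,0)$-stability behaviour forced on the two competing rank-$2$ limits. The one thing to verify carefully is that the limit-subbundle construction genuinely produces an $R'$-flat family of rank-$2$ bundles with the stated special fibre; this is routine over the regular two-dimensional scheme $C \times \mathrm{Spec}(R')$ and is exactly the input already used in establishing $\Theta$-reductivity of $\mathscr{C}oh_n^d$.
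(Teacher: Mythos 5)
Your proposal is correct and follows essentially the same route as the paper: take the family of Lemma \ref{nProper}, assume an extension in $\mathscr{U}$ exists, pass to the induced limit of the rank-$2$ subbundle $\mathcal{F}_K$, invoke Corollary \ref{d} to force $(1,0)$-stability of that limit, and contradict the uniqueness of stable limits. The only cosmetic difference is that the paper cites Langton's theorem A for this uniqueness, whereas you derive it from the separatedness (S-completeness) of the stable rank-$2$ locus; both are valid.
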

\begin{proof}
Any family $\mathcal{E}_R \in \mathscr{B}un_3^{2,ss}(R)$ as in Lemma \ref{nProper} defines a diagram
\[
\begin{tikzcd}
\mathrm{Spec}(K) \ar[r,"{\mathcal{E}_K}"] \ar[d,hook] &  \mathscr{U} \ar[d] \\
\mathrm{Spec}(R) \ar[ur,dashed,"\times"'] \ar[r] & \mathrm{Spec}(k)
\end{tikzcd}
\]
of solid arrows and we claim that there is no dotted arrow filling it in. Assume otherwise that there exists a family $\mathcal{E}'_R \in \mathscr{U}(R)$ such that $\mathcal{E}'_K \cong \mathcal{E}_K$. Then we consider the short exact sequences $0 \to \mathcal{F}'_{\kappa} \to \mathcal{E}'_{\kappa} \to \mathcal{L}'_{\kappa}\to 0$ induced by that on the generic fiber $0 \to \mathcal{F}_K \to \mathcal{E}_K \to \mathcal{L}_K \to 0$ in Lemma \ref{nProper}. Since $\mathcal{E}'_\kappa \in \mathscr{U}(\kappa)$, we have $\mathcal{F}'_\kappa$ is $(1,0)$-stable by Corollary \ref{d}. Both $\mathcal{F}_\kappa$ and $\mathcal{F}'_\kappa$ are stable limits of $\mathcal{F}_K$, so they are isomorphic by Langton's theorem A (see \cite{MR0364255}). But $\mathcal{F}'_\kappa$ is $(1,0)$-stable while $\mathcal{F}_\kappa$ is not, a contradiction.
\end{proof}
\begin{proof}[Proof of Theorem $\ref{rank-3-main}$]
The open substack $\mathscr{U} \subseteq \mathscr{B}un_3^{2}$ is non-empty and not contained in $\mathscr{B}un_3^{2,ss}$ by Lemma \ref{dec}. It is locally reductive (Lemma \ref{local-theta}), $\Theta$-reductive (Lemma \ref{theta}), S-complete (Corollary \ref{S-comp}), and does not satisfy the existence part of the valuative criterion (Corollary \ref{nPro}). By Theorem \ref{thm0}, $\mathscr{U}$ admits a separated non-proper adequate moduli space. Furthermore, since every closed point of $\mathscr{U}$ has linearly reductive stabilizer by the proof of Lemma \ref{local-theta}, this adequate moduli space is actually a good moduli space by \cite[Corollary 6.11]{Alper2019-1}.
\end{proof}
\begin{rmk}
It is also interesting to find a compactification of $\mathscr{U} \subseteq \mathscr{B}un_3^2$, i.e., an open substack $\overline{\mathscr{U}} \subseteq \mathscr{B}un_3^2$ containing $\mathscr{U}$ that admits a proper good moduli space. The na\"ive candidate $\mathscr{U} \cup \mathscr{B}un_3^{2,ss}$, while S-complete, is not $\Theta$-reductive.
\end{rmk}
\begin{acknowledgements}
Part of the work presented in this paper constitutes Xucheng Zhang's thesis carried out at Universit\"{a}t Duisburg-Essen. He would like to thank his supervisor Jochen Heinloth for suggesting this topic and sharing many ideas; Mingshuo Zhou for pointing out the literature \cite{MR0508172} and lots of fruitful discussions; and Bin Wang for explaining concepts and checking computations on reductive groups. Dario Wei\ss mann would like to thank his supervisor Georg Hein for his support and teaching him about the moduli space of vector bundles as well as his second supervisor Jochen Heinloth for teaching him about stacks and moduli. He would also like to thank Ludvig Modin for a discussion on adequate moduli.
Both authors would like to thank Jarod Alper for many inspiring discussions. 
We thank the anonymous referee for many thoughtful corrections and insightful suggestions which allowed us to generalize Theorem \ref{1535-2} to reductive groups and remove the characteristic 0 restriction in a previous version of Theorem \ref{thmA} and \ref{thmB}. 
In particular, Remark \ref{3de}, Proposition \ref{lem:localreductive}, and Remark \ref{1505-5} are due to the referee's sharing.
\end{acknowledgements}

\bibliographystyle{amsalpha}
\bibliography{Thesis-ref}

\end{document}